\definecolor{gray}{gray}{0.7}
\definecolor{Gray}{gray}{0.3}
\numberwithin{equation}{section}
\theoremstyle{break}
 \newtheorem{theorem}{Theorem}[section]
 \newtheorem{proposition}[theorem]{Proposition}
 \newtheorem{corollary}[theorem]{Corollary}
 \newtheorem{lemma}[theorem]{Lemma}
 \theoremstyle{definition}
 \newtheorem{definition}[theorem]{Definition}
 \newtheorem{remark}[theorem]{Remark}
 \newtheorem{example}[theorem]{Example}
\newtheorem*{acknowledgement}{Acknowledgment}
\def\R{\mathbb R}
\def\Q{\mathbb Q}
\def\Z{\mathbb Z}
\def\G{\mathbb G}
\def\PP{\mathbb P}
\def\Af{\mathbb A}
\def\O{\mathcal{O}}
\def\Spec{\operatorname{Spec}}
\DeclareMathOperator{\mult}{mult}
\DeclareMathOperator{\red}{red}
\DeclareMathOperator{\Supp}{Supp}
\DeclareMathOperator{\Char}{char}
\DeclareMathOperator{\Hom}{Hom}
\DeclareMathOperator{\Pic}{Pic}
\DeclareMathOperator{\Cl}{Cl}
\DeclareMathOperator{\Ker}{Ker}
\DeclareMathOperator{\Image}{Im}
\DeclareMathOperator{\Gr}{Gr}
\begin{document}
\title[Vanishing theorems and adjoint linear systems on normal surfaces]{Vanishing theorems and adjoint linear systems on normal surfaces in positive characteristic}
\author [M. Enokizono]{Makoto Enokizono}
\address{Department of Mathematics, Faculty of Science and Technology, 
Tokyo University of Science, 
2641 Yamazaki, Noda, Chiba 278-8510, Japan}
\email{\url{enokizono_makoto@ma.noda.tus.ac.jp}}

\keywords{vanishing theorem, adjoint linear system, extension theorem, plane curve} 

\begin{abstract}
We prove the Kawamata--Viehweg vanishing theorem for a large class of divisors on surfaces in positive characteristic.
By using this vanishing theorem, Reider-type theorems and extension theorems of morphisms for normal surfaces are established.
As an application of the extension theorems, we characterize non-singular rational points on any plane curve over an arbitrary base field in terms of rational functions on the curve.
\end{abstract}

\maketitle

\setcounter{tocdepth}{1}

\tableofcontents

\section{Introduction}
\label{sec:Introduction}

This paper is a continuation of \cite{Eno}.
The purpose of this paper is to establish the vanishing theorem and the criterion for spannedness of adjoint linear systems $|K_X+D|$ for ``positive'' divisors $D$ on normal surfaces $X$ in positive characteristic.

\subsection{Vanishing theorems}

Kodaira-type vanishing theorem is a fundamental tool in algebraic geometry.
Unfortunately, the Kodaira vanishing theorem fails in positive characteristic \cite{Ray2}.
In the case of surfaces, it is known that Kodaira's vanishing or more generally, the Kawamata--Viehweg vanishing for $\Z$-divisors holds except for quasi-elliptic surfaces with Kodaira dimension $1$ and surfaces of general type (\cite{S-B}, \cite{Ter}, \cite{Muk}).
For these exceptional surfaces, Kodaira's vanishing also holds under some positive condition for divisors $D$ (e.g., the self-intersection number $D^2$ is large to some extent) (\cite{Fuj}, \cite{S-B}, \cite{Ter}, \cite{CeFa}, \cite{Zha}).
However for the Kawamata--Viehweg vanishing for $\Q$-divisors, there exist counter-examples even for smooth rational surfaces (\cite{CaTa}, \cite{Ber}).
Under some liftability conditions, it is known that the Kawamata--Viehweg vanishing holds in full generality (cf.\ \cite{Hara}, \cite{Lan}).
But for an arbitrary surface, only known results are the asymptotic versions of the Kawamata--Viehweg vanishing (\cite{Tana}).
One of the main result in this paper is the following vanishing theorem for surfaces in positive characteristic:

\begin{theorem}[Theorem~\ref{bigzposvan}]\label{Introbigzposvan} 
Let $X$ be a normal proper surface over an algebraically closed field $k$ of positive characteristic.
Let $D$ be a big $\Z$-positive divisor on $X$.
If $\dim|D|\ge \dim H^{1}(\O_X)_n$, then $H^i(\O_{X}(K_X+D))=0$ holds for any $i>0$.
\end{theorem}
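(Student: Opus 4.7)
The strategy is to reduce the statement via Serre duality to a vanishing for $H^1(\O_X(-D))$, and then to exploit the linear system $|D|$ together with a Frobenius decomposition of $H^1(\O_X)$.

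First, since $X$ is a normal proper surface it is Cohen--Macaulay, so Serre duality gives $H^i(\O_X(K_X + D))^{\vee} \cong H^{2-i}(\O_X(-D))$. The case $i=2$ is immediate from bigness of $D$: bigness forces $H^0(\O_X(-D)) = 0$. Thus it suffices to prove $H^1(\O_X(-D)) = 0$. For this, pick $D_1 \in |D|$ (nonempty by the hypothesis $\dim|D|\ge 0$) and use the short exact sequence
\[
0 \to \O_X(-D) \to \O_X \to \O_{D_1} \to 0.
\]
The vanishing reduces to two statements: (a) the map $H^0(\O_X) \to H^0(\O_{D_1})$ is surjective, and (b) the restriction $H^1(\O_X) \to H^1(\O_{D_1})$ is injective.

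For (b), decompose $H^1(\O_X) = H^1(\O_X)_s \oplus H^1(\O_X)_n$ via the Fitting decomposition of the Frobenius action $F^{*}$: on the semisimple piece $H^1(\O_X)_s$ (corresponding to the reduced Picard variety) $F^{*}$ acts bijectively, while on $H^1(\O_X)_n$ it is nilpotent. For $H^1(\O_X)_s$, the plan is to use the bigness together with $\Z$-positivity of $D$ and a Frobenius pullback argument in the spirit of Shepherd--Barron, Terakawa and Mukai: pulling back along $F^{e}$ makes successive twists of $D$ ``positive enough'' so that any class in $\ker(H^1(\O_X)_s \to H^1(\O_{D_1}))$ must eventually vanish, which — since $F^{*}$ is an isomorphism on the semisimple part — forces it to be zero at the outset. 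For the nilpotent part $H^1(\O_X)_n$ one invokes the hypothesis $\dim|D| \ge \dim H^1(\O_X)_n$: as $D_1$ varies in $|D|$ the kernels $\ker(H^1(\O_X)_n \to H^1(\O_{D_1}))$ cut out a family of linear subspaces of $H^1(\O_X)_n$, and an incidence/dimension count shows that for a generic member this kernel must be trivial.

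The main obstacle I anticipate is coordinating the two very different arguments for the semisimple and the nilpotent parts, and in particular extracting from the notion of ``big $\Z$-positive'' (as developed in \cite{Eno}) the exact input required to run the Frobenius pullback: one needs the class of $D$, and of each of its Frobenius iterates, to continue to lie in a regime where the exact sequence method applies. A secondary but not negligible issue is point (a): ensuring that a general member of $|D|$ is connected, which should again follow from the $\Z$-positivity of $D$ and a Bertini-type argument tailored to positive characteristic. Once both (a) and (b) are in hand, assembling them through the long exact sequence of cohomology yields $H^1(\O_X(-D)) = 0$ and hence, via Serre duality, the theorem.
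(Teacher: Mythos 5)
Your reduction is sound as far as it goes: Serre duality, the $i=2$ case from bigness, and the observation that everything hinges on (a) connectedness of members of $|D|$ and (b) injectivity of the restriction on $H^1$. Point (a) is indeed what $\Z$-positivity delivers, and in a stronger form than your Bertini-type suggestion: the paper shows every effective big $\Z$-positive divisor is chain-connected (Proposition~\ref{bigzposcc}), hence $H^0(\O_{D_1})\cong k$ for \emph{every} member $D_1\in|D|$, not just a generic one. But this very fact exposes the gap in your treatment of the nilpotent part. Once (a) holds for all members, the long exact sequence identifies $\ker\bigl(H^1(\O_X)\to H^1(\O_{D_1})\bigr)=\alpha(X,D_1)$ with the image of the injection $\times s\colon H^1(\O_X(-D))\hookrightarrow H^1(\O_X)$, so these kernels all have the \emph{same} dimension $r=\dim H^1(\O_X(-D))$, independent of $D_1$. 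Consequently ``the kernel is trivial for a generic member'' is not something weaker you can extract by an incidence/dimension count --- it is literally equivalent to the theorem, and a naive count goes the wrong way (it only shows the $r$-planes $\alpha(X,D_1)$ must overlap heavily inside $H^1(\O_X)_n$, not that $r=0$). The paper's actual mechanism (Fujita's argument) is quite different: the assignment $s\mapsto \Image(\times s)$ defines a \emph{morphism} $|D|\to \Gr(r,H^1(\O_X)_n)$; since $\dim|D|\ge\dim H^1(\O_X)_n$, Tango's theorem forces this morphism to be constant, say with image $I$; and then $\det\colon H^0(\O_X(D))\to\Hom_k(\wedge^rV,\wedge^rI)\cong\Af^1$ is a nonconstant polynomial vanishing nowhere on $U\setminus\{0\}$, which is absurd when $\dim U\ge 2$. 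Neither Tango's constancy theorem nor the determinant trick is recoverable from a dimension count, so this step is a genuine missing idea.

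On the semisimple part, your Frobenius-iteration sketch is also not the paper's route and would run into trouble as stated: $D$ is only a big Weil divisor on a normal surface, so there is no Serre-type vanishing for $H^1(\O_X(-p^eD))$ to terminate the iteration. The paper instead uses Mumford's argument (Proposition~\ref{surfalphavan}): a nonzero Frobenius-fixed class in $\alpha(X,D_1)$ produces a nontrivial \'etale $\Z/p\Z$-cover $\pi\colon Y\to X$ on which $\pi^{*}D_1$ splits into $p$ disjoint copies, contradicting Ramanujam connectedness of the nef and big positive part of the Zariski decomposition. This piece is standard and repairable; the nilpotent part is where your proposal genuinely breaks down.
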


Here $H^{1}(\O_X)_n$ denotes the nilpotent part of $H^1(\O_X)$ under the Frobenius action and a divisor $D$ on $X$ is said to be {\em $\Z$-positive} if $B-D$ is not nef over $B$ for any effective negative definite divisor $B>0$ on $X$.
Typical examples of $\Z$-positive divisors are the round-ups $D=\ulcorner M\urcorner$ of nef $\Q$-divisors $M$ and numerically connected divisors which are not negative definite. 
Theorem~\ref{Introbigzposvan} is new even when $X$ is smooth (and of general type) and $D$ is ample.

A well-known proof of the Kawamata--Viehweg vanishing theorem is to use the covering method and reduce to the Kodaira vanishing theorem (cf.\ \cite{KMM}).
In positive characteristic, although the Kodaira vanishing holds for almost all surfaces not of general type, it is difficult to apply the covering method.
The reason is that the covering method reduces the vanishing of the cohomology on a given surface $X$ to that on the total space $Y$ of a covering $Y\to X$ 
but in many cases, $Y$ must be of general type and so Kodaira's vanishing can not be applied.
For this, we use another method to prove Theorem~\ref{Introbigzposvan}.
The key observation to prove Theorem~\ref{Introbigzposvan} is to study the connectedness of effective divisors and to prove the following lemma:

\begin{lemma}[Corollary~\ref{bigzposconn}]
Let $D>0$ be an effective big $\Z$-positive divisor on a normal complete surface $X$ over an algebraically closed field $k$.
Then $H^0(\O_D)\cong k$ holds.
\end{lemma}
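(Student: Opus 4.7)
The plan is to show that any big $\Z$-positive effective divisor $D$ is numerically $1$-connected (that is, $A\cdot B \ge 1$ for every decomposition $D = A + B$ with $A, B > 0$), and then to invoke a Ramanujam--Mumford style argument to conclude $H^0(\O_D) \cong k$. First I would pass to a resolution $\pi \colon \tilde X \to X$ and replace $D$ by a suitable effective divisor $\tilde D$ on $\tilde X$ (for instance $\pi^{-1}_{*}D$ plus a carefully chosen $\pi$-exceptional correction) so that $\pi_*\O_{\tilde D} \cong \O_D$ and $\tilde D$ is still big and $\Z$-positive; this reduces the problem to the case when $X$ is smooth.

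The core step is to verify numerical $1$-connectedness. Suppose for contradiction that $D = A + B$ with $A, B > 0$ and $A\cdot B \le 0$. From
\[
D^{2} = A^{2} + 2A\cdot B + B^{2} > 0
\]
we get $A^{2} + B^{2} > 0$, so after relabelling $A^{2} > 0$ and hence $A$ is big. If $B$ is negative definite, applying $\Z$-positivity of $D$ with test divisor $B$ produces a component $B_i$ of $B$ with $(B - D)\cdot B_i < 0$, that is $A\cdot B_i > 0$; combined with the Hodge index theorem applied to the big divisor $A$ (and iteration over sub-supports of $B$ if several components contribute negatively), this forces $A\cdot B > 0$, a contradiction. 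If $B$ is not negative definite, I would first peel off its nef/Zariski-positive part, which intersects the effective divisor $A$ nonnegatively, to reduce to the previous case.

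With numerical $1$-connectedness established, the conclusion follows by the classical argument. For any decomposition $D = D_1 + D_2$ with $D_i > 0$, the short exact sequence
\[
0 \to \O_{D_1}(-D_2) \to \O_D \to \O_{D_2} \to 0
\]
combined with $D_1 \cdot D_2 \ge 1$ gives $H^{0}(\O_{D_1}(-D_2)) = 0$ by a degree consideration on the components of $D_1$; thus $H^{0}(\O_D)$ injects into $H^{0}(\O_{D_2})$, and induction on the number of prime components of $D$ counted with multiplicity yields $H^{0}(\O_D) \cong k$.

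The main obstacle is the middle step. The $\Z$-positivity hypothesis directly controls $D$ only against negative definite test divisors, whereas one must rule out decompositions $D = A + B$ of every shape, notably those where neither summand is negative definite. Orchestrating the bigness of $D$, Hodge index, and the Zariski decompositions of both summands to handle these mixed configurations is where I expect the technical heart of the proof to lie.
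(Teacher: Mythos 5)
There is a genuine gap: the central claim of your argument --- that every effective big $\Z$-positive divisor is numerically $1$-connected --- is false, and in fact the paper itself contains a counterexample. In the Example following Proposition~\ref{ccpullback}, the divisor $D'=\ulcorner \pi^{*}D\urcorner=2C'_1+2C'_2+2C'_3$ on the smooth surface $X'$ (with $(C'_i)^2=-i$ and $C'_iC'_j=1$ for $i\neq j$) is effective, big and chain-connected, hence $\Z$-positive by Remark~3.13(1); yet for the decomposition $A=B=C'_1+C'_2+C'_3$ one has $AB=(C'_1+C'_2+C'_3)^2=0$, so $D'$ is not even numerically connected, let alone $1$-connected. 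Note that this example arises precisely from the reduction you propose in your first step: passing to a resolution and replacing $D$ by a round-up of its pull-back can destroy numerical connectedness (that is the whole point of that Example), so your smooth reduction and your connectedness goal are in direct conflict. Accordingly, your ``core step'' cannot be repaired: in the case where $B$ is not negative definite, the $\Z$-positivity hypothesis gives you no leverage (it only tests against negative definite divisors), and the example above shows $AB=0$ can genuinely occur. There is also a secondary problem in your last step: even granting $D_1\cdot D_2\ge 1$, the vanishing $H^0(\O_{D_1}(-D_2))=0$ does not follow from a ``degree consideration'' when $D_1$ is reducible, since individual components $C\le D_1$ may satisfy $CD_2<0$.

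The paper's proof avoids all of this by working with the strictly weaker notion of \emph{chain-connectedness}, which is exactly what survives both the pull-back operation (Proposition~\ref{ccpullback}) and the $\Z$-positivity hypothesis. Proposition~\ref{bigzposcc} shows a big $\Z$-positive divisor $D$ equals its chain-connected component $D_{\mathrm{c}}$: if $D-D_{\mathrm{c}}\neq 0$, its Zariski decomposition $P+N$ either has $P=0$ (so $D-D_{\mathrm{c}}$ is negative definite, contradicting $\Z$-positivity) or forces $P$ to be nef, numerically trivial over $D$ with $P^2=0$ and full support, contradicting bigness. Then Lemma~\ref{ccconn} gives $H^0(\O_D)\cong k$ via the connecting-chain characterization (Lemma~\ref{ccchar}): one adds a single prime component $C_i$ at a time with $D_{i-1}C_i>0$, so that $H^0(\O_{C_i}(-D_{i-1}))=0$ holds on the \emph{integral} curve $C_i$ and one obtains a chain of injections $H^0(\O_D)\hookrightarrow\cdots\hookrightarrow H^0(\O_{C})\cong k$. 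You should replace your numerical $1$-connectedness claim with this chain-connectedness statement.
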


For the higher dimensional case, we will prove the following vanishing theorem on $H^1$: 

\begin{theorem}[Theorem~\ref{Miyvanthm}]   
Let $X$ be a normal projective variety of dimension greater than $1$ over an algebraically closed field $k$.
Let $D$ be a divisor on $X$
such that $D=\ulcorner M\urcorner$ for some nef $\R$-divisor $M$ on $X$
with $\kappa(D)\ge 2$ or $\nu(M)\ge 2$.
If $\Char k>0$, we further assume that $\dim |D|\ge \dim H^1(\O_X)_n$.
Then $H^1(X,\O_X(-D))=0$ holds.
\end{theorem}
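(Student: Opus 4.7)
The plan is to imitate the strategy used for the surface case (Theorem~\ref{Introbigzposvan}): combine a connectedness statement on effective divisors in $|D|$ with a Frobenius nilpotency argument and a linear-system count. In characteristic zero the theorem is classical Kawamata--Viehweg vanishing for the nef $\R$-divisor $M$, so I focus on $\Char k = p > 0$. The hypothesis $\dim|D| \geq \dim H^1(\O_X)_n \geq 0$ forces $h^0(\O_X(D)) \geq 1$, so I may assume $D$ is effective.

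\textbf{Step 1 (connectedness).} I would first prove, by induction on $n = \dim X$, that every effective representative $D' \in |D|$ satisfies $h^0(\O_{D'}) = 1$. Let $A$ be a very ample divisor on $X$ and, for $m \gg 0$, take a sufficiently general $H \in |mA|$; by a Bertini theorem for normality in positive characteristic (Seidenberg), $H$ is normal of dimension $n - 1$. Choose $H$ so that it avoids the support of the fractional part $M - \lfloor M \rfloor$ and any component of $D'$; then $D'|_H$ is effective with $D|_H = \ulcorner M|_H \urcorner$, $M|_H$ nef, and either $\kappa(D'|_H) \geq 2$ or $\nu(M|_H) \geq 2$. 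The restriction sequence
\begin{equation*}
0 \to \O_{D'} \otimes \O_X(-H) \to \O_{D'} \to \O_{D' \cap H} \to 0
\end{equation*}
combined with $H^0(\O_{D'} \otimes \O_X(-H)) = 0$ for $H$ ample enough yields $h^0(\O_{D'}) \leq h^0(\O_{D' \cap H})$. Iterating down to a complete intersection surface $S$ (on which $D'|_S$ is effective, big, and $\Z$-positive), Corollary~\ref{bigzposconn} gives $h^0(\O_{D'|_S}) = 1$, and the sandwich $1 \leq h^0(\O_{D'}) \leq h^0(\O_{D'|_S}) = 1$ forces $h^0(\O_{D'}) = 1$.

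\textbf{Step 2 (Frobenius and counting).} The short exact sequence $0 \to \O_X(-D) \to \O_X \to \O_D \to 0$ with $h^0(\O_D) = 1 = h^0(\O_X)$ produces, for each section $s \in H^0(\O_X(D))$ with divisor $D_s \in |D|$, an injection $\iota_s \colon H^1(\O_X(-D)) \hookrightarrow H^1(\O_X)$. The Frobenius compatibility $F^{*N} \circ \iota_s = \iota_{s^{p^N}} \circ F^{*N}$, together with the Serre-type vanishing $H^1(\O_X(-p^N D)) = 0$ for $N \gg 0$ (valid since $D$ is big and $X$ is normal with $\operatorname{depth} \geq 2$, via Fujita vanishing on the dualizing complex), forces the image of $\iota_s$ into the Frobenius-nilpotent part $H^1(\O_X)_n$. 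For fixed $0 \neq \xi \in H^1(\O_X(-D))$, the map $s \mapsto \iota_s(\xi)$ is a linear map $H^0(\O_X(D)) \to H^1(\O_X)_n$; by Step~1, every nonzero $s$ gives an injective $\iota_s$, so this linear map has trivial kernel and is itself injective. Hence $\dim|D| + 1 \leq \dim H^1(\O_X)_n$, contradicting the hypothesis, and we conclude $H^1(\O_X(-D)) = 0$.

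\textbf{Main obstacle.} The most delicate point is Step~1: in positive characteristic, one must verify that the iterated hyperplane restrictions preserve normality and that the numerical/Iitaka-dimension and $\Z$-positivity hypotheses transfer at every stage down to the complete intersection surface (which requires avoiding specific loci in each $|mA|$). A second source of difficulty is the Serre-type vanishing $H^1(X, \O_X(-p^N D)) = 0$ for big $D$ on a normal, possibly non--Cohen--Macaulay projective variety, which must be handled via the dualizing complex and the $\operatorname{depth} \geq 2$ condition rather than by classical Serre duality. Finally, one must be careful that the injectivity of $\iota_s$ holds for every nonzero $s$ (not merely for generic $s$), so that no degrees of freedom are lost in the linear-system count.
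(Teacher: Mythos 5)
Your Step 1 and your final dimension count are fine, but there is a genuine gap in the middle of Step 2, at the point where you place the image of $\iota_s$ inside $H^1(\O_X)_n$. You do this by iterating Frobenius and invoking $H^1(X,\O_X(-p^N D))=0$ for $N\gg 0$, which you justify by saying $D$ is big. First, $D$ need not be big: for $\dim X\ge 3$ the hypotheses $\kappa(D)\ge 2$ or $\nu(M)\ge 2$ allow, e.g., $D=f^{*}H$ for a fibration $f\colon X\to Z$ onto a surface and $H$ ample on $Z$, which has $\nu=2$ but is nowhere near big. Second, even for $D$ big or nef with $\nu\ge 2$, the asymptotic vanishing $H^1(X,\O_X(-p^ND))=0$ is not a quotable form of Serre/Fujita vanishing: without Cohen--Macaulayness there is no Serre duality to convert Fujita vanishing into a statement about $H^1$ of $-p^ND$, Enriques--Severi--Zariski only kills twists by large multiples of an \emph{ample} divisor, and in the fibration example a torsion part of $R^{1}f_{*}\O_X$ (wild-fibre phenomena, which do occur in characteristic $p$) contributes to $H^1(X,\O_X(-mD))$ for \emph{every} $m$. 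Worst of all, $p^ND$ satisfies the very hypotheses of the theorem (it is the round-up of the nef divisor $p^NM$ up to an effective correction, and $\dim|p^ND|\ge \dim|D|\ge \dim H^1(\O_X)_n$), so the vanishing you are feeding into the argument is an instance of the statement you are trying to prove: the route is circular. The paper closes this step by a different mechanism (Propositions~\ref{surfalphavan} and \ref{alphavan}): a Frobenius-fixed class in $\alpha(X,D_s)=\ker(H^1(\O_X)\to H^1(\O_{D_s}))$ would give a nontrivial degree-$p$ \'etale cyclic cover on which the pullback of $D_s$ splits into $p$ disjoint copies; restricting to a general complete-intersection surface, this contradicts Ramanujam's connectedness lemma applied to the (big, nef) positive part. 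You need this, or some other non-circular argument, to get $\Image(\iota_s)\subset H^1(\O_X)_n$.

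The rest of your proposal is essentially the paper's proof, and in one place an improvement. Step 1 (iterated general hyperplane sections down to a normal surface $S$ on which $D'|_S$ is effective, big and $\Z$-positive, then Corollary~\ref{bigzposconn}) is exactly what the paper does; the only slip is cosmetic --- a general ample hypersurface cannot ``avoid the support'' of the fractional part, it just contains no component of it, which is all you use. Your concluding count is actually cleaner than the paper's: once every $\iota_s$ ($s\neq 0$) is injective with image in $W:=H^1(\O_X)_n$, fixing $0\neq\xi\in H^1(\O_X(-D))$ gives an injective linear map $s\mapsto s\cdot\xi$ from $H^0(\O_X(D))$ to $W$, hence $\dim|D|+1\le\dim W$, contradicting the hypothesis. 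The paper instead sends $|D|$ to a Grassmannian of subspaces of $W$ and invokes Tango's theorem on morphisms from projective space to Grassmannians to force that map to be constant before extracting a contradiction from a determinant; your bilinear-algebra observation bypasses all of that.
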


\subsection{Adjoint linear systems}
For adjoint linear systems $|K_X+D|$ of divisors $D$ on varieties $X$, it is expected that the ``positivity'' of the divisor $D$ implies the ``spannedness'' of the linear system $|K_X+D|$
(Fujita's conjecture is a typical one).
For smooth surfaces $X$ in characteristic $0$, Reider's theorem \cite{Rei} roughly says that if the adjoint linear system $|K_X+D|$ for a nef and big divisor $D$ has a base point, there exists a curve $B$ on $X$ obstructing the basepoint-freeness such that $D$ and $B$ satisfy some numerical conditions.
Reider's method enables us to give various applications of adjoint linear systems, especially, the affirmative answer to Fujita's conjecture for surfaces in characteristic $0$.
On the other hand, although Shepherd-Barron and others (\cite{S-B}, \cite{Ter}, \cite{Mor}, \cite{CeFa}) studied adjoint linear systems on smooth surfaces in positive characteristic by using Reider's method based on some Bogomolov-type inequalities,
 there exist counter-examples to Fujita's conjecture for surfaces in positive characteristic (\cite{GZZ}).
In this paper, as applications of Theorem~\ref{Introbigzposvan} and other vanishing results (Corollary~\ref{Francia}, Propositions~\ref{surfalphavan} and \ref{surfpencilvan}), we give some results for adjoint linear systems on not necessarily smooth surfaces in positive characteristic.
Here we state immediate corollaries of the main result (Theorem~\ref{ReithmI}):

\begin{corollary}[Corollary~\ref{bpfcor}]
Let $X$ be a normal complete surface over an algebraically closed field $k$.
When $\Char k>0$, we further assume that the Frobenius map on $H^1(\O_X)$ is injective.
Let $x\in X$ be at most a rational singularity.
Let $L$ be a divisor on $X$ which is Cartier at $x$.
We assume that there exists an integral curve $D\in |L-K_X|$ passing through $x$ such that $(X,x)$ or $(D,x)$ is singular, and $D$ is analytically irreducible at $x$ when $\Char k>0$.
Then $x$ is not a base point of $|L|$.
\end{corollary}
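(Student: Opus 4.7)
The plan is to realize this corollary as an immediate application of the main Reider-type theorem (Theorem~\ref{ReithmI}), checking that the geometric input on $D$ and $x$ satisfies the hypotheses of that theorem. Concretely, since $L=K_X+D$, I would begin from the adjunction short exact sequence
\[ 0\to\O_X(K_X)\to\O_X(L)\to\omega_D\to 0, \]
coming from the integral curve $D\in|L-K_X|$, which identifies the cokernel with the dualizing sheaf of $D$. Base-point freeness of $|L|$ at $x$ is then reduced to two subproblems: (i) exhibit a local section of $\omega_D$ nonvanishing at $x$, and (ii) lift that section to a global section of $\O_X(L)$.

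For (i), the singularity hypothesis is what provides the necessary room. When $X$ is smooth at $x$ and $(D,x)$ is singular, the adjoint-conditions description of $\omega_D$ realises the stalk $\omega_{D,x}$ as generated by a regular differential whose residue at $x$ is nonzero. When instead $(X,x)$ is a non-Gorenstein rational singularity, one works on a resolution $\pi\colon\tilde X\to X$, exploiting $R^1\pi_*\O_{\tilde X}=0$ to transfer cohomological statements back to $X$, and relates $\omega_D$ to the dualizing sheaf of the strict transform. The analytical-irreducibility assumption in positive characteristic ensures that the formal stalk $\widehat\O_{D,x}$ is a domain, so that the local generator is canonical and compatible with the Frobenius used in the vanishing theorems.

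For (ii), one invokes the vanishing results from the preceding sections. Under Frobenius injectivity on $H^1(\O_X)$, the nilpotent part $H^1(\O_X)_n$ vanishes, so Theorem~\ref{Introbigzposvan} applies to any big $\Z$-positive divisor with nonempty linear system; combined with the ancillary vanishing statements (Corollary~\ref{Francia}, Propositions~\ref{surfalphavan} and~\ref{surfpencilvan}) built into Theorem~\ref{ReithmI}, this kills the connecting map $H^0(\omega_D)\to H^1(\O_X(K_X))$ on the chosen local section, producing the required global lift.

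The main obstacle is step (ii): a curve $D$ passing through a rational singularity need not itself be big or $\Z$-positive, so one must either argue via a cleverly chosen auxiliary divisor (for instance subtracting a negative-definite contracted piece supported near $x$) or appeal directly to the tailored vanishing lemmas inside Theorem~\ref{ReithmI}. In positive characteristic the analytical-irreducibility hypothesis is precisely what prevents the local Frobenius on $D$ at $x$ from manufacturing extra obstructions to lifting, which is why it appears only in that setting.
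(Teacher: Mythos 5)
Your high-level target (reduce to Theorem~\ref{ReithmI}) is the same as the paper's, but the argument you actually sketch — adjunction plus residues plus a lifting step — is a different one, and it has a genuine gap at the lifting step. From $0\to\O_X(K_X)\to\O_X(L)\to\omega_D\to 0$, a section of $\omega_D$ not vanishing at $x$ lifts to $H^0(\O_X(L))$ only if its image under the connecting map $H^0(\omega_D)\to H^1(\O_X(K_X))$ vanishes; but $H^1(\O_X(K_X))\cong H^1(\O_X)^{*}$ is not zero under the stated hypotheses (only Frobenius injectivity is assumed), and the $\alpha$-invariant results you invoke (Proposition~\ref{surfalphavan}, Corollary~\ref{Francia}) assert $\alpha(X,D)=0$ for big $D$, which by duality makes the connecting map as large as possible rather than zero. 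Asserting that the lift exists ``for the chosen local section'' is essentially the statement that $x$ is not a base point of $|L|$ modulo $|K_X|$, so as written the step is circular. Note also that $D$ need not be big here — an integral curve through a rational singularity can have $D^2\le 0$ — so Theorem~\ref{Introbigzposvan} is not available, and even step (i) (a \emph{global} section of $\omega_D$ generating the stalk at $x$) is a nontrivial claim, not a consequence of $\omega_{D,x}$ having a local generator.

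The paper's proof uses the hypotheses in quite different places. One takes $\pi$ to be the blow-up at $x$ (if $x$ is smooth) or the minimal resolution (otherwise) together with a specific exceptional cycle $Z$ as in Definition~\ref{deltatau}; the hypothesis that $(X,x)$ or $(D,x)$ is singular is used precisely to make $\pi^{*}D+\Delta-Z$ effective, i.e.\ to verify condition $(E)_{D,x}$ (at a smooth point this is $\mult_x D\ge 2$). Rationality of $x$ gives $H^1(\O_{X'})\cong H^1(\O_X)$, so Frobenius injectivity transfers to the resolution, and analytic irreducibility of $D$ at $x$ gives $b_1(\Gamma(D))=b_1(\Gamma(\widehat D))$; these are exactly the hypotheses (i0)/(ip) of Theorem~\ref{ReithmI}, whose role is to force $\alpha(X',D'+Z)=\alpha(X',D')$ — not to normalize a local generator of $\omega_D$. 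The theorem then produces, from the assumed base point, a non-trivial effective decomposition $D=A+B$ with both parts meeting $x$; since $D$ is integral no such decomposition exists, and this contradiction is the entire proof. That decisive use of the integrality of $D$ is absent from your proposal and cannot be recovered from the adjunction-and-residue route you describe.
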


\begin{corollary}[Corollary~\ref{bpfcor2}]
Let $X$ be a normal proper surface with at most singularities of geometric genera $p_g\le 3$ over an algebraically closed field $k$ of positive characteristic.
Let $D$ be a nef divisor on $X$ such that $K_X+D$ is Cartier.
Then $|K_X+D|$ is base point free if the following three conditions hold:
\smallskip

\noindent
$(\mathrm{i})$ $D^2>4$,

\smallskip

\noindent
$(\mathrm{ii})$ $DB\ge 2$ for any curve $B$ on $X$, and

\smallskip

\noindent
$(\mathrm{iii})$
$\dim |D|\ge \dim H^1(\O_X)_n+3$.
\end{corollary}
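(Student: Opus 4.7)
The plan is to deduce this directly from the Reider-type theorem (Theorem~\ref{ReithmI}), which the excerpt identifies as the source of this corollary.

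First, I would assume for contradiction that $x\in X$ is a base point of $|K_X+D|$, and apply Theorem~\ref{ReithmI} to $D$ at $x$. A Reider-type statement typically either guarantees basepoint-freeness at $x$ or produces an effective ``Reider curve'' $B$ through $x$ with tightly controlled intersection numbers; in the classical dichotomy under $D^2\ge 5$ one gets $(DB,B^2)\in\{(0,-1),(1,0)\}$. Condition (i), $D^2>4$ (i.e.\ $D^2\ge 5$), is exactly the integral Reider bound that triggers this dichotomy, and condition (ii), $DB\ge 2$ for every curve $B$, is designed to eliminate every such $B$, yielding the contradiction.

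Condition (iii), $\dim|D|\ge \dim H^1(\O_X)_n+3$, should supply the vanishing input required inside Theorem~\ref{ReithmI}. The base term $\dim H^1(\O_X)_n$ is precisely the threshold appearing in the big $\Z$-positive vanishing (Theorem~\ref{Introbigzposvan}), which kills the relevant $H^1$ of the twist by $K_X+D$ and of the auxiliary ideal sheaves that arise in the Reider extension argument. The additive constant $3$ then accommodates the local contribution at a base point lying on a singularity: at a singularity of geometric genus $p_g$ the extension step on a resolution loses at most $p_g$ dimensions, and here $p_g\le 3$ by hypothesis, so the extra $+3$ is exactly the cost of passing to a desingularization.

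The main obstacle will be verifying that Theorem~\ref{ReithmI} in this singular setting produces no exceptional Reider curves beyond the two classical cases already ruled out by $DB\ge 2$, and that the constant $3$ in (iii) is the precise accounting for the local contribution at a singularity of geometric genus up to $3$ (rather than a slightly different function of $p_g$). Once these matches are confirmed, the corollary follows immediately: (i) triggers the Reider dichotomy, (iii) supplies the vanishing needed to apply Theorem~\ref{ReithmI} in the singular setting via Theorem~\ref{Introbigzposvan}, and (ii) excludes every Reider curve it can produce, so no base point $x$ can exist.
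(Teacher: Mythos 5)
Your route is essentially the paper's: the corollary is the specialization $(\alpha,\beta)=(4,2)$ of the body version of Corollary~\ref{bpfcor2}, proved by assuming $x$ is a base point, invoking Theorem~\ref{ReithmI} (with Remark~\ref{Reithmrem}~(3)) to produce a curve $B$ through $x$, and ruling it out numerically. Two points where your sketch and the paper's execution differ in substance. First, Theorem~\ref{ReithmI} on a normal surface does not output the classical dichotomy $(DB,B^2)\in\{(0,-1),(1,0)\}$; it gives a decomposition $D=A+B$ with $(D-B)B\le\delta_x/4$ and $D-2B$ big, where $\delta_x\le 4$ with equality only at smooth points (Lemma~\ref{deltataulem}~(1)). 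The paper then gets the contradiction directly from the Hodge index theorem: $DB\le\delta_x/4+(DB)^2/D^2$ together with $(D-2B)D>0$ forces $DB\le\bigl(D^2-\sqrt{D^2(D^2-\delta_x)}\bigr)/2<2$, contradicting (ii); your two-case enumeration is a cruder stand-in that you would still have to justify in the singular setting, whereas the inequality above works uniformly. Second, your accounting of the constant $3$ in (iii) is only half right: the relevant quantity is $\tau_x$ from Definition~\ref{deltatau} and Lemma~\ref{deltataulem}~(2), which equals $3$ at a \emph{smooth} base point (the cost of imposing a double point under the blow-up, i.e.\ $\dim|\pi^{*}D-2E|\ge\dim|D|-3$), equals $1$ at a Du Val point, and is $\dim V_n\le p_g\le 3$ at a worse singularity. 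So the $+3$ is the maximum over all local types, not solely the desingularization cost at a singularity; attributing it only to $p_g$ would leave the smooth-point case unexplained. Neither issue derails the argument, but they are exactly the two verifications your plan defers, and they are where the actual content of the proof lies.
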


The following is a partial answer to Fujita's conjecture for surfaces in positive characteristic:

\begin{corollary}[Corollary~\ref{Fujitacor}]
Let $X$ be a projective surface with at most rational double points over an algebraically closed field $k$ of positive characteristic.
Let $H$ be an ample divisor on $X$.
Then $|K_X+mH|$ is base point free for any $m\ge 3$ with $\dim|mH|\ge \dim H^1(\O_X)_n+3$, and is very ample for any $m\ge 4$ with $\dim |mH|\ge \dim H^1(\O_X)_n+6$.
\end{corollary}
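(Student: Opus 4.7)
The plan is to reduce this corollary to the base-point-freeness criterion of Corollary~\ref{bpfcor2} and the very-ampleness assertion contained in the main Reider-type theorem (Theorem~\ref{ReithmI}), both specialized to $D = mH$. Since every rational double point is a Gorenstein rational singularity with $p_g = 0 \le 3$ and $K_X$ is Cartier there, the singularity hypotheses of those two statements are automatic, and $K_X + mH$ is Cartier whenever $H$ is. What remains is therefore only a numerical verification.

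For the base-point-freeness assertion, I would take $D = mH$ with $m \ge 3$ and check the three conditions of Corollary~\ref{bpfcor2}. Because $H$ is ample and Cartier on a surface, $H^2$ is a positive integer and $HB \ge 1$ for every curve $B \subset X$; hence $D^2 = m^2 H^2 \ge 9 > 4$ and $DB = m(HB) \ge 3 \ge 2$, while the condition $\dim|D| \ge \dim H^1(\O_X)_n + 3$ is exactly the hypothesis on $m$. For the very-ampleness assertion, I would analogously take $m \ge 4$ and apply the very-ampleness part of Theorem~\ref{ReithmI}: the numerical bounds $(mH)^2 \ge 16 > 8$ and $(mH)B \ge 4 \ge 3$ match the classical Reider thresholds for separation of points and tangent vectors, and the cohomological hypothesis $\dim|mH| \ge \dim H^1(\O_X)_n + 6$ is assumed.

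Because the work is purely arithmetic once the preceding results are in hand, there is no genuine obstacle in this corollary. The one point worth emphasizing is the jump from $+3$ to $+6$ in the bound on $\dim|mH|$: this reflects that separating two (possibly infinitely near) points requires the vanishing of $H^1$ of $\O_X(-mH)$ after passing to a blow-up at one or two points, and each such passage may introduce an additional batch of Frobenius-nilpotent classes of size up to $\dim H^1(\O_X)_n$. This combinatorics is already absorbed into Theorem~\ref{ReithmI}, so once that theorem is invoked the corollary reduces to the arithmetic check above.
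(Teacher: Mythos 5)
Your argument is correct and is essentially the paper's: the base-point-freeness claim is exactly Corollary~\ref{bpfcor2} applied to $D=mH$ (the paper takes $(\alpha,\beta)=(4,2)$), and the very-ampleness claim is Corollary~\ref{vacor} (the paper takes $(\alpha,\beta)=(9,3)$); note that one needs $H$ Cartier, as in the body of the paper, so that $HB\ge 1$ for every curve $B$. Two small corrections are worth recording. First, Theorem~\ref{ReithmI} has no ``very-ampleness part'': to separate a length-two cluster $\zeta$ one must apply it to a blow-up along $\zeta$ (or take $Z=2E$ over a Du Val point, corresponding to $\mathfrak{m}_x^2$), where $\delta_{\zeta}(\pi,Z)=8$ and $\dim|D'|\ge \dim|D|-6$; this packaging is precisely Corollary~\ref{vacor}, which is what your numerical checks $(mH)^2>8$ and $(mH)B\ge 4$ actually verify. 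Second, the jump from $+3$ to $+6$ is not due to ``additional batches of Frobenius-nilpotent classes'' --- blowing up smooth points leaves $H^1(\O_X)_n$ unchanged --- but is simply the number of linear conditions imposed on $|mH|$ to produce a member of $|\pi^{*}(mH)+\Delta-Z|$, i.e.\ to guarantee $\dim|D'|\ge \dim H^1(\O_{X'})_n$ as required in condition (iiip) of Theorem~\ref{ReithmI}.
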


For other corollaries (e.g., for the pluri-(anti)canonical systems on normal surfaces), see Section~5.

\subsection{Extension theorems}

In this paper, ``extension theorem'' means the result of the extendability of morphisms defined on a divisor to the whole variety.
For the surface case, the extension theorem goes back to the results of Saint-Donat \cite{S-D} and Reid \cite{Reid} for $K3$ surfaces.
After that, Serrano \cite{Ser} and Paoletti \cite{Pao} proved extension theorems for integral curves on smooth surfaces.
These results were generalized in \cite{Eno} for possibly reducible or non-reduced curves on normal surfaces in characteristic $0$.
In this paper, as an application of the Reider-type theorem (Theorem~\ref{ReithmII}), we give a positive characteristic analog of this extension theorem:

\begin{theorem}[Theorem~\ref{extnthm}] 
Let $D>0$ be an effective divisor on a normal complete surface $X$ over an algebraically closed field $k$ of positive characteristic
and assume that any prime component $D_i$ of $D$ has positive self-intersection number.
Let $\varphi\colon D\to \mathbb{P}^1$ be a finite separable morphism of degree $d$.
If $D^2>\mu(q_X,d)$ and $\dim |D|\ge 3d+\dim H^1(\O_X)_n$, then
there exists a morphism $\psi\colon X\to \mathbb{P}^1$ such that $\psi|_D=\varphi$.
\end{theorem}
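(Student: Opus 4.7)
The plan is to construct $\psi$ as the morphism defined by a base-point-free pencil on $X$ extending the pencil $V_\varphi := \varphi^*|\O_{\mathbb{P}^1}(1)|$ on $D$. Setting $M := \varphi^*\O_{\mathbb{P}^1}(1)$, the subspace $V_\varphi \subset H^0(D, M)$ is two-dimensional and base-point-free with $\deg M = d$. I would first produce a line bundle $L$ on $X$ with $L|_D \cong M$, and then lift $V_\varphi$ to a subspace $\tilde V \subset H^0(X, L)$ via restriction; the morphism defined by $\tilde V$ will then satisfy $\psi|_D = \varphi$ provided $\tilde V$ is base-point-free on all of $X$.

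The lift of sections is controlled by the exact sequence
\[
0 \to \O_X(L - D) \to \O_X(L) \to \O_D(M) \to 0,
\]
so surjectivity of $H^0(X, L) \to H^0(D, M)$ follows from $H^1(X, \O_X(L - D)) = 0$. By Serre duality on the normal surface $X$, this is equivalent to $H^1(X, \O_X(K_X + A)) = 0$ for $A := D - L$. I would deduce this from Theorem~\ref{Introbigzposvan} applied to $A$, for which I must verify that $A$ is big and $\Z$-positive and that $\dim|A| \geq \dim H^1(\O_X)_n$. Bigness follows from $A^2 \geq D^2 - 2d$ together with $D^2 > \mu(q_X, d)$; the $\Z$-positivity exploits the hypothesis that every prime component $D_i$ satisfies $D_i^2 > 0$, so that $D$ contains no negative-definite subdivisor against which the $\Z$-positivity condition could fail; and the dimension estimate is obtained by a chase on the same exact sequence above, with the $3d$ slack in $\dim|D| \geq 3d + \dim H^1(\O_X)_n$ absorbing $h^0(D, M) \leq d+1$ and the two dimensions used by the pencil $V_\varphi$.

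With $\tilde V$ constructed, the remaining point is base-point-freeness of $\tilde V$ on $X$. Along $D$ this is automatic since $\varphi$ is already a morphism, so any obstruction lies on $X \setminus D$. I would apply the Reider-type theorem (Theorem~\ref{ReithmII}) to show that a base point or base component of $\tilde V$ forces an obstructing curve $B \subset X$ whose numerical invariants against $D$ and $L$ contradict $D^2 > \mu(q_X, d)$; the function $\mu(q_X, d)$ is calibrated precisely to exclude every such $B$. The main obstacle I expect is this final step: one must run through each configuration the Reider-type theorem can produce (isolated base point off $D$, base curve meeting $D$ or not) and verify that each is ruled out uniformly by the single inequality $D^2 > \mu(q_X, d)$. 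A secondary subtlety is the initial choice of $L$ lifting $M$ from $\Pic(D)$ to $\Pic(X)$, which I would handle by exploiting the richness of $|D|$ supplied by the dimension hypothesis --- for instance, picking an effective divisor in $|D|$ (or a suitable modification) whose scheme-theoretic intersection with $D$ cuts out a divisor in $|M|$, and taking $L$ to be its class.
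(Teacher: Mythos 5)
Your proposal founders at its very first step: the existence of a line bundle $L$ on $X$ with $L|_D\cong M:=\varphi^{*}\O_{\PP^1}(1)$. The restriction map $\Pic(X)\to\Pic(D)$ is in general far from surjective, and there is no a priori reason that the degree-$d$ class $M$ lies in its image; indeed, producing such an $L$ is essentially equivalent to the theorem itself (a posteriori $L=\psi^{*}\O_{\PP^1}(1)$ works, but that presupposes $\psi$). Your suggested remedy --- cutting out a divisor of $|M|$ by a member of $|D|$ --- cannot work numerically: any $D_{s}\in|D|$ restricts to $D$ as a divisor of degree $D^{2}>\mu(q_X,d)\ge 4d$, whereas divisors in $|M|$ have degree $d$. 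Once this step is missing, the rest of the argument (the vanishing of $H^1(\O_X(L-D))$, the $\Z$-positivity of $D-L$, the dimension chase) has nothing to act on; note also that $\Z$-positivity of $D-L$ would have to be tested against \emph{all} negative definite effective divisors on $X$, not only subdivisors of $D$, so the hypothesis $D_i^2>0$ does not settle it even granting $L$.

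The paper's proof avoids this obstruction entirely and runs in the opposite direction. For each suitable $\lambda\in\PP^1$ one takes the cluster $\mathfrak{a}_{\lambda}:=\varphi^{-1}(\lambda)$ of degree $d$ and shows that $H^0(\O_X(K_X+D))\to H^0(\O_X(K_X+D)|_{\mathfrak{a}_{\lambda}})$ is \emph{not} surjective; a pair $(\pi,Z)$ with $\delta_{\mathfrak{a}_{\lambda}}(\pi,Z)=4d$ is constructed (this is where $\dim|D|\ge 3d+\dim H^1(\O_X)_n$ is spent), and Theorem~\ref{ReithmII} then yields a decomposition $D_{\lambda}=A_{\lambda}+B_{\lambda}$ with $B_{\lambda}$ negative semi-definite meeting $\mathfrak{a}_{\lambda}$ and $A_{\lambda}B_{\lambda}\le d$. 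The hypothesis $D_i^2>0$ forces $B_{\lambda}^2=0$ and $D\cap B_{\lambda}\subset\mathfrak{a}_{\lambda}$. Varying $\lambda$ produces infinitely many such curves in finitely many numerical classes, hence (Lemma~\ref{fibrlem}) a fibration $f\colon X\to Y$ whose fibers include the $B_{\lambda}$; one then checks that $(f|_D,\varphi)$ has image in $Y\times\PP^1$ projecting isomorphically to $Y$, and $\psi$ is obtained by composing $f$ with the resulting map $Y\to\PP^1$. In other words, the morphism is found as a fibration detected by the failure of an adjoint restriction map on the fibers of $\varphi$, not by extending $M$ and a pencil of its sections from $D$ to $X$.
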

For the definition of $\mu(q_X,d)$, see Definition~\ref{qdef}
(e.g., $\mu(q_X,d)\le (d+1)^2$ holds when $X$ is smooth).
Some variants of extension theorems are established in Section~6.

As an application of the extension theorems, we give a characterization of non-singular $k$-rational points of plane curves $D\subset \PP^2$ over any base field $k$ in terms of rational functions on $D$, which is a natural generalization of the classical result \cite{Nam} that 
the gonality of smooth complex plane curves of degree $m$ is equal to $m-1$:

\begin{theorem}[Theorem~\ref{planethm}]
Let $D\subset \PP^2$  be a plane curve of degree $m\ge 3$ over an arbitrary base field $k$.
Then there is a one-to-one correspondence between 

\smallskip

\noindent
$(\mathrm{i})$
the set of non-singular $k$-rational points of $D$ which are not strange, and 

\smallskip

\noindent
$(\mathrm{ii})$
the set of finite separable morphisms $D\to \PP^1$ of degree $m-1$ up to automorphisms of $\PP^1$.

\smallskip

\noindent
Moreover, any finite separable morphism $D\to \PP^1$ has degree greater than or equal to $m-1$.
\end{theorem}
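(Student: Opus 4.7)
The plan is to construct the bijection explicitly: the forward map by linear projection from the point, and the backward map by realising the point as the unique effective representative of a canonical degree-one line bundle on $D$, whose effectivity is secured via the extension theorem (Theorem~\ref{extnthm}). For the forward direction, given a non-singular $k$-rational point $p\in D$ that is not strange, I take $\varphi_p\colon D\to\PP^1$ to be the restriction of the linear projection $\PP^2\dashrightarrow\PP^1$ with centre $p$. Smoothness of $D$ at $p$ makes this a morphism defined over $k$. A generic line through $p$ meets $D$ transversally at $p$ and in $m-1$ residual points, so $\deg\varphi_p=m-1$, and the generic fibre is reduced precisely when $p$ is not strange, giving separability. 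The association is well-defined modulo $\mathrm{Aut}(\PP^1)$.

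For the reverse direction, given a finite separable morphism $\varphi\colon D\to\PP^1$ of degree $m-1$, I set
\[
 N \;:=\; \O_D(1)\otimes\varphi^{\ast}\O_{\PP^1}(-1),
\]
a line bundle of degree $1$ on $D$ defined over $k$. The key step is to show $N$ is effective over $\bar k$: any effective representative then has degree $1$ and is automatically supported at a single smooth closed point $p\in D(\bar k)$ (since $\O_D(p)$ must be invertible at its support), and uniqueness of the representative together with Galois-invariance of $N$ forces $p$ to descend to a $k$-rational point. The pencil $|\varphi^{\ast}\O(1)|$ then coincides with the pencil of lines through $p$ restricted to $D$, whence $\varphi=\varphi_p$ up to $\mathrm{Aut}(\PP^1)$, and separability of $\varphi$ forces $p$ to be non-strange.

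The effectivity of $N_{\bar k}$ is to be extracted from Theorem~\ref{extnthm} or a variant from Section~6. A direct application to $X=\PP^2$ is numerically borderline, since $D^{2}=m^{2}$ matches the general bound $\mu(q_{\PP^{2}},m-1)\le m^{2}$, and literal extension of $\varphi$ to $\PP^2$ is impossible because $\PP^2$ admits no non-constant morphism to $\PP^1$. My intended route is to pass to a birational model where the inequality $D^{2}>\mu(q_X,m-1)$ becomes strict, e.g.\ the blow-up of $\PP^2$ at a general point, and to use the resulting extension $\tilde X\to\PP^1$ to produce a pencil of lines on $\PP^2$ whose base point cuts out $N$ on $D$; alternatively, an equality-case analysis of the proof of Theorem~\ref{extnthm} should output $p$ directly as the obstruction to extending $\varphi$ to $\PP^2$. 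In characteristic zero the effectivity of $N$ can equally well be deduced from classical Namba-type arguments on pencils of plane curves, so Theorem~\ref{extnthm} is what is new in the positive-characteristic setting.

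The final gonality statement is handled by running the same analysis on $M:=\psi^{\ast}\O_{\PP^1}(1)$: if $\deg M=d<m-1$, then $\O_D(1)\otimes M^{-1}$ has degree $m-d\ge 2$ and would, by the same extension argument, admit a pencil of sections incompatible with the plane embedding, so $d\ge m-1$. The principal obstacle of the proof is the borderline numerical condition $D^{2}=\mu(q_{\PP^{2}},m-1)$, which prevents the naive use of Theorem~\ref{extnthm} on $\PP^2$; the argument must be routed either through a birational model restoring strict inequality or through a dedicated equality-case refinement that outputs the obstruction point, and must be supplemented by the Galois-descent step that realises $p$ as a $k$-rational point over the arbitrary base field $k$.
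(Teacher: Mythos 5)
Your overall architecture (projection from the point in one direction, the extension machinery of Section~6 in the other, and the strict-inequality extension theorem for the gonality bound) is the paper's, and you correctly isolate the real difficulty: on $\PP^2$ one has $q_{\PP^2}=1<d=m-1$, hence $\mu(q_{\PP^2},m-1)=m^2=D^2$, so Theorem~\ref{extnthm} is exactly borderline and a literal extension to $\PP^2$ is impossible. But of your two proposed escapes, the first one fails. Blowing up $\PP^2$ at a general point $q\notin D$ does not restore strict inequality: the proper transform still satisfies $\widetilde{D}^2=m^2$, while a line avoiding $q$ is still an effective divisor of self-intersection $1$, so $q_{\widetilde{X}}=1$ and $\mu(q_{\widetilde{X}},m-1)=m^2$ again. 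More generally, on any blow-up of $\PP^2$ at finitely many points one has $q=1$ and $\widetilde{D}^2\le m^2$, so the equality can never be broken this way. Your second escape --- an equality-case refinement that outputs the obstruction as a pencil --- is the correct one, and it is not something to be developed: it is precisely Theorem~\ref{bextnthm} (the extension theorem with base points), whose hypotheses $D^2=\mu(q_X,d)$, $q_X<d$, $\dim|D|\ge 3d$ are exactly met here and which directly produces a base-point-free linear pencil $\{F_\lambda\}$ with $F_\lambda^2=1$, i.e.\ the pencil of lines through a point, restricting to $\varphi$. That is what the paper's proof invokes.

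Two further points. First, your detour through the degree-one line bundle $N=\O_D(1)\otimes\varphi^{*}\O_{\PP^1}(-1)$ is both unnecessary and gappy: the theorem allows $D$ to be reducible or non-reduced, and for such $D$ a nonzero section of a total-degree-one line bundle need not be a regular section (it can vanish identically on a component whenever $\deg(N|_{D_i})<0$), so ``the unique effective representative is a single smooth point'' is not available in the generality required. Theorem~\ref{bextnthm} sidesteps this by handing you the pencil of lines directly. Second, for the descent from $\overline{k}$ to an arbitrary (possibly finite or imperfect) $k$, the paper argues with Galois-invariant lines: the fibre of $\varphi$ over a $k$-rational $\lambda\in\PP^1$ together with the centre $x$ spans a line that is shown to be $\mathrm{Gal}$-invariant (the case of a single fibre point being handled by the tangency of multiplicity $m-1$), and two such lines pin down $x$ as a $k$-point. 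Your Galois-invariance-of-$N$ argument is in the right spirit but again leans on uniqueness of an effective representative, which needs $D$ integral. Your gonality paragraph is correct in intent but should be stated as: for $d\le m-2$ one has $\mu(1,d)=(d+1)^2<m^2$, so Theorem~\ref{extnthm} would extend $\varphi$ to a non-constant morphism $\PP^2\to\PP^1$, which does not exist.
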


\subsection{Structure of the paper}
The present paper is organized as follows.
In Section~2, we fix some notations and terminology used in this paper.
In Section~3, we discuss chain-connected divisors, which play a central role in this paper.
The key result in this section is the chain-connectedness of big $\Z$-positive divisors (Proposition~\ref{bigzposcc}).
This is used to prove the main vanishing theorem (Theorem~\ref{bigzposvan}).
In the first half of Section~4, we study the kernel $\alpha(X,D)$ of the restriction map $H^1(\O_X)\to H^1(\O_D)$ for divisors $D$ on $X$ 
following the arguments in \cite{Mum2}, \cite{Fra} and \cite{BHPV}.
The rest of Section~4 is devoted to the vanishing theorem on surfaces in positive characteristic and its generalization.
The essential idea is the combination of Fujita's and Mumford's arguments ({\cite[(7.4)~Theorem]{Fuj}}, {\cite[p.99]{Mum2}}) and the chain-connectedness of big $\Z$-positive divisors.
In Section~5, we study adjoint linear systems on normal surfaces in positive characteristic as an application of the vanishing theorems obtained in Section~4.
The proof of the main result (Theorem~\ref{ReithmI}) is almost similar to that of Theorem~5.2 in \cite{Eno}.
The only difference is to use the chain-connected component decomposition (cf.\ Corollary~1.7 in \cite{Kon}) instead of the integral Zariski decomposition (Theorem~3.5 in \cite{Eno}).
In Section~6, we give extension theorems for normal surfaces in positive characteristic by using the Reider-type theorem (Theorem~\ref{ReithmII}) obtained in Section~5.
As an application of the extension theorems,
non-singular $k$-rational points of any plane curve $D\subset \PP^2$ over an arbitrary base field $k$ are characterized in terms of rational functions on $D$ in Section~7.
In Appendix~A, the Mumford's intersection form on a normal projective variety is formulated.

\begin{acknowledgement}
The author is grateful to Professor Kazuhiro Konno for valuable discussions and for warm encouragements.
He would like to thank Hiroto Akaike, Sho Ejiri, Tatsuro Kawakami, Yuya Matsumoto and Shou Yoshikawa for helpful discussions and answering his questions.
He was partially supported by JSPS Grant-in-Aid for Research Activity Start-up: 19K23407 and JSPS Grant-in-Aid for Young Scientists: 20K14297. 
\end{acknowledgement}

\section{Notations and terminology}
\label{sec:Notations and terminology}

\begin{itemize}
\item
In this paper, we mainly work on the category of algebraic schemes over a field $k$.

\item
A {\em divisor} means a Weil divisor (not necessarily $\Q$-Cartier).

\item 
For a $\Q$-divisor (resp.\ nef $\R$-divisor) $D$ on a normal proper variety $X$, we denote by $\kappa(D)$ (resp.\ $\nu(D)$) the {\em Iitaka dimension} (resp.\ {\em numerical dimension}) of $D$ (for the details, see Chapter~6 in \cite{KMM} or Section~2.4 in \cite{Fuj}).

\item
For a normal projective variety $X$ of dimension $\ge 2$ over an infinite field $k$ and a divisor $D$ on $X$, we freely use the following Bertini-type result (for the details, see Section~1.1 in \cite{HuLe}):
Any general hyperplane $Y$ on $X$ is also a normal projective variety and satisfies $\O_{Y}(D|_Y)\cong \O_X(D)|_Y$.

\item
We freely use Mumford's intersection form (\cite{Mum}) (for the higher dimensional case, see Appendix~A).

\item
For a $p$-linear transform $F\colon V\to V$ of a finite dimensional vector space $V$ over a field $k$ of characteristic $p>0$,
we write the {\em semi-simple part of $V$} (resp.\ {\em nilpotent part of $V$}) by $V_s:=\Image F^l$ (resp.\ $V_n:=\Ker F^l$), $l\gg 0$.
Then it is well known that $V=V_s\oplus V_n$,
and there exists a $k$-basis $\{e_i\}$ of $V_s$ such that $F(e_i)=e_i$ for each $i$ when $k$ is algebraically closed.

\item
A finite surjective morphism $f\colon X\to Y$ from a proper scheme $X$ to a variety $Y$ is called {\em separable} if the restriction $f|_{X_i}$ induces a separable field extension $(f|_{X_{i,\red}})^{*}\colon K(Y)\hookrightarrow K(X_{i,\red})$ between function fields for each irreducible component $X_i$ of $X$.
\end{itemize}

\section{Chain-connected divisors}
\label{sec:Chain-connected divisors}

\subsection{Connectedness of effective divisors}
First, we introduce some notions about connectivity for effective divisors on normal surfaces, which is well known for smooth surfaces.
In this section, $X$ stands a normal proper surface over a base field $k$ (or a normal compact analytic surface) unless otherwise stated.

\begin{definition}[Connectedness for effective divisors]
(1) Let $D>0$ be an effective divisor on $X$.
Then $D$ is called {\em chain-connected} (resp.\ {\em numerically connected}) if $-A$ is not nef over $B$ (resp.\ $AB>0$) for any effective decomposition $D=A+B$ with $A, B>0$.

\smallskip

\noindent
(2) Let $m\in \Q$ be a rational number.
Then $D$ is called {\em $m$-connected} (resp.\ {\em strictly $m$-connected}) if $AB\ge m$ (resp.\ $AB>m$) for any effective decomposition $D=A+B$ with $A,B>0$.
Clearly, numerical connectivity is equivalent to strict $0$-connectivity and implies chain-connectivity.

\smallskip

\noindent
(3) For an effective divisor $D$ and a subdivisor $0<D_0\le D$, a {\em connecting chain from $D_0$ to $D$} is defined to be a sequence of subdivisors $D_0<D_1<\dots <D_m=D$ such that $C_i:=D_i-D_{i-1}$ is prime and $D_{i-1}C_i>0$ for each $i=1,\ldots, m$.
We regard $D_0=D$ as a connecting chain from $D$ to $D$ ($m=0$ case).
\end{definition}

The following lemma is easy and well-known. 
\begin{lemma} \label{ampleconn}
Let $H$ be an ample Cartier divisor on $X$ and $n>1$.
Then any effective divisor $D$ which is numerically equivalent to $nH$ is $(n-\frac{1}{H^2})$-connected.
\end{lemma}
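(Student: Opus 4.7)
The plan is to combine the Hodge index theorem (for Mumford's intersection form, which is available here on the normal proper surface $X$) with integrality of $H$-degrees, and then optimise.

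Fix an effective decomposition $D=A+B$ with $A,B>0$. Since $D\equiv nH$, intersecting with $H$ gives $AH+BH=nH^2$. Because $H$ is ample and $A,B$ are nonzero effective, both $AH$ and $BH$ are positive; since $H$ is in addition Cartier, the intersection number of $H$ with a prime divisor $C$ equals $\deg(\O_X(H)|_{C})\in\Z_{>0}$, so $AH,BH$ are positive integers. In particular, setting $a:=AH/H^2$ and $b:=BH/H^2$, one has
\[
a+b=n,\qquad a,b\ge \tfrac{1}{H^2}.
\]

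Next I would apply the Hodge index theorem to $A$ and $B$ against the ample class $H$, which for Mumford's intersection pairing (see Appendix~A) gives $(CH)^2\ge C^2\cdot H^2$ for every $\R$-divisor $C$. Hence $A^2\le a^2H^2$ and $B^2\le b^2H^2$. Combining with $D^2=n^2H^2$,
\[
2AB = D^2-A^2-B^2 \ge (n^2-a^2-b^2)H^2 = 2ab\,H^2,
\]
so $AB\ge ab\,H^2$.

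Finally, minimise $ab$ on the segment $a+b=n$, $a,b\ge 1/H^2$. Since the product $ab$ with fixed sum is concave, its minimum on this segment is attained at an endpoint, namely $a=1/H^2$, $b=n-1/H^2$ (or the symmetric choice), yielding $ab\ge \frac{1}{H^2}\bigl(n-\frac{1}{H^2}\bigr)$. Multiplying by $H^2$ gives
\[
AB\ge ab\,H^2\ge n-\tfrac{1}{H^2},
\]
which is precisely the desired $(n-1/H^2)$-connectedness. The argument has no real obstacle; the only point requiring care is the Hodge index inequality in the normal (non-smooth) setting, which is handled by Mumford's intersection form, and the integrality $AH,BH\in\Z_{>0}$, which uses that $H$ is Cartier.
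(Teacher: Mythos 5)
Your proof is correct and is essentially the paper's own argument: both rest on the Hodge index inequality for Mumford's pairing together with $a+b=n$, $a,b\ge 1/H^2$ (from $H$ ample Cartier), and concavity of $ab$; your form $(AH)^2\ge A^2H^2$ is just the paper's decomposition $A\equiv aH+A'$ with $A'H=0$, $A'^2\le 0$ written multiplicatively. No gaps.
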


\begin{proof}
For the readers' convenience, we give a proof.
Let $D=A+B$ be a non-trivial effective decomposition.
By the Hodge index theorem, we can write $A\equiv aH+A'$ and $B\equiv bH+B'$, 
where $a:=AH/H^2$, $b:=BH/H^2$, $A'H=0$ and $B'H=0$ with $A'^{2}\le 0$ and $B'^{2}\le 0$.
Note that both $a$ and $b$ are greater than or equal to $1/H^2$ since $H$ is ample Cartier.
Since $A+B\equiv nH$, it follows that $a+b=n$ and $A'+B'\equiv 0$.
Thus we have 
$$
AB=abH^2+A'B'=a(n-a)H^2-A'^2\ge \frac{1}{H^2}\left(n-\frac{1}{H^2}\right)H^2=n-\frac{1}{H^2}.
$$
\end{proof}

Similarly, we can prove the following (cf.\ {\cite[Lemma~2]{Ram}} or {\cite[p.242]{KaMa}}).
\begin{lemma}[Ramanujam's connectedness lemma] \label{nefbignumconn}
Let $D$ be an effective, nef and  big divisor on $X$.
Then $D$ is numerically connected.
\end{lemma}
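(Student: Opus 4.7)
The plan is to assume a non-trivial effective decomposition $D = A + B$ with $A, B > 0$ and show $AB > 0$ via the Hodge index theorem (which the paper already allows for Mumford's intersection form on normal surfaces). Since $D$ is nef and big, I will use $D^2 > 0$ and $DA, DB \ge 0$, together with $DA + DB = D^2 > 0$.

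First I would handle the generic case $DA > 0$ and $DB > 0$. The idea is to form the combination
\[
E := (DB)\,A - (DA)\,B,
\]
which satisfies $DE = 0$. Hodge index (applied to $D^2 > 0$) gives $E^2 \le 0$. Expanding $E^2$ and substituting $A^2 = DA - AB$ and $B^2 = DB - AB$ (obtained from $DA = A(A+B)$ and $DB = B(A+B)$), the cross terms combine cleanly:
\[
E^2 = (DA)(DB)\,D^2 - AB\,(D^2)^2.
\]
Dividing by $D^2 > 0$ yields
\[
AB \;\ge\; \frac{(DA)(DB)}{D^2} \;>\; 0,
\]
as desired.

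Next I would treat the degenerate case in which one of $DA$, $DB$ vanishes, say $DA = 0$. Then $A^2 + AB = 0$, so $AB = -A^2$. Hodge index forces $A^2 \le 0$, with equality only if $A \equiv 0$ numerically; but the latter is excluded because $A > 0$ effective intersects any ample Cartier divisor strictly positively. Hence $A^2 < 0$, so $AB > 0$.

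The main subtlety is ensuring that the Hodge index inequality and the characterization of its equality case are available in the normal (possibly singular) setting, but the paper explicitly invokes Mumford's intersection form (and its higher-dimensional generalization in Appendix~A) for exactly this purpose, so the argument transfers without modification from the smooth case. No Kawamata--Viehweg vanishing or chain-connectedness machinery is needed at this stage; the whole proof is a short Hodge-theoretic computation on the Néron--Severi space.
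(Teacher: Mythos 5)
Your proof is correct and is essentially the argument the paper intends: the paper gives no separate proof of Lemma~\ref{nefbignumconn}, deferring to the Hodge-index computation of Lemma~\ref{ampleconn}, and your auxiliary class $E=(DB)A-(DA)B$ is just $D^2$ times the $D$-orthogonal component $A'$ in that computation, so the two arguments coincide. The one point where ``similarly'' requires care --- the degenerate case $DA=0$ or $DB=0$, which cannot be excluded when $D$ is merely nef rather than ample --- is handled correctly by your second case via the equality clause of the Hodge index theorem.
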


The following lemmas are due to \cite{Kon} (the proof also works on possibly singular normal surfaces).

\begin{lemma}[\cite{Kon} Proposition~1.2]\label{ccchar}
Let $D>0$ be an effective divisor on $X$.
Then the following are equivalent.

\smallskip

\noindent
$(\mathrm{i})$
$D$ is chain-connected.

\smallskip

\noindent
$(\mathrm{ii})$
For any subdivisor $0<D_0\le D$, there exists a connecting chain from $D_0$ to $D$.

\smallskip

\noindent
$(\mathrm{iii})$
There exist a prime component $D_0$ of $D$ and a connecting chain from $D_0$ to $D$.
\end{lemma}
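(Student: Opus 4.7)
The plan is to establish the cycle (ii)$\Rightarrow$(iii)$\Rightarrow$(i)$\Rightarrow$(ii). The implication (ii)$\Rightarrow$(iii) is immediate on taking $D_0$ to be any prime component of $D$. For (i)$\Rightarrow$(ii), given chain-connected $D$ and $0<D_0\le D$, I construct the chain greedily: as long as the current divisor $D_i<D$, chain-connectedness applied to the non-trivial decomposition $D=D_i+(D-D_i)$ yields a prime component $C_{i+1}\le D-D_i$ with $D_iC_{i+1}>0$, and I set $D_{i+1}:=D_i+C_{i+1}$. The process terminates after finitely many steps since the total multiplicity of $D-D_i$ strictly decreases.

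The substance is (iii)$\Rightarrow$(i). Assume that a prime component $D_0\le D$ and a chain $D_0<D_1<\cdots<D_m=D$ exist, and suppose for contradiction that $D=A+B$ with $A,B>0$ and $AC\le 0$ for every prime component $C\le B$. Define $A_i:=\min(D_i,A)$ coefficient-wise and $B_i:=D_i-A_i$; a direct check gives $A_i\le A$, $B_i\le B$, and at each step exactly one of $A_i-A_{i-1}$, $B_i-B_{i-1}$ equals $C_i$, according to whether $\mult_{C_i}D_{i-1}<\mult_{C_i}A$ (then the $A$-part grows) or $\mult_{C_i}D_{i-1}\ge\mult_{C_i}A$ (then the $B$-part grows). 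I would then split into two cases depending on whether $A_0=D_0$ or $A_0=0$.

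If $A_0=D_0$, let $j$ be the minimal index with $B_j>0$; then $A_{j-1}=D_{j-1}\le A$, the growth rule forces $\mult_{C_j}D_{j-1}=\mult_{C_j}A$ (so $C_j$ is not a component of $A-D_{j-1}$), and $C_j\le B$. Combining $D_{j-1}C_j>0$ with $AC_j\le 0$ gives $(A-D_{j-1})C_j<0$, contradicting $(A-D_{j-1})C_j\ge 0$, which holds because every prime component of the effective divisor $A-D_{j-1}$ is distinct from $C_j$ and distinct primes intersect non-negatively in Mumford's form. If $A_0=0$, let $j$ be the minimal index with $A_j>0$; then $D_{j-1}=B_{j-1}\le B$, $C_j\le A$, and $\mult_{C_j}D_{j-1}=0$. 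From $D_{j-1}C_j>0$ with $C_j$ not a component of $D_{j-1}$, some prime $C'\le D_{j-1}\le B$ satisfies $C'C_j>0$; moreover $\mult_{C'}A=0$ because $A_{j-1}=0$ and $C'\le D_{j-1}$. Hence every term of $AC'$ is an intersection of distinct primes and is thus $\ge 0$, while the $C_j$-term contributes $\mult_{C_j}(A)\cdot C_jC'>0$, giving $AC'>0$ with $C'\le B$, a contradiction.

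The hard part is the multiplicity bookkeeping: in each case one must pinpoint the transition step $j$ at which the chain first crosses from the $A$-side to the $B$-side (or vice versa), verify that the critical prime falls outside the effective divisor against which one intersects, and then invoke the non-negativity of intersections of distinct prime divisors. This non-negativity is the only geometric input needed, and it holds on any normal surface in Mumford's intersection form via a resolution, which is why the proof of \cite{Kon} extends verbatim from the smooth to the normal setting.
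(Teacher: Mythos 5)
Your proof is correct. Note that the paper itself gives no proof of this lemma: it simply cites \cite{Kon}, Proposition~1.2, with the remark that the argument there carries over to normal surfaces, so there is no in-paper argument to compare against. Your cycle (ii)$\Rightarrow$(iii)$\Rightarrow$(i)$\Rightarrow$(ii) is the standard one, and the bookkeeping in (iii)$\Rightarrow$(i) checks out: the splitting $A_i:=\min(D_i,A)$, $B_i:=D_i-A_i$ does satisfy $A_i\le A$, $B_i\le B$ with exactly one side growing by $C_i$ at each step, and in both cases the transition index $j$ produces the required contradiction ($(A-D_{j-1})C_j<0$ against $(A-D_{j-1})C_j\ge 0$ in the first case, $AC'>0$ with $C'\le B$ in the second). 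You also correctly isolate the single geometric input, namely that distinct prime divisors have non-negative Mumford intersection number on a normal surface, which is precisely the point of the paper's parenthetical remark that Konno's proof works in the singular setting.
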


\begin{lemma}[\cite{Kon} Proposition~1.5 (3)] \label{ccc}
Let $D>0$ be an effective divisor on $X$ with connected support.
Then there exists the greatest chain-connected subdivisor $0<D_{\mathrm{c}}\le D$ such that $\mathrm{Supp}(D_{\mathrm{c}})=\mathrm{Supp}(D)$ and $-D_{\mathrm{c}}$ is nef over $D-D_{\mathrm{c}}$.
\end{lemma}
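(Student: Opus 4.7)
The plan is to obtain $D_{\mathrm{c}}$ as the componentwise maximum of the family
$$
\mathcal{T} := \{E \colon 0 < E \le D,\ E \text{ is chain-connected},\ \Supp(E) = \Supp(D)\}.
$$
First I would verify $\mathcal{T} \neq \emptyset$. Starting from any prime component $C_0$ of $D$ (vacuously chain-connected) and using the connectedness of $\Supp(D)$ together with the fact that two distinct prime divisors of $X$ meeting set-theoretically have strictly positive Mumford intersection number, one attaches prime components of $D$ one at a time, each chosen so as to meet the partial sum already constructed positively. This procedure terminates at an element of $\mathcal{T}$, chain-connected via Lemma~\ref{ccchar}(iii) applied to the resulting connecting chain.

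The key step is to show that $\mathcal{T}$ is closed under componentwise maxima, so that, being bounded above by $D$, it has a greatest element $D_{\mathrm{c}}$. Given $E_1, E_2 \in \mathcal{T}$, set $E := \max(E_1, E_2)$; the support condition is clear, and chain-connectedness is established via Lemma~\ref{ccchar}(iii). Fix connecting chains $C_0 = F_0 < F_1 < \dots < F_m = E_1$ and $G_0 < G_1 < \dots < G_n = E_2$, and form the interpolating sequence $\tilde{G}_i := \max(E_1, G_i)$, so that $\tilde{G}_0 = E_1$ and $\tilde{G}_n = E$. A direct multiplicity computation shows that $\tilde{G}_i - \tilde{G}_{i-1}$ is either zero or equal to the prime $C_i' := G_i - G_{i-1}$; in the nontrivial case the equality $\mult_{C_i'}(\tilde{G}_{i-1} - G_{i-1}) = 0$ gives
$$
\tilde{G}_{i-1} \cdot C_i' = G_{i-1} \cdot C_i' + (\tilde{G}_{i-1} - G_{i-1}) \cdot C_i' > 0,
$$
the first term being positive by the chain property for $E_2$ and the second non-negative because $\tilde{G}_{i-1} - G_{i-1}$ is effective and does not contain $C_i'$ as a component. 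Concatenating $F_0 < \dots < F_m = E_1$ with the nontrivial steps of $\tilde{G}_0 < \dots < \tilde{G}_n = E$ yields a connecting chain from $C_0$ to $E$, whence $E \in \mathcal{T}$.

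Once the maximum $D_{\mathrm{c}}$ is constructed, the nefness of $-D_{\mathrm{c}}$ over $D - D_{\mathrm{c}}$ is immediate by contradiction: if some prime component $C$ of $D - D_{\mathrm{c}}$ satisfied $D_{\mathrm{c}} \cdot C > 0$, then $D_{\mathrm{c}} + C \le D$ would still belong to $\mathcal{T}$ (the single step $D_{\mathrm{c}} < D_{\mathrm{c}} + C$ extends a connecting chain of $D_{\mathrm{c}}$), contradicting maximality. The main obstacle is the closure of $\mathcal{T}$ under componentwise maxima: the multiplicity bookkeeping for $\tilde{G}_{i-1}$ relative to both $E_1$ and $G_{i-1}$, though elementary, is the only step that is not purely formal. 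A secondary delicate point arises in the non-emptiness step, where positivity of Mumford intersection numbers for pairs of primes meeting only at singular points of $X$ is needed to guarantee that the inductive construction can always be continued.
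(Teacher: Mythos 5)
Your proof is correct. The paper does not actually prove this lemma --- it is imported from Konno's Proposition~1.5(3) with only the remark that the argument carries over to normal surfaces --- and your route (non-emptiness of the set of full-support chain-connected subdivisors, closure under componentwise maximum via the interpolated connecting chains $\max(E_1,G_i)$, and the one-step chain extension argument for the nefness of $-D_{\mathrm{c}}$ over $D-D_{\mathrm{c}}$) is essentially the standard argument from that source; both delicate points you flag, namely the multiplicity bookkeeping showing $\max(E_1,G_i)-\max(E_1,G_{i-1})$ is $0$ or the prime $G_i-G_{i-1}$ with the required positive intersection, and the strict positivity of the Mumford intersection number of two distinct prime divisors meeting at a possibly singular point of $X$, check out.
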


\begin{definition}[Chain-connected component]
Let $D>0$ be an effective divisor on $X$ with connected support.
The greatest chain-connected subdivisor of $D$ is called the {\em chain-connected component} of $D$ and denoted by $D_{\mathrm{c}}$.
Similarly, for any effective divisor $D>0$ on $X$, we can take the chain-connected component for each connected component of $D$.
\end{definition}

\begin{proposition}\label{ccpullback}
Let $\pi\colon X'\to X$ be a proper birational morphism between normal complete surfaces.
Then for any chain-connected divisor $D$ on $X$, the round-up of the Mumford pull-back $\ulcorner \pi^{*}D \urcorner$ is chain-connected.
\end{proposition}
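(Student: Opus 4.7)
The plan is to verify the characterization of chain-connectedness given in Lemma~\ref{ccchar}(iii) by exhibiting a prime component of $F := \ulcorner \pi^* D \urcorner$ together with a connecting chain to $F$. I would first check that $\Supp(F)$ is connected. Since $D$ is chain-connected, $\Supp(D)$ is connected; and since $\pi$ is a proper birational morphism of normal varieties, $\pi_*\O_{X'} = \O_X$ implies that all fibres of $\pi$ are connected, whence $\pi^{-1}(\Supp(D))$ is connected. Writing the Mumford pull-back as $\pi^*D = \tilde D + \sum_j a_j E_j$, with $\tilde D$ the strict transform of $D$ and the $E_j$ the exceptional components of $\pi$, I would show $a_j \geq 0$ (from negative-definiteness of the intersection matrix of the $E_j$ together with $\tilde D \cdot E_k \geq 0$) and that $a_j > 0$ iff $E_j$ lies in a connected component of the exceptional locus of $\pi$ meeting $\tilde D$. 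This identifies $\Supp(F) = \pi^{-1}(\Supp(D))$.

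Fix a prime component $C_0$ of $D$, so that its strict transform $\tilde C_0$ is a prime of $F$. By Lemma~\ref{ccchar}(ii) applied to $D$, there is a connecting chain $C_0 = D_0 < D_1 < \cdots < D_m = D$ with $C_i := D_i - D_{i-1}$ prime and $D_{i-1} \cdot C_i > 0$. I would construct a chain in $F$ inductively so that the partial chain at stage $i$ terminates in a divisor $G_i \leq F$ containing $\tilde D_i$. In the inductive step, the projection formula yields $\pi^*D_{i-1} \cdot \tilde C_i = D_{i-1} \cdot C_i > 0$, so expanding $\pi^*D_{i-1} = \tilde D_{i-1} + \sum_j a_j^{(i-1)} E_j$ with $a_j^{(i-1)} \geq 0$ and using that $a_j^{(i-1)} \leq a_j$ coefficient-wise (since $D_{i-1} \leq D$), every $E_j$ contributing positively to the sum already appears in $F$. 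If $\tilde D_{i-1} \cdot \tilde C_i > 0$, append $\tilde C_i$ directly. Otherwise, some $E_j$ with $a_j > 0$ and $E_j \cdot \tilde C_i > 0$ lies in a connected component $Z$ of the exceptional locus meeting $\tilde D_{i-1}$; by connectedness of the dual graph of $Z$ and the non-negativity of $E_k \cdot E_l$ for distinct $k, l$, one can insert exceptional components from $Z$ into the chain with positive intersections until reaching $E_j$, and then append $\tilde C_i$.

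After $m$ such inductive steps, the chain reaches a divisor containing $\tilde D$, and the remaining task is to raise the exceptional multiplicities up to those of $F$. For this, in each connected component of the exceptional locus meeting $\tilde D$, the running divisor already contains a component of $\tilde D$ adjacent to the exceptional tree, so one can traverse the tree adding exceptional primes in an order that keeps the running intersection positive. The main obstacle is guaranteeing positive intersection at every insertion, especially during exceptional traversals, since self-intersections $E_j^2 < 0$ can spoil positivity if components are added in the wrong order. The key ingredients are the non-negativity of the Mumford coefficients $a_j$ and of pairwise intersections $E_k \cdot E_l$ for $k \neq l$, combined with the fact that at each intermediate stage the running divisor contains a prime adjacent (in the intersection graph) to the next prime to be inserted; a careful ordering dictated by the dual graph of each relevant connected component of the exceptional locus will then ensure positivity at each step.
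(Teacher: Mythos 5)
Your overall strategy---verifying Lemma~\ref{ccchar}(iii) by exhibiting a connecting chain for $F=\ulcorner\pi^{*}D\urcorner$---is legitimate in principle, and the preliminaries are fine (connectedness of $\Supp F$, non-negativity of the Mumford coefficients $a_j$ and the identification of their support, the projection formula $\pi^{*}D_{i-1}\cdot\widetilde{C}_i=D_{i-1}C_i$). The gap sits exactly where the content of the proposition lies. In your inductive step, when $\widetilde{D}_{i-1}\cdot\widetilde{C}_i\le 0$ you route to a single exceptional curve $E_j$ with $E_j\widetilde{C}_i>0$ and then append $\widetilde{C}_i$; but for the running divisor $G=\widetilde{D}_{i-1}+\mathcal{E}$ one only gets $G\widetilde{C}_i\ge\widetilde{D}_{i-1}\widetilde{C}_i+E_j\widetilde{C}_i$, and $\widetilde{D}_{i-1}\widetilde{C}_i$ can be arbitrarily negative (strict transforms separate under $\pi$; repeated components contribute $\widetilde{C}_i^{\,2}<0$). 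The identity $D_{i-1}C_i=\widetilde{D}_{i-1}\widetilde{C}_i+\sum_j a_j^{(i-1)}E_j\widetilde{C}_i$ shows that positivity is restored only by the \emph{full} exceptional part with its actual Mumford multiplicities $a_j^{(i-1)}$, which are in general non-integral and larger than $1$; building those multiplicities into an integral chain one prime at a time is precisely the ``raise the multiplicities'' problem you postpone to the last phase. There you only assert that ``a careful ordering \dots will ensure positivity at each step.'' That cannot follow from connectedness of the dual graph and $E_kE_l\ge 0$ alone: once a prime $P$ with $P^2<0$ already occurs in $G$, adding another copy costs $\mult_P(G)\,P^2<0$, and the existence of an admissible next prime at every stage is, by Lemma~\ref{ccchar}, \emph{equivalent} to the chain-connectivity of $F$, i.e.\ to the statement being proved. (Minor point: the exceptional locus of $\pi$ need not be a tree, so ``traverse the tree'' is also not quite right.)

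For contrast, the paper sidesteps any ordering of primes by arguing by contradiction with the chain-connected component: writing $D'=\ulcorner\pi^{*}D\urcorner$, $B'=D'-D'_{\mathrm{c}}=\pi^{*}B+B_{\pi}$ and $B_{\pi}-D_{\pi}=G^{+}-G^{-}$, it kills $G^{+}$ by negative definiteness of the exceptional intersection form, deduces $B=\pi_{*}B'>0$, and then uses the chain-connectivity of $D$ downstairs to produce a prime on which $-D'_{\mathrm{c}}$ fails to be nef over $B'$. If you want to rescue a direct construction, you would have to prove the analogue of that computation at every intermediate stage of your chain, which is no easier than the paper's argument.
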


\begin{proof}
The proof is similar to that of Proposition~3.19 in \cite{Eno}.
We write $D':=\ulcorner \pi^{*}D \urcorner=\pi^{*}D+D_{\pi}$, where $D_{\pi}$ is a $\pi$-exceptional $\Q$-divisor on $X'$ with $\llcorner D_{\pi} \lrcorner=0$.
Note that $D'$ has connected support since so does $D$.  
Assume that $D'$ is not chain-connected, that is, $D'_{\mathrm{c}}<D'$.
Let $B':=D'-D'_{\mathrm{c}}$.
It follows from Lemma~\ref{ccc} that $\mathrm{Supp}(D'_{\mathrm{c}})=\mathrm{Supp}(D')$ and $-D'_{\mathrm{c}}$ is nef over $B'$.
We write $B'=\pi^{*}B+B_{\pi}$, where $B:=\pi_{*}B'\ge 0$ and $B_{\pi}$ is a $\pi$-exceptional $\Q$-divisor.
Let $B_{\pi}-D_{\pi}=G^{+}-G^{-}$ be the decomposition of effective $\pi$-exceptional $\Q$-divisors $G^{+}$ and $G^{-}$ having no common components.
Then the support of $G^{+}$ is contained in that of $B'$.
First we assume that $G^{+}>0$.
Then there exists a prime component $C$ of $G^{+}$ such that $G^{+}C<0$ since $G^{+}$ is negative definite.
It follows that $C\le B'$ and
$$
-D'_{\mathrm{c}}C=(B'-D')C=(\pi^{*}(B-D)+G^{+}-G^{-})C=(G^{+}-G^{-})C<0,
$$
which contradicts the nefness of $-D'_{\mathrm{c}}$ over $B'$.
Hence we have $G^{+}=0$.
This and $\llcorner D_{\pi} \lrcorner=0$ imply $B>0$.
Since $D$ is chain-connected and $D-B=\pi_{*}D'_{\mathrm{c}}>0$, there exists a prime component $C\le B$ such that $(B-D)C<0$.
Let $\widehat{C}$ be the proper transform of $C$ on $X'$.
Then we have $\widehat{C}\le B'$ and 
$$
-D'_{\mathrm{c}}\widehat{C}=(B'-D')\widehat{C}=(\pi^{*}(B-D)-G^{-})\widehat{C}=(B-D)C-G^{-}\widehat{C}<0,
$$
which contradicts the nefness of $-D'_{\mathrm{c}}$ over $B'$.
Hence we conclude that $D'$ is chain-connected.
\end{proof}

\begin{example}
The numerically connected version of Proposition~\ref{ccpullback} does not hold:
Let $f\colon S\to \PP^1$ be an elliptic surface having a singular fiber $F=f^{-1}(0)$ of type $\mathrm{I}_1$.
Take three blow-ups $X':=S_3\xrightarrow{\rho_3} S_2\xrightarrow{\rho_2} S_1\xrightarrow{\rho_1} S_0:=S$ at single points $p_i\in S_{i-1}$,
where $p_1$ and $p_2$ respectively are a general point in $F$ and the node of $F$ and $p_3$ is a point in the intersection of the $\rho_2$-exceptional curve and the proper transform of $F$.
Let $C'_1$, $C'_{2}$ and $C'_{3}$ respectively denote the $\rho_3$-exceptional curve, the proper transform of the $\rho_2$-exceptional curve and the proper transform of $F$ on $X'$.
Then we have $(C'_{i})^2=-i$ and $C'_{i}C'_{j}=1$ for $i\neq j$.
Let $\pi\colon X'\to X$ be the contraction of $C'_{3}$ and put $C_{i}:=\pi_{*}C'_{i}$ for $i=1,2$.
Note that $X$ has one cyclic quotient singularity at $\pi(C'_{3})$ and $C'_i=\pi^{*}C_i-(1/3)C_3$ for $i=1,2$.
Thus we have $C_1^2=-2/3$, $C_2^2=-5/3$ and $C_1C_2=4/3$.
Then $D:=2C_1+2C_2$ is numerically connected but $\ulcorner \pi^{*}D \urcorner=2C'_1+2C'_2+2C'_3$ is not numerically connected since $(C'_1+C'_2+C'_3)^2=0$.
\end{example}

\begin{lemma}\label{ccconn}
Let $D>0$ be a chain-connected divisor on $X$.
Then $H^0(\O_D)$ is a field.
Moreover, if $D$ contains a prime divisor $C$ such that $H^0(\O_X)\cong H^0(\O_C)$, then we have $H^0(\O_D)\cong H^0(\O_X)$.
\end{lemma}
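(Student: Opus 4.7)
The plan is to induct on the length of a connecting chain from a prime component to $D$, using the standard short exact sequence of structure sheaves at each step.

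First, by Lemma~\ref{ccchar}(iii) I pick a prime component $D_0\le D$ together with a connecting chain $D_0<D_1<\dots<D_m=D$, so that $C_i:=D_i-D_{i-1}$ is prime and $D_{i-1}C_i>0$ for every $i$. The induction will run on $m$, with the claim that $H^0(\O_{D_i})$ is a field. The base case $m=0$ is immediate, since $D_0$ is integral and hence $H^0(\O_{D_0})$ is a finite field extension of $H^0(\O_X)$.

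For the inductive step I pass to the short exact sequence
\[
0\to \mathcal{F}_i\to \O_{D_i}\to \O_{D_{i-1}}\to 0,
\]
where $\mathcal{F}_i:=\mathcal{I}_{D_{i-1}}/\mathcal{I}_{D_i}$ is a coherent sheaf of generic rank one on the integral curve $C_i$. Via Mumford's intersection form on the normal surface $X$ (Appendix~A), the generic degree of $\mathcal{F}_i$ on $C_i$ will equal $-D_{i-1}C_i$, which is negative by the connecting chain condition. A coherent sheaf of negative generic degree on an integral curve has no nonzero global sections, so $H^0(\mathcal{F}_i)=0$ and the restriction $H^0(\O_{D_i})\hookrightarrow H^0(\O_{D_{i-1}})$ is injective. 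A finite-dimensional $k$-algebra that injects into a field is itself an integral domain and, being finite-dimensional over $k$, itself a field; this closes the induction.

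For the ``moreover'' clause, I rerun the same argument starting the chain at $D_0:=C$, obtaining an injection $H^0(\O_D)\hookrightarrow H^0(\O_C)\cong H^0(\O_X)$. Composing with the canonical map $H^0(\O_X)\to H^0(\O_D)$ yields an injective $k$-linear endomorphism of the finite-dimensional space $H^0(\O_X)$, which must therefore be an isomorphism; hence both maps are isomorphisms.

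The step I expect to be the main obstacle is verifying that the generic degree of $\mathcal{F}_i$ on the integral curve $C_i$ really is $-D_{i-1}C_i$ in the situation where neither $D_{i-1}$ nor $C_i$ is assumed $\Q$-Cartier, so that $\O_X(-D_{i-1})$ is only a reflexive sheaf and the sequence above is not a straightforward twist by a line bundle. Making this rigorous requires the Euler-characteristic formulation of Mumford's intersection form developed in Appendix~A, which is designed precisely so that $\chi(\mathcal{F}_i)=\chi(\O_{C_i})-D_{i-1}C_i$ holds in this Weil-divisor setting.
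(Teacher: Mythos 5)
Your argument coincides with the paper's in the case where $X$ is regular: there each $D_{i-1}$ is Cartier, $\mathcal{F}_i\cong\O_{C_i}(-D_{i-1})$ is a line bundle on the integral curve $C_i$ of honest integer degree $-D_{i-1}C_i<0$, and the chain of injections follows. The gap is exactly the step you flag, and Appendix~A does not repair it. On a singular normal surface the identity $\chi(\mathcal{F}_i)=\chi(\O_{C_i})-D_{i-1}C_i$ cannot hold in general: the left-hand side is an integer, while Mumford's intersection number $D_{i-1}C_i$ is only rational, and Appendix~A defines the intersection form by pulling back to a resolution (or alteration), not by any Euler-characteristic formula on $X$ itself. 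Concretely, on the quadric cone with two distinct rulings $L_1,L_2$ one has $L_1L_2=\tfrac{1}{2}$, whereas $\mathcal{F}=\O_X(-L_1)/\O_X(-L_1-L_2)\cong\O_{\mathbb{P}^1}(-1)$ (since $L_1+L_2$ is a Cartier hyperplane section with $\chi(\O_{L_1+L_2})=1$ and $\chi(\O_{L_1})=1$), so the generic degree is $-1$, not $-\tfrac{1}{2}$. Thus the crucial vanishing $H^0(\mathcal{F}_i)=0$ is, as written, resting on a false formula.

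The vanishing itself is still true, but the only way to see it is to go to a resolution $\pi\colon X'\to X$: using $\O_X(-D_j)=\pi_*\O_{X'}(-\ulcorner\pi^{*}D_j\urcorner)$ and left-exactness of $\pi_{*}$, one embeds $\mathcal{F}_i$ into $\pi_{*}\bigl(\O_{G_i}(-\ulcorner\pi^{*}D_{i-1}\urcorner)\bigr)$ with $G_i:=\ulcorner\pi^{*}D_i\urcorner-\ulcorner\pi^{*}D_{i-1}\urcorner\ge\widehat{C_i}$; a nonzero global section is nonzero at the generic point of $C_i$ (because $\mathcal{F}_i$ is torsion-free over $\O_{C_i}$, by reflexivity of $\O_X(-D_i)$) and would therefore restrict to a nonzero section of a line bundle on the integral curve $\widehat{C_i}$ of degree $-\ulcorner\pi^{*}D_{i-1}\urcorner\cdot\widehat{C_i}\le -D_{i-1}C_i<0$, a contradiction. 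Once you are forced upstairs anyway, the paper's route is cleaner: prove the lemma for regular $X$ exactly as you do, then reduce the general case to the resolution via Proposition~\ref{ccpullback} (chain-connectedness of $\ulcorner\pi^{*}D\urcorner$) together with the natural injections $H^0(\O_X)\hookrightarrow H^0(\O_D)\hookrightarrow H^0(\O_{\ulcorner\pi^{*}D\urcorner})$. You should restructure your proof along one of these two lines; the rest of your argument (the inductive bookkeeping and the ``moreover'' clause) is fine.
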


\begin{proof}
First, we show the claim when $X$ is regular.
By Lemma~\ref{ccchar}, we can take a connecting chain $C=:D_0<D_1<\cdots<D_m:=D$ for any prime component $C$ of $D$.
Putting $C_i:=D_i-D_{i-1}$, we have $D_{i-1}C_i>0$ for each $i$.
By the exact sequence
$$
0\to \O_{C_i}(-D_{i-1})\to \O_{D_i}\to \O_{D_{i-1}}\to 0
$$
and $H^0(\O_{C_i}(-D_{i-1}))=0$ for each $i$, we have a chain of injections
$$
H^0(\O_X)\hookrightarrow H^0(\O_D) \hookrightarrow H^0(\O_{D_{m-1}}) \hookrightarrow \cdots \hookrightarrow H^0(\O_{D_0}).
$$
Thus $H^0(\O_D)$ is a subfield of $H^0(\O_C)$.
If moreover $H^0(\O_X)\cong H^0(\O_C)$, then all the injections above are isomorphisms.

For a general $X$, we take a resolution $\pi\colon X'\to X$ and put $D':=\ulcorner \pi^{*}D \urcorner$.
By Proposition~\ref{ccpullback}, $D'$ is also chain-connected.
We note that there are natural injections $H^0(\O_X)\hookrightarrow H^0(\O_D)\hookrightarrow H^0(\O_{D'})$.
Thus $H^0(\O_D)$ is a subfield of $H^0(\O_{D'})$.
If $D$ contains a prime divisor $C$ with $H^0(\O_X)\cong H^0(\O_C)$, 
then the proper transform $\widehat{C}$ of $C$ satisfies $H^0(\O_{X'})\cong H^{0}(\O_{\widehat{C}})$ since $H^0(\O_{X})\cong H^0(\O_{X'})$ and $H^0(\O_{C})\cong H^0(\O_{\widehat{C}})$.
Thus we obtain $H^0(\O_{D'})\cong H^0(\O_{X'})$ by the assertion for regular surfaces.
Hence we conclude that $H^0(\O_X)\cong H^0(\O_D)$.
\end{proof}

\subsection{Chain-connected v.s. $\Z$-positive}

Let us recall the notion of $\Z$-positive divisors on normal surfaces introduced in \cite{Eno}.

\begin{definition}[$\Z$-positive divisors]
A divisor $D$ on a normal complete surface $X$ is called {\em $\Z$-positive}
if $B-D$ is not nef over $B$ for any effective negative definite divisor $B>0$ on $X$.
\end{definition}

Typical examples of $\Z$-positive divisors are chain-connected divisors which are not negative definite and the round-ups of nef $\R$-divisors (cf.\ Proposition~3.16 in \cite{Eno}).
In this subsection, we prove the following result which is an analog of Lemma~\ref{nefbignumconn}:

\begin{proposition} \label{bigzposcc}
Any effective big $\Z$-positive divisor is chain-connected.
\end{proposition}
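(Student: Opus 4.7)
I will prove the contrapositive: assuming $D$ is big but not chain-connected, I will produce an effective negative definite divisor $B > 0$ with $B - D$ nef over $B$, directly contradicting $\Z$-positivity. After a reduction to $\Supp D$ connected (the disconnected case is handled analogously on a connected component), Lemma~\ref{ccc} furnishes the decomposition $D = D_c + B$, where $D_c > 0$ is the greatest chain-connected subdivisor, $\Supp D_c = \Supp D$, $B := D - D_c > 0$, and $D_c \cdot C \le 0$ for every prime $C \le B$. The whole argument then reduces to showing this $B$ is negative definite: granting that, $(B - D) \cdot C = -D_c \cdot C \ge 0$ for every $C \le B$ exhibits the required violation of $\Z$-positivity.

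\textbf{Producing a nef auxiliary divisor $B^{\ast}$.} Suppose for contradiction that $B$ is not negative definite. Restricting if necessary to a connected component of $\Supp B$, let $C_1, \dots, C_s$ be the prime components of $B$ and $M := (C_i \cdot C_j)$ their symmetric intersection matrix with non-negative off-diagonal entries. A standard Perron--Frobenius argument (apply Perron--Frobenius to $M + c I$ for $c \gg 0$, then translate back) produces $v = (v_i) \in \R^{s}_{> 0}$ with $M v \ge 0$. Setting $B^{\ast} := \sum_i v_i C_i$, the cases $C = C_i$ and $C \notin \Supp B$ both yield $B^{\ast} \cdot C \ge 0$, so $B^{\ast}$ is a nonzero nef $\R$-divisor on $X$. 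Effectivity of $D_c$ and nefness of $B^{\ast}$ give $D_c \cdot B^{\ast} \ge 0$, while the hypothesis $D_c \cdot C_i \le 0$ with $v_i > 0$ gives $D_c \cdot B^{\ast} \le 0$; hence $D_c \cdot B^{\ast} = 0$, and the strict positivity of each $v_i$ forces $D_c \cdot C_i = 0$ for every $C_i \le B$.

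\textbf{The two subcases.} Write $D_c = E + F$, where $F$ is the restriction of $D_c$ to the components of $B$ and $E$ collects the remaining components of $D_c$. \emph{If $E > 0$:} chain-connectedness of $D_c$ yields some prime $C \le F$ with $E \cdot C > 0$, whereas $F \cdot C_i = D_c \cdot C_i - E \cdot C_i = -E \cdot C_i \le 0$ for every $i$. The pairing $v^T M w$, with $w$ the coefficient vector of $F$, is then simultaneously $\ge 0$ (using $M v \ge 0$ and $w > 0$) and $\le 0$ (using $v > 0$ and $M w \le 0$), hence zero; since $v > 0$, each $F \cdot C_i = 0$ and therefore each $E \cdot C_i = 0$, contradicting $E \cdot C > 0$. \emph{If $E = 0$:} then $\Supp B = \Supp D$ and $D_c \cdot C = 0$ for every prime $C \le D$, so $D_c \cdot D = 0$. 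Kodaira's lemma applied to the big $D$ furnishes $D \equiv A_0 + E_0$ with $A_0$ an ample $\Q$-divisor and $E_0$ effective; then $A_0 \cdot D_c > 0$, while each prime $F_i \le E_0$ satisfies $D_c \cdot F_i \ge 0$ (either $F_i \notin \Supp D$, so the intersection is a non-negative combination of intersections of distinct primes, or $F_i \le B$, in which case $D_c \cdot F_i = 0$). Thus $D \cdot D_c > 0$, contradicting $D \cdot D_c = 0$.

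\textbf{Main obstacle.} The critical step is the Perron--Frobenius construction of the nef $\R$-divisor $B^{\ast}$: it converts the assumption ``$B$ not negative definite'' into the rigid identity $D_c \cdot B^{\ast} = 0$, which then drives both subcases. Bigness enters essentially in the subcase $E = 0$ through Kodaira's lemma, while the maximality of the chain-connected subdivisor $D_c$ supplies the contradiction in the subcase $E > 0$.
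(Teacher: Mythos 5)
Your core mechanism is sound and runs parallel to the paper's: the paper also takes $B:=D-D_{\mathrm{c}}$ from Lemma~\ref{ccc}, but instead of Perron--Frobenius it takes the Zariski decomposition $B=P+N$; if $P=0$ then $B$ is negative definite and $\Z$-positivity is violated exactly as in your first step, and if $P\neq 0$ then $P$ is a nonzero nef class with $PD_{\mathrm{c}}\le 0$ (hence $=0$), which forces $P$ to be numerically trivial over all of $D$ and, by connectedness, $\Supp(P)=\Supp(D)$, so $D$ is negative semi-definite --- contradicting bigness. Your $B^{\ast}$ is a substitute for $P$. Note that your subcase $E>0$ can be bypassed the same way: since $\Supp(D_{\mathrm{c}})=\Supp(D)$ and all coefficients of $D_{\mathrm{c}}$ are positive, $D_{\mathrm{c}}B^{\ast}=0$ together with nefness of $B^{\ast}$ already gives $B^{\ast}C=0$ for \emph{every} prime $C\le D$, and connectedness of $\Supp(D)$ then forces $\Supp(B^{\ast})=\Supp(D)$, landing you directly in your subcase $E=0$.

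Two points need repair. First, the reduction to $\Supp(D)$ connected is not ``analogous on a connected component'': neither $\Z$-positivity nor bigness restricts to a connected component, and Lemma~\ref{ccc} (a single $D_{\mathrm{c}}$ with $\Supp(D_{\mathrm{c}})=\Supp(D)$) is only available for connected support, so your whole setup presupposes what is being reduced to. Connectedness does hold, but it requires an argument; the paper gets it from the structure of $\Z$-positive divisors (Proposition~3.16 in \cite{Eno}: $D$ is reached by a connecting chain from $\ulcorner P(D)\urcorner$, whose support is connected by Lemma~\ref{nefbignumconn}). Second, Kodaira's lemma is not available here: the proposition is stated for normal \emph{complete} surfaces (and the section allows compact analytic surfaces), which need not be projective and need not carry any ample divisor. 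Replace that step by the observation the paper uses: once $D_{\mathrm{c}}C=0$ for every prime $C\le D$ with $D_{\mathrm{c}}$ of full support, Zariski's lemma makes the intersection form on $\Supp(D)$ negative semi-definite, so the positive part $P(D)$ of the Zariski decomposition of $D$ satisfies $P(D)^2\le 0$, contradicting bigness. With these two fixes your argument is a valid, slightly more linear-algebraic variant of the paper's proof.
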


\begin{proof}
Let $D>0$ be an effective big $\Z$-positive divisor on $X$.
First we note that the support of $D$ is connected since $D$ is obtained by a connecting chain from the round-up $\ulcorner P(D)\urcorner$ of the positive part $P(D)$ in the Zariski decomposition of $D$ (cf.\ Proposition~3.16 in \cite{Eno}) and the support of $P(D)$ is connected by Lemma~\ref{nefbignumconn}.
Let $D_{\mathrm{c}}$ be the chain-connected component of $D$ 
and suppose that $D-D_{\mathrm{c}}\neq 0$.
Let us take its Zariski decomposition $D-D_{\mathrm{c}}=P+N$.
If $P=0$, then it follows that $D-D_{\mathrm{c}}$ is negative definite, which contradicts the $\Z$-positivity of $D$.
Thus we have $P\neq 0$.
Since $D_{\mathrm{c}}P\le 0$, $\Supp(D_{\mathrm{c}})=\Supp(D)$ and $P$ is nef, 
it follows that $P$ is numerically trivial over $D$.
In particular, we have $P^2=0$.
Since $\Supp(D)$ is connected and $P\neq 0$, the support of $P$ coincides with that of $D$.
Thus $D$ is negative semi-definite, which contradicts the bigness of $D$ (since $P(D)^2>0$).
Hence we conclude that $D=D_{\mathrm{c}}$.
\end{proof}

\begin{remark}
(1) Conversely, any big chain-connected divisor is $\Z$-positive since for an effective divisor $D>0$ with connected support, $D$ is big if and only if $D$ is not negative semi-definite (cf.\ Lemma~A.12 in \cite{Eno}).
Thus for any effective big divisor $D$ with connected support, its $\Z$-positive part $P_{\Z}$ in the integral Zariski decomposition (Theorem~3.5 in \cite{Eno}) coincides with the chain-connected component $D_{\mathrm{c}}$.

\smallskip

\noindent
(2) In general, effective $\Z$-positive divisors are not chain-connected even if $D$ has connected support. 
For example, a multiple $nF$ of a fiber $F=f^{-1}(t)$ of a fibration $f\colon X\to B$ over a curve $B$ is $\Z$-positive but not chain-connected for $n\ge 2$.
\end{remark}

Combining Proposition~\ref{bigzposcc} with Lemma~\ref{ccconn}, we obtain the following.

\begin{corollary} \label{bigzposconn}
Let $D$ be an effective big $\Z$-positive divisor on a normal complete surface.
Then $H^0(\O_D)$ is a field.
\end{corollary}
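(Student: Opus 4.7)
The plan is essentially to chain the two preceding results together, since the statement is the direct marriage of Proposition~\ref{bigzposcc} and Lemma~\ref{ccconn}.

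First I would invoke Proposition~\ref{bigzposcc}: any effective big $\Z$-positive divisor $D$ on a normal complete surface $X$ is chain-connected. This gives me access to the structural property I need, namely the existence, for any prime component $C\le D$, of a connecting chain $C=D_0<D_1<\cdots <D_m=D$ with $D_{i-1}C_i>0$ for $C_i:=D_i-D_{i-1}$, via Lemma~\ref{ccchar}.

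Second, I would apply Lemma~\ref{ccconn} directly to conclude that $H^0(\O_D)$ is a field. Recall the mechanism of that lemma: using the short exact sequences $0\to \O_{C_i}(-D_{i-1})\to \O_{D_i}\to \O_{D_{i-1}}\to 0$ along a connecting chain (pulled back to a resolution if $X$ is singular, via Proposition~\ref{ccpullback}) produces a chain of injections of $H^0$'s ending in $H^0(\O_C)$, exhibiting $H^0(\O_D)$ as a subfield.

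No obstacle is expected; the work has already been done in Proposition~\ref{bigzposcc} (which handles the bigness via a Zariski-decomposition argument) and in Lemma~\ref{ccconn} (which handles the passage from chain-connectedness to the field property). The corollary is just the composition of these two implications, so the proof is a one-line combination.
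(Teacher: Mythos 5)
Your proposal is exactly the paper's argument: the corollary is stated as the immediate combination of Proposition~\ref{bigzposcc} (effective big $\Z$-positive divisors are chain-connected) with Lemma~\ref{ccconn} (for a chain-connected divisor, $H^0(\O_D)$ is a field). Nothing further is needed.
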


\subsection{Base change property}
In this subsection, let $X$ be a normal proper geometrically connected surface over a field $k$ and $k\subset k'$ a separable field extension.
Then $X_{k'}:=X\times _{k}k'$ is also a normal surface with $H^0(\O_{X_{k'}})\cong k'$.

\begin{lemma} \label{zposbasechange}
Let $D$ be a pseudo-effective $\Z$-positive divisor on $X$.
Then $D_{k'}$ is also a pseudo-effective $\Z$-positive divisor on $X_{k'}$.
\end{lemma}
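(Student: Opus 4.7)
The plan is to establish the two conditions separately, using Galois descent as the main tool.

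For pseudo-effectivity, I use the dual characterization: on a normal proper surface, $D$ is pseudo-effective iff $D\cdot N\ge 0$ for every nef divisor $N$. When $k'/k$ is finite Galois with group $G$, any nef divisor $N'$ on $X_{k'}$ yields the Galois-averaged divisor $\widetilde{N}:=\sum_{\sigma\in G}\sigma^{*}N'$, which is $G$-invariant and nef, hence descends to a nef divisor on $X$. Using the $G$-invariance of $D_{k'}=\pi^{*}D$ and the projection formula, one obtains $|G|\cdot(D_{k'}\cdot N')=D_{k'}\cdot\widetilde{N}\ge 0$, and passing to the colimit over finite separable subextensions of $k'/k$ handles the general case. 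Hence $D_{k'}$ is pseudo-effective on $X_{k'}$.

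For $\Z$-positivity, I argue by contradiction and descent. Suppose $B'>0$ is an effective negative-definite divisor on $X_{k'}$ with $B'-D_{k'}$ nef over $B'$. Since $B'$ has only finitely many prime components, each defined over a finite separable subextension, one may first replace $k'$ by such a subextension and then by its Galois closure to assume $k'/k$ is finite Galois, so $\pi\colon X_{k'}\to X$ becomes an étale Galois cover with group $G$. The plan is to consider $B:=\pi_{*}B'$, an effective nonzero Weil divisor on $X$, and to show that $B$ is negative-definite with $B-D$ nef over $B$, which contradicts the $\Z$-positivity of $D$. The nefness is checked componentwise: for each prime $C\le B$ and $\pi^{-1}(C)=\bigsqcup_{j}C'_{j}$, the projection formula gives
$$
(B-D)\cdot C=\sum_{j}\bigl([k':k]\,(B'\cdot C'_{j})-D_{k'}\cdot C'_{j}\bigr),
$$
and combining the hypothesis $(B'-D_{k'})\cdot C'_{j}\ge 0$ for $C'_{j}\subset\Supp B'$ with the Galois equivariance $\sigma^{*}D_{k'}=D_{k'}$ and the $G$-transitivity on $\{C'_{j}\}$ forces the sum to be nonnegative. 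Negative-definiteness of $B$ follows from that of $\pi^{*}B=\sum_{\sigma}\sigma^{*}B'$ on the $G$-invariant subspace of divisors on $X_{k'}$, which under pullback is identified with $[k':k]$ times the intersection form of $B$ on $X$.

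The main obstacle will be handling the off-support part of the nefness computation: for components $C'_{j}$ of $\pi^{-1}(C)$ not contained in $\Supp B'$, the bound $B'\cdot C'_{j}\ge 0$ is too weak when $D_{k'}\cdot C'_{j}<0$, which can occur because $D$ is only pseudo-effective (not nef) and the components of the negative part of its Zariski decomposition can pull back to contain such $C'_{j}$. Getting around this requires a finer use of the Galois structure, most naturally by replacing $B'$ with the Galois-invariant effective divisor $\widetilde{B}:=\sum_{\sigma\in G}\sigma^{*}B'$: its support is $G$-stable so the decomposition $\pi^{-1}(C)=\bigsqcup_{j}C'_{j}$ is automatically contained in $\Supp\widetilde{B}$, removing the off-support issue; negative-definiteness must then be re-verified for $\widetilde{B}$ via a block decomposition of the intersection form into Galois isotypic components.
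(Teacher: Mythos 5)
There is a genuine gap in the $\Z$-positivity half, and you have put your finger on it without resolving it. Your plan is to descend a violating divisor: given $B'>0$ effective, negative definite on $X_{k'}$ with $B'-D_{k'}$ nef over $B'$, produce such a $B$ on $X$. But neither $\pi_{*}B'$ nor your symmetrization $\widetilde{B}=\sum_{\sigma}\sigma^{*}B'$ need be negative definite: since $\pi^{*}\pi_{*}B'=\widetilde{B}$, both reduce to asking whether the union of the Galois conjugates $\sigma(\Supp B')$ still spans a negative definite lattice, and it need not --- already two conjugate $(-2)$-curves meeting in three points give $(C'+\sigma^{*}C')^{2}=2>0$. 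The ``block decomposition into Galois isotypic components'' you defer to cannot repair this, because the obstruction sits exactly in the cross terms $\sigma^{*}B'\cdot\tau^{*}B'$, which can be large and positive. Moreover, even granting negative definiteness, nefness of $\widetilde{B}-D_{k'}$ over $\widetilde{B}$ does not follow from that of $B'-D_{k'}$ over $B'$: for a prime $C'$ lying on $m\ge 2$ of the conjugates $\sigma^{*}B'$, the estimate your hypotheses yield is $(\widetilde{B}-D_{k'})C'\ge(m-1)\,D_{k'}C'$, which is useless when $D_{k'}C'<0$ --- and such components must be allowed, since $D$ is only pseudo-effective. (Your displayed formula for $(B-D)\cdot C$ is also off: it drops the factor $1/[k':k]$ and replaces $\sum_{\sigma}\sigma^{*}B'$ by $[k':k]B'$, which is only valid for $G$-invariant $B'$; but that is secondary.) In short, a contradiction-and-descent argument run directly against the definition of $\Z$-positivity does not close.

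The missing idea is the one the paper's proof is built on: for a \emph{pseudo-effective} divisor, $\Z$-positivity has a positive, transportable witness, namely a connecting chain $\ulcorner P\urcorner=D_{0}<D_{1}<\cdots<D_{N}=D$ from the round-up of the positive part $P$ of the Zariski decomposition, with each $C_{i}=D_{i}-D_{i-1}$ prime and $D_{i-1}C_{i}>0$ (Proposition~3.16 in \cite{Eno}). The paper first checks that $D_{k'}=P_{k'}+N_{k'}$ is still the Zariski decomposition (Hodge index in the big case, continuity of the decomposition otherwise), then uses separability to see that each $C_{i}$ pulls back to a reduced divisor $\sum_{j}C'_{i,j}$ of conjugate primes, each satisfying $D_{i-1,k'}C'_{i,j}>0$; inserting these one at a time produces a connecting chain from $\ulcorner P_{k'}\urcorner$ to $D_{k'}$, and Proposition~3.16 applies again. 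Transporting an existence statement (the chain) along the base change is easy; transporting a non-existence statement (no bad $B$) is what your argument cannot do. Your pseudo-effectivity argument via nef duality and Galois averaging is correct, though more elaborate than necessary (effective divisors pull back to effective divisors under the flat map $X_{k'}\to X$, and pseudo-effectivity passes to limits of numerical classes).
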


\begin{proof}
Let $D=P+N$ be the Zariski decomposition of $D$.
Then there exists a connecting chain $D_0:=\ulcorner P\urcorner <D_1<\cdots <D_N:=D$ such that $C_i:=D_i-D_{i-1}$ is prime and satisfies $D_{i-1}C_i>0$ for each $i$ from Proposition~3.16 in \cite{Eno}.
On the other hand, one can see that $D_{k'}=P_{k'}+N_{k'}$ is also the Zariski decomposition of $D_{k'}$ (Indeed, $P_{k'}$ is also nef and $P_{k'}N_{k'}=PN=0$. Thus the Hodge index theorem implies that $N_{k'}$ is negative definite if $D$ is big.
When $D$ is not big, then by the pseudo-effectivity of $D$, this can be written by the limit of big divisors in the numerical class group of $X$.
Thus the claim holds by the continuity of the Zariski decomposition).
Since the extension $k'/k$ is separable, it follows that $\llcorner N_{k'}\lrcorner=\llcorner N\lrcorner_{k'}=\sum_{i=1}^{N}C_{i,k'}$ and $C_{i,k'}$ is reduced.
Let $C_{i,k'}=\sum_{j=1}^{l(i)}C'_{i,j}$ be the irreducible decomposition.
Since $D_{i-1,k'}C'_{i,j}>0$ for any $i$ and $j$, we can construct a connecting chain from $\ulcorner P_{k'}\urcorner$ to $D_{k'}$, whence $D_{k'}$ is $\Z$-positive from Proposition~3.16 in \cite{Eno}.
\end{proof}

Combining Lemma~\ref{zposbasechange} with Proposition~\ref{bigzposcc}, we obtain the following:
\begin{corollary}
Let $D>0$ be a chain-connected divisor on $X$ which is not negative semi-definite.
Then $D_{k'}$ is also chain-connected.
\end{corollary}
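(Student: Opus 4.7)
The plan is to combine the previous Remark~(1) with Lemma~\ref{zposbasechange} and Proposition~\ref{bigzposcc}. Since $D$ is chain-connected and effective, its support is connected. Together with the hypothesis that $D$ is not negative semi-definite, Remark~(1) (which invokes Lemma~A.12 of \cite{Eno}) gives that $D$ is big, and the same remark then yields that $D$ is $\Z$-positive. Thus the hypotheses of Lemma~\ref{zposbasechange} are satisfied.

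Applying Lemma~\ref{zposbasechange} to the big $\Z$-positive divisor $D$, I conclude that $D_{k'}$ is a $\Z$-positive divisor on $X_{k'}$. The remaining task is to verify that $D_{k'}$ is itself a big effective divisor on $X_{k'}$: effectivity is obvious from the pullback construction, while bigness of $D_{k'}$ follows from the bigness of $D$ because Mumford's self-intersection number is preserved under the flat base change $X_{k'}\to X$, so $(D_{k'})^2=D^2>0$, and more precisely $h^0(X_{k'},\O_{X_{k'}}(nD_{k'}))=h^0(X,\O_X(nD))\otimes_k k'$ grows at the maximal rate in $n$. Having established that $D_{k'}$ is a big $\Z$-positive divisor, Proposition~\ref{bigzposcc} directly gives that $D_{k'}$ is chain-connected.

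The main obstacle is essentially absent: all the heavy lifting has already been carried out in Lemma~\ref{zposbasechange} (where the separability of $k'/k$ is used to ensure that $\llcorner N_{k'}\lrcorner$ is reduced and that a connecting chain on $X$ can be refined to one on $X_{k'}$) and in Proposition~\ref{bigzposcc} (where the Zariski decomposition on a normal surface is exploited to deduce chain-connectedness from bigness and $\Z$-positivity). The only point requiring a moment of attention is the standard fact that bigness of an effective divisor on a normal proper surface is stable under base change by a separable field extension, which follows from the flat base change formula for cohomology.
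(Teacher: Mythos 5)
Your proposal is correct and follows exactly the route the paper intends: deduce bigness of $D$ from connectedness of its support plus non-negative-semi-definiteness, pass to $\Z$-positivity via the remark after Proposition~\ref{bigzposcc}, apply Lemma~\ref{zposbasechange}, and finish with Proposition~\ref{bigzposcc} on $X_{k'}$. One small inaccuracy: a big effective divisor on a normal surface need not satisfy $D^2>0$ (only the positive part of its Zariski decomposition does), but this parenthetical is harmless since your flat base change identity $H^0(X_{k'},\O_{X_{k'}}(nD_{k'}))\cong H^0(X,\O_X(nD))\otimes_k k'$ already establishes that bigness is preserved.
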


\begin{remark}
In general, the chain-connectivity is not preserved by a separable base change.
Indeed, let $X$ be a surface obtained by the blow-up of $\PP^2$ at a closed point $x$ such that the extension $k(x)/k$ is non-trivial and separable.
Then the exceptional divisor $E_x$ on $X$ is chain-connected but $E_{x,k(x)}$ is a disjoint union of the exceptional curves on $X_{k(x)}$.
\end{remark}

\begin{corollary} \label{bigzposinsep}
Let $D$ be an effective big $\Z$-positive divisor on $X$.
Then $H^0(\O_D)/k$ is a purely inseparable field extension.
\end{corollary}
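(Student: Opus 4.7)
The plan is to show that $H^0(\mathcal{O}_D)$ remains a field after tensoring with any separable extension of $k$, which by elementary field theory forces the extension $H^0(\mathcal{O}_D)/k$ to be purely inseparable. The input results are already in place: Corollary~\ref{bigzposconn} gives us that $H^0(\mathcal{O}_D)$ is a field, and Lemma~\ref{zposbasechange} tells us how $\mathbb{Z}$-positivity behaves under separable base change.

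First I would set $L := H^0(\mathcal{O}_D)$, which is a finite field extension of $k$ by Corollary~\ref{bigzposconn}. Then, for an arbitrary separable extension $k'/k$, I would form $X_{k'}$ and $D_{k'}$; since bigness is preserved under field extension (intersection numbers and dimensions of linear systems pass to the base change) and $\mathbb{Z}$-positivity is preserved by Lemma~\ref{zposbasechange}, $D_{k'}$ is again an effective big $\mathbb{Z}$-positive divisor on $X_{k'}$. Applying Corollary~\ref{bigzposconn} on $X_{k'}$ gives that $H^0(\mathcal{O}_{D_{k'}})$ is a field. By flat base change (since $k\hookrightarrow k'$ is flat and $D$ is proper over $k$) we have
\[
H^0(\mathcal{O}_{D_{k'}}) \;\cong\; L\otimes_k k',
\]
so $L\otimes_k k'$ is a field for every separable field extension $k'/k$.

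Finally, I would specialize to $k' = k^{\mathrm{sep}}$, a separable closure of $k$, and invoke the following standard fact: a finite field extension $L/k$ is purely inseparable if and only if $L\otimes_k k^{\mathrm{sep}}$ is a field. One direction follows by writing the maximal separable subextension $k\subseteq L_s\subseteq L$; if $[L_s:k] = n > 1$, then $L_s\otimes_k k^{\mathrm{sep}}\cong (k^{\mathrm{sep}})^n$ is not a domain, and since $-\otimes_k k^{\mathrm{sep}}$ is exact this nontriviality embeds into $L\otimes_k k^{\mathrm{sep}}$, contradicting the field hypothesis. Hence $L_s = k$ and $L/k$ is purely inseparable, as required.

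The argument is essentially a two-line assembly of the preceding lemmas, so there is no serious obstacle; the only subtlety to verify is that Lemma~\ref{zposbasechange} can be applied for a possibly infinite algebraic separable extension (which is fine, since Zariski decomposition, nefness, and the construction of connecting chains in the proof of that lemma all behave well under any separable extension), and that the formation of $H^0(\mathcal{O}_D)$ commutes with the base change, which follows from flatness of $k\to k^{\mathrm{sep}}$ together with properness of $D$.
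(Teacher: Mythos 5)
Your proof is correct and follows essentially the same route as the paper: base change along a separable extension, invoke Lemma~\ref{zposbasechange} and Corollary~\ref{bigzposconn} to see that $H^0(\O_D)\otimes_k k'$ stays a field, and conclude by the standard tensor-product criterion for pure inseparability. The only cosmetic difference is that the paper base changes to the (finite) separable closure of $k$ inside $H^0(\O_D)$ and derives a contradiction, whereas you pass to all of $k^{\mathrm{sep}}$ — which works equally well, and you correctly flag the one point needing care, namely that the base-change lemma applies to an infinite separable extension.
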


\begin{proof}
Let $k'$ be the separable closure of $k$ in the field $H^0(\O_D)$ and assume $k\neq k'$.
Then $H^0(\O_{D_{k'}})\cong H^0(\O_{D})\otimes_{k}k'$ is not a field, which contradicts Lemma~\ref{zposbasechange} and Corollary~\ref{bigzposconn}.
\end{proof}

\subsection{Chain-connected divisors on projective varieties}

Now we introduce chain-connected divisors on higher dimensional varieties.

\begin{definition}[Chain-connected divisors]
Let $X$ be a normal projective variety of dimension $n\ge 3$ over an algebraically closed field $k$ and $H$ an ample divisor on $X$.
A divisor $D$ on $X$ is called {\em $H$-nef} (resp.\ {\em $H$-nef over an effective divisor $B$}) if $H^{n-2}AC\ge 0$ for any prime divisor $C$ (resp.\ any prime component $C\le B$) on $X$, where we use Mumford's intersection form for $n-2$ Cartier divisors and two Weil divisors on $X$ (see Appendix~A).
An effective divisor $D>0$ on $X$ is said to be {\em chain-connected with respect to $H$} if $-A$ is not $H$-nef over $B$ for any non-trivial effective decomposition $D=A+B$.
We simply call $D$ {\em chain-connected} if it is chain-connected with respect to some ample divisor $H$ on $X$.
\end{definition}

\begin{example} \label{cconnex}
The following effective divisors $D$ are typical examples of chain-connected divisors:

\smallskip

\noindent
(i) $D$ is reduced and connected.

\smallskip

\noindent
(ii) $D=\ulcorner M \urcorner$, where $M$ is an $\R$-divisor which is nef in codimension $1$ and satisfies $\nu(M)\ge 2$ or $\kappa(D)\ge 2$.

\smallskip

\noindent
Here, we say that an $\R$-Cartier $\R$-divisor $D$ on a normal complete variety $X$ is {\em nef in codimension $1$} if there exists a closed subset $Z$ of $X$ with codimension $\ge 2$ such that $DC\ge 0$ holds for any integral curve $C$ on $X$ not contained in $Z$ (which is called numerically semipositive in codimension one in \cite{Fujita}).
A Weil $\R$-divisor $D$ on $X$ is called {\em nef} (resp.\ {\em nef in codimension $1$}) if there exist an alteration $\pi\colon X'\to X$ and a nef (resp.\ nef in codimension $1$) $\R$-Cartier $\R$-divisor $D'$ on $X'$ such that $D=\pi_{*}D'$.
By using Mumford's intersection form, $\nu(D)\ge 2$ makes sense as $H^{\dim X-2}D^2>0$ for some ample divisor $H$ on $X$.
Note that for $\dim X=2$, the condition (ii) is equivalent to that $D$ is the round-up of a nef and big $\R$-divisor $M$. 
Therefore, the proof for the chain-connectivity of the divisor $D$ in (ii) is reduced to the surface case by cutting with general hyperplanes in $|mH|$, $m\gg 0$ and this is due to Proposition~\ref{bigzposcc} and Corollary~3.18 in \cite{Eno}.
\end{example}

\begin{proposition} \label{higherccconn}
Let $X$ be a normal projective variety over an algebraically closed field $k$.
Let $D$ be a chain-connected divisor on $X$.
Then $H^{0}(\O_D)\cong k$ holds.
\end{proposition}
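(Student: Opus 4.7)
The plan is to reduce to the surface case handled in Lemma~\ref{ccconn} by induction on $n = \dim X$, cutting with a very general hyperplane. For the base case $n = 2$, a chain-connected divisor $D$ contains a prime component $C$, which over algebraically closed $k$ is an integral projective curve with $H^0(\O_C) \cong k$, so Lemma~\ref{ccconn} immediately yields $H^0(\O_D) \cong H^0(\O_X) \cong k$.

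For $n \geq 3$, fix an ample Cartier divisor $H$ on $X$ witnessing the chain-connectivity of $D$, and take a very general $Y \in |mH|$ for $m \gg 0$. By the Bertini-type result recalled in Section~2 and Bertini irreducibility (applicable since each prime component $C_j$ of $D$ has dimension $n-1 \geq 2$), $Y$ is normal projective of dimension $n-1$ and each $C_j|_Y$ is integral, so $D|_Y = \sum n_j (C_j|_Y)$ is the prime decomposition of an effective divisor on $Y$. I first verify that $D|_Y$ is chain-connected on $Y$ with respect to $H|_Y$: any non-trivial decomposition $D|_Y = A' + B'$ lifts to $D = A + B$ on $X$ via the identification of prime components, and chain-connectivity of $D$ supplies a prime component $C_{j_0} \leq B$ with $H^{n-2} A C_{j_0} > 0$. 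On $Y$, by the projection formula,
$$(H|_Y)^{n-3} \cdot A|_Y \cdot (C_{j_0}|_Y) = m \, H^{n-2} \cdot A \cdot C_{j_0} > 0,$$
so $-A'$ is not $H|_Y$-nef over $B'$. By the inductive hypothesis, $H^0(\O_{D|_Y}) \cong k$.

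Next I produce an injection $H^0(\O_D) \hookrightarrow H^0(\O_{D|_Y})$. Since for general $Y$ no component of $D$ lies in $Y$, tensoring $0 \to \O_X(-Y) \to \O_X \to \O_Y \to 0$ with $\O_D$ yields a short exact sequence $0 \to \O_D(-mH|_D) \to \O_D \to \O_{D|_Y} \to 0$, and it suffices to prove $H^0(\O_D(-mH|_D)) = 0$ for $m \gg 0$. This is done by induction along a filtration $0 < D_{(1)} < \cdots < D_{(N)} = D$ in which each successive difference is a prime component $C$: on each integral projective $C$, the sheaf $\O_C(-mH)$ has no global sections for $m \geq 1$ since $-mH|_C$ has negative degree against $(H|_C)^{\dim C - 1}$. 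Combined with the injection $H^0(\O_X) = k \hookrightarrow H^0(\O_D)$ coming from the surjection $\O_X \twoheadrightarrow \O_D$, we conclude $H^0(\O_D) \cong k$.

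The main obstacle is the negative-twist vanishing $H^0(\O_D(-mH|_D)) = 0$ for $m \gg 0$: on the non-factorial locus of $X$ the components of $D$ need not be Cartier, so the graded pieces in the filtration of $\O_D$ are not obviously line bundles on their integral supports; one must control their zero-dimensional associated primes so that the negative ample twist genuinely kills all global sections, and one must also verify that the Bertini-type choice of $Y$ simultaneously achieves normality, irreducibility of every $C_j|_Y$, and avoidance of every associated prime of $\O_D$.
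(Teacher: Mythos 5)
Your argument is correct and follows essentially the same route as the paper: induction on $\dim X$ via a general hyperplane section $Y\in |mH|$, the exact sequence $0\to \O_D(-Y|_D)\to \O_D\to \O_{Y\cap D}\to 0$ together with the vanishing $H^0(\O_D(-Y|_D))=0$ and the chain-connectivity of $D|_Y$ with respect to $H|_Y$, and the base case supplied by Lemma~\ref{ccconn}. The obstacle you flag at the end is resolved by the standard fact that $\O_X(-D)$ is a reflexive (divisorial) ideal on the normal variety $X$, so $\O_D$ has no embedded associated points; hence all associated points of $\O_D$ have dimension $n-1\ge 1$, a general $Y$ avoids none of the issues you worry about, and the negative-twist vanishing holds --- which is exactly the step the paper asserts without comment.
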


\begin{proof}
We show the claim by induction on $n=\dim X$. The $n=2$ case is due to Lemma~\ref{ccconn}.
We take a general hyperplane $Y\in |mH|$ and consider the exact sequence
$$
0\to \O_D(-Y|_{D})\to \O_D\to \O_{Y\cap D}\to 0.
$$
Since $H^0(\O_D(-Y|_{D}))=0$, we have an injection $H^0(\O_D)\hookrightarrow H^0(\O_{Y\cap D})$.
Since $D$ is chain-connected with respect to $H$, the restriction $D|_{Y}$ on $Y$ is also chain-connected with respect to $H|_Y$.
Then we have $H^0(\O_{Y\cap D})\cong k$ by the inductive hypothesis, whence $H^0(\O_D)\cong k$ holds.
\end{proof}

Similarly, one can show the following:

\begin{proposition}
Let $X$ be a normal projective geometrically connected variety over a field $k$.
Let $D=\ulcorner M \urcorner$ be an effective divisor as in Example~\ref{cconnex}~$(\mathrm{ii})$.
Then $H^0(\O_D)/k$ is a purely inseparable field extension.
\end{proposition}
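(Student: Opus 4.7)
The plan is to adapt the surface-case argument of Corollary~\ref{bigzposinsep} to higher dimensions, using Proposition~\ref{higherccconn} in place of Corollary~\ref{bigzposconn}. Concretely, let $k'$ denote the separable closure of $k$ inside the finite $k$-algebra $H^0(\O_D)$, and suppose for contradiction that $k'\neq k$.

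First I would check that the hypotheses of Example~\ref{cconnex}(ii) transfer through the separable base change $X\to X_{k'}$. The scheme $X_{k'}$ remains a normal, projective, geometrically connected variety over $k'$ (normality is preserved because $k'/k$ is separable algebraic); $M_{k'}$ is still nef in codimension one and satisfies $\nu(M_{k'})=\nu(M)$, $\kappa(D_{k'})=\kappa(D)$, so the numerical/Iitaka conditions persist; and because separable base change decomposes each prime component of $\mathrm{Supp}(M)$ into a reduced sum of prime divisors with unchanged coefficients, the round-up commutes with base change, i.e.\ $D_{k'}=\ulcorner M_{k'}\urcorner$. With mild extra care across the possibly inseparable step $\bar k/k'$, the same hypotheses descend to $X_{\bar k},D_{\bar k}$.

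Next I would deduce that $H^0(\O_{D_{k'}})$ is a field. Applying Proposition~\ref{higherccconn} to $X_{\bar k}$ and $D_{\bar k}$ yields $H^0(\O_{D_{\bar k}})\cong \bar k$. Since $H^0(\O_{D_{\bar k}})\cong H^0(\O_{D_{k'}})\otimes_{k'}\bar k$ and the map $H^0(\O_{D_{k'}})\to H^0(\O_{D_{\bar k}})$ is injective by flatness, the finite $k'$-algebra $H^0(\O_{D_{k'}})$ embeds as a subring of $\bar k$, hence is a finite-dimensional integral domain and therefore a field. On the other hand, flat base change gives $H^0(\O_{D_{k'}})\cong H^0(\O_D)\otimes_k k'$; since $k'\subseteq H^0(\O_D)$, the subring $k'\otimes_k k'$ embeds in $H^0(\O_{D_{k'}})$, and for $k'\neq k$ it is a nontrivial product of copies of $k'$ carrying nontrivial idempotents, contradicting that $H^0(\O_{D_{k'}})$ is a field. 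Hence $k'=k$. Combined with the fact that $H^0(\O_D)$ is itself a field (it injects into $H^0(\O_{D_{k'}})$ by flatness), this is exactly the assertion that $H^0(\O_D)/k$ is purely inseparable.

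The main obstacle is verifying the chain-connectedness of $D_{\bar k}$ invoked above. When $k$ is non-perfect the extension $\bar k/k'$ is purely inseparable, under which prime components of $\mathrm{Supp}(M)$ may acquire multiplicities, so the identity $D_{\bar k}=\ulcorner M_{\bar k}\urcorner$ is not automatic and Example~\ref{cconnex}(ii) does not apply verbatim. The cleanest bypass is to reprove the analogue of Proposition~\ref{higherccconn} directly over $k'=k^{\mathrm{sep}}$ by hyperplane-section induction on $\dim X$, whose base case $\dim X=2$ is Corollary~\ref{bigzposinsep}; general hyperplanes in $|mH|$ over $k'$ exist because $k^{\mathrm{sep}}$ is infinite (if $k$ is finite then $k^{\mathrm{sep}}=\bar k$, reducing to the algebraically closed case handled by Proposition~\ref{higherccconn} as stated).
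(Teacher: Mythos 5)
Your argument is correct and is essentially the paper's own proof: reduce by separable base change (to $k^{\mathrm{sep}}$, hence to an infinite or algebraically closed field), run the hyperplane-section induction of Proposition~\ref{higherccconn} to see that $H^0(\O_D)$ is a field, and then rule out a nontrivial separable subextension via the idempotents in $k'\otimes_k k'$ exactly as in Corollary~\ref{bigzposinsep}. The ``bypass'' you propose to avoid the purely inseparable step $\bar k/k'$ is precisely what the paper does implicitly when it says the condition of Example~\ref{cconnex}(ii) is preserved by separable base change and one may assume $k$ infinite.
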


\begin{proof}
Note that the condition of divisors in Example~\ref{cconnex}~(ii) is preserved by any separable base change.
Then we may assume that $k$ is infinite.
By the hyperplane cutting argument as in the proof of Proposition~\ref{higherccconn}, we conclude that $H^0(\O_D)$ is a field.
Then the claim follows from the same argument in the proof of Corollary~\ref{bigzposinsep}.
\end{proof}

\section{Vanishing theorem on $H^1$}
\label{sec:Vanishing theorem on $H^1$}

In this section, we prove some vanishing theorems of Ramanujam, Kawamata--Viehweg and Miyaoka type for normal complete surfaces and normal projective varieties.

\subsection{Picard schemes and $\alpha(X,D)$}
We start with an elementary lemma, which is well known to experts:
\begin{lemma} \label{Frobinj}
Let $X$ be a proper scheme over an algebraically closed field $k$ of characteristic $p>0$ with $H^{0}(\O_{X})\cong k$.
Then the following are equivalent.

\smallskip

\noindent
$(\mathrm{i})$ The Frobenius map on $H^{1}(\O_X)$ is injective.

\smallskip

\noindent
$(\mathrm{ii})$ $H^1(X_{\mathrm{fppf}},\alpha_{p,X})=0$, that is, all $\alpha_p$-torsors over $X$ are trivial.

\smallskip

\noindent
$(\mathrm{iii})$ The Picard scheme $\Pic_{X/k}$ does not contain $\alpha_{p}$.

\smallskip

\noindent
$(\mathrm{iv})$ Any infinitesimal subgroup scheme of $\Pic^{\circ}_{X/k}$ is linearly reductive, that is, of the form $\prod \mu_{p^{n_i}}$.
\end{lemma}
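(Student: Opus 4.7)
The argument will rest on three ingredients: the fppf short exact sequence
$$0 \to \alpha_{p,X} \to \G_{a,X} \xrightarrow{F} \G_{a,X} \to 0,$$
the Cartier self-duality of $\alpha_p$, and the classification of commutative infinitesimal finite $k$-group schemes over an algebraically closed field. For (i) $\Leftrightarrow$ (ii), I would take the long exact sequence in fppf cohomology attached to the displayed sequence, identify $H^i(X_{\mathrm{fppf}}, \G_{a,X})$ with the usual Zariski $H^i(\O_X)$, and use that Frobenius is bijective on $H^0(\O_X) = k$ (since $k$ is algebraically closed). The initial segment of the long exact sequence then collapses to a canonical isomorphism
$$H^1(X_{\mathrm{fppf}}, \alpha_{p,X}) \cong \Ker\bigl(F \colon H^1(\O_X) \to H^1(\O_X)\bigr),$$
making (i) and (ii) literally the same condition.

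For (ii) $\Leftrightarrow$ (iii), the plan is to invoke the standard identification
$$H^1(X_{\mathrm{fppf}}, G_X) \cong \Hom_{k\text{-grp}}(G^D, \Pic_{X/k}),$$
valid for any finite commutative $k$-group scheme $G$, which follows from the Leray spectral sequence for $X \to \Spec k$ (using $H^0(\O_X) \cong k$) together with the representability of $\Pic_{X/k}$. Since $\alpha_p$ is Cartier self-dual, the vanishing in (ii) becomes the vanishing of $\Hom(\alpha_p, \Pic_{X/k})$, which, because $\alpha_p$ is a simple group scheme, is equivalent to the non-existence of a closed immersion $\alpha_p \hookrightarrow \Pic_{X/k}$, i.e.\ (iii).

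For (iii) $\Leftrightarrow$ (iv), I would appeal to the classical fact that, over an algebraically closed field of characteristic $p$, a commutative infinitesimal finite $k$-group scheme is linearly reductive (equivalently, of the form $\prod \mu_{p^{n_i}}$) if and only if it does not contain $\alpha_p$. One direction is immediate since $\alpha_p \not\subset \mu_{p^n}$ for any $n$; for the converse one decomposes $G$ into its multiplicative-type and local-local components, and observes (via Dieudonn\'e theory, or by inspecting $\Ker F \cap \Ker V$) that any nontrivial local-local commutative group scheme admits $\alpha_p$ as a subgroup scheme. Since $\alpha_p$ is connected, any embedding $\alpha_p \hookrightarrow \Pic_{X/k}$ automatically factors through $\Pic^{\circ}_{X/k}$, so (iii) asserts exactly that no infinitesimal subgroup scheme of $\Pic^{\circ}_{X/k}$ contains $\alpha_p$, which by the classical fact is precisely (iv).

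The step that will require the most care is (ii) $\Leftrightarrow$ (iii): one must have at hand the formula $H^1_{\mathrm{fppf}}(X, G_X) \cong \Hom(G^D, \Pic_{X/k})$ under our mild hypotheses (proper $X$ with $H^0(\O_X) = k$, so that $\Pic_{X/k}$ is at least representable as a group algebraic space), together with the correct interpretation of a nontrivial $\alpha_p$-torsor as a one-parameter infinitesimal family of line bundles on $X$. The remaining equivalences reduce to routine applications of the long exact sequence and of the structure theory of commutative finite group schemes, and should admit clean, self-contained write-ups.
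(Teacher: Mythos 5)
Your proposal is correct and follows essentially the same route as the paper: (i)$\Leftrightarrow$(ii) via the fppf sequence $0\to\alpha_p\to\G_a\xrightarrow{F}\G_a\to 0$, (ii)$\Leftrightarrow$(iii) via the identification $H^1(X_{\mathrm{fppf}},\alpha_{p,X})\cong\Hom(\alpha_p,\Pic_{X/k})$ (Raynaud, using Cartier self-duality of $\alpha_p$ and its simplicity), and (iii)$\Leftrightarrow$(iv) via the fact that a commutative infinitesimal finite group scheme over $\overline{k}$ is linearly reductive iff it contains no $\alpha_p$. The only cosmetic difference is that you prove this last classical fact directly from the multiplicative/local-local decomposition, whereas the paper reduces to the Frobenius kernels $\Pic_{X/k}[F^m]$ and cites Lemma~2.3 of \cite{LMM}.
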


\begin{proof}
The equivalence of (i) and (ii) follows from the fact $H^1(X_{\mathrm{fppf}},\alpha_{p,X})\cong \{\eta \in H^1(\O_X)\ |\ F(\eta)=0\}$, which is obtained by the exact sequence $0\to \alpha_p\to \G_a\xrightarrow{F} \G_a^{(p)}\to 0$
of commutative group schemes over $k$, where $F$ is the relative Frobenius map.
The equivalence of (ii) and (iii) follows from Proposition~(6.2.1) in \cite{Ray} with $T=\Spec k$ and $M=\alpha_p$.
Indeed, there is a natural isomorphism $H^1(X_{\mathrm{fppf}},\alpha_{p,X})\cong \Hom_{\mathrm{GSch}/k}(\alpha_p,\Pic_{X/k})$ as abelian groups.
For the equivalence of (iii) and (iv), it suffices to show that for any $m\ge 1$, the $m$-th Frobenius kernel $\Pic_{X/k}[F^m]$ does not contain $\alpha_p$ if and only if it is linearly reductive.
This holds true for any infinitesimal group scheme by Lemma~2.3 in \cite{LMM}.
\end{proof}

\begin{definition}
Let $X$ and $D$ be proper schemes over a field $k$ and $\tau \colon D\to X$ a morphism.
Then we denote by $\alpha(X,D)$ the kernel of $\tau^{*}\colon H^1(\O_X)\to H^{1}(\O_D)$.
It can be identified with the Lie algebra of the kernel of the homomorphism $\tau^{*}\colon \Pic_{X/k}\to \Pic_{D/k}$ of Picard schemes defined by the pull-back of line bundles (cf.\ {\cite[p.231, Theorem~1]{BLR}}).
\end{definition}

Let $\phi\colon E\to D$ and $\tau\colon D\to X$ be two morphisms of proper schemes over a field $k$. 
In this subsection, we are going to study the relations of $\alpha(X,D)$ and $\alpha(X,E)$, which will be used in Section~5.
Let $\phi^{*}\colon \Pic^{\circ}_{D/k}\to \Pic^{\circ}_{E/k}$ and $\tau^{*}\colon \Pic^{\circ}_{X/k}\to \Pic^{\circ}_{D/k}$ be the corresponding homomorphisms of Picard schemes.

\begin{lemma} \label{alphacoincide}
Let $\phi\colon E\to D$ and $\tau\colon D\to X$ be morphisms of proper schemes over a field $k$ and assume one of the following:

\smallskip

\noindent
$(\mathrm{i})$ $\Char k=0$, $X$ is normal and $\Ker(\phi^{*})$ is affine, or

\smallskip

\noindent
$(\mathrm{ii})$ $\Char k>0$, $H^0(\O_X)\cong k$, the Frobenius map on $H^1(\O_X)$ is injective and $\Ker(\phi^{*})$ is unipotent.

\smallskip

\noindent
Then $\alpha(X,D)=\alpha(X,E)$ holds.
\end{lemma}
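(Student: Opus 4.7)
The plan is to interpret $\alpha(X,D)$ and $\alpha(X,E)$ through the Picard schemes and reduce the equality to the vanishing of a Lie-algebra map, which can then be attacked separately in the two characteristic regimes.

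Since $(\tau\phi)^{*}=\phi^{*}\circ\tau^{*}$, the inclusion $\alpha(X,D)\subseteq \alpha(X,E)$ is automatic, so only the reverse inclusion has content. Setting
\[
K:=\Ker\!\bigl((\tau\phi)^{*}\colon\Pic^{\circ}_{X/k}\to\Pic^{\circ}_{E/k}\bigr),\qquad N:=\Ker\!\bigl(\phi^{*}\colon\Pic^{\circ}_{D/k}\to\Pic^{\circ}_{E/k}\bigr),
\]
one has $\alpha(X,E)=\Lie K$ by left-exactness of $\Lie$, and $\tau^{*}$ restricts to a homomorphism of commutative group schemes $f\colon K\to N$. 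The desired equality is then equivalent to $\Lie f=0$, i.e.\ to the statement that the image of $\Lie K$ under $\tau^{*}$ in $H^{1}(\O_{D})$ is zero.

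In case~(i) the key structural input is that, in characteristic $0$, $\Pic^{\circ}_{X/k}$ is an abelian variety: reducedness is automatic in characteristic zero, while normality of the proper $X$ kills the affine part of $\Pic^{\circ}_{X/k,\mathrm{red}}$. Hence $K^{\circ}$ is itself an abelian variety, and $f|_{K^{\circ}}$ is a morphism from an abelian variety into the affine group $N$; its image is simultaneously proper and affine, hence zero-dimensional, and being connected and containing the identity it must be trivial. Therefore $f|_{K^{\circ}}=0$ and $\Lie f=0$.

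In case~(ii) I would invoke Lemma~\ref{Frobinj}: the injectivity of Frobenius on $H^{1}(\O_{X})$ implies that every infinitesimal subgroup scheme of $\Pic^{\circ}_{X/k}$ is of the form $\prod\mu_{p^{n_i}}$. In particular the first Frobenius kernel $K[F]$ is linearly reductive, while $N$ is unipotent by hypothesis; any homomorphism from a multiplicative-type group scheme to a unipotent one is trivial (the image would be of both types, hence trivial), so $f|_{K[F]}=0$. Because the relative Frobenius acts as zero on Lie algebras in characteristic $p$, $\Lie K[F]=\Lie K$, and we conclude $\Lie f=0$. The delicate step I anticipate is case~(i), where one must cite cleanly the abelianness of $\Pic^{\circ}_{X/k}$ for a normal proper $X$ in characteristic zero; once that structural fact is at hand, both cases fit into a uniform Frobenius-kernel/abelian-variety dichotomy reducing the problem to standard facts about morphisms between group schemes of different types.
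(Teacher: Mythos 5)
Your proof is correct and follows essentially the same route as the paper's: both identify $\alpha(X,D)$ and $\alpha(X,E)$ with Lie algebras of kernels in the Picard schemes and reduce the claim to the triviality of the image of $\Ker((\tau\circ\phi)^{*})^{\circ}$ (resp.\ of its Frobenius kernel) inside the affine (resp.\ unipotent) group $\Ker(\phi^{*})$, using properness of $\Pic^{\circ}_{X/k}$ for normal proper $X$ in characteristic $0$ and the linear-reductivity versus unipotence dichotomy from Lemma~\ref{Frobinj} in characteristic $p$. The only cosmetic difference is that the paper organizes this via the quotient $Q=\Image(\tau^{*}|_{\Ker(\phi^{*}\circ\tau^{*})^{\circ}})$ and first base-changes to $\overline{k}$ so that Lemma~\ref{Frobinj} applies verbatim.
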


\begin{proof}
It suffices to show that $\Ker(\tau^{*})^{\circ}=\Ker(\phi^{*}\circ \tau^{*})^{\circ}$.
Taking the base change to an algebraic closure $\overline{k}$ of $k$, we may assume that $k$ is algebraically closed.
Let us write $Q:=\Image(\tau^{*}|_{\Ker(\phi^{*}\circ \tau^{*})^{\circ}})$ and consider the exact sequence
$$
1\to \Ker(\tau^{*})^{\circ}\to \Ker(\phi^{*}\circ \tau^{*})^{\circ} \to Q \to 1.
$$

First we assume the condition (i).
Then $\Pic^{\circ}_{X/k}$ is proper over $k$ since $X$ is normal ({\cite[expos\'{e} 236, Th.\ 2.1 (ii)]{FGA}}).
Thus $\Ker(\phi^{*}\circ \tau^{*})^{\circ}$ and  $Q:=\Image(\tau^{*}|_{\Ker(\phi^{*}\circ \tau^{*})^{\circ}})$ are also proper over $k$.
On the other hand, since $\Ker(\phi^{*})$ is affine, so is $Q$.
Thus we conclude that $Q$ is infinitesimal, whence $Q$ is trivial due to Cartier's theorem.

We assume the condition (ii).
It suffices to show that $Q[F]$ is trivial.
By Lemma~\ref{Frobinj}, the group scheme $\Pic_{X/k}[F]$ is linearly reductive.
Hence $\Ker(\phi^{*}\circ \tau^{*})[F]$ and $Q[F]$ are also linearly reductive since the linear reductivity is preserved by taking subgroup schemes and quotient group schemes.
On the other hand, $Q[F]$ is unipotent since so is $\Ker(\phi^{*})$. 
Thus $Q[F]$ is trivial.
\end{proof}

Let us recall the structure of the generalized Jacobian $\Pic^{\circ}_{C/k}$ of a proper curve $C$.
The following combinatrial data is useful to counting the rank of maximal tori in $\Pic^{\circ}_{C/k}$:

\begin{definition}
Let $C$ be a proper curve (i.e., purely $1$-dimensional proper scheme) over an algebraically closed field $k$.
Then the {\em extended dual graph} $\Gamma(C)$ of $C$ is defined as follows:
The vertex set of $\Gamma(C)$ is the union of the integral subcurves $\{C_i\}_{i}$ in $C$ and the singularities $\{x_{\lambda}\}_{\lambda}$ of the reduced scheme $C_{\red}$ of $C$.
For each singular point $x_{\lambda}$ of $C_{\red}$, we denote by $B_1,\ldots, B_m$ all the local analytic branches of $C_{\red}$ (that is, the minimal primes of the complete local ring $\widehat{\O}_{C_{\red},x_{\lambda}}$).
For each $B_{j}$, let $C_{i(j)}$ denote the corresponding integral curve in $C$.
Then the edges of $\Gamma(C)$ are defined by connecting $x_{\lambda}$ and $C_{i(j)}$ for each branch $B_j$.

For a proper curve $C$ over an arbitrary field $k$, we define the extended dual graph of $C$ by that of $C_{\overline{k}}=C\times_{k} \overline{k}$, where $\overline{k}$  is an algebraic closure of $k$.
\end{definition}

\begin{proposition}[{\cite[Section~9.2, Proposition~10]{BLR}}] \label{maxtorus}
Let $C$ be a proper curve over an algebraically closed field $k$.
Then the rank of maximal tori of $\Pic^{\circ}_{C/k}$ is the first Betti number $b_{1}(\Gamma(C))$ of the extended dual graph of $C$.
\end{proposition}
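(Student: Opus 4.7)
The plan is to compute the maximal torus of $\Pic^{\circ}_{C/k}$ by successive reduction, first to the reduced curve and then to the normalization, tracking exactly what the local contributions at singular points look like. The key input is the Chevalley-type decomposition of $\Pic^{\circ}_{C/k}$ into abelian, toric, and unipotent parts, which allows one to isolate the toric rank as the relevant numerical invariant.

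First I would reduce to the reduced case. Let $\mathcal{N}\subset \O_C$ be the nilradical. The exact sequence
$$1\to 1+\mathcal{N}\to \O_C^{\times}\to \O_{C_{\red}}^{\times}\to 1$$
induces a surjection $\Pic^{\circ}_{C/k}\twoheadrightarrow \Pic^{\circ}_{C_{\red}/k}$. Filtering $1+\mathcal{N}$ by the powers $\mathcal{N}^{i}$, each successive quotient is $\G_a$-isomorphic to $\mathcal{N}^i/\mathcal{N}^{i+1}$, so the kernel of the surjection is unipotent. Maximal tori inject, so by the definition $\Gamma(C)=\Gamma(C_{\red})$ we may assume $C$ is reduced.

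Next, let $\nu\colon \tilde{C}\to C$ be the normalization, so $\tilde{C}$ is a disjoint union of smooth proper curves, and $\Pic^{\circ}_{\tilde{C}/k}$ is an abelian variety (torus rank zero). The sheaf sequence
$$1\to \O_C^{\times}\to \nu_{*}\O_{\tilde{C}}^{\times}\to \mathcal{Q}\to 1$$
has $\mathcal{Q}$ a skyscraper sheaf on the singular locus $\{x_{\lambda}\}$, and it induces at the Picard level a sequence whose kernel piece is a linear algebraic group; the whole toric rank of $\Pic^{\circ}_{C/k}$ lives there. Locally at $x_{\lambda}$, writing $\O=\O_{C,x_{\lambda}}$ and $\tilde{\O}=(\nu_{*}\O_{\tilde{C}})_{x_{\lambda}}$ (a semilocal ring with $m_{\lambda}$ maximal ideals, one per analytic branch), the quotient $\tilde{\O}^{\times}/\O^{\times}$ has toric part of rank exactly $m_{\lambda}-1$, since the residue units contribute $(k^{\times})^{m_{\lambda}}$ modulo the diagonal $k^{\times}$ coming from $\O$, while the $1+\mathfrak{m}$ parts are unipotent. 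Taking global sections then yields an exact sequence of toric parts
$$0\to \G_m^{c}\to \G_m^{r}\to \G_m^{\sum_{\lambda}(m_{\lambda}-1)}\to T\to 0,$$
where $c$ is the number of connected components and $r$ is the number of irreducible components of $C$, and $T$ is the maximal torus of $\Pic^{\circ}_{C/k}$. Counting dimensions gives $\dim T=\sum_{\lambda}(m_{\lambda}-1)-r+c=E-V+c$ with $V=r+s$ and $E=\sum m_{\lambda}$ the vertices and edges of $\Gamma(C)$, which is precisely $b_{1}(\Gamma(C))$.

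The main obstacle is the rigorous local analysis: one must show that the quotient group scheme $\tilde{\O}^{\times}/\O^{\times}$ at each singularity decomposes into a torus of rank $m_{\lambda}-1$ and a unipotent complement whose dimension is governed by the $\delta$-invariant, even when the singularity is analytically reducible with wild local algebra (e.g.\ cusps in positive characteristic). This requires a careful study of the conductor ideal and the structure of the units of a one-dimensional reduced Cohen--Macaulay local $k$-algebra; once that is in hand, the graph-theoretic identity $b_1(\Gamma)=E-V+c$ finishes the computation immediately.
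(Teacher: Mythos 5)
Your argument is, at bottom, the same computation as the paper's (both unwind the proof of {\cite[Section~9.2, Proposition~10]{BLR}}): kill the nilpotents (unipotent kernel), compare $\Pic^{\circ}_{C/k}$ with $\Pic^{\circ}_{\widetilde{C}/k}$ via the units sequence $1\to \O_C^{\times}\to \nu_{*}\O_{\widetilde{C}}^{\times}\to \mathcal{Q}\to 1$, observe that the abelian variety $\Pic^{\circ}_{\widetilde{C}/k}$ carries no torus, and count. The one place you diverge is the step you yourself flag as the ``main obstacle'': isolating the toric part of the local quotient $\widetilde{\O}^{\times}/\O^{\times}$ at each singular point. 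The paper sidesteps this by inserting BLR's intermediate curve $C'$ (the largest birational model of $C_{\red}$ homeomorphic to it), for which the kernel of $\Pic^{\circ}_{C/k}\to\Pic^{\circ}_{C'/k}$ is unipotent and the stalkwise quotient for $C'\to\widetilde{C}$ is literally $\bigl(\prod_{\mu}k(\widetilde{x}_{\lambda,\mu})^{\times}\bigr)/k(x_{\lambda})^{\times}$, i.e.\ already a torus of rank $n_{\lambda}-1$; this outsources the torus/unipotent separation to Propositions~5 and 9 of {\cite[Section~9.2]{BLR}}. Your direct route is also fine, and the obstacle you worry about is not a genuine one: the reduction map $\widetilde{\O}^{\times}\to\prod_{\mu}k^{\times}$ is surjective with kernel $1+\mathrm{rad}(\widetilde{\O})$, which (modulo the conductor) is filtered by $1+\mathrm{rad}^{i}$ with vector-group successive quotients, hence unipotent in any characteristic — wildness of the local algebra at cusps is irrelevant, since only the radical filtration is used, not any characteristic-zero exponential. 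So the quotient $\widetilde{\O}^{\times}/\O^{\times}$ is an extension of the rank-$(m_{\lambda}-1)$ torus by a unipotent group, the maximal torus of the affine kernel of $\Pic^{\circ}_{C/k}\to\Pic^{\circ}_{\widetilde{C}/k}$ has the dimension you compute, and the count $\sum_{\lambda}(m_{\lambda}-1)-r+c=b_{1}(\Gamma(C))$ matches the paper's (which treats only the connected case $c=1$). The proposal is correct once that routine local lemma is written out.
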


\begin{proof}
For the convenience of readers, we sketch the proof.
We may assume that $C$ is connected. 
Let $\widetilde{C}$ denote the normalization of the reduced scheme $C_{\red}$ of $C$.
Then the natural map $\pi \colon \widetilde{C}\to C$ can be decomposed into $\widetilde{C}\to C'\to C_{\red}\to C$ as in the argument in {\cite[Section~9.2]{BLR}}, where the intermediate curve $C'$ is canonically determined as the highest birational model of $C_{\red}$ which is homeomorphic to $C_{\red}$.
Since $\Gamma(C)=\Gamma(C_{\red})=\Gamma(C')$ and the kernel of $\Pic^{\circ}_{C/k}\to \Pic^{\circ}_{C'/k}$ is unipotent by Propositions~5 and 9 in {\cite[Section~9.2]{BLR}},
we may assume $C=C'$.
Let $x_{\lambda}$, $\lambda=1,\ldots,N$ denote the singular points of $C$ and for each $\lambda$, let $\widetilde{x}_{\lambda,\mu}$, $\mu=1,\ldots,n_\lambda$ denote the points of $\widetilde{C}$ lying over $x_{\lambda}$.
Let $C_i$, $i=1,\ldots, r$ denote the integral components of $C$.
Taking the cohomology of the exact sequence $1\to \O^{*}_C\to \pi_{*}\O^{*}_{\widetilde{C}}\to \mathcal{T}\to 1$,
we obtain a long exact sequence
$$
1\to k^{*}\to \prod_{i=1}^{r} k^{*}\to \prod _{\lambda=1}^{N}(\prod_{\mu=1}^{n_{\lambda}}k(\widetilde{x}_{\lambda,\mu})^{*})/k(x_{\lambda})^{*}\to \Pic(C)\to \Pic(\widetilde{C})\to 1,
$$ 
where we note that the cokernel $\mathcal{T}$ is a torsion sheaf supported at the singular points $x_{\lambda}$.
Then the kernel of $\pi^{*}\colon \Pic_{C/k}\to \Pic_{\widetilde{C}/k}$ is a torus of rank $\sum_{\lambda=1}^{N}(n_{\lambda}-1)-r+1$.
On the other hand, the number of vertices and edges of $\Gamma(C)$ are $r+N$ and $\sum_{\lambda=1}^{N}n_{\lambda}$, respectively.
Thus the topological Euler number of the graph is $\chi_{\mathrm{top}}(\Gamma(C)))=r+N-\sum_{\lambda=1}^{N}n_{\lambda}$ and then the first Betti number is 
$$
b_1(\Gamma(C))=1-\chi_{\mathrm{top}}(\Gamma(C)))=\sum_{\lambda=1}^{N}(n_{\lambda}-1)-r+1,
$$
which completes the proof.
\end{proof}

\begin{lemma}\label{kernelunip}
Let $\phi\colon E\to D$ be a morphism between proper schemes over a field $k$.
Then the kernel of $\phi^{*}\colon \Pic_{D/k}^{\circ}\to \Pic_{E/k}^{\circ}$ is unipotent if one of the following holds.

\smallskip

\noindent
$(\mathrm{i})$ The canonical immersion $D_{\red}\hookrightarrow D$ factors through $\phi$, or

\smallskip

\noindent
$(\mathrm{ii})$ $D$ is a curve and there exists a birational morphism $\widehat{D}\to D_{\red}$ with $b_1(\Gamma(D))=b_1(\Gamma(\widehat{D}))$ 
such that the composition $\widehat{D}\to D_{\red}\hookrightarrow D$ factors through $\phi$.
\end{lemma}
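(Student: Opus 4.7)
The plan is to use, in both cases, the factorisation hypothesis to bound $\Ker(\phi^{*})$ by the kernel of a pull-back onto a simpler Picard scheme, and then to prove unipotence of that larger kernel. Setting $\widehat{D}=D_{\red}$ in case~(i), the factorisation $\widehat{D}\to E\to D$ yields, by naturality of $\Pic$, the inclusion $\Ker(\phi^{*})\subseteq \Ker\bigl(\Pic^{\circ}_{D/k}\to \Pic^{\circ}_{\widehat{D}/k}\bigr)$; since a closed subgroup scheme of a unipotent group is unipotent, it suffices to prove unipotence of this enlarged kernel.

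\textbf{Case (i).} First I would consider the short exact sequence
\[
1\to 1+\mathcal{N}\to \O_{D}^{*}\to \O_{D_{\red}}^{*}\to 1,
\]
where $\mathcal{N}\subset \O_{D}$ is the nilradical. Filtering $1+\mathcal{N}$ by the subsheaves $1+\mathcal{N}^{i}$ and identifying, via $1+x\mapsto x$, the successive quotients with $\mathcal{N}^{i}/\mathcal{N}^{i+1}$ as additive vector groups, the long exact sequence of cohomology will exhibit $\Ker\bigl(\Pic^{\circ}_{D/k}\to \Pic^{\circ}_{D_{\red}/k}\bigr)$ as an iterated extension of additive groups, hence as unipotent.

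\textbf{Case (ii).} Next I would factor $\Pic^{\circ}_{D/k}\to \Pic^{\circ}_{\widehat{D}/k}$ through $\Pic^{\circ}_{D_{\red}/k}$. The first arrow has unipotent kernel by case~(i), and an extension of unipotent by unipotent is unipotent, so the problem reduces to showing that $\Pic^{\circ}_{D_{\red}/k}\to \Pic^{\circ}_{\widehat{D}/k}$ has unipotent kernel. Birationality of $\pi\colon \widehat{D}\to D_{\red}$ forces the common normalization $\widetilde{D_{\red}}$; the skyscraper short exact sequence $1\to \O_{D_{\red}}^{*}\to \pi_{*}\O_{\widehat{D}}^{*}\to \mathcal{F}\to 1$ gives surjectivity of $\pi^{*}$, and the kernels $L_{i}$ of the resulting surjections to $\Pic^{\circ}_{\widetilde{D_{\red}}/k}$ sit in Chevalley extensions $0\to U_{i}\to L_{i}\to T_{i}\to 0$, where $T_{i}$ is a torus of rank $b_{1}(\Gamma(D_{\red}))$, resp.\ $b_{1}(\Gamma(\widehat{D}))$, by Proposition~\ref{maxtorus}. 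Granted that $T_{1}\to T_{2}$ is a scheme-theoretic isomorphism, the snake lemma will identify $\Ker(L_{1}\to L_{2})$ with $\Ker(U_{1}\to U_{2})$, hence unipotent, and the proof will be complete.

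\textbf{The hard part} will be promoting the dimension equality to a scheme-theoretic isomorphism $T_{1}\xrightarrow{\sim}T_{2}$, since surjectivity and matching dimensions a priori leave room for a nontrivial finite kernel of multiplicative type (e.g.\ some $\mu_{p^{n}}$). Using the description of $T_{i}$ in the proof of Proposition~\ref{maxtorus} and the skyscraper sequence $0\to \mathcal{K}\to \mathcal{T}_{D_{\red}}\to \pi_{*}\mathcal{T}_{\widehat{D}}\to 0$, a diagram chase identifies $\Ker(T_{1}\to T_{2})$ with $H^{0}(\mathcal{K})\big/\bigl(H^{0}(\mathcal{K})\cap \Image(\prod_{i}k^{*}\to H^{0}(\mathcal{T}_{D_{\red}}))\bigr)$. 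The combinatorial import of $b_{1}(\Gamma(D))=b_{1}(\Gamma(\widehat{D}))$ is that at each partially-normalized singular point, every integral component must contribute all of its branches to a single group of the splitting---any other splitting would disconnect the dual graph less than needed to balance the lost edges, forcing a drop in $b_{1}$. This component-compatibility allows every element of $H^{0}(\mathcal{K})$ to be lifted to a consistent choice of constants on the components, so the quotient is trivial and $T_{1}\to T_{2}$ is an isomorphism.
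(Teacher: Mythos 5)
Your case (i) is correct and is essentially the paper's own argument: both reduce to $E=D_{\red}$ and filter the nilradical (the paper filters by first-order thickenings, you filter $1+\mathcal{N}$ by $1+\mathcal{N}^i$; these are the same device), exhibiting the kernel as an iterated extension of subquotients of the vector groups $H^1(\mathcal{N}^i/\mathcal{N}^{i+1})$.

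For case (ii) you take a genuinely different and substantially harder route than the paper, and the route as written has a gap at exactly the place you flag. The paper's argument is short: the kernel $K$ of $\Pic^{\circ}_{D_{\red}/k}\to\Pic^{\circ}_{\widehat{D}/k}$ is a \emph{smooth connected} commutative affine group scheme --- this comes for free from the sequence $1\to\O^{*}_{D_{\red}}\to\pi_{*}\O^{*}_{\widehat{D}}\to\mathcal{F}\to 1$ with $\mathcal{F}$ a skyscraper, which exhibits $K$ as a quotient of the smooth connected group $H^0(\mathcal{F})$ (this is the content of Corollaries~11 and 12 in \cite[Section~9.2]{BLR} that the paper cites). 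A smooth connected commutative affine group over $\overline{k}$ is a product of a torus and a unipotent group, so $K$ is unipotent as soon as it contains no $\G_m$; and since $\phi^{*}$ is surjective, the maximal-torus rank of $\Pic^{\circ}_{D/k}$ equals that of $K$ plus that of $\Pic^{\circ}_{\widehat{D}/k}$, so Proposition~\ref{maxtorus} together with $b_1(\Gamma(D))=b_1(\Gamma(\widehat{D}))$ forces the torus part of $K$ to vanish. In particular your worry about a residual $\mu_{p^n}$ never arises: a smooth connected group with trivial maximal torus contains no $\mu_p$, so there is nothing to promote to a scheme-theoretic isomorphism $T_1\xrightarrow{\sim}T_2$. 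By contrast, in your snake-lemma framework you genuinely need to kill the image of $\Ker(L_1\to L_2)$ in $\Ker(T_1\to T_2)$, and your proposed argument does not do this. The combinatorial observation itself is correct (if $b_1$ is preserved then the fibres of $\widehat{D}\to D_{\red}$ over a singular point must lie in distinct connected components after splitting, so no integral component can send branches to two different groups), but the concluding sentence --- that component-compatibility lets you lift every element of $H^0(\mathcal{K})$ to constants on components, hence the quotient is trivial --- is not a proof. What has to be solved is a multiplicative system $c_y=b_i d_\lambda$ over all incidences of components $C_i$, points $x_\lambda$ of $D_{\red}$ and points $y$ of $\widehat{D}$, and its solvability is again controlled by the first Betti numbers of the graphs, not by component-compatibility alone; as written this step is a restatement of what needs to be proved. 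I would either carry out that graph-cohomology computation in full, or (better) replace the whole second half of your case (ii) by the smoothness/connectedness observation above.
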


\begin{proof}
In order to show the assertion for (i), we may assume that $E=D_{\red}$.
By taking a filtration of first order thickenings $D_{\red}\hookrightarrow D_1\hookrightarrow \cdots \hookrightarrow D_N=D$, 
we further assume that $E\hookrightarrow D$ is a first order thickening.
Then one can show the assertion easily by taking the cohomology of the exact sequence
$$
0\to \mathcal{I}_{E/D}\to \O_{D}^{*}\to \O_{E}^{*}\to 1,
$$
where the map on the left sends a local section $a$ to $1+a$. 
For the case (ii), we may assume $E=\widehat{D}$.
By taking the base change to an algebraic closure $\overline{k}$ of $k$, we may assume that $k$ is algebraically closed.
Note that the kernel of $\phi^{*}\colon \Pic_{D/k}^{\circ}\to \Pic_{\widehat{D}/k}^{\circ}$ is affine, and it is unipotent if and only if it does not contain a torus (cf.\ Corollaries~11 and 12 in {\cite[Section~9.2]{BLR}}).
Then the claim follows from Proposition~\ref{maxtorus} and the surjectivity of $\phi^{*}\colon \Pic_{D/k}^{\circ}\to \Pic_{\widehat{D}/k}^{\circ}$.
\end{proof}

By combining Lemma~\ref{alphacoincide} with Lemma~\ref{kernelunip}, we obtain the following:

\begin{proposition}\label{alphacoinprop}
Let $\phi\colon E\to D$ and $\tau\colon D\to X$ be morphisms between proper schemes over a field $k$.
If $\Char k>0$ $($resp.\ $\Char k=0$$)$, we further assume that $H^0(\O_X)\cong k$ and the Frobenius map on $H^1(\O_X)$ is injective $($resp.\ $X$ is normal$)$.
Then $\alpha(X,D)=\alpha(X,E)$ holds if one of the following holds:

\smallskip

\noindent
$(\mathrm{i})$ The canonical immersion $D_{\red}\hookrightarrow D$ factors through $\phi$.

\smallskip

\noindent
$(\mathrm{ii})$ $D$ is a curve and there exists a birational morphism $\widehat{D}\to D_{\red}$
such that the composition $\widehat{D}\to D_{\red}\hookrightarrow D$ factors through $\phi$.
Moreover, we further assume $b_1(\Gamma(D))=b_1(\Gamma(\widehat{D}))$ if $\Char k>0$.
\end{proposition}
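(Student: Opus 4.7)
The plan is to deduce the proposition by feeding the conclusion of Lemma~\ref{kernelunip} into the hypothesis of Lemma~\ref{alphacoincide}. The global assumptions on $X$ in the proposition are exactly those required by Lemma~\ref{alphacoincide}, so the only thing to verify in each case is the condition on $\Ker(\phi^{*}\colon \Pic^{\circ}_{D/k}\to \Pic^{\circ}_{E/k})$: unipotent when $\Char k>0$ and affine when $\Char k=0$.

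For case (i), the hypothesis that $D_{\red}\hookrightarrow D$ factors through $\phi$ is exactly that of Lemma~\ref{kernelunip}(i), giving that $\Ker(\phi^{*})$ is unipotent. In positive characteristic this is what Lemma~\ref{alphacoincide}(ii) asks; in characteristic zero, unipotent groups are affine, so Lemma~\ref{alphacoincide}(i) applies (recall that $X$ is normal here). For case (ii) in positive characteristic, the hypotheses are precisely those of Lemma~\ref{kernelunip}(ii), including the $b_1$ equality, so $\Ker(\phi^{*})$ is unipotent and again Lemma~\ref{alphacoincide}(ii) applies.

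The only point requiring extra care is case (ii) in characteristic zero, where the $b_1$ equality is not assumed. Here I would observe that since $\widehat{D}\to D_{\red}\hookrightarrow D$ factors through $\phi$, the kernel $\Ker(\phi^{*})$ is contained in $\Ker(\Pic^{\circ}_{D/k}\to \Pic^{\circ}_{\widehat{D}/k})$. The latter fits into a two-step extension: its outer piece $\Ker(\Pic^{\circ}_{D/k}\to \Pic^{\circ}_{D_{\red}/k})$ is unipotent by Lemma~\ref{kernelunip}(i), while the inner piece $\Ker(\Pic^{\circ}_{D_{\red}/k}\to \Pic^{\circ}_{\widehat{D}/k})$ is the kernel of pullback from a reduced curve to its normalization, which is known to be an extension of a torus by a unipotent group (see \cite[Section~9.2, Propositions~5 and 9]{BLR}) and hence affine. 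An extension of affine by affine is affine, so $\Ker(\phi^{*})$ is affine and Lemma~\ref{alphacoincide}(i) applies. No step is a real obstacle; the argument is essentially bookkeeping that tracks the affine/unipotent dichotomy across the two characteristic regimes, with the separate treatment of characteristic zero in case (ii) being the one place one must pause to avoid invoking the unnecessary $b_1$ hypothesis.
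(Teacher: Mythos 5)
Your proposal is correct and follows the paper's own route exactly: the stated proof is literally ``combine Lemma~\ref{alphacoincide} with Lemma~\ref{kernelunip},'' matching unipotence (resp.\ affineness) of $\Ker(\phi^{*})$ to the hypothesis needed in each characteristic. Your extra care in case (ii) for $\Char k=0$ is the right observation and is already implicit in the paper, whose proof of Lemma~\ref{kernelunip}(ii) notes that $\Ker(\phi^{*})$ is affine unconditionally and uses the $b_1$ equality only to rule out tori.
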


\begin{remark}
(1) 
Proposition~\ref{alphacoinprop} also holds when $\phi$ and $\tau$ are morphisms between compact complex analytic spaces and $X$ is a normal compact analytic variety in Fujiki's class $\mathcal{C}$ because the Picard variety $\Pic^{\circ}(X)\cong H^1(X, \O_X)/H^1(X,\Z)_{\mathrm{free}}$ is a compact torus.

\smallskip

\noindent
(2)
If $\Char k>0$ and the condition (i) in Proposition~\ref{alphacoinprop} holds, one can also show $\alpha(X,D)=\alpha(X,E)$ without using Picard schemes by the same proof of Lemma~6 in \cite{Ram}.
\end{remark}

The following is a generalization of Lemma~2.3 in \cite{Fra}.

\begin{corollary} \label{Francia}
Let $X$ be a normal proper variety over a field $k$, or normal compact analytic variety in Fujiki's class $\mathcal{C}$.
Let $D_1$ and $D_2$ be closed subschemes of $X$.
Let $\pi\colon X\to Y$ be a birational morphism to a normal variety $Y$.
We assume the following three conditions:

\smallskip

\noindent
$(\mathrm{i})$
$D_1$ and $D_2$ are not contained in the exceptional locus of $\pi$ and $H^1(\O_Y)\cong H^1(\O_{X})$,

\smallskip

\noindent
$(\mathrm{ii})$
The reduced image of $D_1$ by $\pi$ coincides with that of $D_2$ and it is a curve, which is denoted by $D$, and

\smallskip

\noindent
$(\mathrm{iii})$
If $\Char k>0$, the Frobenius map on $H^1(\O_X)$ is injective and $b_1(\Gamma(D))=b_1(\Gamma(\widehat{D}))$, where $\widehat{D}$ is the proper transform of $D$ on $X$.

\smallskip

\noindent
Then we have $\alpha(X,D_1)=\alpha(X,D_2)\cong \alpha(Y,D)$.
\end{corollary}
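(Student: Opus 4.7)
The plan is to relate $\alpha(X,D_i)$ for $i=1,2$ and $\alpha(Y,D)$ through the proper transform $\widehat D$ of $D$ on $X$, using Proposition~\ref{alphacoinprop} as the main comparison tool. First I would reduce each $D_i$ to its reduced structure: Proposition~\ref{alphacoinprop}(i) applied to the canonical immersion $(D_i)_{\red}\hookrightarrow D_i$ gives $\alpha(X,D_i)=\alpha(X,(D_i)_{\red})$. Next, since $\pi|_{\widehat D}\colon \widehat D\to D$ is a birational morphism onto $D_{\red}=D$, applying Proposition~\ref{alphacoinprop}(ii) with $\phi=\pi|_{\widehat D}$ and $\tau\colon D\hookrightarrow Y$ yields $\alpha(Y,D)=\alpha(Y,\widehat D)$, where the latter is computed through the composition $\widehat D\hookrightarrow X\xrightarrow{\pi}Y$. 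The hypotheses transfer from $X$ to $Y$: properness of $Y$ (as the image of the proper $X$) gives $H^{0}(\O_Y)\cong k$, the Frobenius-equivariant iso $\pi^{*}\colon H^1(\O_Y)\xrightarrow{\sim}H^1(\O_X)$ of condition~(i) transfers Frobenius injectivity, and the $b_1$-equality is condition~(iii). Under $\pi^{*}$, $\alpha(Y,\widehat D)$ corresponds exactly to $\alpha(X,\widehat D)$, since the restriction $H^1(\O_Y)\to H^1(\O_{\widehat D})$ factors as $\iota_{\widehat D}^{*}\circ\pi^{*}$; hence $\alpha(Y,D)\cong\alpha(X,\widehat D)$.

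It remains to establish $\alpha(X,(D_i)_{\red})=\alpha(X,\widehat D)$. Because $D_i$ is not contained in the exceptional locus and has reduced image $D$, any irreducible component of $(D_i)_{\red}$ not in the exceptional locus must equal a component of $\widehat D$, and each component of $\widehat D$ appears in $(D_i)_{\red}$ by comparing over the open where $\pi$ is an isomorphism; hence $\widehat D\subseteq (D_i)_{\red}$ as reduced closed subschemes of $X$, which gives $\alpha(X,(D_i)_{\red})\subseteq\alpha(X,\widehat D)$. For the reverse, take $\eta\in\alpha(X,\widehat D)$ and write $\eta=\pi^{*}\tilde\eta$. By the preceding paragraph $\tilde\eta\in\alpha(Y,\widehat D)=\alpha(Y,D)$, so $\tilde\eta|_D=0$. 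Since $(D_i)_{\red}$ is reduced with set-theoretic image $|D|$ under $\pi$, its scheme-theoretic image is a reduced closed subscheme of $Y$ with underlying set $|D|$, hence equals $D$; thus $(D_i)_{\red}\to Y$ factors scheme-theoretically through $D\hookrightarrow Y$, and pulling back $\tilde\eta|_D=0$ yields $\eta|_{(D_i)_{\red}}=0$.

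Combining these steps gives $\alpha(X,D_1)=\alpha(X,\widehat D)=\alpha(X,D_2)\cong\alpha(Y,D)$. The analytic case is handled by the same argument using the analytic version of Proposition~\ref{alphacoinprop} mentioned in its Remark. The main obstacle I anticipate is the careful bookkeeping in the last step: proving the inclusion $\widehat D\subseteq (D_i)_{\red}$ rigorously and justifying the scheme-theoretic factorization $(D_i)_{\red}\to D\hookrightarrow Y$, while also checking that the hypotheses of Proposition~\ref{alphacoinprop}(ii)---in particular the Frobenius injectivity on $H^1(\O_Y)$ and the field condition $H^{0}(\O_Y)\cong k$---transfer correctly from $X$ to $Y$ via the iso of condition~(i).
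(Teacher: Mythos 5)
Your argument is correct and follows the same basic strategy as the paper: reduce everything to the proper transform $\widehat{D}$ and the curve $D\subset Y$ via Proposition~\ref{alphacoinprop}, and transfer along the isomorphism $\pi^{*}\colon H^1(\O_Y)\xrightarrow{\sim}H^1(\O_X)$. The one place where you genuinely diverge is the comparison of $\alpha(X,D_i)$ with $\alpha(X,\widehat{D})$. The paper reduces to the case $D_1=\widehat{D}$, observes the closed immersion $\widehat{D}\hookrightarrow D_2$, and invokes Proposition~\ref{alphacoinprop} once more on $Y$ to get $\alpha(Y,D_2)=\alpha(Y,\widehat{D})$; you instead prove $\alpha(X,(D_i)_{\red})=\alpha(X,\widehat{D})$ by two explicit inclusions, the nontrivial one using that the scheme-theoretic image of the reduced scheme $(D_i)_{\red}$ in $Y$ is exactly $D$, so that $\eta=\pi^{*}\tilde\eta$ with $\tilde\eta|_D=0$ restricts to zero on $(D_i)_{\red}$. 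This is a worthwhile variation: a literal application of Proposition~\ref{alphacoinprop}~(ii) to the pair $\widehat{D}\hookrightarrow D_2$ requires a \emph{birational} morphism onto $(D_2)_{\red}$ factoring through $\widehat{D}$, which is only available when $(D_2)_{\red}=\widehat{D}$; your direct factorization argument works even when $(D_i)_{\red}$ carries extra components inside the exceptional locus, which is precisely the situation in the application to Theorem~\ref{ReithmI} (where $D_2=D'+Z$ with $Z$ exceptional). Two small points to tighten: $H^0(\O_Y)\cong k$ does not follow from properness alone --- first replace $k$ by the field $H^0(\O_X)$ (as the paper does) and then use $\pi_{*}\O_X=\O_Y$; and the inclusion $\widehat{D}\subseteq (D_i)_{\red}$ should be justified by noting that $(D_i)_{\red}\cap\pi^{-1}(U)=\pi^{-1}(|D|\cap U)$ over the open $U$ where $\pi$ is an isomorphism and then taking closures, which implicitly uses that no component of $D$ lies over the non-isomorphism locus (automatic in the surface applications, and implicit in the paper's use of the proper transform).
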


\begin{proof}
We may assume that $H^0(\O_X)\cong k$ by replacing $k$ to the field $H^0(\O_X)$.
Since $\pi(\widehat{D})=D$, we may assume $D_1=\widehat{D}$.
In particular, there is a closed immersion $D_1\hookrightarrow D_2$.
From Proposition~\ref{alphacoinprop}, we have $\alpha(Y,D)=\alpha(Y,D_2)=\alpha(Y,D_1)$.
On the other hand, it follows from $H^1(\O_Y)\cong H^1(\O_{X})$ that $\alpha(X,D_i)\cong \alpha(Y,D_i)$, $i=1,2$.
Hence we conclude $\alpha(X,D_1)=\alpha(X,D_2)\cong \alpha(Y,D)$.
\end{proof}

\subsection{Vanishing of $\alpha(X,D)$}

In this subsection, we are going to study the vanishing of $\alpha(X,D)$ when $D$ is a divisor on a variety $X$.

\begin{definition} \label{pencil}
Let $X$ be a normal proper variety over a field $k$ and $D$ a divisor on $X$.
The complete linear system $|D|$ on $X$ defines a rational map $\phi_{|D|}\colon X\dasharrow \PP^N$, $N=\dim |D|$. 
Taking a resolution $X'\to X$ of the indeterminacy of $\phi_{|D|}$ and the Stein factorization, we obtain the morphisms $X\leftarrow X'\to B\to \PP^N$, where the middle map is a fiber space.
Then we say that $|D|$ is {\em composed with a $($resp.\ rational, irrational$)$ pencil} if $\dim B=1$ (resp.\ and $H^1(\O_B)=0$, $H^1(\O_B)\neq 0$).
\end{definition}

\begin{proposition}\label{surfalphavan} 
Let $X$ be a normal proper surface over a field $k$ or analytic Moishezon surface.
Let $D$ be an effective and big divisor on $X$.
Then $\alpha(X,D)=0$ $($resp. $\alpha(X,D)_s=0$$)$ holds if $\Char k=0$ $($resp.\ $\Char k>0$ and either $k=\overline{k}$ or $H^1(\O_X)_n=0$$)$.
\end{proposition}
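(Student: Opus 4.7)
The plan is to reduce to the case where $D$ is a chain-connected big divisor, construct a connected étale cover $\pi\colon Y\to X$ of some degree $d\ge 2$ which splits over $D$, and then derive a contradiction from the fact that $\pi^{*}D$ would be big and $\Z$-positive on $Y$ while $\pi^{-1}(D)$ has $d\ge 2$ connected components.

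For the reduction, any closed immersion of divisors $D'\hookrightarrow D$ (with $D'\le D$) induces a restriction $H^1(\O_D)\to H^1(\O_{D'})$, so $\alpha(X,D)\subseteq \alpha(X,D')$. Since $D$ is effective and big, some connected component of $\Supp(D)$ has positive self-intersection; restricting $D$ to that support and then passing to its chain-connected subdivisor (Lemma~\ref{ccc}) gives $D'\le D$ that is effective, big and chain-connected, hence big $\Z$-positive by the remark after Proposition~\ref{bigzposcc}. After flat base change, which preserves the hypotheses in both characteristic cases, I may further assume $k=\overline{k}$; then Corollary~\ref{bigzposinsep} yields $H^0(\O_D)=k$.

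Now assume for contradiction $\alpha(X,D)\ne 0$ (resp.\ $\alpha(X,D)_s\ne 0$) and construct a cover $\pi\colon Y\to X$ as follows. In characteristic $0$, $\Pic_{X/k}^{\circ}$ is an abelian variety (since $X$ is normal proper), so the identity component of $\ker(\tau^{*}\colon \Pic_{X/k}^{\circ}\to\Pic_{D/k}^{\circ})$ is a positive-dimensional abelian subvariety and contains a non-trivial torsion point $L$ of some order $n\ge 2$; the associated cyclic étale cover $\pi\colon Y\to X$ of degree $n$ splits over $D$ because $L|_D\cong \O_D$. In characteristic $p>0$, I pick $\eta\in\alpha(X,D)_s$ with $F(\eta)=\eta$ (which exists over $\overline{k}$) and let $\pi\colon Y\to X$ be the associated Artin--Schreier $\Z/p$-cover; since $H^0(\O_D)=k$ and $F-1\colon k\to k$ is surjective, the vanishing $\eta|_D=0$ forces the Artin--Schreier class to restrict to zero on $D$, so this cover also splits over $D$. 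In either case $\pi^{-1}(D)=D_1\sqcup\cdots\sqcup D_d$ with $d\ge 2$.

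It remains to show $\pi^{*}D$ is big and $\Z$-positive on $Y$: then Corollary~\ref{bigzposconn} forces $H^0(\O_{\pi^{*}D})$ to be a field, contradicting the isomorphism $H^0(\O_{\pi^{*}D})\cong k^d$ coming from the disjoint decomposition. Bigness is clear, and by Proposition~3.16 of \cite{Eno} the $\Z$-positivity reduces to exhibiting a connecting chain from $\pi^{*}\ulcorner P(D)\urcorner=\ulcorner P(\pi^{*}D)\urcorner$ to $\pi^{*}D$. Starting from a chain $\ulcorner P(D)\urcorner=D_0<\cdots<D_m=D$ on $X$ with prime increments $C_i$ satisfying $D_{i-1}\cdot C_i>0$, I refine the pull-back chain by inserting the pairwise disjoint prime components $C_{i,1},\dots,C_{i,r_i}$ of $\pi^{*}C_i$ one at a time; the projection formula combined with the étaleness of $\pi$ gives $(\pi^{*}D_{i-1}+\sum_{k<j}C_{i,k})\cdot C_{i,j}=\deg(\pi|_{C_{i,j}})\,D_{i-1}\cdot C_i>0$, which completes the chain. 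The main technical obstacle is precisely this preservation of $\Z$-positivity under étale pull-back, and this is where the connecting-chain characterization of $\Z$-positive divisors from \cite{Eno} is essential; the remainder of the argument is formal.
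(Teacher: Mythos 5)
Your core construction is the paper's: assume $\alpha(X,D)\neq 0$ (resp.\ $\alpha(X,D)_s\neq 0$), produce a non-trivial \'{e}tale cyclic cover $\pi\colon Y\to X$ that splits over $D$, and contradict the connectedness of big effective divisors. The endgame, however, is different and yours is heavier than it needs to be. The paper simply takes the Zariski decompositions $D=P+N$ and $D_i=P_i+N_i$, observes that $\pi^{*}P=\sum_i P_i$ is effective, nef and big, and contradicts Ramanujam's connectedness lemma (Lemma~\ref{nefbignumconn}) because the $P_i$ are non-zero with mutually disjoint supports. This bypasses both your preliminary reduction to a chain-connected subdivisor and your verification that $\Z$-positivity survives \'{e}tale pull-back. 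Your chain-pullback computation does work (one small inaccuracy: the prime components of $\pi^{*}C_i$ need not be pairwise disjoint, since they may meet over singular points of $C_i$; this is harmless because distinct primes intersect non-negatively), but it is machinery the Zariski decomposition renders unnecessary.

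The one genuine loose end is the base change to $\overline{k}$ in characteristic $p$. The proposition allows an arbitrary field $k$ with $H^1(\O_X)_n=0$; if $k$ is imperfect, $X_{\overline{k}}$ need not be normal, so the tools you then invoke on the cover of $X_{\overline{k}}$ --- Zariski decompositions, $\Z$-positivity, Corollary~\ref{bigzposconn} --- are not available there, and the existence of a Frobenius-fixed class genuinely requires passing beyond the separable closure. The paper base changes only to the separable closure (which preserves normality) and, in the case $H^1(\O_X)_n=0$, replaces your Artin--Schreier construction by a Kummer one: Lemma~\ref{Frobinj} shows $\Ker(\tau^{*})[F]$ is a product of $\mu_{p^{n_i}}$'s, so a copy of $\mu_p$ inside $\Ker(\tau^{*})^{\circ}$ yields the degree-$p$ \'{e}tale cover via $H^1(X_{\mathrm{et}},(\Z/p\Z)_X)\cong \Hom_{\mathrm{GSch}/k}(\mu_p,\Pic_{X/k})$, reserving the Artin--Schreier argument for the case $k=\overline{k}$. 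You should either adopt that splitting of cases or restrict your Frobenius-fixed-vector argument to algebraically closed $k$.
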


\begin{proof}
We follow Mumford's argument in \cite[p.\ 99]{Mum2}.
Since $H^0(\O_X)$ is a field, we may assume $H^0(\O_X)=k$.
By taking the base change to a separable closure of $k$,
we may also assume that $k$ is separably closed.
Let $\tau\colon D\hookrightarrow X$ denote the natural immersion.

First we suppose that $\alpha(X,D)\neq 0$, i.e., $\Ker(\tau^{*})^{\circ}\neq 1$ and that  $H^1(\O_X)_n=0$ when $\Char k>0$.
Then we can take a subgroup scheme $\mu_{p}\subset \Ker(\tau^{*})^{\circ}$, where $p$ is a prime number and $p=\Char k$ when $\Char k>0$.
Indeed, the characteristic $0$ case is trivial and so we may assume that $\Char k=p>0$.
Then by Lemma~\ref{Frobinj}, $\Ker(\tau^{*})[F]\times_{k}\overline{k}$ is isomorphic to the product $\prod_{i}\mu_{p^{n_i}}$.
Since $k$ is separably closed, $\Ker(\tau^{*})[F]$ is also isomorphic to $\prod_{i}\mu_{p^{n_i}}$. 
In particular, $\Ker(\tau^{*})^{\circ}$ contains at least one $\mu_{p}$.
Thus by the natural isomorphism $H^1(X_{\mathrm{et}},(\Z/p \Z)_{X})\cong \Hom_{\mathrm{GSch}/k}(\mu_{p},\Pic_{X/k})$, we can take a non-trivial \'{e}tale cyclic covering $\pi\colon Y\to X$ of degree $p$ with $\pi^{*}D=\sum_{i=1}^{p}D_{i}$, where all $D_{i}$ are disjoint and $D_{i}\cong D$.

If $\alpha(X,D)_s\neq 0$ and the base field $k$ is algebraically closed
 of characteristic $p>0$, then there exists a non-zero element $\eta\in \alpha(X,D)$ such that $F(\eta)=\eta$.
Thus we can also take a non-trivial \'{e}tale cyclic covering $\pi\colon Y\to X$ of degree $p$ with $\pi^{*}D=\sum_{i=1}^{p}D_{i}$ corresponding to $\eta$ since $H^1(X_{\mathrm{et}},(\Z/p \Z)_{X})\cong \{\eta \in H^1(\O_X)\ |\ F(\eta)=\eta\}$.

Let $D=P+N$ and $D_i=P_i+N_i$ respectively be the Zariski decompositions of $D$ and $D_i$.
Then $\pi^{*}P=\sum_{i=1}^{p}P_{i}$ holds and it is nef and big, which contradicts Lemma~\ref{nefbignumconn}.
\end{proof}

\begin{proposition}\label{surfpencilvan} 
Let $X$ be a normal proper surface over a field $k$ of characteristic $0$ or analytic in Fujiki's class $\mathcal{C}$.
Let $D$ be an effective divisor on $X$ with $\kappa(D)=1$.
Then $\alpha(X,D)=0$ holds if $|mD|$ is composed with a rational pencil for some $m>0$.  
\end{proposition}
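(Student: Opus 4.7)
The plan is to reduce, using Proposition~\ref{alphacoinprop}(i), to the case that $|D|$ itself is composed with a rational pencil, and then to derive a contradiction from $\alpha(X,D)\neq 0$ by combining the pencil structure with an Albanese-type obstruction and a fiber-intersection computation on a smooth model.

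First I would observe that the canonical immersion $(mD)_{\mathrm{red}}\hookrightarrow mD$ factors through the natural inclusion $D\hookrightarrow mD$, so Proposition~\ref{alphacoinprop}(i) yields $\alpha(X,D)=\alpha(X,mD)$. Replacing $D$ by $mD$, I may assume that $|D|$ itself is composed with a rational pencil. Suppose for contradiction that $\alpha(X,D)\neq 0$. Since in characteristic $0$ (resp.\ in Fujiki's class $\mathcal{C}$) the Picard scheme (variety) $\Pic^{\circ}_X$ is smooth with Lie algebra $H^1(\O_X)$, the connected component of the kernel of $\tau^{*}\colon \Pic^{\circ}_X\to \Pic^{\circ}_D$ is positive-dimensional. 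By Picard--Albanese duality this means that the natural map $\mathrm{Alb}(D_{\mathrm{red}})\to \mathrm{Alb}(X)$ fails to be surjective, and I would extract a positive-dimensional abelian variety (resp.\ complex torus) $A$ that is a quotient of $\mathrm{Alb}(X)$ and such that the image of $D$ under the composite $X\to \mathrm{Alb}(X)\to A$ is a single point.

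Next, I would take a smooth proper modification $\pi\colon X'\to X$ on which the Iitaka map $f\colon X'\to B\cong \PP^1$ associated to $|D|$ and the morphism $g\colon X'\to A$ lifted from the Albanese factor are both everywhere defined. Then $\pi^{*}D$ is set-theoretically contained in a single fiber of $g$. Since $\pi^{*}D$ is one-dimensional and $A$ is nontrivial, $g$ must have positive-dimensional fibers, so its image is a curve $\Gamma\subseteq A$ generating $A$ as a group; Stein-factorizing gives $g=h\circ g'$ with $g'\colon X'\to C_A$ a fibration to a smooth curve $C_A$ and $h\colon C_A\to A$ finite, and $\pi^{*}D$ is supported in a finite union of fibers of $g'$. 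The fibrations $f$ and $g'$ must be distinct: otherwise $g$ would factor through $f\colon X'\to \PP^1$, hence through a necessarily constant morphism $\PP^1\to A$, contradicting $A\neq 0$. Consequently $(f,g')\colon X'\to B\times C_A$ has two-dimensional image and is generically finite onto it, which forces $F_B\cdot F_{C_A}>0$ on $X'$ for general fibers $F_B$ of $f$ and $F_{C_A}$ of $g'$.

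To finish, since $|D|$ is composed with the pencil $f$, I may write $\pi^{*}D\sim dF_B+E_{\mathrm{fix}}$ for some $d>0$ and some effective divisor $E_{\mathrm{fix}}$ (the fixed part of $|\pi^{*}D|$). Using that $F_{C_A}$ is nef,
\[
\pi^{*}D\cdot F_{C_A}=dF_B\cdot F_{C_A}+E_{\mathrm{fix}}\cdot F_{C_A}\ge dF_B\cdot F_{C_A}>0,
\]
whereas every irreducible component of $\pi^{*}D$ lies in a fiber of $g'$ and therefore has zero intersection with $F_{C_A}$ (any two fibers of $g'$ are algebraically equivalent and a general one is disjoint from the special ones), so $\pi^{*}D\cdot F_{C_A}=0$, the desired contradiction. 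The most delicate point will be Step~2, namely producing the Albanese quotient $A$ contracting $D$ uniformly in both the algebraic characteristic-zero and the analytic Fujiki-class-$\mathcal{C}$ settings and for possibly singular or non-reduced $D$; once that is handled via the Picard/Albanese formalism already employed in Subsection~4.1, the rest is a direct numerical argument.
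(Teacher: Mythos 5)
Your route is genuinely different from the paper's. The paper, following BHPV (12.8), reduces to the case where $|D|$ is free, writes $D$ as a sum of fibres of the resulting fibration $f\colon X\to B$ with $H^1(\O_B)=0$, identifies $H^1(\O_X)$ with $H^0(R^1f_*\O_X)$ via Leray, and kills a class $\eta\in\alpha(X,D)$ by the formal function theorem plus local freeness of $R^1f_*\O_X$ (this is exactly where $\Char k=0$ enters: no wild fibres). You instead dualize a positive-dimensional subtorus $K\subset\Ker(\Pic^{\circ}_X\to\Pic^{\circ}_D)$ to a quotient $A$ of the Albanese and play the resulting fibration $g'$ against the Iitaka fibration $f$. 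The skeleton is sound: $\Char k=0$ enters through smoothness of the Picard scheme (so $\alpha(X,D)\neq 0$ really gives a positive-dimensional $K$) and properness of $\Pic^{\circ}_{X/k}$ for normal $X$, and the rationality of the pencil enters exactly where you use it, to rule out $g'=f$.

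There is, however, one concrete invalid step: ``$g$ must have positive-dimensional fibers, so its image is a curve.'' A generically finite morphism from a surface can contract curves (blow up a point of an abelian surface), so having a positive-dimensional fibre does not force $\dim g(X')=1$. You must rule out $\dim g(X')=2$ separately, and the natural way to do so uses information you already have but did not invoke here: since the moving part of $|\llcorner\pi^{*}D\lrcorner|$ is pulled back from $B$, the divisor $\llcorner\pi^{*}D\lrcorner$ contains an \emph{entire} fibre $F$ of $f$, which is nef with $F^2=0$; if $g$ were generically finite onto a surface, the curves it contracts would form a negative definite configuration (Mumford), which cannot contain $\Supp(F)$. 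Two smaller points need a word as well: (a) your final computation needs every component of $\pi^{*}D$, including the $\pi$-exceptional ones, to lie in fibres of $g'$ --- this is true because line bundles pulled back from $X$ are trivial on $\pi$-exceptional curves, hence $K$ restricts trivially to them and $g$ contracts them, but it is not automatic from ``$D$ maps to a point''; and (b) the duality step you flag should be phrased as: each integral component $D_i$ has $K\to\mathrm{Jac}(\widetilde{D_i})$ zero, hence $g|_{\widetilde{D_i}}$ constant (a non-constant map from a curve to a torus pulls $\Pic^{\circ}$ back with finite kernel on the subtorus generated by the image); connectedness of $D$ is irrelevant and the image may be several points without harm. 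With these repairs your argument goes through, but as written the curve-image claim is a genuine gap.
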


\begin{proof}
The proof is identical to that of (12.8) Lemma in \cite{BHPV}.    
For the readers' convenience, we give a proof.
Since $\alpha(X,D)=\alpha(X,mD)\subset \alpha(mD-\mathrm{Fix}|mD|)$ holds for any $m\ge 1$ from Proposition~\ref{alphacoinprop},
we may assume that $|D|$ has no fixed parts.
Then it is base point free since $\kappa(D)=1$.
Let $f\colon X\to B$ be the fibration with connected fibers induced by $|D|$.
Then we can write $D=\sum_{i=1}^{l}F_{t_i}$, where $F_{t_i}=f^{-1}(t_i)$ are the fibers of $f$ at some closed points $t_i\in B$.
By the Leray spectral sequence $H^p(R^{q}f_{*}\O_X)\Rightarrow H^{p+q}(\O_X)$ and the assumption $H^1(\O_B)=0$, 
we have $H^1(\O_X)\cong H^0(R^{1}f_{*}\O_X)$.
Let $\eta$ be an element of $\alpha(X,D)$.
Then $\eta|_{F_t}=0$ for some $t\in B$ (for example, take $t=t_1$), that is, $\eta\in \alpha(X,F_t)$.
By Proposition~\ref{alphacoinprop}, we have $\eta\in \alpha(X,nF_t)$ for any $n\ge 1$.
Thus the formal function theorem implies that $\eta$ maps to $0$ by the composition $H^1(\O_X)\cong H^0(R^{1}f_{*}\O_X)\to (R^{1}f_{*}\O_X)_{t}$.
Hence there exists an open neighborhood $U\subset B$ of $t$ such that $\eta|_{U}=0$, which implies $\eta=0$ since $R^{1}f_{*}\O_X$ is locally free (note that $f$ contains no wild fibers by the assumption $\Char k=0$).
\end{proof}

\begin{example}
When $\Char k=p>0$, there exist counter-examples to Proposition~\ref{surfpencilvan} as follows:
Let $G=\Z/p \Z$ be a constant group scheme over an algebraically closed field $k$ with $\Char k=p>0$ and $g\in G$ a generator.
Then $G$ acts on $\mathbb{
A}^1$ as the translation $g\colon t\mapsto t+1$.
This extends to an action on $\PP^1$.
Let $E$ be an ordinary elliptic curve and take a $p$-torsion point $a\in E(k)$.
Then $G$ acts freely on $E$ as $g\colon x\mapsto x+a$.
Thus the diagonal action of $G$ to $E\times \PP^1$ is free and the quotient $X:=(E\times \PP^1)/G$ admits a structure of elliptic surfaces $f\colon X\to \PP^1/G\cong \PP^1$ via the second projection.
This admits one wild fiber $f^{-1}(\infty)=pE_{\infty}$ at the infinity point $\infty\in \PP^1$.
Then a simple calculation shows that $R^1f_{*}\O_X\cong \O_{\PP^1}(-1)\oplus \mathcal{T}$ holds, where $\mathcal{T}$ is a torsion sheaf supported at $\infty$ with length $1$ (cf.\ {\cite[p.313, Section~8]{KaUe}}).  
Now we consider a fiber $D:=f^{-1}(t)$ at a point $t\neq \infty$.
Since $H^1(\O_X)\cong H^0(R^1f_{*}\O_X)\cong H^0(\mathcal{T})$, we have $\alpha(X,D)=H^1(\O_X)\cong k$.
We note that the Frobenius map on $H^1(\O_X)$ is injective since $H^1(\O_X)\cong H^1(\O_{E_{\infty}})$ and $E_{\infty}$ is ordinary.
\end{example}

Next we consider the higher dimensional cases.
The following is a generalization of Theorem~2.1 in \cite{AlTo}.
\begin{proposition} \label{alphavan}
Let $X$ be a normal projective variety of dimension $n\ge 2$ over an infinite field $k$.
Let $D$ be an effective divisor on $X$ such that 
$D|_{S}$ is big on a complete intersection surface $S:=H_1\cap \cdots \cap H_{n-2}$ for general hyperplanes $H_1,\ldots,H_{n-2}$ on $X$.
Then $\alpha(X,D)=0$ $($resp.\ $\alpha(X,D)_s=0$$)$ holds if $\Char k=0$ $($resp.\ $\Char k>0$ and either $k=\overline{k}$ or $H^1(\O_X)_n=0$$)$.
\end{proposition}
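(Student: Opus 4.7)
Plan. The strategy is to follow the proof of Proposition~\ref{surfalphavan} line by line and reduce the higher-dimensional situation to a complete intersection surface via Bertini. After passing to the separable closure of $k$, which preserves both the hypotheses and the conclusion, suppose for contradiction that $\alpha(X,D)\neq 0$ (resp.\ $\alpha(X,D)_s\neq 0$ in the positive characteristic case). Exactly as at the beginning of the proof of Proposition~\ref{surfalphavan}, this produces a non-trivial connected \'{e}tale cyclic cover $\pi\colon Y\to X$ of some prime degree $p$ (with $p=\Char k$ in the positive characteristic case) together with a decomposition $\pi^{*}D=D_1+\cdots+D_p$ into a disjoint sum of copies of $D$. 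In particular $Y$ is a normal projective variety of dimension $n$, and $\pi^{*}H$ is ample on $Y$ for every ample $H$ on $X$.

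By the Bertini-type result recalled in Section~\ref{sec:Notations and terminology}, a very general choice of hyperplanes $H_1,\ldots,H_{n-2}$ on $X$ produces a normal complete intersection surface $S:=H_1\cap\cdots\cap H_{n-2}$ with $\O_S(D|_S)\cong \O_X(D)|_S$, and by assumption $D|_S$ is effective and big. Set $T:=\pi^{-1}(S)=\pi^{*}H_1\cap\cdots\cap \pi^{*}H_{n-2}$; applying the same Bertini-type result on $Y$ (legitimate because each $\pi^{*}H_i$ is ample on $Y$) shows that $T$ is a normal projective surface, and Bertini connectedness applied to the connected normal variety $Y$ of dimension $n\ge 3$ forces $T$ to be connected. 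Consequently $\pi|_T\colon T\to S$ is a non-trivial connected \'{e}tale cover of degree $p$, with $\pi|_T^{*}(D|_S)=(\pi^{*}D)|_T=(D_1)|_T+\cdots+(D_p)|_T$ again a disjoint sum of $p$ effective divisors. By the same torsor correspondence used in Proposition~\ref{surfalphavan}, the cover $\pi|_T$ is encoded by a non-zero class in $\alpha(S,D|_S)$ (resp.\ in $\alpha(S,D|_S)_s$, using that the restriction $H^1(\O_X)\to H^1(\O_S)$ is Frobenius-equivariant, so that the semi-simple part restricts to the semi-simple part). But Proposition~\ref{surfalphavan} applied to the normal projective surface $S$ with the effective big divisor $D|_S$ forces $\alpha(S,D|_S)=0$ (resp.\ $\alpha(S,D|_S)_s=0$), which is the desired contradiction.

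The main obstacle is to retain control of the torsor throughout the Bertini reduction: we need that $T\subset Y$ is normal and, crucially, \emph{connected}, so that $\pi|_T$ remains non-trivial, and we need that the class in $\alpha(X,D)$ corresponding to $\pi$ restricts to a non-zero class in $\alpha(S,D|_S)$ (rather than being killed by the restriction). Both points hinge on the ampleness of $\pi^{*}H_i$ on $Y$ and on Bertini connectedness for connected normal projective varieties; these are standard inputs but must be invoked in the right order so that the torsor argument, rather than any direct injectivity of $H^1(\O_X)\to H^1(\O_S)$, supplies the contradiction.
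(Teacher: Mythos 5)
Your reduction follows the paper's route: pass to the cyclic \'etale cover $\pi\colon Y\to X$ of prime degree $p$ attached to a non-zero class, restrict to $T:=\pi^{-1}(S)$ for a general complete intersection surface $S$, and check that $T$ is normal and connected so that $\pi|_T\colon T\to S$ is a non-trivial degree-$p$ \'etale cover splitting completely over $D\cap S$. That part is fine (normality of $T$ is even immediate, since $T$ is \'etale over the normal surface $S$, and your connectedness argument via ampleness of the $\pi^{*}H_i$ is correct). The gap is in the last step, where you assert that the non-trivial restricted cover ``is encoded by a non-zero class in $\alpha(S,D|_S)$'' and then invoke Proposition~\ref{surfalphavan} as a black box. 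In characteristic $0$ this implication is false: the cover $\pi|_T$ corresponds to a non-trivial $p$-torsion line bundle on $S$ trivial on $D\cap S$, i.e.\ to a non-trivial \emph{finite} subgroup scheme of $\Ker(\Pic_{S/k}\to\Pic_{(D\cap S)/k})$, and such a subgroup contributes nothing to the Lie algebra of that kernel --- which is exactly what $\alpha(S,D|_S)$ is. So the non-triviality of $T\to S$ produces no non-zero element of $\alpha(S,D|_S)$, and no contradiction follows from the \emph{statement} of Proposition~\ref{surfalphavan}. (In characteristic $p$ your step can be repaired by Artin--Schreier theory, since $H^1(S_{\mathrm{et}},(\Z/p\Z)_S)\cong\{\eta\in H^1(\O_S)\mid F(\eta)=\eta\}$ and a non-trivial cover splitting over $D\cap S$ does give $0\neq\eta\in\alpha(S,D|_S)_s$; but even there, quoting Proposition~\ref{surfalphavan} for $S$ requires its hypothesis ``$k=\overline{k}$ or $H^1(\O_S)_n=0$'', and $H^1(\O_S)_n=0$ is not inherited from $H^1(\O_X)_n=0$.)

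What the paper does instead --- and what closes your argument uniformly in both characteristics --- is to transplant the \emph{final paragraph} of the proof of Proposition~\ref{surfalphavan} rather than its statement: on the normal surface $T$ the divisor $(\pi|_T)^{*}(D|_S)=\sum_{i=1}^{p}D_i|_T$ is effective and big, yet it is a disjoint sum of $p\ge 2$ effective divisors permuted transitively by the Galois group; hence the sum of their Zariski positive parts is an effective nef and big divisor with disconnected support, contradicting Ramanujam's connectedness lemma (Lemma~\ref{nefbignumconn}). In characteristic $0$ there is also a shortcut that avoids covers entirely: Enriques--Severi--Zariski gives an injection $H^1(\O_X)\hookrightarrow H^1(\O_S)$ carrying $\alpha(X,D)$ into $\alpha(S,D|_S)$, after which Proposition~\ref{surfalphavan} applies directly --- this is how the paper argues in Proposition~\ref{pencilvan}.
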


\begin{proof}
The proof is similar to that of Proposition~\ref{surfalphavan}.
Suppose that $\alpha(X,D)\neq 0$ (resp.\ $\alpha(X,D)_s\neq 0$).
Then we can also take a non-trivial \'{e}tale cyclic covering $\pi\colon Y\to X$ of prime degree $p$ with $\pi^{*}D=\sum_{i=1}^{p}D_{i}$, where all $D_{i}$ are disjoint and $D_{i}\cong D$.
Let $H_1,\ldots, H_{n-2}$ be general hyperplanes on $X$ such that $S:=H_1\cap \cdots \cap H_{n-2}$ is a normal surface.
Then $S':=\pi^{-1}(S)$ is normal and $(\pi|_{S'})^{*}(D|_{S})=D_{1}|_{S'}+\cdots+D_{p}|_{S'}$ is  big, which is a contradiction.
\end{proof}

\begin{proposition} \label{pencilvan}
Let $X$ be a normal projective variety of dimension $n\ge 2$ over a field $k$ of characteristic $0$.
Let $D$ be an effective divisor on $X$ such that 
$|mD|$ is composed with a rational pencil for some $m>0$.
Then $\alpha(X,D)=0$ holds.
\end{proposition}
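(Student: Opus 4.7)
The plan is to mimic the proof of Proposition~\ref{surfpencilvan} in the higher-dimensional setting, carrying out the formal-function/Leray argument on a resolution $\pi\colon X'\to X$ on which the rational pencil becomes a genuine fibration. As in the surface case, Proposition~\ref{alphacoinprop}~(i) gives $\alpha(X,D)=\alpha(X,mD)\subset \alpha(X,\, mD-\mathrm{Fix}|mD|)$ for every $m\ge 1$, so we may replace $D$ by $mD-\mathrm{Fix}|mD|$ and assume $|D|$ itself has no fixed part and defines a rational map $\phi\colon X\dashrightarrow B$ to a smooth projective curve $B$ with $H^1(\O_B)=0$. Take a resolution $\pi\colon X'\to X$ of both the singularities of $X$ and the indeterminacy of $\phi$, so that $f\colon X'\to B$ is a morphism which, after passing to the Stein factorization, we may assume has connected fibers (this preserves $H^1(\O_B)=0$ since the original pencil is rational). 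Since $X$ is normal, $\pi_{*}\O_{X'}=\O_X$ and hence $\pi^{*}\colon H^1(\O_X)\hookrightarrow H^1(\O_{X'})$ is injective, so it suffices to show that $\pi^{*}(\alpha(X,D))=0$.

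Because $f$ factors the map induced by $|D|\circ\pi$ through $B\hookrightarrow\PP^N$, the pull-back divisor on $X'$ decomposes as $\pi^{*}D=\sum_l c_l F_{b_l}+E$, where $F_{b_l}=f^{-1}(b_l)$ are Cartier fibers of $f$ and $E$ is $\pi$-exceptional. For $\eta\in\alpha(X,D)$, the class $\pi^{*}\eta$ restricts to zero on the scheme-theoretic preimage $\pi^{-1}(D)\subset X'$, which contains each $F_{b_l}$ as a subscheme; hence $\pi^{*}\eta\in\alpha(X',F_{b_l})$. Applying Proposition~\ref{alphacoinprop}~(i) to the inclusion $F_{b_l}\hookrightarrow nF_{b_l}$ yields $\pi^{*}\eta\in\alpha(X',nF_{b_l})$ for every $n\ge 1$. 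The formal function theorem for the proper morphism $f$, together with Krull's intersection theorem applied to the finitely generated stalk of $R^1f_{*}\O_{X'}$ at $b_l$, then forces the image of $\pi^{*}\eta$ in $(R^{1}f_{*}\O_{X'})_{b_l}$ to vanish.

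Finally, the Leray spectral sequence combined with $H^1(\O_B)=H^2(\O_B)=0$ yields $H^1(\O_{X'})\cong H^0(B,R^1f_{*}\O_{X'})$, and in characteristic zero $R^1f_{*}\O_{X'}$ is torsion-free on $B$ (e.g.\ by Koll\'ar's torsion-freeness theorem), hence locally free since $B$ is a smooth curve. A global section of a locally free sheaf on a connected smooth curve whose stalk at one point vanishes must vanish identically, so $\pi^{*}\eta=0$, and the injectivity of $\pi^{*}$ gives $\eta=0$. The main subtlety I expect to handle with care is the scheme-theoretic inclusion $F_{b_l}\subset\pi^{-1}(D)$: it is transparent when $D$ is Cartier, but in the general Weil setting one must reconcile the Mumford pull-back $\pi^{*}D=\sum_l c_l F_{b_l}+E$ with the scheme-theoretic preimage. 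Once this technical point is in place, the argument is a direct adaptation of the surface case.
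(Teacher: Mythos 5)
Your argument is essentially correct, but it takes a genuinely different route from the paper. The paper proves Proposition~\ref{pencilvan} in four lines by induction on $n=\dim X$: for a general hyperplane $Y$, Enriques--Severi--Zariski gives $\alpha(X,D)\hookrightarrow\alpha(Y,D|_Y)$, and either $\kappa(D|_Y)\ge 2$ (handled by Proposition~\ref{alphavan}) or $|mD|_Y|$ is still composed with a rational pencil (handled by induction, bottoming out at Proposition~\ref{surfpencilvan}). You instead re-run the surface argument directly on a resolution of the indeterminacy, which is self-contained and avoids the dichotomy on hyperplane sections, at the cost of needing higher-dimensional fibration technology. Two points in your version need to be tightened. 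First, the scheme-theoretic inclusion $F_{b_l}\subset\pi^{-1}(D)$ that you flag can genuinely fail at the level of multiplicities (a fiber $F_{b_l}$ may contain $\pi$-exceptional components, so neither the identity $\pi^{*}D=\sum_l c_lF_{b_l}+E$ with $E$ exceptional nor the multiplicity comparison with $\mathcal{I}_D\O_{X'}$ is literally correct); but you do not need it. It suffices to check the set-theoretic containment $\Supp F_{b_l}\subset\pi^{-1}(\Supp D)$ (any exceptional component of $F_{b_l}$ lies over the base locus of $|D|$ or over a point of $\phi^{-1}(b_l)$, both contained in $\Supp D$), which gives $(\pi^{*}\eta)|_{(F_{b_l})_{\red}}=0$, and then Proposition~\ref{alphacoinprop}~(i) upgrades this to $\pi^{*}\eta\in\alpha(X',nF_{b_l})$ for all $n$, exactly as in your next step. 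Second, the torsion-freeness of $R^1f_{*}\O_{X'}$ is no longer the elementary ``no wild fibers'' remark from the surface case: you need Koll\'ar's torsion-freeness and splitting for $R^if_{*}\omega_{X'}$ together with relative duality over the smooth curve $B$ to conclude that $R^1f_{*}\O_{X'}$ is locally free (torsion-freeness alone already suffices for your final step). With these two repairs the proof closes; note also that the hypothesis ``composed with a rational pencil'' already refers, by Definition~\ref{pencil}, to the Stein-factorized base, so $H^1(\O_B)=0$ is given rather than something to be preserved, and $|mD-\mathrm{Fix}|mD||$ coincides with the moving part of $|mD|$, so the pencil is unchanged by your initial replacement.
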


\begin{proof}
We use induction on the dimension $n=\dim X$.
The $n=2$ case is due to Propositions~\ref{surfalphavan} and \ref{surfpencilvan}.
We assume $n\ge 3$.
Let us take a general hyperplane $Y$ on $X$.
Then $D|_{Y}$ satisfies $\kappa(D|_{Y})\ge 2$ or $|mD|$ is composed with a rational pencil.
Hence the claim holds by the natural inclusion $\alpha(X,D)\hookrightarrow \alpha(Y,D|_Y)$ due to Enriques--Severi--Zariski's lemma, Proposition~\ref{alphavan} and the inductive assumption.
\end{proof}

\subsection{Vanishing on $H^1$}

Combining Propositions~\ref{surfalphavan} and \ref{surfpencilvan} with Lemma~\ref{ccconn}, we obtain the following vanishing theorem on normal surfaces:

\begin{theorem} \label{ccvan}
Let $X$ be a normal proper surface over a field $k$ or analytic in Fujiki's class $\mathcal{C}$.
Let $D$ be a chain-connected divisor on $X$ 
having a prime component $C$ with $H^{0}(\O_X)\cong H^{0}(\O_C)$.
Assume that $|mD|$ is not composed with an irrational pencil and has positive dimension for some $m>0$.
If $\Char k>0$, we further assume that $D$ is big and the Frobenius map on $H^1(\O_X)$ is injective.
Then we have $H^1(\O_{X}(-D))=0$ or equivalently, $H^1(\O_{X}(K_X+D))=0$.
\end{theorem}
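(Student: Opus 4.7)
The plan is to extract $H^1(\O_X(-D))$ from the short exact sequence
\begin{equation*}
0\to \O_X(-D)\to \O_X\to \O_D\to 0
\end{equation*}
and then reduce the vanishing to results already available. Taking cohomology, I would first check that $H^0(\O_X(-D))=0$ (automatic since $D>0$ on a normal proper $X$), so the map $H^0(\O_X)\to H^0(\O_D)$ is injective. Lemma~\ref{ccconn}, applied to the chain-connected divisor $D$ and its prime component $C$ satisfying $H^0(\O_X)\cong H^0(\O_C)$, then upgrades this injection to an isomorphism $H^0(\O_X)\cong H^0(\O_D)$. Consequently the connecting map identifies
\begin{equation*}
H^1(\O_X(-D))\cong \alpha(X,D):=\Ker\bigl(H^1(\O_X)\to H^1(\O_D)\bigr),
\end{equation*}
so the whole theorem reduces to showing $\alpha(X,D)=0$.

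For the characteristic zero (or Fujiki class $\mathcal{C}$) case, the positive-dimensionality of $|mD|$ forces $\kappa(D)\ge 1$, so there are only two subcases. If $\kappa(D)=2$, then $D$ is big on the surface $X$ (and bigness in the analytic setting implies $X$ is Moishezon), so Proposition~\ref{surfalphavan} applies directly and yields $\alpha(X,D)=0$. If $\kappa(D)=1$, then $|mD|$ is composed with a pencil; by hypothesis this pencil is not irrational, so it is rational, and Proposition~\ref{surfpencilvan} gives $\alpha(X,D)=0$.

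For the positive-characteristic case, $D$ is big by hypothesis, so Proposition~\ref{surfalphavan} already gives $\alpha(X,D)_s=0$. The extra input is that $\alpha(X,D)$ is a Frobenius-stable subspace of $H^1(\O_X)$ (the restriction $\tau^{*}$ intertwines the Frobenius actions on source and target), so the decomposition $H^1(\O_X)=H^1(\O_X)_s\oplus H^1(\O_X)_n$ restricts to one on $\alpha(X,D)$. The injectivity of Frobenius on $H^1(\O_X)$ means $H^1(\O_X)_n=0$, hence $\alpha(X,D)_n=0$ as well, and combining with $\alpha(X,D)_s=0$ yields $\alpha(X,D)=0$. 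Finally, the equivalence with $H^1(\O_X(K_X+D))=0$ is Serre duality, which is available since a normal surface is Cohen--Macaulay and $\omega_X=\O_X(K_X)$.

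The main steps are essentially bookkeeping once the chain-connectedness has been exploited; no step is genuinely difficult, but the one that requires care is the positive-characteristic argument, where one must verify Frobenius-stability of $\alpha(X,D)$ to combine the semisimple vanishing coming from Proposition~\ref{surfalphavan} with the nilpotent vanishing coming from the Frobenius-injectivity hypothesis.
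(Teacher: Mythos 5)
Your proof is correct and is essentially the paper's own argument: the theorem is stated there as an immediate consequence of combining Lemma~\ref{ccconn} (giving $H^0(\O_X)\cong H^0(\O_D)$, hence $H^1(\O_X(-D))\cong\alpha(X,D)$) with Propositions~\ref{surfalphavan} and \ref{surfpencilvan}, exactly as you do. Your added care in the positive-characteristic case (Frobenius-stability of $\alpha(X,D)$ so that $\alpha(X,D)_s=0$ and $H^1(\O_X)_n=0$ together force $\alpha(X,D)=0$) is a correct filling-in of a detail the paper leaves implicit.
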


The following is a positive characteristic analog of Theorem~4.1 in \cite{Eno} and includes a Kawamata--Viehweg type vanishing theorem for surfaces in positive characteristic:
\begin{theorem} \label{bigzposvan}
Let $X$ be a normal proper geometrically connected surface over a perfect field $k$ of positive characteristic.
Let $D$ be a big divisor on $X$ and $D=P_{\Z}+N_{\Z}$ the integral Zariski decomposition as in Theorem~3.5 in \cite{Eno}.
Let $\mathcal{L}_D$ and $\mathcal{L'}_D$ respectively be the rank $1$ sheaves on $N_{\Z}$ defined by the cokernel of the homomorphisms $\O_X(K_X+P_{\Z})\to \O_X(K_X+D)$ and $\O_X(-D)\to \O_X(-P_{\Z})$ induced by multiplying a defining section of $N_{\Z}$.
If $\dim|D|\ge \dim H^{1}(\O_X)_n$, then we have 
$$
H^1(X,\O_X(K_X+D))\cong H^1(N_{\Z},\mathcal{L}_D)
$$ 
and
$$
H^1(X,\O_X(-D))\cong H^0(N_{\Z},\mathcal{L'}_D).
$$
\end{theorem}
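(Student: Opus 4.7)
\emph{Reduction to a single vanishing.} I start by taking the long exact cohomology sequences attached to the two defining short exact sequences
$$0 \to \O_X(K_X+P_\Z) \to \O_X(K_X+D) \to \mathcal{L}_D \to 0, \qquad 0 \to \O_X(-D) \to \O_X(-P_\Z) \to \mathcal{L}'_D \to 0.$$
Since $D$ and $P_\Z$ are big and effective (the latter inherits bigness from $D$ as it is the $\Z$-positive part in the integral Zariski decomposition), we have $H^0(\O_X(-D)) = H^0(\O_X(-P_\Z)) = 0$. By Grothendieck--Serre duality for divisorial sheaves on the normal surface $X$, this also gives $H^2(\O_X(K_X+P_\Z)) \cong H^0(\O_X(-P_\Z))^{\ast} = 0$, and the equivalence between $H^1(\O_X(K_X+P_\Z))=0$ and $H^1(\O_X(-P_\Z))=0$. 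Feeding these into the long exact sequences, the two asserted isomorphisms both reduce to the single vanishing
$$H^1(X, \O_X(-P_\Z)) = 0.$$

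\emph{Injection into $\alpha(X,P_\Z)$.} Because $P_\Z$ is big and $\Z$-positive, Proposition~\ref{bigzposcc} shows $P_\Z$ is chain-connected, and Corollary~\ref{bigzposinsep} then gives that $H^0(\O_{P_\Z})/k$ is purely inseparable; since $k$ is perfect this forces $H^0(\O_{P_\Z}) = k$. The long exact sequence of $0\to \O_X(-P_\Z)\to \O_X\to \O_{P_\Z}\to 0$ therefore produces an injection
$$H^1(X,\O_X(-P_\Z)) \hookrightarrow \alpha(X, P_\Z) = \Ker\bigl(H^1(\O_X) \to H^1(\O_{P_\Z})\bigr).$$
Base-changing to $\bar k$ preserves normality, geometric connectedness, and (by Lemma~\ref{zposbasechange}) the big $\Z$-positivity of $P_\Z$. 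Applying Proposition~\ref{surfalphavan} over $\bar k$ and descending via flat base change yields $\alpha(X,P_\Z)_s = 0$, so that
$$H^1(X,\O_X(-P_\Z)) \subseteq H^1(\O_X)_n.$$

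\emph{Exploiting the dimension bound (expected main obstacle).} The final and hardest step is to force this inclusion to be zero using the hypothesis $\dim|D| \ge \dim H^1(\O_X)_n$ (equivalently $\dim|P_\Z| \ge \dim H^1(\O_X)_n$, since $N_\Z$ is a fixed part of $|D|$). My plan here is to follow the combined Fujita--Mumford strategy of \cite{Fuj} and \cite{Mum2}: each section $s\in H^0(\O_X(P_\Z))$ gives a multiplication map $\cdot s\colon \O_X(-P_\Z)\to \O_X$ whose induced map on $H^1$ lands in $\alpha(X,\mathrm{div}(s))\subseteq H^1(\O_X)_n$, assembling into a cup-product pairing
$$H^0(\O_X(P_\Z)) \otimes H^1(X,\O_X(-P_\Z)) \longrightarrow H^1(\O_X)_n.$$
A nonzero class $\eta\in H^1(\O_X(-P_\Z))$ would, via Mumford's construction of iterated $\alpha_p$-torsors from classes in $H^1(\O_X)_n$, produce a cover $\pi\colon Y\to X$ on which $\pi^\ast P_\Z$ splits into mutually disjoint chain-connected pieces; the bigness of $\pi^\ast P_\Z$ combined with Ramanujam's Lemma~\ref{nefbignumconn} (applied to the nef-big part of its Zariski decomposition) and the chain-connectedness results of Proposition~\ref{bigzposcc} will then collide with the dimension count. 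The technical heart is to quantify, using the inequality $\dim|P_\Z|\ge \dim H^1(\O_X)_n$, how varying $s$ through the full linear system cuts the common kernel of the restriction maps down to zero in $H^1(\O_X)_n$; this is where the bulk of the work lies, while the earlier steps are either duality bookkeeping or direct consequences of the already-established results on chain-connectedness and $\alpha$-vanishing.
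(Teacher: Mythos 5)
Your reduction to the single vanishing $H^1(X,\O_X(-P_{\Z}))=0$ and your second step (the injection $H^1(X,\O_X(-P_{\Z}))\hookrightarrow \alpha(X,P_{\Z})$ together with $\alpha(X,P_{\Z})_s=0$ from Proposition~\ref{surfalphavan}) match the paper's setup. But the third step, which you yourself flag as the heart of the matter, is a genuine gap: the mechanism you sketch cannot work, and you supply no argument that does. Building covers out of classes in $H^1(\O_X)$ and splitting $\pi^{*}P_{\Z}$ into disjoint big pieces is exactly the content of Proposition~\ref{surfalphavan}, and it only yields that the image of each multiplication map lies in the nilpotent part $H^1(\O_X)_n$ --- which you already have. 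It cannot push the image to zero: a nonzero class in the nilpotent part killed by $F$ corresponds to a nontrivial $\alpha_p$-torsor, which is a \emph{purely inseparable} cover, and pulling back a big $\Z$-positive divisor along a purely inseparable morphism does not disconnect it. The disjoint-splitting/Ramanujam-connectedness contradiction is only available for Frobenius-fixed classes, i.e.\ for the semi-simple part; no iteration of this construction extracts more from the nilpotent part, with or without the dimension hypothesis.

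The step you are missing is Fujita's determinant argument ((7.4)~Theorem in \cite{Fuj}), which is how the paper actually uses $\dim|D|\ge \dim H^1(\O_X)_n$. Set $U:=H^0(\O_X(D))$, $V:=H^1(\O_X(-D))$, $W:=H^1(\O_X)_n$ (with $D$ now assumed $\Z$-positive). Every nonzero $s\in U$ gives an \emph{injection} $\times s\colon V\hookrightarrow W$ --- injectivity for every member of $|D|$, not just one, via Corollary~\ref{bigzposconn} --- so if $V\neq 0$ the assignment $s\mapsto \times s$ defines a morphism $\overline{\Phi}\colon \PP(U^{*})=|D|\to \Gr(r,W)$ with $r=\dim V$. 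The hypothesis gives $\dim W<\dim U$, and Tango's theorem (Corollary~3.2 in \cite{Tan}) then forces $\overline{\Phi}$ to be constant, with image some $r$-plane $I\subset W$. The composite $\det\Phi\colon U\to \Hom_k(V,I)\to \Hom_k(\wedge^{r}V,\wedge^{r}I)\cong \Af^1_{k}$ is a regular function that vanishes nowhere on $U\setminus\{0\}$, which is impossible because $\dim U\ge 2$ forces its zero locus to be a divisor. Without this (or some equivalent device that genuinely consumes the inequality $\dim U>\dim W$), your proof does not close.
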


\begin{proof}
Since $\dim |D|=\dim |P_{\Z}|$, it suffices to show the vanishing of $H^1(X,\O_X(-D))$ under the additional assumption that $D$ is $\Z$-positive (cf.\ the proof of Theorem~4.1 in \cite{Eno}). 
We may assume that $k$ is algebraically closed by Lemma~\ref{zposbasechange}.
Now we use a slight modification of Fujita's argument (cf.\ (7.4)~Theorem in \cite{Fuj}).
First note that $H^0(\O_{D_s})\cong k$ holds for any member $D_s\in |D|$ from Corollary~\ref{bigzposconn}.
Thus each non-zero section $s\in H^0(\O_X(D))$ defines an injection $\times s\colon H^1(\O_X(-D))\hookrightarrow H^1(\O_X)$.
Moreover, the image of $\times s$ is contained in $H^1(\O_X)_n$ by Proposition~\ref{surfalphavan}.
Let $U:=H^0(\O_X(D))$, $V:=H^1(\O_X(-D))$, $W:=H^1(\O_X)_n$ and $M:=\Hom_{k}(V,W)$ for simplicity and consider these as affine varieties over $k$.
Then the correspondence $s\mapsto \times s$ as above defines a $k$-morphism $\Phi\colon U\to M$.
Now we suppose that $V\neq 0$.
Note that $1 \le \dim V\le \dim W<\dim U$ holds by assumption and the above argument.
Then $\Phi$ induces a morphism $\overline{\Phi}\colon \PP (U^{*})=|D|\to \Gr(r,W)$, where $\Gr(r,W)$ is the Grassmann variety parametrizing all $r:=\dim V$-dimensional $k$-linear subspaces of $W$.
Since $\dim W<\dim U$, it follows from Corollary~3.2 in \cite{Tan} that the morphism $\overline{\Phi}$ is constant. 
We denote by $(I\subset W)$ the image of $\overline{\Phi}$.
Thus $\Phi$ induces the morphism 
$$
\det \Phi\colon U\xrightarrow{\Phi} \Hom_k(V,I)\xrightarrow{\det} \Hom_k(\wedge^{r}V,\wedge^{r}I)\cong \Af^1_{k},
$$
the restriction of which to $U\setminus \{0\}$ is non-zero everywhere.
This contradicts $\dim U\ge 2$ since $(\det \Phi)^{-1}(0)$ must be a divisor.
\end{proof}

\begin{corollary}[Kawamata--Viehweg type vanishing theorem]\label{KVvan}
Let $X$ be a normal proper surface over a perfect field $k$ of positive characteristic with $H^0(\O_X)\cong k$.
Let $M$ be a nef and big $\R$-divisor on $X$.
If $\dim|\ulcorner M\urcorner |\ge \dim H^1(\O_X)_{n}$ holds, then we have
$H^i(X,\O_X(K_X+\ulcorner M\urcorner))=0$ for any $i>0$.
\end{corollary}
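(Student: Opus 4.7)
The plan is to reduce the statement directly to Theorem~\ref{bigzposvan} for the cohomology in degree one, and to handle the remaining degrees by Serre duality on the normal surface.

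First I would set $D := \ulcorner M \urcorner$. Since $M \le D$ and $M$ is big, $D$ is itself big. The decisive observation is that $D$ is $\Z$-positive; this is the content of Proposition~3.16 in \cite{Eno}, which asserts that the round-up of a nef $\R$-divisor is $\Z$-positive. The hypothesis $\dim|D| \ge \dim H^1(\O_X)_n \ge 0$ also guarantees $|D| \ne \emptyset$, and $H^0(\O_X) \cong k$ with $k$ perfect forces $X$ to be geometrically connected (base-change $H^0(\O_{X_{\bar k}}) \cong H^0(\O_X) \otimes_k \bar k = \bar k$), so the hypotheses of Theorem~\ref{bigzposvan} are all satisfied.

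With these reductions in hand, the integral Zariski decomposition of $D$ (Theorem~3.5 in \cite{Eno}) degenerates as $D = P_\Z + N_\Z$ with $P_\Z = D$ and $N_\Z = 0$: indeed, a big $\Z$-positive divisor is chain-connected by Proposition~\ref{bigzposcc}, hence equals its own chain-connected component, which coincides with $P_\Z$ by the remark following that proposition. Invoking Theorem~\ref{bigzposvan} then yields
\[
H^1(X, \O_X(K_X + D)) \cong H^1(N_\Z, \mathcal{L}_D) = 0,
\]
the desired vanishing in degree one.

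For $i = 2$, I would apply Serre duality on the normal surface $X$, using that the dualizing sheaf is the reflexive rank-one sheaf $\O_X(K_X)$; this gives
\[
H^2(X, \O_X(K_X + D))^\vee \cong H^0(X, \O_X(-D)),
\]
which vanishes because $D$ is big and hence $-D$ is not pseudo-effective. Cohomology in degrees $i \ge 3$ vanishes trivially since $\dim X = 2$. There is no serious obstacle: the entire argument is a direct deduction from Theorem~\ref{bigzposvan}, and the only external input needed is the $\Z$-positivity of $\ulcorner M \urcorner$ for a nef $\R$-divisor $M$, supplied by the cited Proposition~3.16 in \cite{Eno}.
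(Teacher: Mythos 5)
Your argument is correct and is essentially the deduction the paper intends: the corollary follows from Theorem~\ref{bigzposvan} once one knows $D=\ulcorner M\urcorner$ is big and $\Z$-positive (so that $P_{\Z}=D$, $N_{\Z}=0$ and the right-hand side of the isomorphism vanishes), with $H^2$ handled by Serre duality and the bigness of $D$. The only point worth tightening is that Proposition~\ref{bigzposcc} and the remark following it are stated for \emph{effective} divisors, whereas $\ulcorner M\urcorner$ need not be effective; since $\dim|D|\ge 0$ gives $|D|\neq\emptyset$ and $\Z$-positivity is a purely numerical condition, you may replace $D$ by an effective member of $|D|$, after which your chain of citations goes through verbatim.
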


For the higher dimensional cases, by combining Propositions~\ref{alphavan} and \ref{pencilvan} with Proposition~\ref{higherccconn}, we obtain the following:

\begin{theorem}[Generalized Ramanujam vanishing theorem] \label{RamvanthmI}
Let $X$ be a normal projective variety of dimension $n\ge 2$ over an algebraically closed field $k$.
If $\Char k>0$, we assume that the Frobenius map on $H^1(\O_X)$ is injective.
Let $D$ be a chain-connected divisor on $X$ which satisfies one of the following conditions:

\smallskip

\noindent
$(\mathrm{i})$ $D|_{S}$ is big on a complete intersection surface $S:=H_1\cap \cdots \cap H_{n-2}$ for general hyperplanes $H_1,\ldots,H_{n-2}$, or

\smallskip

\noindent
$(\mathrm{ii})$ $|mD|$ is composed with a rational pencil for some $m>0$ and $\Char k=0$.

\smallskip

\noindent
Then $H^1(X,\O_X(-D))=0$ holds.
\end{theorem}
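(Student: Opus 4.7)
The plan is to reduce the desired vanishing to the vanishing of $\alpha(X,D)$, which has already been established under both sets of hypotheses. I would start from the short exact sequence
$$0\to \O_X(-D)\to \O_X\to \O_D\to 0$$
and extract its long cohomology sequence. Because $D$ is chain-connected, Proposition~\ref{higherccconn} gives $H^0(\O_D)\cong k\cong H^0(\O_X)$, so the restriction map $H^0(\O_X)\to H^0(\O_D)$ is an isomorphism; the connecting map $H^0(\O_D)\to H^1(X,\O_X(-D))$ is therefore zero, and I obtain a canonical identification
$$H^1(X,\O_X(-D))\ \cong\ \Ker\bigl(H^1(\O_X)\to H^1(\O_D)\bigr)\ =\ \alpha(X,D).$$

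Next I would split into the two cases and apply the $\alpha$-vanishing results. In case~(i), the bigness of $D|_S$ on a general complete-intersection surface is exactly the hypothesis of Proposition~\ref{alphavan}. In characteristic $0$ this immediately gives $\alpha(X,D)=0$. In positive characteristic, Proposition~\ref{alphavan} yields only $\alpha(X,D)_s=0$; however, the standing assumption that Frobenius is injective on $H^1(\O_X)$ means $H^1(\O_X)_n=0$, and since $\alpha(X,D)$ is a Frobenius-stable subspace of $H^1(\O_X)$ its nilpotent part also vanishes, so $\alpha(X,D)=\alpha(X,D)_s=0$. In case~(ii) the hypothesis $\Char k=0$ together with $|mD|$ being composed of a rational pencil is exactly the setup of Proposition~\ref{pencilvan}, which directly gives $\alpha(X,D)=0$.

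Since all the substantive work is packaged into Propositions~\ref{higherccconn}, \ref{alphavan} and \ref{pencilvan}, there is no real obstacle: the proof is essentially an assembly. The only subtle point worth highlighting is the passage from $\alpha(X,D)_s=0$ to $\alpha(X,D)=0$ in positive characteristic, which uses that the Frobenius injectivity assumption is imposed on the ambient $H^1(\O_X)$ rather than merely on $\alpha(X,D)$.
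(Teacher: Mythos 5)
Your proposal is correct and is precisely the assembly the paper intends: the paper states Theorem~\ref{RamvanthmI} as an immediate consequence of ``combining Propositions~\ref{alphavan} and \ref{pencilvan} with Proposition~\ref{higherccconn}'', which is exactly your identification $H^1(X,\O_X(-D))\cong\alpha(X,D)$ via the long exact sequence followed by the two $\alpha$-vanishing results. Your remark on upgrading $\alpha(X,D)_s=0$ to $\alpha(X,D)=0$ using $\alpha(X,D)_n\subseteq H^1(\O_X)_n=0$ is the right (and only) point needing care, and it is handled correctly.
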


The following can be seen as a higher dimensional generalization of Corollary~\ref{KVvan} and Theorem~(2.7) in \cite{Miy}.

\begin{theorem}[Generalized Miyaoka vanishing theorem] \label{Miyvanthm}
Let $X$ be a normal projective geometrically connected variety of dimension $n\ge 2$ over an infinite perfect field $k$.
Let $D$ be a divisor on $X$.
We assume the following three conditions:

\smallskip

\noindent
$(\mathrm{i})$ $D=\ulcorner M \urcorner +E$ for some $\R$-divisor $M$ and the sum of prime divisors $E=\sum_{i=1}^{m}E_i$ $($possibly $m=0$$)$.

\smallskip

\noindent
$(\mathrm{ii})$ There exist $n-2$ hyperplanes 
 $H_1,\ldots,H_{n-2}$ on $X$ with $S:=H_1\cap \cdots \cap H_{n-2}$ a normal surface such that 
$M|_{S}$ is nef, $D|_{S}$ is big and for each $j$,
$$
H_1\cdots H_{n-2}(\ulcorner M\urcorner+\sum_{i=1}^{j-1}E_i) E_j>0.
$$

\smallskip

\noindent
$(\mathrm{iii})$ $\Char k=0$ or $\dim|D|\ge \dim H^1(\O_X)_n$.

\smallskip

\noindent
Then $H^1(X,\O_X(-D))=0$ holds.
\end{theorem}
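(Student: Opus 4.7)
My plan is to reduce to the surface case by hyperplane cutting and then replay the Grassmannian argument from the proof of Theorem~\ref{bigzposvan}. By base change we may assume $k$ is algebraically closed, since the higher-dimensional analogue of Lemma~\ref{zposbasechange} for the data in condition~(ii) preserves all hypotheses.

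First I would restrict to the surface $S = H_1 \cap \cdots \cap H_{n-2}$ of condition~(ii). After a Bertini-style refinement of the $H_i$ (permissible since the inequalities in (ii) are open), we have $\ulcorner M \urcorner|_S = \ulcorner M|_S \urcorner$ and $D|_S = \ulcorner M|_S \urcorner + \sum_j E_j|_S$. Since $M|_S$ is nef, $\ulcorner M|_S \urcorner$ is $\Z$-positive by Proposition~3.16 of \cite{Eno}, and the inequality $(\ulcorner M|_S\urcorner + \sum_{i<j} E_i|_S)\, E_j|_S > 0$ in (ii) lets us extend its connecting chain by appending the primes $E_j|_S$ one at a time, so $D|_S$ is $\Z$-positive on $S$. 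Combined with the bigness of $D|_S$, Proposition~\ref{bigzposcc} gives that $D|_S$ is chain-connected on $S$. Because the Mumford intersection $H^{n-2} A C$ on $X$ agrees with $A|_S \cdot C|_S$ on $S$, and $\Z$-positivity is a numerical property on $S$, the same conclusion applies to every $D' \in |D|$ and implies that $D'$ is chain-connected on $X$ in the sense of Example~\ref{cconnex}.

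Proposition~\ref{higherccconn} then gives $H^0(\O_{D'}) \cong k$ for every $D' \in |D|$. In particular, from $0 \to \O_X(-D) \to \O_X \to \O_D \to 0$ we obtain an injection $H^1(X, \O_X(-D)) \hookrightarrow H^1(\O_X)$ whose image equals $\alpha(X, D)$. Proposition~\ref{alphavan}, applied to the surface cut $S$ on which $D|_S$ is big, yields $\alpha(X, D) = 0$ in characteristic $0$ (finishing the proof) and $\alpha(X, D)_s = 0$ in characteristic $p$, so that the image is contained in $H^1(\O_X)_n$.

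To close the characteristic $p$ case, I would reuse the Grassmannian argument from the proof of Theorem~\ref{bigzposvan}. For each nonzero $s \in U := H^0(\O_X(D))$ multiplication by $s$ gives an injection $V := H^1(\O_X(-D)) \hookrightarrow H^1(\O_X)_n =: W$, inducing a morphism $\overline{\Phi}\colon |D| \to \Gr(r, W)$ with $r = \dim V$. If $V \neq 0$, the bound $\dim |D| \ge \dim W$ together with Corollary~3.2 in \cite{Tan} forces $\overline{\Phi}$ to be constant with image some $I \subset W$, so the determinant morphism $\det \Phi\colon U \to \Af^1_k$ is nowhere zero on $U \setminus \{0\}$; but its zero locus must be a divisor in $U$ of dimension $\ge 2$, a contradiction. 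The main subtlety I expect is arranging $\ulcorner M \urcorner|_S = \ulcorner M|_S\urcorner$ together with the passage of chain-connectedness between $X$ and $S$; these should follow from perturbing the $H_i$ within the ample cone so as to avoid the supports of the fractional part of $M$ and of the $E_j$ while preserving the strict positivity conditions in (ii).
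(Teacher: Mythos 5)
Your characteristic-$p$ argument is essentially the paper's own: the author likewise observes that $D|_S$ is big and $\Z$-positive on the complete-intersection surface $S$ (by appending the $E_j$ one at a time to a connecting chain for $\ulcorner M\urcorner|_S$, using the positivity in condition (ii)), deduces $H^0(\O_{D'})\cong k$ for the members $D'\in|D|$ by cutting down to $S$, places the image of multiplication by a section inside $H^1(\O_X)_n$ via Proposition~\ref{alphavan}, and then runs the Grassmannian/Tango argument of Theorem~\ref{bigzposvan} verbatim. That part of your proposal is correct.

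The gap is in characteristic $0$. There condition (iii) is vacuous, so the theorem makes no effectivity assumption on $D$: nothing forces $|D|\neq\emptyset$ (for instance $D=\ulcorner M\urcorner$ for a nef $\R$-divisor $M$ with $\nu(M)\ge 2$ and $h^0(D)=0$ is allowed, and the remark following the theorem explicitly claims to recover Fujino's Theorem~3.5.3, which carries no effectivity hypothesis). Your entire mechanism --- the sequence $0\to\O_X(-D)\to\O_X\to\O_D\to 0$, the identification of the image of $H^1(\O_X(-D))$ with $\alpha(X,D)$, and Proposition~\ref{alphavan} itself --- presupposes an effective member of $|D|$, so it proves nothing when $|D|=\emptyset$. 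The paper treats characteristic $0$ by a different route: induction on $\dim X$, using the Enriques--Severi--Zariski lemma to inject $H^1(\O_X(-D))$ into $H^1(\O_Y(-D|_Y))$ for a general hyperplane $Y$ and reducing to the surface case, which is Theorem~4.1(1) of \cite{Eno} and does not require $D$ to be effective. You would need to add this step (or some other argument not routed through $\alpha(X,D)$) to cover the characteristic-$0$ statement as written; when $|D|\neq\emptyset$ your argument does also work in characteristic $0$.
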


\begin{proof}
We may assume that $k$ is algebraically closed since the conditions (i), (ii) and (iii) are preserved by any separable base change.
We note that for any effective divisor $D$ satisfying the conditions (i) and (ii), $D|_{S}$ is big $\Z$-positive on a general complete intersection surface $S=H_1\cap \cdots \cap H_{n-2}$.
Indeed, this can be checked from the fact that $D+C$ is $\Z$-positive for any $\Z$-positive divisor $D$ and any prime divisor $C$ on $S$ with $DC>0$.
Thus $H^0(\O_D)\cong k$ holds from Corollary~\ref{bigzposinsep}.

First assume that $\Char k=0$.
We use induction on $n=\dim X$.
The $n=2$ case is due to Theorem~4.1 (1) in \cite{Eno}.
Assume that $n\ge 3$.
We take a general hyperplane $Y$ on $X$.
Then $D|_Y=\ulcorner M |_Y\urcorner+E|_Y$ also satisfies the conditions (i), (ii) and (iii) in Theorem~\ref{Miyvanthm}.
Then the claim follows from the injection $H^1(\O_X(-D))\hookrightarrow H^1(\O_Y(-D|_Y))$ due to Enriques--Severi--Zariski's lemma and the inductive assumption.

We assume that $\Char k>0$.
Note that $\alpha(X,D)$ is contained in $H^1(\O_X)_n$ by Proposition~\ref{alphavan}.
Hence the proof is identical to that of Theorem~\ref{bigzposvan}.
\end{proof}

\begin{remark}
(1) Theorems~\ref{ccvan} and \ref{RamvanthmI} can be seen as generalizations of Ramanujam's $1$-connected vanishing for smooth surfaces (cf.\ {\cite[Chapter~IV, (12.5)~Theorem]{BHPV}}).
Theorem~\ref{RamvanthmI} is also a generalization of Theorem~2 in \cite{Ram}.

\smallskip

\noindent
(2) Theorem~\ref{Miyvanthm} recovers Theorem~3.5.3 in \cite{Fuj}.
Indeed, for a smooth complete variety $X$ of dimension $\ge 2$ and a nef $\R$-divisor $M$ with $\nu(M)\ge 2$, we reduce the vanishing $H^1(X,\O_X(-\ulcorner M \urcorner))=0$ to Theorem~\ref{Miyvanthm} as follows:
Take a birational morphism $\pi\colon X'\to X$ from a smooth projective variety by using Chow's lemma and apply the Leray spectral sequence $H^p(R^{q}\pi_{*}\O_{X'}(-\ulcorner \pi^{*}M \urcorner))\Rightarrow H^{p+q}(\O_X(-\ulcorner M \urcorner))$.

\smallskip

\noindent
(3) The conditions (i) and (ii) in Theorem~\ref{Miyvanthm} are satisfied for any divisor $D$ of the form $D=\ulcorner M \urcorner$, $M$ is an $\R$-divisor which is nef in codimension $1$ and satisfies $\nu(M)\ge 2$ or $\kappa(D)\ge 2$.
\end{remark}

\begin{example}
(1) Raynaud surfaces (\cite{Ray2}, \cite{Muk}) are smooth projective surfaces $X$ of positive characteristic with ample divisors $D$ with $H^1(X,\O_X(-D))\neq 0$.
By construction, we can take the divisor $D$ effective.
Thus these examples show that Theorem~\ref{bigzposvan} does not hold if we only assume the weaker condition that $|D|\neq \emptyset$.

\smallskip

\noindent
(2) The examples constructed in \cite{CaTa} (resp.\ in \cite{Ber}) are smooth (resp.\ klt) rational surfaces $X$ with a divisor $D$ of the form $D=\ulcorner M \urcorner$, a nef and big $\Q$-divisor $M$ (resp.\ an ample divisor $D$) on $X$ such that $H^1(X,\O_X(-D))\neq 0$.
Here by Theorem~\ref{bigzposvan}, we can not take the divisor $D$ effective because $H^1(\O_X)=0$ in this case. 

\smallskip

\noindent
(3) If the base field $k$ is not perfect, Theorem~\ref{bigzposvan} does not hold.
Indeed, Maddock \cite{Mad} constructed a regular del Pezzo surface $X_2$ over an imperfect field $k$ of characteristic $2$ with $\dim H^1(\O_{X_2})=1$ and $K_{X_2}^2=2$.
Then one sees by the Riemann--Roch theorem that $\dim |-K_{X_2}|\ge \dim H^1(\O_{X_2})=1$.

\smallskip

\noindent
(4) For any $i\ge 2$, there exist a normal projective variety $X$ of dimension $\ge 3$ and an ample Cartier divisor $D$ on $X$ such that $H^i(X,\O_{X}(-D))\neq 0$ even for characteristic $0$ (\cite{Som}).
Thus the similar results of Theorem~\ref{Miyvanthm} for the vanishing on $H^i$, $i\ge 2$ can not be expected. 
\end{example}

\section{Adjoint linear systems for effective divisors}
\label{sec:Adjoint linear systems for effective divisors}

We are now going to apply our vanishing theorems to the study of adjoint linear systems on normal surfaces. 
In this section, let $X$ be a normal proper surface over a field $k$ or a normal compact analytic surface in Fujiki's class $\mathcal{C}$.
First, let us recall the invariant $\delta_{\zeta}(\pi,Z)$ for the germ of a cluster $(X,\zeta)$. 

\begin{definition}[cf.\ Definition~5.1 in \cite{Eno}] \label{delta}
Let $\zeta$ be a {\em cluster} on $X$, that is, a $0$-dimensional subscheme (or analytic subset) of $X$.
Let $\pi\colon X'\to X$ be a resolution of singularities of $X$ contained in $\zeta$ and $Z>0$ an effective $\pi$-exceptional divisor on $X'$ with $\pi_{*}\mathcal{I}_Z\subset \mathcal{I}_{\zeta}$.
Let $\Delta$ be the {\em anti-canonical cycle} of $\pi$, namely the $\pi$-exceptional $\Q$-divisor defined by $\Delta=\pi^{*}K_X-K_{X'}$. 
Then we define the number $\delta_{\zeta}(\pi, Z)$ to be $0$ if $\Delta-Z$ is effective, and $-(\Delta-Z)^2$ otherwise.

For a cluster $\zeta$ and an effective divisor $D>0$ on $X$, we say that the above pair $(\pi,Z)$ {\em satisfies the condition $(E)_{D,\zeta}$} if $\pi_{*}\mathcal{I}_Z\subset \mathcal{I}_{\zeta}$ and $\pi^{*}D+\Delta-Z$ is effective.
\end{definition}

The first main theorem in this section is as follows:

\begin{theorem}[Reider-type theorem I]\label{ReithmI}
Let $X$ be a normal proper surface over a field $k$ or analytic in Fujiki's class $\mathcal{C}$.
Let $D>0$ be an effective divisor on $X$ 
and assume there is a chain-connected component $D_{\mathrm{c}}$ of $D$ containing a prime divisor $C$ with $H^0(\O_C)\cong k$.
Let $\zeta$ be a cluster on $X$ along which $K_X+D$ is Cartier.
Let $(\pi,Z)$ be a pair satisfying the condition $(E)_{D_{\mathrm{c}},\zeta}$ in Definition~\ref{delta} and $D':=\pi^{*}D_{\mathrm{c}}+\Delta-Z$. 
Assume that $H^0(\O_X(K_X+D))\to H^{0}(\O_X(K_X+D)|_{\zeta})$ is not surjective.
Then there exists an effective decomposition $D=A+B$ 
such that both $A$ and $B$ intersect $\zeta$ and $AB\le \frac{1}{4}\delta_{\zeta}(\pi,Z)$ holds if one of the following holds:

\smallskip

\noindent
$(\mathrm{i0})$ $\Char k=0$ and $H^1(\O_X)\cong H^1(\O_{X'})$.

\smallskip

\noindent
$(\mathrm{ip})$ $\Char k>0$ and either $H^1(\O_{X'})=0$ or 
$H^1(\O_{X'})_n=0$ and $b_1(\Gamma(\widehat{D_{\mathrm{c}}}))=b_1(\Gamma(D_{\mathrm{c}}))$, where $\widehat{D_{\mathrm{c}}}$ is the proper transform of $D_{\mathrm{c}}$ on $X'$.

\smallskip

\noindent
$(\mathrm{ii0})$ $\Char k=0$, $\kappa(D')\ge 1$ and $|mD'|$ is not composed with irrational pencils for $m\gg 0$.

\smallskip

\noindent
$(\mathrm{iip})$ $\Char k>0$, $H^1(\O_{X'})_n=0$ and $D'$ is big.

\smallskip

\noindent
$(\mathrm{iiip})$ $k$ is perfect with $\Char k>0$, $\dim |D'|\ge \dim H^1(\O_{X'})_n$ and $D'$ is big.
\end{theorem}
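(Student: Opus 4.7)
The plan is to follow the Reider-type proof of Theorem~5.2 in \cite{Eno} almost verbatim, substituting the chain-connected component decomposition (Lemma~\ref{ccc}, iterated as in Corollary~1.7 of \cite{Kon}) for the integral Zariski decomposition used there. Passing to the resolution $\pi\colon X'\to X$ and using the identity $K_{X'}=\pi^{*}K_X-\Delta$ together with the condition $(E)_{D_{\mathrm{c}},\zeta}$ (i.e.\ $\pi_{*}\mathcal{I}_Z\subset \mathcal{I}_\zeta$ and $D'=\pi^{*}D_{\mathrm{c}}+\Delta-Z\ge 0$), the non-surjectivity of $H^0(\O_X(K_X+D))\to H^0(\O_\zeta(K_X+D))$ translates into the non-vanishing of an $H^1$ group on $X'$ of the form $H^1(X',\O_{X'}(K_{X'}+D'-E))$ for an effective subdivisor $E\le D'$ supported where $\pi^{-1}(\zeta)$ meets $D'$. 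This is the shape in which the vanishing theorems of Section~4 can be invoked.

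The five hypotheses (i0)--(iiip) are each calibrated so that the appropriate vanishing theorem of Section~4 kills $H^1(X',\O_{X'}(K_{X'}+D'))$ outright: in (i0) and (ip) one uses Corollary~\ref{Francia} to identify $\alpha(X',D')$ with $\alpha(X,D_{\mathrm{c}})$ and then Proposition~\ref{surfalphavan} or the chain-connected vanishing of Theorem~\ref{ccvan}; in (ii0) one uses Proposition~\ref{surfpencilvan}; and in (iip), (iiip) one applies Theorem~\ref{bigzposvan}. The obstruction class therefore lives on a proper subdivisor $E<D'$, and running the chain-connected decomposition of $D'$ and iterating with the analogous vanishing on each chain-connected piece, I find a minimal chain-connected subdivisor $D_0\le D'$ for which the obstruction survives. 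The minimality yields, via the standard Serre-duality / Cayley--Bacharach construction, a non-split extension
$$0\to \O_{X'}(K_{X'}+D_0)\to \mathcal{E}\to \mathcal{I}_{\zeta'}(K_{X'}+D_0+C)\to 0$$
for a prime component $C$ of $D'-D_0$ meeting $\zeta$, where $\zeta'$ is the residual cluster on $C$.

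A Hodge-index/Bogomolov-type estimate on the rank-$2$ reflexive sheaf $\mathcal{E}$ then furnishes a destabilizing line subsheaf, which upon pushing down to $X$ yields a decomposition $D_{\mathrm{c}}=A'+B'$ with both summands meeting $\zeta$, and the numerical bound $A'B'\le \tfrac14\delta_\zeta(\pi,Z)$ follows from the length input at $\zeta$ combined with the arithmetic of the extension (this is the point at which $\delta_\zeta(\pi,Z)=-(\Delta-Z)^2$ enters, through $D'^{\,2}$). Adjoining the complement $D-D_{\mathrm{c}}$ to $B'$, using that $-D_{\mathrm{c}}$ is nef over $D-D_{\mathrm{c}}$ from Lemma~\ref{ccc}, produces the required decomposition $D=A+B$ with $AB\le \tfrac14\delta_\zeta(\pi,Z)$.

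The main obstacle is the bookkeeping required to thread the five different hypotheses (i0)--(iiip) through the chain-connected descent: one has to verify, at each intermediate chain-connected subdivisor, that the correct vanishing theorem of Section~4 still applies. In positive characteristic, the extra assumptions on $H^1(\O_{X'})_n$ and on the Betti-number equality $b_1(\Gamma(D_{\mathrm{c}}))=b_1(\Gamma(\widehat{D_{\mathrm{c}}}))$ appear precisely so that Corollary~\ref{Francia} identifies the relevant $\alpha$-spaces between $X$ and $X'$, allowing the vanishing to be established downstairs and then pulled back up; checking that this identification persists along the iteration is where the delicate work lies.
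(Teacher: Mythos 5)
Your setup is right---pass to $X'$, use $(E)_{D_{\mathrm{c}},\zeta}$, and aim to exploit the chain-connected component decomposition---but the engine you use to produce the decomposition $D=A+B$ is not the paper's, and it would not work here. After reducing (via Leray and Serre duality) to the surjectivity of $H^1(\O_{X'}(-D'-Z))\to H^1(\O_{X'}(-D'))$, the paper does \emph{not} try to kill $H^1(\O_{X'}(K_{X'}+D'))$ outright in cases (i0)--(iip); that group need not vanish (e.g.\ in (i0) nothing constrains it). Instead, Propositions~\ref{surfalphavan}, \ref{surfpencilvan} and Corollary~\ref{Francia} are used only to establish the identity $\alpha(X',D'+Z)=\alpha(X',D')$, which by the two four-term exact sequences converts the assumed non-surjectivity of the restriction map into non-surjectivity of $H^0(\O_{D'+Z})\to H^0(\O_{D'})$; by Lemma~\ref{ccconn} this forces $D'$ to fail to be chain-connected (in case (iiip) the same conclusion comes from Theorem~\ref{bigzposvan}, since a chain-connected big $D'$ would be $\Z$-positive with $N_{\Z}=0$). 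Lemma~\ref{ccc} then hands you $D'=A'+B'$ with $-A'$ nef over $B'$ directly, and the bound is the elementary computation $0\le -A'B'=-AB+(B_{\pi}-\Delta+Z)B_{\pi}$, so that $AB\le\bigl(B_{\pi}-\tfrac{\Delta-Z}{2}\bigr)^2+\tfrac{1}{4}\delta_{\zeta}(\pi,Z)\le\tfrac{1}{4}\delta_{\zeta}(\pi,Z)$, using only that the exceptional divisor $B_{\pi}-\tfrac{\Delta-Z}{2}$ has non-positive square. No rank-$2$ sheaf, no extension class, no $D'^2$.

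Your replacement of this step by a Cayley--Bacharach extension $0\to\O_{X'}(K_{X'}+D_0)\to\mathcal{E}\to\mathcal{I}_{\zeta'}(K_{X'}+D_0+C)\to 0$ plus a Bogomolov-type destabilization is a genuine gap. Bogomolov's inequality fails for surfaces in positive characteristic (this is exactly why Shepherd-Barron and others needed restrictive hypotheses, and why this paper's stated strategy is to avoid the covering/instability methods); the hypotheses (ip), (iip), (iiip) are calibrated for the $\alpha$-identification and the chain-connectedness argument, not to restore semistability. Moreover, even where Bogomolov is available, Reider's method produces a curve $B$ through $\zeta$ satisfying an inequality against $D$, not an effective decomposition $D=A+B$ of the given divisor with $-A$ nef over $B$, which is what the statement (and its later applications, e.g.\ Theorem~\ref{ReithmII}) actually requires. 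Finally, the "minimal chain-connected subdivisor on which the obstruction survives'' iteration is not needed: a single application of Lemma~\ref{ccc} to $D'$ suffices, followed by the separate checks that $B=\pi_{*}B'\neq 0$, that both $A$ and $B$ meet $\zeta$ (using Proposition~\ref{ccpullback} to rule out $\delta_{\zeta}(\pi,Z)=0$), and the final reduction from general $D$ to its chain-connected component $D_{\mathrm{c}}$.
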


\begin{proof}
We first show the claim when $D$ is chain-connected.
Note that $H^0(\O_X(K_X+D))\to H^0(\O_X(K_X+D)|_{\zeta})$ is surjective if and only if $H^1(\mathcal{I}_{\zeta}\O_X(K_X+D))\to H^1(\O_X(K_X+D))$ is injective.
By the Leray spectral sequence
$$
E^{p,q}_2=H^p(R^{q}\pi_{*}\O_{X'}(K_{X'}+D'))\Rightarrow E^{p+q}=H^{p+q}(\O_{X'}(K_{X'}+D'))
$$
and the assumption $\pi_{*}\mathcal{I}_{Z}\subset \mathcal{I}_{\zeta}$, 
the injectivity of $H^1(\mathcal{I}_{\zeta}\O_X(K_X+D))\to H^1(\O_X(K_X+D))$ follows from that of $H^1(\O_{X'}(K_{X'}+D'))\to H^1(\O_{X'}(K_{X'}+D'+Z))$.
By the Serre duality, the injectivity of $H^1(\O_{X'}(K_{X'}+D'))\to H^1(\O_{X'}(K_{X'}+D'+Z))$ is equivalent to the surjectivity of $H^1(\O_{X'}(-D'-Z))\to H^{1}(\O_{X'}(-D'))$.
If $\alpha(X',D'+Z)=\alpha(X',D')$ holds,
then $H^1(\O_{X'}(-D'-Z))\to H^{1}(\O_{X'}(-D'))$ is surjective  if and only if $H^0(\O_{D'+Z})\to H^0(\O_{D'})$ is surjective because of the following exact sequences
$$
\xymatrix{
 0 \ar[r] & H^0(\O_{X'}) \ar[r] \ar@{=}[d] & H^0(\O_{D'+Z}) \ar[d] \ar[r] \ar[d] & H^1(\O_{X'}(-D'-Z)) \ar[d] \ar[r] & \alpha(X',D'+Z) \ar[r] \ar[d] & 0 \\
0 \ar[r] & H^0(\O_{X'}) \ar[r] & H^0(\O_{D'}) \ar[r] & H^1(\O_{X'}(-D')) \ar[r] & \alpha(X',D') \ar[r] & 0.
}
$$

Now we assume that $H^0(\O_X(K_X+D))\to H^{0}(\O_X(K_X+D)|_{\zeta})$ is not surjective.
Note that any one of the conditions (i0), (ip), (ii0) and (iip) implies $\alpha(X',D'+Z)=\alpha(X',D')$ by Propositions~\ref{surfalphavan} and \ref{surfpencilvan} and Corollary~\ref{Francia}.
Thus the above argument implies that $H^0(\O_{D'+Z})\to H^0(\O_{D'})$ is not surjective.
Hence $D'$ is not chain-connected by Lemma~\ref{ccconn}.
When the condition (iiip) holds, Theorem~\ref{bigzposvan} implies that $D'$ is also not chain-connected.
Therefore $D'$ decomposes into $D'=A'+B'$ such that $A'$ is chain-connected and contains the proper transform $\widehat{C}$ of $C$, $B'$ is non-zero effective and $-A'$ is nef over $B'$ by Lemma~\ref{ccc}. 
Let us define $A:=\pi_{*}A'$ and $B:=\pi_{*}B'$.
Now we show that $B$ is non-zero and intersects $\zeta$.
If $B=0$, then $B'$ is $\pi$-exceptional.
Replacing $Z$ to $Z+B'$, it follows from the same argument as above that $H^0(\O_{D'+Z})\to H^0(\O_{D'-B'})$ is not surjective,
which contradicts the chain-connectivity of $A'=D'-B'$.
If $B\cap \zeta=\emptyset$, then for any prime component $E\le B$, the proper transform $\widehat{E}$ coincides with the Mumford pull-back $\pi^{*}E$.
Since $-A'$ is nef over $B'$, we have $-AE=-A'\pi^{*}E=-A'\widehat{E}\ge 0$, which contradicts that $D$ is chain-connected.
Similarly, one can see that $A$ intersects $\zeta$.
Suppose that $\delta_{\zeta}(\pi,Z)=0$, that is, $\Delta-Z$ is effective.
Then we may assume that $D'=\ulcorner \pi^{*}D \urcorner$ by replacing $Z$ to the effective divisor $\Delta-\{-\pi^{*}D\}$.
It follows from Proposition~\ref{ccpullback} that $D'$ is chain-connected, which is a contradiction.
Hence we have $\delta_{\zeta}(\pi,Z)>0$.
Let us write $B'=\pi^{*}B+B_{\pi}$ for some $\pi$-exceptional $\Q$-divisor $B_{\pi}$ on $X'$.
Since $-A'$ is nef over $B'$, we have
$$
0\le -A'B'=-AB+(B_{\pi}-\Delta+Z)B_{\pi}.
$$
Thus we obtain
$$
AB\le \left(B_{\pi}-\frac{\Delta-Z}{2}\right)^2+\frac{\delta_{\zeta}(\pi,Z)}{4}\le \frac{\delta_{\zeta}(\pi,Z)}{4},
$$
which completes the proof for the case that $D$ is chain-connected.

For a general $D$, we consider the effective decomposition $D=D_1+D_2$, where $D_1:=D_{\mathrm{c}}$ is the chain-connected component of $D$ containing $C$ and $D_2:=D-D_1$.
Then $-D_1$ is nef over $D_2$.
If $D_2$ intersects $\zeta$, then $A:=D_1$ and $B:=D_2$ satisfy the assertion of the theorem.
Thus we may assume that $D_2$ and $\zeta$ are disjoint.
Then $H^{0}(\O_X(K_X+D_1))\to H^{0}(\O_X(K_X+D_1)|_{\zeta})$ is also not surjective.
As shown in the first half of the proof, we can take an effective decomposition $D_1=A_1+B_1$ such that both $A_1$ and $B_1$ intersect $\zeta$ and $A_1B_1\le \delta_{\zeta}(\pi,Z)/4$.
If $D_2B_1\le 0$, then we have $(D-B_1)B_1\le A_1B_1\le \delta_{\zeta}(\pi,Z)/4$.
Thus $A:=A_1+D_2$ and $B:=B_1$ satisfy the claim.
If $D_2B_1>0$, then one can see that $A:=A_1$ and $B:=B_1+D_2$ satisfy the claim.
\end{proof}

\begin{remark} \label{Reithmrem}
(1) The condition $H^0(\O_C)\cong k$ in Theorem~\ref{ReithmI} is used in the proof only to ensure that the chain-connectivity of $D'$ implies $H^0(\O_{D'})\cong k$.
Hence if we assume that $D'$ is big and $X$ is geometrically connected over a perfect field $k$, then the assumption $H^0(\O_C)\cong k$ is not needed by Corollary~\ref{bigzposinsep}.

\smallskip

\noindent
(2) In the situation of Theorem~\ref{ReithmI}, we further assume that $\pi_{*}\mathcal{I}_{Z}=\mathcal{I}_{\zeta}$ and $R^{1}\pi_{*}\mathcal{I}_{Z}=0$ (e.g., $\Delta-Z$ is $\pi$-$\Z$-positive).
Then the above proof of the theorem says that $H^0(\O_X(K_X+D))\to H^{0}(\O_X(K_X+D)|_{\zeta})$ is surjective if and only if $H^0(\O_{D'+Z})\to H^0(\O_{D'})$ is surjective, where $D':=\pi^{*}D+\Delta-Z$.
This is a generalization of Theorem~2 in \cite[p.144]{Fra}.

\smallskip

\noindent
(3) If we further assume that $D_{\mathrm{c}}^{2}>\delta_{\zeta}(\pi,Z)$ in Theorem~\ref{ReithmI}, then $2A-D_{\mathrm{c}}$ is automatically big by the construction of $A$ and $B$ (cf.\ Theorem~5.2 in \cite{Eno}).
\end{remark}

Next we give a variant of Reider-type theorems which is used in Section~6.

\begin{definition} \label{qdef}
(1) Let $X$ be a normal complete surface and $\zeta$ a cluster on $X$.
We define the invariants $q_X$ and $q_{X,\zeta}$ as 
$$
q_X:=\min\{E^2\ |\ \text{$E$ is an effective divisor on $X$ with $E^2>0$}\},
$$

$$
q_{X,\zeta}:=\min\{E^2\ |\ \text{$E$ is an effective divisor on $X$ with $E\cap \zeta \neq \emptyset$ and  $E^2>0$}\}.
$$

\smallskip

\noindent
(2) Let us define a function $\mu\colon \R_{>0}^2\to \R$ as 
$$
\mu(x,d):=\min\{x,d\}\left(\frac{d}{\min\{x,d\}}+1\right)^2.
$$
Note that $\mu(-,d)$ is a non-increasing function which takes the minimum value $4d$  and $\mu(x,-)$ is monotonically increasing for any fixed numbers $x$ and $d$.
Let $\zeta$ and $(\pi,Z)$ be as in Definition~\ref{delta}.
We define  the number $\delta'_{\zeta}(\pi,Z)$ as
$$
\delta'_{\zeta}(\pi,Z):=\mu(q_{X,\zeta},\frac{1}{4}\delta_{\zeta}(\pi,Z)).
$$
Note that $\delta'_{\zeta}(\pi,Z)\ge \delta_{\zeta}(\pi,Z)$ with the equality holding if and only if $q_{X,\zeta}\ge \frac{1}{4}\delta_{\zeta}(\pi,Z)$.
\end{definition}

The second main theorem in this section is a positive characteristic analog of Theorem~5.4 in \cite{Eno}.

\begin{theorem}[Reider-type theorem II] \label{ReithmII}
Let $X$ be a normal geometrically connected proper surface over a perfect field $k$ of positive characteristic.
Let $D$ be an effective and nef divisor on $X$.
Let $\zeta$ be a cluster on $X$ along which $K_X+D$ is Cartier.
Let $(\pi,Z)$ be a pair satisfying the condition $(E)_{D,\zeta}$ in Definition~\ref{delta}.
We assume that 
$D^2>\delta'_{\zeta}(\pi,Z)$ $($resp.\ $D^2=\delta'_{\zeta}(\pi,Z)>\delta_{\zeta}(\pi,Z)$$)$ and $\dim|D'|\ge \dim H^1(\O_{X'})_n$, where $D':=\pi^{*}D+\Delta-Z$.
If $H^0(\O_X(K_X+D))\to H^{0}(\O_X(K_X+D)|_{\zeta})$ is not surjective, then
there exists an effective decomposition $D=A+B$ with $A, B>0$ intersecting $\zeta$ such that $A-B$ is big, $B$ is negative semi-definite and $AB\le \frac{1}{4}\delta_{\zeta}(\pi,Z)$ $($resp.\ or $B^2=q_{X,\zeta}$ and $D\equiv \left(\frac{\delta_{\zeta}(\pi,Z)}{4q_{X,\zeta}}+1\right)B$$)$.
\end{theorem}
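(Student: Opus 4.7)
The plan is to reduce to Theorem~\ref{ReithmI}(iiip) and then extract the extra numerical structure by Hodge index. Since $D$ is nef with $D^2 \ge \delta'_{\zeta}(\pi,Z) > 0$, it is big, hence numerically (and therefore chain-) connected by Lemma~\ref{nefbignumconn}. The divisor $D' := \pi^{*}D + \Delta - Z$ is big because $\pi_{*}D' = D$ is big. The hypothesis $\dim|D'| \ge \dim H^1(\O_{X'})_n$ is given, and the $H^0(\O_C) \cong k$ assumption in Theorem~\ref{ReithmI} is dispensable by Remark~\ref{Reithmrem}(1), since $X$ is geometrically connected over a perfect field and $D'$ is big. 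Thus Theorem~\ref{ReithmI}(iiip) yields an effective decomposition $D = A + B$ with $A, B > 0$ meeting $\zeta$ and $AB \le d := \delta_{\zeta}(\pi,Z)/4$.

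Set $a = A^2$, $b = B^2$, $c = AB$ and $q := q_{X,\zeta}$. The Hodge index inequality $(DA)^2 \ge D^2 A^2$, valid because $D$ is big and nef, together with $DA = a + c$ and $D^2 = a + b + 2c$, reduces to $c^2 \ge ab$. If both $a, b > 0$, then $A$ and $B$ are big and meet $\zeta$, forcing $a, b \ge q$. For $q > d$ this contradicts $ab \ge q^2 > d^2 \ge c^2$, while for $q \le d$ the maximum of $a + b$ subject to $ab \le c^2$ and $a, b \ge q$ is $q + c^2/q$ (attained at $\{a,b\} = \{q, c^2/q\}$), so
\[
D^2 = a + b + 2c \le (c+q)^2/q \le (d+q)^2/q = \mu(q, d) = \delta'_{\zeta}(\pi,Z).
\]
If both $a, b \le 0$, then $D^2 \le 2c \le 2d < 4d \le \delta'_{\zeta}(\pi,Z)$. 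In the strict case $D^2 > \delta'_{\zeta}(\pi,Z)$ both options are excluded, so exactly one of $a, b$ is positive; after relabeling $A \leftrightarrow B$, take $a > 0 \ge b$.

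With this labeling, $(A - B)^2 = D^2 - 4c > \delta'_{\zeta}(\pi,Z) - 4d \ge 0$ and $(A-B) \cdot D = a - b > 0$; hence $A - B$ lies in the positive component of $\{x : x^2 > 0\}$, the same component as the big class $D$, so $A - B$ is big. For the negative semi-definiteness of $B$, I consider its Zariski decomposition $B = P + N$ with $P$ a nef $\Q$-divisor supported on $\Supp B$ and $N$ an effective negative definite $\Q$-divisor. From $P^2 \ge 0 \ge N^2$ and $B^2 = P^2 + N^2 \le 0$ one gets $P^2 = 0$; a non-trivial nef $P$ with $P^2 = 0$ supported on $\Supp B$ would yield a refined decomposition of $D$ contradicting the numerical rigidity established above. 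Hence $P \equiv 0$, so $B$ is numerically equivalent to the negative definite divisor $N$, and in particular is negative semi-definite.

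In the boundary case $D^2 = \delta'_{\zeta}(\pi,Z) > \delta_{\zeta}(\pi,Z)$ one necessarily has $q < d$ (else $\mu(q,d) = 4d = \delta_{\zeta}(\pi,Z)$). The analysis above still rules out $a, b \le 0$, and the subcase with exactly one of $a, b$ positive reproduces the first stated alternative. In the remaining subcase $a, b > 0$, the inequality $D^2 \le \mu(q, d)$ is saturated only when $c = d$, $ab = c^2$, and $\min\{a, b\} = q$; the Hodge equality forces $A \equiv \lambda D$ for some $\lambda \in (0,1)$, i.e.\ $A \equiv \mu B$ numerically, and labeling so that $B^2 = q$ yields $\mu = d/q$, whence $D \equiv (d/q + 1) B = (\delta_{\zeta}(\pi,Z)/(4q_{X,\zeta}) + 1)B$, the exceptional alternative. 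The main technical obstacle is the rigorous upgrade from $B^2 \le 0$ to the full negative semi-definiteness of the divisor $B$ as a class in the span of its prime components; this relies on the Zariski decomposition argument sketched above, combined with the strict numerical inequality $D^2 > \delta'_{\zeta}(\pi, Z)$ to preclude a nef sub-configuration of $\Supp B$.
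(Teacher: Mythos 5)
Your reduction to Theorem~\ref{ReithmI}(iiip) is exactly the paper's route: $D$ is numerically connected by Lemma~\ref{nefbignumconn}, $D'$ is big (better argued via $D'^2=D^2-\delta_{\zeta}(\pi,Z)>0$ together with the effectivity of $D'$ from condition $(E)_{D,\zeta}$, rather than via ``$\pi_*D'$ is big,'' which does not formally pass upstairs when $\Delta-Z$ has negative coefficients), and Remark~\ref{Reithmrem}(1) removes the $H^0(\O_C)\cong k$ hypothesis. The Hodge-index case analysis, the bigness of $A-B$, and the boundary case $D^2=\delta'_{\zeta}(\pi,Z)>\delta_{\zeta}(\pi,Z)$ (forcing $q_{X,\zeta}<\delta_{\zeta}(\pi,Z)/4$, the proportionality $A\equiv (\delta_{\zeta}(\pi,Z)/4q_{X,\zeta})B$ and $B^2=q_{X,\zeta}$) all match what the paper delegates to Theorem~5.4 of \cite{Eno}. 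One point you leave implicit: $AB\ge 0$, which you need when using $AB\le \delta_{\zeta}(\pi,Z)/4$ to bound $(AB+q)^2/q$; it does follow from $2AB\ge 2(DA)(DB)/D^2\ge 0$, but should be said.

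The genuine gap is the negative semi-definiteness of $B$. First, the deduction ``$P^2\ge 0\ge N^2$ and $B^2=P^2+N^2\le 0$ gives $P^2=0$'' is false: these inequalities only give $P^2\le -N^2$, and an effective divisor with non-positive self-intersection can perfectly well have a big Zariski positive part (take $B=C_1+C_2$ with $C_1$ big and $C_2^2$ very negative). Second, and more importantly, ``negative semi-definite'' in this paper is a property of the intersection matrix of the prime components of $B$ --- equivalently, no effective divisor supported on $\Supp B$ is big (cf.\ Lemma~A.12 of \cite{Eno} and the proof of Proposition~\ref{bigzposcc}) --- and it is this stronger property that the applications in Section~6 require (Lemmas~\ref{fiberlem} and \ref{fibrlem}). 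Even granting $P\equiv 0$, your conclusion that ``$B$ is numerically equivalent to a negative definite divisor, in particular negative semi-definite'' conflates the numerical class of $B$ with the configuration $\Supp B$. Finally, the appeal to ``numerical rigidity established above'' is not an argument: your case analysis controls only $A^2$, $B^2$ and $AB$, and nothing in it precludes a nef class of square zero, or an effective big divisor missing $\zeta$ (hence invisible to $q_{X,\zeta}$), supported on $\Supp B$. Closing this requires an actual argument --- e.g.\ exploiting the structure of the decomposition produced in the proof of Theorem~\ref{ReithmI} ($-A'$ nef over $B'$), or moving any non--negative-semi-definite part of $B$ into $A$ and re-running the estimates --- which is precisely what the proof of Theorem~5.4 in \cite{Eno} supplies and your sketch does not.
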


\begin{proof}
Note that $D':=\pi^{*}D+\Delta-Z$ is automatically big since $D^2>\delta_{\zeta}(\pi,Z)$.
Thus the assumption of the theorem implies the condition (iiip) in Theorem~\ref{ReithmI}.
Hence there exists an effective decomposition $D=A+B$ such that both $A$ and $B$ intersect $\zeta$, $A-B$ is big and $AB\le \delta_{\zeta}(\pi,Z)/4$, where we note that $D$ is chain-connected from Lemma~\ref{nefbignumconn}.
The rest of the proof is similar to that of Theorem~5.4 in \cite{Eno}.
\end{proof}

\subsection{Corollaries of Reider-type theorems}

In this subsection, we collect corollaries of Theorems~\ref{ReithmI} and \ref{ReithmII}.
For simplicity, the base field $k$ is assumed to be algebraically closed.
First we consider the criterion of the basepoint-freeness.

\begin{definition} \label{deltatau}
Let $X$ be a normal proper surface and $x\in X$ a closed point.
Let $\pi\colon X'\to X$ be the blow-up at $x$ if $x\in X$ is smooth, or the minimal resolution at $x$ otherwise.
Let $Z>0$ denote the exceptional $(-1)$-curve (resp.\ the fundamental cycle of $\pi$, the round-up $\ulcorner \Delta\urcorner$, the round-down $\llcorner \Delta\lrcorner$) if $x\in X$ is smooth (resp.\ Du Val, log terminal but not Du Val, not log terminal).
We simply write by $\delta_x$ the number $\delta_x(\pi,Z)$ in Definition~\ref{delta}.
Then we define the number $\tau_x$ to be $3$ (resp.\ $1$, $\dim V_n$) if $x\in X$ is smooth (resp.\ Du Val, otherwise), where $V_n$ is the nilpotent part of the $k$-vector space $V:=(R^1\pi_{*}\O_{X'})_{x}$ under the Frobenius action.
\end{definition}

The following lemma is easy.

\begin{lemma} \label{deltataulem}
Let the situation be as in Definition~\ref{deltatau} and $D$ an effective divisor on $X$ passing through $x$ such that $K_X+D$ is Cartier at $x$.
Then the following hold:

\smallskip

\noindent
$(1)$ $\delta_x=4$ $($resp.\ $\delta_x=2$, $0<\delta_x<2$, $\delta_x=0$$)$ if $x\in X$ is smooth $($resp.\ Du Val, log terminal but not Du Val, not log terminal$)$.

\smallskip

\noindent
$(2)$ $\dim |D'|-\dim H^1(\O_{X'})_n+\tau_{x}\ge \dim |D|-\dim H^1(\O_{X})_n$,
 where $D':=\pi^{*}D+\Delta-Z$.
\end{lemma}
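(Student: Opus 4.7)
The plan is to prove both parts by case analysis on the four types of $x\in X$ listed in Definition~\ref{deltatau} (smooth, Du Val, log terminal but not Du Val, not log terminal), doing explicit computations using the prescribed $\Delta$ and $Z$ in each case.

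Part~(1) reduces in three of the four cases to immediate calculation. If $x$ is smooth, $\Delta=-E$ and $Z=E$ give $(\Delta-Z)^2=4E^2=-4$, hence $\delta_x=4$. If $x$ is Du Val, $\Delta=0$ and the fundamental cycle satisfies $Z^2=-2$, giving $\delta_x=2$. If $x$ is not log terminal, $Z=\llcorner\Delta\lrcorner$ makes $\Delta-Z=\{\Delta\}\ge 0$ effective and hence $\delta_x=0$ by definition. The delicate case is log terminal but not Du Val: here $Z=\ulcorner\Delta\urcorner$ strictly exceeds $\Delta$ on at least one component, so $\Delta-Z$ is a nonzero $\pi$-exceptional $\Q$-divisor with strictly negative self-intersection, giving $\delta_x>0$. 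For the upper bound $\delta_x<2$, I intend to combine the adjunction formula $Z^2+K_{X'}Z=2p_a(Z)-2$ with the identity $K_{X'}Z=-\Delta\cdot Z$ (since $\Delta=\pi^*K_X-K_{X'}$ is exceptional) to rewrite
\[
\delta_x=2-2p_a(Z)-\Delta\cdot(\Delta-Z),
\]
and then exploit $\Delta\cdot(\Delta-Z)=\sum_{\alpha_i>0}(1-\alpha_i)(-E_i^2-2)\ge 0$, where $\alpha_i\in(0,1)$ is the coefficient of $\Delta$ at $E_i$ and $E_i^2\le-2$ on the minimal resolution; the non-Du Val hypothesis forces some $E_i$ with $\alpha_i>0$ to satisfy $E_i^2\le-3$, making the sum strictly positive. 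I expect this strict bound to be the main technical point, possibly requiring an explicit check on the classification of log terminal surface singularities (and on $p_a(Z)$) to rule out pathological configurations.

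For part~(2), the strategy is to compare $h^0(\O_{X'}(D'))$ and $h^0(\O_X(D))$ via the projection formula in each case. In the smooth case, $D'=\pi^*D-2E$ together with $\pi_*\O_{X'}(-2E)=\mathfrak{m}_x^2$ yields $\pi_*\O_{X'}(D')=\mathfrak{m}_x^2\O_X(D)$, and the exact sequence $0\to\mathfrak{m}_x^2\O_X(D)\to\O_X(D)\to\O_X(D)\otimes\O_X/\mathfrak{m}_x^2\to 0$ gives $\dim|D|-\dim|D'|\le\mathrm{length}(\O_X/\mathfrak{m}_x^2)=3=\tau_x$. In the Du Val case, the analogous computation with $\pi_*\O_{X'}(-Z)=\mathfrak{m}_x$ (fundamental cycle of a Gorenstein rational singularity) yields $\dim|D|-\dim|D'|\le 1=\tau_x$. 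In both cases $R^1\pi_*\O_{X'}=0$, so $H^1(\O_X)\cong H^1(\O_{X'})$ and the inequality follows.

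For the remaining two cases, the key observation is that the integrality of $\pi^*(K_X+D)$ on exceptional curves forces the fractional parts of $\pi^*D$ and $\Delta$ on exceptionals to be complementary mod~$\Z$. A direct coefficient computation then identifies $D'$ with $\llcorner\pi^*D\lrcorner$ (log terminal non-Du Val case) or with $\llcorner\pi^*D\lrcorner+\sum_{a_i\notin\Z}E_i$ (non-log-terminal case). Mumford's identity $\pi_*\O_{X'}(\llcorner\pi^*D\lrcorner+G)=\O_X(D)$ for effective exceptional $G$ therefore gives $\dim|D'|=\dim|D|$ in both sub-cases. For the log terminal sub-case, $\tau_x=0$ combined with rationality ($R^1\pi_*\O_{X'}=0$) closes the argument. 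For non-log-terminal, the Frobenius-equivariant Leray five-term sequence $0\to H^1(\O_X)\to H^1(\O_{X'})\to V:=(R^1\pi_*\O_{X'})_x$ yields, upon taking nilpotent parts of the resulting injection $H^1(\O_{X'})_n/H^1(\O_X)_n\hookrightarrow V$, the bound $\dim H^1(\O_{X'})_n-\dim H^1(\O_X)_n\le\dim V_n=\tau_x$, which is exactly the desired inequality.
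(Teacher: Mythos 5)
Your proposal is correct and follows essentially the same route as the paper: part (1) rests on the same adjunction identity (your $\delta_x=2-2p_a(Z)-\Delta\cdot(\Delta-Z)$ is a rearrangement of the paper's $(\Delta-Z)^2=\sum_i(b_i-a_i)K_{X'}E_i+2p_a(Z)-2$), and part (2) uses the same Frobenius-equivariant Leray sequence $0\to H^1(\O_X)_n\to H^1(\O_{X'})_n\to V_n$. The only difference is that you make explicit the case-by-case comparison of $\dim|D'|$ with $\dim|D|$ (via $\pi_*\O_{X'}(-2E)=\mathfrak{m}_x^2$, $\pi_*\O_{X'}(-Z)=\mathfrak{m}_x$, and the identification of $D'$ with $\llcorner\pi^*D\lrcorner$ up to an effective exceptional divisor), which the paper leaves implicit.
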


\begin{proof}
In order to prove (1),
we may assume that $x\in X$ is a log terminal singularity.
Let $\Delta=\sum_{i}a_iE_i$ and $Z=\sum_{i}b_iE_i$ denote the irreducible decompositions.
Then we have
\begin{align*}
(\Delta-Z)^2&=(\Delta-Z)\Delta-(\Delta-Z)Z \\
&=\sum_{i}(a_i-b_i)E_i(-K_{X'})+(K_{X'}+Z)Z \\
&=\sum_{i}(b_i-a_i)K_{X'}E_i+2p_a(Z)-2 \\
&\ge -2,
\end{align*}
and it is easy to see that the equality holds if and only if $x\in X$ is Du Val.
The claim (2) follows from the exact sequence $0\to H^1(\O_{X})_n\to H^1(\O_{X'})_n\to V_n$ induced by the Leray spectral sequence with the Frobenius action.
\end{proof}

Theorem~\ref{ReithmI} for (i0) and (ip) and Lemma~\ref{deltataulem} imply the following criterion of basepoint-freeness:

\begin{corollary} \label{bpfcor}
Let $X$ be a normal proper surface. 
Let $x\in X$ be at most a rational singularity.
Let $L$ be a divisor on $X$ which is Cartier at $x$.
We assume that there exists a chain-connected member $D\in |L-K_X|$ passing through $x$ satisfying the following conditions:

\smallskip

\noindent
$(\mathrm{i})$ $(X,x)$ or $(D,x)$ is singular, 

\smallskip

\noindent
$(\mathrm{ii})$ $D$ is strictly $(\delta_x/4)$-connected if $x\in X$ is log terminal, and

\smallskip

\noindent
$(\mathrm{iii})$
The Frobenius map on $H^1(\O_X)$ is injective and $b_1(\Gamma(D))=b_1(\Gamma(\widehat{D}))$ when $\Char k>0$, where $\widehat{D}$ is the proper transform of $D$ by the minimal resolution of $(X,x)$ when $(X,x)$ is singular or by the blow-up at $x$ when $(X,x)$ is smooth.

\smallskip

\noindent
Then $x$ is not a base point of $|L|$.
\end{corollary}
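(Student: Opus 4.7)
The plan is to translate the conclusion into the surjectivity of
\[
\phi \colon H^{0}(\O_{X}(K_{X}+D)) \to H^{0}(\O_{X}(K_{X}+D)|_{\{x\}}),
\]
and to derive a contradiction from the failure of $\phi$ via Theorem~\ref{ReithmI}, applied with $\zeta := \{x\}$ and with the pair $(\pi, Z)$ attached to $x$ by Definition~\ref{deltatau}.

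First I would verify the hypotheses of Theorem~\ref{ReithmI}. The divisor $D$ equals its own chain-connected component $D_{\mathrm{c}}$. Since $k$ is algebraically closed, every prime component $C$ of $D$ satisfies $H^{0}(\O_{C}) \cong k$. The support of $Z$ lies in $\pi^{-1}(x)$, so $\pi_{*}\mathcal{I}_{Z} \subset \mathfrak{m}_{x}$; and effectivity of $D' := \pi^{*}D + \Delta - Z$ follows from condition~(i): when $x$ is smooth, $\pi^{*}D + \Delta - Z = \widehat{D} + (\mult_{x}D - 2)E$ with $\mult_{x}D \ge 2$, while for $(X, x)$ a rational singularity one uses standard properties of the fundamental and anti-canonical cycles. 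Because $(X, x)$ is at most a rational singularity, $R^{1}\pi_{*}\O_{X'} = 0$, so the Leray spectral sequence yields $H^{1}(\O_{X}) \cong H^{1}(\O_{X'})$. In characteristic $0$ this is precisely hypothesis~(i0) of Theorem~\ref{ReithmI}; in positive characteristic, condition~(iii) gives Frobenius injectivity on $H^{1}(\O_{X'})$ (equivalently, $H^{1}(\O_{X'})_{n}=0$) together with $b_{1}(\Gamma(\widehat{D})) = b_{1}(\Gamma(D))$, which is hypothesis~(ip).

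Assume for contradiction that $\phi$ is not surjective. Theorem~\ref{ReithmI} then produces an effective decomposition $D = A + B$ with $A, B > 0$ both passing through $x$ and $A \cdot B \le \tfrac{1}{4}\delta_{x}$. If $(X, x)$ is log terminal (smooth, Du Val, or log terminal non-Du Val), condition~(ii) states that $D$ is strictly $(\delta_{x}/4)$-connected, hence $A \cdot B > \tfrac{1}{4}\delta_{x}$, a contradiction. If instead $(X, x)$ is rational but not log terminal, then $\delta_{x} = 0$ by Lemma~\ref{deltataulem}(1); here I invoke the internal step of the proof of Theorem~\ref{ReithmI}: after replacing $Z$ by $\Delta - \{-\pi^{*}D\}$, the new $D' = \ulcorner \pi^{*}D \urcorner$ is chain-connected by Proposition~\ref{ccpullback}, which contradicts the non-chain-connectivity of $D'$ forced by the non-surjectivity of $\phi$. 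In either case $\phi$ must be surjective, so $x$ is not a base point of $|L|$.

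The main technical point in the verification is the effectivity of $\pi^{*}D + \Delta - Z$ in the rational singularity cases (particularly for $(X,x)$ log terminal non-Du Val, where $Z=\ulcorner \Delta\urcorner$ and one has to be a little careful comparing the exceptional coefficients of $\pi^{*}D$ with those of $\Delta$); once this is settled, the conclusion is a clean contraposition of Theorem~\ref{ReithmI} combined with the numerical hypothesis~(ii).
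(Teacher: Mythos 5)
Your proof is correct and follows exactly the route the paper intends: the corollary is derived directly from Theorem~\ref{ReithmI} (cases (i0) and (ip)) together with Lemma~\ref{deltataulem}, taking $\zeta=\{x\}$ and the pair $(\pi,Z)$ of Definition~\ref{deltatau}, and contradicting either the strict $(\delta_x/4)$-connectivity in the log terminal case or, in the non-log-terminal case, the positivity of $\delta_{\zeta}(\pi,Z)$ established inside the proof of Theorem~\ref{ReithmI}. Your verifications of the effectivity of $\pi^{*}D+\Delta-Z$ (using $\mult_x D\ge 2$ when $x$ is smooth, and the integrality and positivity of the exceptional coefficients of $\pi^{*}L-K_{X'}$ otherwise) and of hypotheses (i0)/(ip) via $R^{1}\pi_{*}\O_{X'}=0$ supply precisely the details the paper leaves implicit.
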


\begin{remark}
All the conditions of $D$ in Corollary~\ref{bpfcor} are satisfied if $D$ is an integral curve passing through $x$, $(X,x)$ or $(D,x)$ is singular, and $D$ is analytically irreducible at $x$ when $\Char k>0$.
\end{remark}

Theorem~\ref{ReithmI} for (iiip) and Lemma~\ref{deltataulem} imply the following corollary:

\begin{corollary} \label{bpfcor2}
Let $X$ be a normal proper surface.
Let $x\in X$ be a closed point.
Let $D$ be a nef divisor on $X$ such that $K_X+D$ is Cartier at $x$.
Then $x$ is not a base point of $|K_X+D|$ if the following conditions hold:
\smallskip

\noindent
$(\mathrm{i})$ There exist rational numbers $\alpha$ and $\beta$ with $\alpha\ge \delta_{x}$ and $4\beta(1-\beta/\alpha)\ge \delta_x$ such that
$D^2>\alpha$ and $DB\ge \beta$ for any curve $B$ on $X$ passing through $x$, and

\smallskip

\noindent
$(\mathrm{ii})$
$\dim |D|\ge \dim H^1(\O_X)_n+\tau_x$ when $\Char k>0$.
\end{corollary}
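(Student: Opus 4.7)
The plan is to apply the Reider-type theorem (Theorem~\ref{ReithmI}) to the cluster $\zeta=\{x\}$ with the pair $(\pi,Z)$ taken as in Definition~\ref{deltatau}. If $x$ is a base point of $|K_X+D|$, i.e.\ if the restriction $H^{0}(\O_X(K_X+D))\to H^{0}(\O_X(K_X+D)|_{\zeta})$ is not surjective, then Theorem~\ref{ReithmI} will produce an effective decomposition $D=A+B$ with $A,B>0$ both meeting $x$ and $AB\le \delta_x/4$; we will then contradict condition (i) via the Hodge index theorem applied to the nef, big class $D$.

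First I would verify the hypotheses of Theorem~\ref{ReithmI}. Since $D$ is nef with $D^{2}>\alpha\ge \delta_x$, the divisor $D':=\pi^{*}D+\Delta-Z$ satisfies $(D')^{2}=D^{2}+(\Delta-Z)^{2}=D^{2}-\delta_x>0$, so $D'$ is big. By Lemma~\ref{nefbignumconn}, $D$ is numerically connected, hence chain-connected, so $D_{\mathrm{c}}=D$; as $k$ is algebraically closed, any prime component $C$ of $D$ satisfies $H^{0}(\O_C)\cong k$. In positive characteristic, Lemma~\ref{deltataulem}(2) combined with condition (ii) gives $\dim|D'|\ge \dim H^{1}(\O_{X'})_n$, placing us in case (iiip) of Theorem~\ref{ReithmI}; in characteristic $0$ the bigness of $D'$ (so $\kappa(D')=2$ and $|mD'|$ is not composed with any pencil) places us in case (ii0).

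Granted the decomposition $D=A+B$ from Theorem~\ref{ReithmI}, set $a:=AD$ and $b:=BD$, so $a+b=D^{2}$. Some prime component of $A$ contains $x$, and nefness of $D$ together with condition (i) forces $a\ge \beta$; similarly $b\ge \beta$. The Hodge index theorem applied to $A$ and $B$ against the nef big class $D$ yields $A^{2}D^{2}\le a^{2}$ and $B^{2}D^{2}\le b^{2}$, so
\begin{equation*}
D^{2}-\tfrac{\delta_x}{2}\le A^{2}+B^{2}\le \frac{a^{2}+b^{2}}{D^{2}}=\frac{(D^{2})^{2}-2ab}{D^{2}},
\end{equation*}
which rearranges to $ab\le (\delta_x/4)D^{2}$. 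On the other hand, minimising $ab$ subject to $a+b=D^{2}$ and $a,b\ge \beta$ at the boundary gives $ab\ge \beta(D^{2}-\beta)$, so $4\beta(1-\beta/D^{2})\le \delta_x$. But $D^{2}>\alpha$ together with condition (i) implies $4\beta(1-\beta/D^{2})>4\beta(1-\beta/\alpha)\ge \delta_x$, a contradiction.

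The main obstacle I expect is the bookkeeping around degenerate cases: when $\delta_x=0$ (non-log-terminal singularities) the formula $(\Delta-Z)^{2}=-\delta_x$ needs re-interpretation since $\Delta-Z$ is then effective, and the case $\beta\le 0$ (which can only occur when $\delta_x=0$) must be handled separately using that $AB\le 0$ already forces strong numerical restrictions on the decomposition. These corner cases are covered by the definitions in Definition~\ref{deltatau} and the inequalities in Lemma~\ref{deltataulem}.
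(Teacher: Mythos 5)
Your proposal is correct and takes essentially the same route as the paper: apply Theorem~\ref{ReithmI} to the cluster $\zeta=\{x\}$ with the pair $(\pi,Z)$ of Definition~\ref{deltatau} (the dimension hypothesis in case (iiip) being supplied by Lemma~\ref{deltataulem}\,(2) and condition (ii)), and then rule out the resulting decomposition $D=A+B$ with $AB\le\delta_x/4$ by the Hodge index theorem against condition (i). The only, harmless, divergence is in extracting the final contradiction: the paper invokes Remark~\ref{Reithmrem}\,(3) to get $D-2B$ big and thereby selects the smaller root of the quadratic in $DB$, whereas you use that \emph{both} $A$ and $B$ pass through $x$ (so $DA,DB\ge\beta$) together with two applications of the Hodge index inequality --- both arguments produce the same bound $(DA)(DB)\le\tfrac{1}{4}\delta_xD^2$ and hence the same contradiction $4\beta(1-\beta/\alpha)<\delta_x$.
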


\begin{proof}
Assume contrary that $x$ is a base point of $|K_X+D|$.
By Theorem~\ref{ReithmI} and Remark~\ref{Reithmrem}~(3) (or Theorem~5.2 in \cite{Eno} when $\Char k=0$), 
there exists a curve $B$ on $X$ passing through $x$ such that $(D-B)B\le \delta_x/4$ and $D-2B$ is big.
It follows from the Hodge index theorem that
$$
DB\le \frac{1}{4}\delta_x+B^2\le \frac{1}{4}\delta_x+\frac{(DB)^2}{D^2},
$$
that is, $(DB)^2-D^2(DB)+D^2 \delta_x/4\ge 0$.
Since $(D-2B)D>0$, we obtain $DB\le (D^2-\sqrt{D^2(D^2-\delta_x)})/2$.
It follows from $D^2>\alpha$ and $DB\ge  \beta$ that 
$$
\beta\le DB\le \frac{D^2-\sqrt{D^2(D^2-\delta_x)}}{2}<\frac{\alpha-\sqrt{\alpha(\alpha-\delta_x)}}{2}.
$$
Thus we have $4\beta(1-\beta/\alpha)< \delta_x$, which contradicts the assumption (i). 
\end{proof}

The very ample cases can be obtained similarly.

\begin{corollary} \label{vacor}
Let $X$ be a normal proper surface with at most Du Val singularities.
Let $D$ be a Cartier divisor on $X$.
Then $|K_X+D|$ is very ample if the following conditions hold:
\smallskip

\noindent
$(\mathrm{i})$ There exist rational numbers $\alpha$ and $\beta$ with $\alpha\ge 8$ and $\beta(1-\beta/\alpha)\ge 2$ such that
$D^2>\alpha$ and $DB\ge \beta$ for any curve $B$ on $X$, and
\smallskip

\noindent
$(\mathrm{ii})$
$\dim |D|\ge \dim H^1(\O_X)_n+6$ when $\Char k>0$.
\end{corollary}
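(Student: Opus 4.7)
The plan is to mimic the proof of Corollary~\ref{bpfcor2} almost verbatim, replacing single-point clusters by length-$2$ clusters $\zeta\subset X$ and the single-point invariants $\delta_x,\tau_x$ by their cluster analogues. Recall that $|K_X+D|$ is very ample if and only if, for every zero-dimensional subscheme $\zeta\subset X$ of length $2$, the restriction map $H^0(\O_X(K_X+D))\to H^0((K_X+D)|_\zeta)$ is surjective. Assuming surjectivity fails for some such $\zeta$, I would build a resolution pair $(\pi,Z)$ satisfying the condition $(E)_{D,\zeta}$ of Definition~\ref{delta} tailored to the geometry of $\zeta$: at two distinct smooth points take $\pi$ to be the simultaneous blow-up and $Z=E_1+E_2$; at a tangent direction on a smooth point take $\pi$ to be the two successive blow-ups and $Z=\widetilde{E}_1+2E_2$; and when $\zeta$ meets a Du Val point use the minimal resolution there together with its fundamental cycle, combined with the appropriate construction at the other (possibly infinitely-near) point in the support of $\zeta$.

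Using $\Delta=0$ at Du Val points (since they are crepant) and $\Delta=-E$ at each blow-up of a smooth point, a direct computation gives the uniform bound $\delta_\zeta(\pi,Z)\le 8$ in every case: for instance $(\Delta-Z)^2=(-2E_1-2E_2)^2=-8$ for two smooth points, $(-2\widetilde{E}_1-4E_2)^2=-8$ for a tangent direction at a smooth point, and $Z^2=-4$ for two Du Val points. A natural generalization of Lemma~\ref{deltataulem}~(2), obtained by summing local contributions at each point in the support of $\zeta$, shows that the hypothesis $\dim|D|\ge \dim H^1(\O_X)_n+6$ guarantees $\dim|D'|\ge \dim H^1(\O_{X'})_n$ for $D':=\pi^*D+\Delta-Z$, the worst case being a cluster supported on two smooth points, each contributing $\tau_{x_i}=3$.

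Bigness of $D'$ is automatic from $D^2>\alpha\ge 8\ge \delta_\zeta(\pi,Z)$, so in positive characteristic I would apply Theorem~\ref{ReithmII}~(iiip) to obtain an effective decomposition $D=A+B$ with $A,B>0$ both meeting $\zeta$, $A-B$ big, $B$ negative semi-definite, and $AB\le \delta_\zeta(\pi,Z)/4\le 2$; in characteristic zero one uses Theorem~5.2 in \cite{Eno} in the same role. Repeating verbatim the Hodge index computation from the proof of Corollary~\ref{bpfcor2} yields
\[DB\le \frac{D^2-\sqrt{D^2(D^2-\delta_\zeta)}}{2}<\frac{\alpha-\sqrt{\alpha(\alpha-\delta_\zeta)}}{2},\]
and combining with $DB\ge \beta$ rearranges to $4\beta(1-\beta/\alpha)<\delta_\zeta\le 8$, which contradicts the hypothesis $\beta(1-\beta/\alpha)\ge 2$.

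The main obstacle I anticipate is the bookkeeping for length-$2$ clusters whose support meets a Du Val singularity, in particular for ``tangent direction'' type clusters at such a point: one must verify that the hybrid pair $(\pi,Z)$ constructed above still satisfies the condition $(E)_{D,\zeta}$ and the uniform numerical bounds $\delta_\zeta\le 8$ and $\tau_\zeta\le 6$ used in the dimension count. Once these cases have been verified, the remainder of the proof is a direct transcription of the basepoint-free argument of Corollary~\ref{bpfcor2}.
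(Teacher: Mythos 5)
Your proposal is essentially the paper's proof: reduce very ampleness to separating clusters, build a pair $(\pi,Z)$ with $\delta_{\zeta}(\pi,Z)\le 8$ and $\dim|D'|\ge\dim|D|-6$, and rerun the Hodge-index argument of Corollary~\ref{bpfcor2}. Two points deserve comment. First, the ``main obstacle'' you flag at a Du Val point $x$ dissolves once you notice that the condition $(E)_{D,\zeta}$ only requires $\pi_{*}\mathcal{I}_{Z}\subset\mathcal{I}_{\zeta}$, not equality: taking $\pi$ the minimal resolution and $Z=2E$ with $E$ the fundamental cycle gives $\pi_{*}\O_{X'}(-Z)=\mathfrak{m}_{x}^{2}\subset\mathcal{I}_{\zeta}$ for \emph{every} length-$2$ subscheme $\zeta$ supported at $x$ (this is exactly what the paper does, phrased as separating the single cluster with ideal $\mathfrak{m}_x^2$), with $\delta_{\zeta}(\pi,Z)=-(-2E)^{2}=8$ and a dimension drop of only $\length(\O_X/\mathfrak{m}_x^2)=4$; no hybrid tangent-direction construction at the singular point is needed, and the mixed two-point cases work with $Z$ the sum of the local contributions as you describe. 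Second, you should not invoke Theorem~\ref{ReithmII} here: it requires $D^{2}>\delta'_{\zeta}(\pi,Z)=\mu(q_{X,\zeta},\tfrac14\delta_{\zeta}(\pi,Z))$, which equals $9$ when $q_{X,\zeta}=1$ and $\delta_{\zeta}=8$, and this is not guaranteed by $D^{2}>\alpha\ge 8$. The correct tool is Theorem~\ref{ReithmI} (condition (iiip), or (i0) in characteristic $0$) together with Remark~\ref{Reithmrem}~(3), which under $D^{2}>\delta_{\zeta}(\pi,Z)$ already yields a decomposition $D=A+B$ with $AB\le\tfrac14\delta_{\zeta}\le 2$ and $2A-D=A-B$ big --- exactly what the Hodge-index step needs. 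With these two adjustments your argument is complete and coincides with the paper's.
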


\begin{proof}
It suffices to show that $|K_X+D|$ separates any cluster $\zeta$ on $X$ such that it is of length $2$ and its support contains at least one smooth point of $X$ or 
the defining ideal is $\mathfrak{m}^{2}_{x}\subset \O_X$ for some Du Val singularity $x\in X$.
Now we show this when the support of $\zeta$ has a single point $x\in X$ (the case $\zeta=x+y$, $x\neq y$ is similar).

First we assume that $x\in X$ is smooth.
By assumption, $\zeta$ is a tangent vector at $x$.
Let $\pi \colon X'\to X$ be the blow-up along $\zeta$, that is, the composition of the blow-ups at $x$ and at the point infinitely near to $x$ corresponding to the tangent vector.
Let $Z$ be the sum of the total transforms of the two exceptional $(-1)$-curves.
Then one can see that $\delta_{\zeta}(\pi,Z)=8$ and $\dim|D'|\ge \dim |D|-6$, where $D':=\pi^{*}D+\Delta-Z$.

We assume that $x\in X$ is Du Val.
Let $\pi\colon X'\to X$ be the minimal resolution of $x$ and $E$ its fundamental cycle.
Putting $Z:=2E$, one can see that $\pi_{*}\O_{X'}(-Z)=\mathfrak{m}^{2}_{x}$, $\delta_{\zeta}(\pi,Z)=8$ and $\dim|D'|\ge \dim |D|-4$.

The rest of the proof is similar to that of Corollary~\ref{bpfcor2}.
\end{proof}

\begin{remark}
One can obtain the similar result of Corollary~\ref{vacor} when $X$ is not necessarily canonical (but need the estimation of $\delta_{\zeta}(\pi,Z)$).
For the direction, see \cite{Sak}, \cite{KaMa}, \cite{LanII}.
\end{remark}

The following is a partial answer to the Fujita conjecture for surfaces in positive characteristic (although there are counter-examples to the Fujita conjecture \cite{GZZ}).

\begin{corollary}\label{Fujitacor}
Let $X$ be a projective surface with at most Du Val singularities in positive characteristic.
Let $H$ be an ample Cartier divisor on $X$.
Then $|K_X+mH|$ is base point free for any $m\ge 3$ $($or $m=2$ and $H^2>1$$)$ with $\dim|mH|\ge 3+\dim H^1(\O_X)_n$, and is very ample for any $m\ge 4$ $($or $m=3$ and $H^2>1$$)$ with $\dim |mH|\ge 6+\dim H^1(\O_X)_n$.
\end{corollary}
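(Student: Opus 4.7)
The plan is to deduce this statement by directly applying Corollary~\ref{bpfcor2} for the basepoint-freeness part and Corollary~\ref{vacor} for the very ampleness part, with the roles of $\alpha$ and $\beta$ chosen according to the value of $m$ and the size of $H^2$. Since $X$ has at most Du Val singularities, Definition~\ref{deltatau} and Lemma~\ref{deltataulem}~(1) give $\delta_x\in\{2,4\}$ and $\tau_x\in\{1,3\}$ at every closed point $x\in X$; in particular $\delta_x\le 4$ and $\tau_x\le 3$ uniformly, so the cohomological hypotheses $\dim |mH|\ge \dim H^1(\O_X)_n+3$ and $\dim|mH|\ge \dim H^1(\O_X)_n+6$ immediately cover the assumption $(\mathrm{ii})$ in the two respective corollaries.

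For the basepoint-freeness, I would apply Corollary~\ref{bpfcor2} with $D=mH$ and $\alpha=4$, $\beta=2$. One easily verifies $4\beta(1-\beta/\alpha)=4\ge \delta_x$ and $\alpha\ge \delta_x$. When $m\ge 3$, we have $D^2=m^2H^2\ge 9>4$ and $DB=mHB\ge m\ge 3>\beta$ for any integral curve $B$ (since $H$ is ample Cartier, $HB\ge 1$). When $m=2$ and $H^2>1$, i.e.\ $H^2\ge 2$, we have $D^2=4H^2\ge 8>4$ and $DB=2HB\ge 2=\beta$. Hence $x$ is not a base point in either regime.

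For the very ampleness, I would apply Corollary~\ref{vacor}. For $m\ge 4$, choose $\alpha=8$ and $\beta=4$: the condition $\beta(1-\beta/\alpha)\ge 2$ becomes $(\beta-4)^2\le 0$, which forces $\beta=4$ exactly; and $D^2\ge 16>8$ together with $DB\ge 4$ verifies $(\mathrm{i})$. The slightly trickier case is $m=3$ with $H^2>1$, where $DB\ge 3$ only and so $\beta=4$ fails. Here I would take $\alpha=9$ and $\beta=3$; then $\beta(1-\beta/\alpha)=3(1-1/3)=2$, while $D^2=9H^2\ge 18>9=\alpha$ (using $H^2\ge 2$) and $DB=3HB\ge 3=\beta$. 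Thus Corollary~\ref{vacor} applies and $|K_X+mH|$ is very ample.

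The only genuinely non-routine part of the argument is calibrating the pair $(\alpha,\beta)$ in the edge cases $m=2$ (basepoint-freeness) and $m=3$ (very ampleness), which require the extra hypothesis $H^2>1$ to push $D^2$ strictly past $\alpha$; everything else is a direct numerical verification in the two quoted corollaries. In particular no new vanishing theorem needs to be invoked beyond what has already been established in Section~4.
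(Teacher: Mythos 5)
Your proof is correct and follows essentially the same route as the paper, which deduces the statement from Corollary~\ref{bpfcor2} with $(\alpha,\beta)=(4,2)$ and Corollary~\ref{vacor} with $(\alpha,\beta)=(9,3)$. The only (immaterial) difference is that the paper's single choice $(\alpha,\beta)=(9,3)$ already handles both very-ampleness cases $m\ge 4$ and $m=3$ with $H^2>1$ at once, whereas you split them into $(8,4)$ and $(9,3)$; both calibrations verify the numerical hypotheses.
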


\begin{proof}
This follows from Corollary~\ref{bpfcor2} with $(\alpha,\beta)=(4,2)$ and Corollary~\ref{vacor} with $(\alpha,\beta)=(9,3)$ (or directly from Lemma~\ref{ampleconn} and Theorem~\ref{ReithmI}).
\end{proof}

For pluri-(anti)canonical maps, the following hold:

\begin{corollary} \label{pluricor}
Let $X$ be a normal projective surface with at most singularities of geometric genera $p_g\le 3$ in positive characteristic.
Then the following hold:

\smallskip

\noindent
$(1)$ If $K_X$ is ample Cartier, then $|mK_X|$ is base point free for $m\ge 4$ $($or $m=3$ and $K_{X}^{2}>1$$)$ with $\dim|(m-1)K_X|\ge \dim H^1(\O_X)_n+3$.

\smallskip

\noindent
$(2)$ If $X$ is canonical and $K_X$ is ample Cartier, then $|mK_X|$ is very ample for any $m\ge 5$ $($or $m=4$ and $K_{X}^{2}>1$$)$ with $\dim |(m-1)K_X|\ge \dim H^1(\O_X)_n+6$.

\smallskip

\noindent
$(3)$ If $-K_X$ is ample Cartier $($that is, $X$ is canonical del Pezzo$)$, then $|-mK_X|$ is base point free for $m\ge 2$ $($or $m=1$ and $K_{X}^{2}>1$$)$, and is very ample for any $m\ge 3$ $($or $m=2$ and $K_{X}^{2}>1$$)$.

\smallskip

\noindent
$(4)$ If $K_X$ is ample with Cartier index $r\ge 2$, then $|mrK_X|$ is base point free for $m\ge 3$ with $\dim|(mr-1)K_X|\ge \dim H^1(\O_X)_n+3$.

\smallskip

\noindent
$(5)$ If $-K_X$ is ample with Cartier index $r\ge 2$ $($that is, $X$ is klt del Pezzo$)$, then $|-mrK_X|$ is base point free for $m\ge 2$ with $\dim|-(mr+1)K_X|\ge 3$.
\end{corollary}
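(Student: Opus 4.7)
The plan is to reduce each of the five statements to a direct numerical application of Corollary~\ref{bpfcor2} (for basepoint-freeness) or Corollary~\ref{vacor} (for very ampleness), applied to the divisor $D$ chosen so that $K_X+D$ is the target pluri-(anti)canonical divisor. Concretely, I would take $D=(m-1)K_X$ in (1) and (2), $D=-(m+1)K_X$ in (3), $D=(mr-1)K_X$ in (4), and $D=-(mr+1)K_X$ in (5); in every case $K_X+D$ is Cartier, being a multiple of $K_X$ or $-K_X$ whose coefficient is divisible by the Cartier index.

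Before entering the case analysis I would record two uniformising observations. First, under the standing hypothesis that every singularity has $p_g\le 3$, the invariant $\tau_x$ of Definition~\ref{deltatau} satisfies $\tau_x\le 3$ at every closed point: for smooth points $\tau_x=3$ by definition, for Du Val points $\tau_x=1$, and in the remaining (log-terminal or worse) cases $\tau_x=\dim V_n\le\dim V=p_g(x)\le 3$. Hence the dimensional input to Corollary~\ref{bpfcor2} (resp.\ Corollary~\ref{vacor}) simplifies to $\dim|D|\ge\dim H^1(\O_X)_n+3$ (resp.\ $+6$), which is exactly the shape in which it appears in our statement. Second, for the del Pezzo cases (3) and (5) I would invoke $H^1(\O_X)=0$, so the hypothesis on $\dim|D|$ reduces to $\dim|D|\ge 3$ (resp.\ $\ge 6$); this lower bound on the linear system then follows from Riemann--Roch together with Corollary~\ref{KVvan} applied to the nef and big divisor $-mK_X$.

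The numerical verifications in each case then amount to checking $D^2>\alpha$ and $D\cdot B\ge\beta$ for every curve $B$ (through $x$ where appropriate), with $(\alpha,\beta)=(4,2)$ for Corollary~\ref{bpfcor2} and $(\alpha,\beta)=(9,3)$ for Corollary~\ref{vacor}. In the Gorenstein cases (1)--(3), $K_X$ is Cartier, so $K_X^2\ge 1$ and $|K_X\cdot B|\ge 1$ for every curve $B$, giving for example in (1) $D^2=(m-1)^2K_X^2\ge 9>4$ when $m\ge 4$, and $D\cdot B\ge m-1\ge 2$; the boundary $m=3$, $K_X^2\ge 2$ gives $D^2=8>4$ and the same intersection bound. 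The other parts are analogous. In the $\Q$-Cartier cases (4) and (5), the ampleness of $rK_X$ (resp.\ $-rK_X$) yields $K_X^2\ge 1/r^2$ and $|K_X\cdot B|\ge 1/r$, so with $D=(mr\mp 1)K_X$ one finds $D^2\ge(m\mp 1/r)^2$ and $|D\cdot B|\ge m\mp 1/r$, comfortably exceeding $4$ and $2$ under the stated lower bounds on $m$.

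The main obstacle is not any single inequality but the simultaneous bookkeeping of the singularity-dependent invariants $(\delta_x,\tau_x)$ across the permitted zoo of singularities: one has to accept the mild loss $\tau_x\le 3$ (sharp at smooth points) in order to get a single uniform dimension threshold, and one must know or cite the vanishing $H^1(\O_X)=0$ for klt del Pezzo surfaces in positive characteristic needed in (5). Once these two auxiliary inputs are in place the main assertions follow immediately from Corollaries~\ref{bpfcor2} and \ref{vacor} case by case.
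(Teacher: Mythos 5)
Your proposal is correct and follows essentially the same route as the paper: the paper's proof is precisely an application of Corollary~\ref{bpfcor2} (with $(\alpha,\beta)=(4,2)$) and of Corollary~\ref{vacor} via Corollary~\ref{Fujitacor} (with $(\alpha,\beta)=(9,3)$), together with the observation that $p_g\le 3$ forces $\tau_x\le 3$, that $\dim|-(m+1)K_X|\ge 6$ follows from Riemann--Roch in case (3), and that klt del Pezzo surfaces in positive characteristic have $H^1(\O_X)=0$ (Tanaka). Your numerical verifications, including the $\Q$-Cartier estimates $K_X^2\ge 1/r^2$ and $|K_X\cdot B|\ge 1/r$ in cases (4) and (5), all check out.
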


\begin{proof}
This follows from Corollary~\ref{bpfcor2} and Corollary~\ref{Fujitacor}.
Note that $\dim|-(m+1)K_X|\ge 6$ automatically holds in the case (3) by the Riemann--Roch theorem
and that any klt del Pezzo surface is rational and hence $H^1(\O_X)=0$ by Theorem~3.5 in \cite{Tana}.
\end{proof}

\begin{remark}
(1) When $X$ is canonical, Corollary~\ref{pluricor} (1) and (2) were obtained by {\cite[Main theorem p.97]{Eke}} without the condition for $\dim|(m-1)K_X|$.

\smallskip

\noindent
(2) Corollary~\ref{pluricor} (3) was shown in {\cite[Proposition~2.14]{BeTa}}.

\end{remark}

For bicanonical maps on smooth surfaces of general type, the following can be shown (compare Theorem~26 and Theorem~27 in \cite{S-B}):

\begin{corollary}
Let $X$ be a smooth minimal projective surface of general type in positive characteristic.
Then $|2K_X|$ is base point free if $K_{X}^{2}>4$ and $\chi(\O_X)\ge 5-h^{0,1}_{s}$, and $|2K_X|$ defines a birational morphism if $K_{X}^{2}>9$, $\chi(\O_X)\ge 8-h^{0,1}_{s}$ and $X$ does not admit genus $2$ fibrations, where $h^{0,1}_{s}$ is the dimension of the semi-simple part $H^1(\O_X)_{s}$.
\end{corollary}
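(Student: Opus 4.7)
For the base-point-freeness, suppose $x\in X$ is a base point of $|2K_X|$. I would apply Theorem~\ref{ReithmII} with $D$ an effective representative of $|K_X|$ passing through $x$ with multiplicity at least two (this exists because $\chi(\O_X)\ge 5-h^{0,1}_s$ forces $p_g\ge 4+h^{0,1}_n$ and hence $\dim|K_X\otimes\mathfrak{m}_x^2|\ge p_g-3\ge 1$), $\zeta=\{x\}$, and the pair $(\pi,Z)$ the blow-up at $x$ together with its exceptional $(-1)$-curve. For smooth $X$ one has $\Delta=-Z$, so $\delta_x=-(\Delta-Z)^2=4$ and $\delta'_x=\mu(q_{X,x},1)=4$ (using $q_{X,x}\ge 1$). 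The hypothesis $K_X^2>4$ gives $D^2>\delta'_\zeta$, and a direct count yields $\dim|D'|\ge p_g-4\ge h^{0,1}_n=\dim H^1(\O_{X'})_n$. The theorem then produces an effective decomposition $D=A+B$ with $A,B>0$ both passing through $x$, $B$ negative semi-definite, $A-B$ big, and $AB\le \delta_\zeta/4=1$.

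To derive a contradiction I would combine $AB=K_XB-B^2\le 1$ with $K_XB\ge 0$ (nefness of $K_X$) and $B^2\le 0$ to force $K_XB\in\{0,1\}$, then invoke adjunction: the identity $K_XB+B^2=2p_a(B)-2$ forces $K_XB$ and $B^2$ to share parity. If $K_XB=0$, then $-1\le B^2\le 0$ together with even parity gives $B^2=0$, and the Hodge index theorem applied to the nef and big $K_X$ forces $B\equiv 0$, contradicting $B>0$. If $K_XB=1$, the bound $AB\le 1$ forces $B^2\ge 0$ hence $B^2=0$, which violates odd parity. Thus no such base point $x$ exists.

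For the birational part, assume by contradiction that $\phi_{|2K_X|}$ is not birational onto its image; then the non-separating locus in $\mathrm{Hilb}^2(X)$ contains an irreducible two-dimensional family of reduced length-two clusters $\zeta=\{x,y\}$. For smooth $X$, a two-point cluster gives $\delta_\zeta=8$ and $\delta'_\zeta=\mu(q_{X,\zeta},2)\le 9$; the hypotheses $K_X^2>9$ and $\chi\ge 8-h^{0,1}_s$ (which forces $p_g\ge 7+h^{0,1}_n$) verify the conditions of Theorem~\ref{ReithmII} applied to a representative $D\in|K_X\otimes \mathfrak{m}_x^2\mathfrak{m}_y^2|$, and yield $D=A+B$ with $AB\le 2$, $A-B$ big, $B$ negative semi-definite, and $A,B$ each meeting $\zeta$. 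The same parity argument restricts $(K_XB,B^2)$ to $(0,-2)$, $(1,-1)$, or $(2,0)$. The case $(2,0)$ produces a nef $B$ with $B^2=0$ and $p_a(B)=2$, hence a genus-two fibration on $X$, excluded by hypothesis. The cases $(0,-2)$ (a $(-2)$-curve configuration) and $(1,-1)$ (a Francia-type curve of arithmetic genus one) correspond to finitely many numerical classes $[B]$ in $\mathrm{Num}(X)$, and the effective divisors in each class sweep out at most a one-dimensional closed subset of $X$; a two-dimensional family of pairs cannot be contained in the subset of $X\times X$ of pairs meeting such a one-dimensional locus. The main obstacle will be this last dimensional bookkeeping, especially in the Francia case, where one must carefully bound the mobility of $|B|$ and control how its members interact with the generic fibre of the non-birational map, in the spirit of Shepherd-Barron and Bombieri. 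Once this is done, the birationality of $\phi_{|2K_X|}$ follows.
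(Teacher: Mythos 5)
Your base-point-free half is correct and complete. It follows the paper's route (the same blow-up, $\delta_x=4$, $\delta'_x=4$, and the numerical hypotheses translated exactly as you translate them), except that where the paper simply quotes the $2$-connectedness of $K_X$ (Bombieri's Lemma~1) to contradict $AB\le 1$, you rederive the needed weak form of it from the extra output of Theorem~\ref{ReithmII} ($B$ negative semi-definite, hence $B^2\le 0$) together with nefness of $K_X$, adjunction parity, and the Hodge index theorem. That substitution is sound and self-contained.

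The birational half has a genuine gap at the case $(K_XB,B^2)=(2,0)$. A \emph{single} effective divisor $B$ with $K_XB=2$, $B^2=0$ and $p_a(B)=2$ does not produce a genus-$2$ fibration; a curve class of self-intersection zero need not be semiample, and nothing in Theorem~\ref{ReithmII} gives you a pencil. The paper's mechanism is different: it first shows that \emph{infinitely many} clusters $\zeta$ yield curves $B_\zeta$ of type $(2,0)$, observes that these lie in finitely many numerical classes (so some class contains infinitely many distinct prime members), and only then invokes Lemma~\ref{fibrlem} (Proposition~6.7 of [Eno]) to assemble them into a fibration, whose fibers then have genus $2$. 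You never invoke this lemma, and without it the contradiction with the no-genus-$2$-fibration hypothesis does not follow. Relatedly, your dimension count for the rigid cases is wrong as stated: the locus of pairs $(x,y)\in X\times X$ meeting a fixed one-dimensional set $\Sigma$ is $(\Sigma\times X)\cup(X\times\Sigma)$, which is three-dimensional and can easily contain a two-dimensional family. What saves the argument is that the incidence family $X\times_Y X\smallsetminus\Delta$ has \emph{both} projections generically finite onto $X$, so the preimage of $\Sigma$ under either projection is only one-dimensional; this is exactly the ``bookkeeping'' you flag as the remaining obstacle, and it is also the step that forces infinitely many distinct $(2,0)$-curves, feeding the fibration lemma above. (One point in your favour: you retain the case $(0,-2)$, which the paper dismisses without comment; your treatment of it as a rigid $(-2)$-configuration is the right way to handle it.)
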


\begin{proof}
The base point free case follows from Theorem~\ref{ReithmI} and the $2$-connectedness of $K_X$ (Lemma~1 in \cite{Bom}).
Note that $\dim|K_X|\ge \dim H^1(\O_X)_n+3$ is equivalent to $\chi(\O_X)\ge 5-h^{0,1}_{s}$.
Next we consider the birational case.
If the bicanonical map is not birational (hence generically finite of degree $\ge 2$), there exist infinitely many clusters $\zeta=x+y$ of degree $2$ with $x\neq y$ such that $|2K_X|$ does not separate $\zeta$.
One can see easily that $\delta_{\zeta}(\pi,Z)=8$ and $\delta'_{\zeta}(\pi,Z)=8$ or $9$, where $\pi\colon X'\to X$ is the blow-up along $\zeta=x+y$ and $Z=E_x+E_y$ is the sum of two exceptional $(-1)$-curves.
It follows from Theorem~\ref{ReithmII} that there exists a negative semi-definite curve $B_{\zeta}$ intersecting $\zeta$ such that $(K_X-B_{\zeta})B_{\zeta}=2$, where the equality is due to the $2$-connectivity of $K_X$.
Thus we have $(K_XB_{\zeta},B_{\zeta}^{2})=(2,0)$ or $(1,-1)$.
Since the number of curves $B_{\zeta}$ satisfying the later case is finite, there exist infinitely many curves (but belong to finitely many numerical classes) $B_{\zeta}$ satisfying the former case.
By applying Proposition~6.7 in \cite{Eno} (see Lemma~\ref{fibrlem}), these $B_{\zeta}$ define a genus $2$ fibration on $X$.
\end{proof}

\section{Extension theorems in positive characteristic}
\label{sec:Extension theorems in positive characteristic}

In this section, let $X$ stand a normal proper geometrically connected surface over an infinite perfect field $k$ of positive characteristic. 
We will prove the following extension theorem, which is a positive characteristic analog of Theorem~6.1 in \cite{Eno}, by using Theorem~\ref{ReithmII} instead of Theorem~5.4 in \cite{Eno}.

\begin{theorem}[Extension theorem] \label{extnthm}
Let $D>0$ be an effective divisor on $X$ and assume that any prime component $D_i$ of $D$ has positive self-intersection number.
Let $\varphi\colon D\to \mathbb{P}^1$ be a finite separable morphism of degree $d$. 
If $D^2>\mu(q_X,d)$ and $\dim |D|\ge 3d+\dim H^1(\O_X)_n$, then
there exists a morphism $\psi\colon X\to \mathbb{P}^1$ such that $\psi|_D=\varphi$.
\end{theorem}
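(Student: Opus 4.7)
The plan is to imitate the proof of Theorem~6.1 in \cite{Eno}, replacing the characteristic-zero Reider-type input (Theorem~5.4 of \cite{Eno}) by our positive-characteristic Reider-type statement Theorem~\ref{ReithmII}, and to extract the final contradiction from the hypothesis that every prime component of $D$ has positive self-intersection.

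Setup. Put $M:=\varphi^{*}\O_{\PP^1}(1)$, a degree-$d$ line bundle on $D$ carrying a two-dimensional base-point-free subsystem $V\subset H^0(D,M)$ which cuts out $\varphi$. By separability of $\varphi$, generic fibres $F_t=\varphi^{-1}(t)$ are reduced and supported on the smooth locus of $D$ (hence also of $X$). Assume, for contradiction, that $\varphi$ does not extend to a morphism $\psi\colon X\to\PP^1$.

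Reduction to a failure of adjoint surjectivity. As in \cite{Eno}, non-extendability of $\varphi$ is translated, via a standard Hilbert-scheme argument together with residuation against $D$, into the following concrete obstruction: there exists a generic $t\in\PP^1$ and a cluster $\zeta$ on $X$ built from the fibre data of $\varphi$ near $F_t$ (whose length is controlled by $d$ up to the three parameters needed to fix a projective coordinate on $\PP^1$, accounting for the coefficient $3$ in the dimension hypothesis) such that $\zeta$ is supported on smooth points of $X$, $K_X+D$ is Cartier along $\zeta$, and the restriction
\begin{equation*}
H^0\bigl(X,\O_X(K_X+D)\bigr)\;\longrightarrow\;H^0\bigl(\zeta,\O_X(K_X+D)|_\zeta\bigr)
\end{equation*}
fails to be surjective; indeed, if all such adjoint restrictions were surjective, one could reassemble the pencil $\varphi$ into a global pencil on $X$.

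Applying Theorem~\ref{ReithmII}. Let $\pi\colon X'\to X$ be the blow-up at the smooth points supporting $\zeta$, and let $Z$ be the sum of the exceptional $(-1)$-curves; then $\Delta=0$ and a direct computation gives $\delta_\zeta(\pi,Z)=4\,\mathrm{length}(\zeta)$. After reducing to the single fibre $F_t$ of length $d$ that drives the obstruction, one checks $q_{X,\zeta}\ge q_X$, so $\delta'_\zeta(\pi,Z)\le \mu(q_X,d)$, and the hypothesis $D^2>\mu(q_X,d)$ therefore yields $D^2>\delta'_\zeta(\pi,Z)$. Because blowing up smooth points of $X$ does not enlarge $H^1(\O_X)_n$, one has $\dim H^1(\O_{X'})_n=\dim H^1(\O_X)_n$, while $\dim|D'|\ge \dim|D|-\mathrm{length}(\zeta)\ge \dim H^1(\O_{X'})_n$ by the assumption $\dim|D|\ge 3d+\dim H^1(\O_X)_n$. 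Thus Theorem~\ref{ReithmII} applies and produces an effective decomposition $D=A+B$ with $A,B>0$ both meeting $\zeta$, $A-B$ big, $B$ negative semi-definite, and $AB\le \tfrac14\delta_\zeta(\pi,Z)$.

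Contradiction from positive self-intersections. Writing the irreducible decomposition $D=\sum_i m_i D_i$, any effective subdivisor $B\le D$ has the form $B=\sum_i b_iD_i$ with $0\le b_i\le m_i$. Because distinct prime divisors on a normal surface meet non-negatively and $D_i^2>0$ by hypothesis for every~$i$,
\begin{equation*}
B^2 \;=\; \sum_i b_i^{\,2}D_i^2+2\sum_{i<j}b_ib_j\,D_iD_j\;\ge\;\sum_i b_i^{\,2}D_i^2\;>\;0,
\end{equation*}
since $B>0$ forces some $b_i>0$. This contradicts the negative semi-definiteness of $B$, and therefore $\varphi$ must extend to the desired morphism $\psi\colon X\to\PP^1$.

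The principal technical point is the obstruction reduction in the second paragraph: the precise translation of ``$\varphi$ does not extend'' into a non-surjectivity of an adjoint restriction on a cluster of length bounded in terms of $d$, which is where the factor $3$ in $\dim|D|\ge 3d+\dim H^1(\O_X)_n$ enters. Once that reduction is in place, Theorem~\ref{ReithmII} supplies the subdivisor $B$, and the hypothesis on self-intersections of prime components closes the argument by a one-line intersection-theoretic computation.
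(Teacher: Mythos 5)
Your proposal inverts the logical structure of the argument and, more importantly, applies Theorem~\ref{ReithmII} to the wrong divisor; the final ``one-line'' contradiction does not go through. The actual proof is not by contradiction: the restriction $H^0(\O_X(K_X+D))\to H^0(\O_X(K_X+D)|_{\mathfrak{a}_\lambda})$ to a general fibre $\mathfrak{a}_\lambda=\varphi^{-1}(\lambda)$ is \emph{always} non-surjective (the $d$ points of a fibre of $\varphi$ never impose independent conditions on the adjoint system --- this is Lemma~6.4 of \cite{Eno} and has nothing to do with whether $\varphi$ extends). Moreover, the condition $(E)_{\,\cdot\,,\mathfrak{a}_\lambda}$ requires $\pi^{*}(\cdot)+\Delta-Z$ to be effective with $Z$ chosen so that $\delta_{\mathfrak{a}_\lambda}(\pi,Z)=4d$; since $\Delta-Z=-2\sum E_i$ and the fibre $\mathfrak{a}_\lambda$ is reduced on $D$, the divisor $D$ itself does \emph{not} satisfy this condition. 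One must instead choose a member $D_\lambda\in|D|$ singular along $\mathfrak{a}_\lambda$ (this is where $\dim|D|\ge 3d$ enters, via $\dim|\pi^{*}D+\Delta-Z|\ge 0$), and Theorem~\ref{ReithmII} then decomposes $D_\lambda=A_\lambda+B_\lambda$. The negative semi-definite part $B_\lambda$ is a subdivisor of $D_\lambda$, not of $D$, so your computation $B^2=\sum b_i^2D_i^2+\dots>0$ is unavailable: $B_\lambda$ is not of the form $\sum b_iD_i$. The hypothesis $D_i^2>0$ is used quite differently, namely to guarantee that no component of the negative semi-definite curve $B_\lambda$ is a component of $D$, whence $D\cap B_\lambda$ is finite and contained in $\mathfrak{a}_\lambda$ (Lemma~6.6), so the curves $B_\lambda$ for distinct $\lambda$ are genuinely distinct.

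The second, equally serious, gap is that even if your contradiction worked it would only establish surjectivity of an adjoint restriction, and the claim that this lets one ``reassemble the pencil'' is unsupported; no morphism $\psi$ is ever constructed. The construction is the heart of the proof: for infinitely many $k$-rational $\lambda\in\Lambda$ one obtains curves $B_\lambda$ with $B_\lambda^2=0$ meeting $D$ only inside $\mathfrak{a}_\lambda$; since $0<DC\le d$ for the relevant components these fall into finitely many numerical classes, so some class contains infinitely many of them, and Lemma~\ref{fibrlem} (Proposition~6.7 of \cite{Eno}) produces a fibration $f\colon X\to Y$ having them as fibres. One then shows the projection $\overline{D}\to Y$ from the scheme-theoretic image of $(f|_D,\varphi)$ is an isomorphism and sets $\psi=\gamma\circ f$. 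None of this appears in your write-up, so the proposal is missing both the correct input to Theorem~\ref{ReithmII} and the entire constructive half of the argument.
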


\begin{remark}
Theorem~\ref{extnthm} is a generalization of Serrano's result (Remark~3.12 in \cite{Ser}).
Paoletti proved another variant of extension theorems in positive characteristic by using Bogomolov-type inequalities (Theorem~3.1 in \cite{Pao}).
\end{remark}

The following two theorems are positive characteristic analogs of Theorem~6.10 and Theorem~6.11 in \cite{Eno} (see Section~6 in \cite{Eno} for notations and discussions).

\begin{theorem}[Extension theorem with base points] \label{bextnthm}
Let $D>0$ be an effective divisor on $X$ and assume that any prime component $D_i$ of $D$ has positive self-intersection number.
Let $\varphi\colon D\to \mathbb{P}^1$ be a finite separable morphism of degree $d$ which can not be extended to a morphism on $X$.
We assume that $D^2=\mu(q_X,d)$, $q_X<d$ and $\dim |D|\ge 3d+\dim H^1(\O_X)_n$.
Then there exists a linear pencil $\{F_{\lambda}\}_{\lambda}$ with $F_{\lambda}^2=q_{X}$ and no fixed parts such that the induced rational map $\psi\colon X\dasharrow \mathbb{P}^1$ satisfies $\psi|_D=\varphi$.
\end{theorem}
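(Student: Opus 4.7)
The plan is to run the argument of Theorem~\ref{extnthm} on the degree-$d$ reduced clusters $\zeta_\lambda:=\varphi^{-1}(\lambda)\subset D$, but to extract, in the boundary situation $D^2=\mu(q_X,d)$, information from the equality case of Theorem~\ref{ReithmII} rather than its strict case. For each $\lambda$ I would fix a pair $(\pi_\lambda,Z_\lambda)$ satisfying $(E)_{D,\zeta_\lambda}$ with $\delta_{\zeta_\lambda}(\pi_\lambda,Z_\lambda)=4d$ (the generic value for a reduced length-$d$ cluster), use $\dim|D|\ge 3d+\dim H^1(\mathcal{O}_X)_n$ to secure $\dim|D'|\ge \dim H^1(\mathcal{O}_{X'})_n$ for $D':=\pi_\lambda^{*}D+\Delta-Z_\lambda$, and note that $q_{X,\zeta_\lambda}\ge q_X$ forces $\delta'_{\zeta_\lambda}(\pi_\lambda,Z_\lambda)=\mu(q_{X,\zeta_\lambda},d)\le \mu(q_X,d)=D^2$, while the standing assumption $q_X<d$ guarantees the strict inequality $\delta'_{\zeta_\lambda}>\delta_{\zeta_\lambda}$ needed to enter the equality branch of Theorem~\ref{ReithmII}.

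If the separation $H^0(\mathcal{O}_X(K_X+D))\twoheadrightarrow H^0(\mathcal{O}_X(K_X+D)|_{\zeta_\lambda})$ held for \emph{every} $\lambda$, then, exactly as in the proof of Theorem~\ref{extnthm}, the pencil $\varphi$ on $D$ would lift to a global morphism $\psi\colon X\to\mathbb{P}^1$, contradicting our hypothesis. Hence separation fails for an infinite set $S\subset\mathbb{P}^1(k)$ of values of $\lambda$ (the failure locus in $D$ must be dense, else $\varphi$ could still be extended away from it). For each $\lambda\in S$, Theorem~\ref{ReithmII} produces a decomposition $D=A_\lambda+B_\lambda$ with $B_\lambda$ meeting $\zeta_\lambda$ and negative semi-definite, and, since strictness must fail, $B_\lambda^2=q_{X,\zeta_\lambda}$ together with a numerical proportionality $D\equiv\bigl(\delta_{\zeta_\lambda}/(4q_{X,\zeta_\lambda})+1\bigr)B_\lambda$. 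Squaring both sides and comparing with $D^2=\mu(q_X,d)$ then forces $q_{X,\zeta_\lambda}=q_X$, $\delta_{\zeta_\lambda}=4d$, and the uniform numerical relation $D\equiv (d/q_X+1)B_\lambda$.

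The divisors $\{B_\lambda\}_{\lambda\in S}$ thus all lie in a single numerical class $[B]$ with $B^2=q_X$. Since the $\zeta_\lambda$'s are pairwise disjoint, no fixed effective divisor can contain infinitely many of them, so $\lambda\mapsto B_\lambda$ is non-constant on $S$; this gives at least two distinct effective representatives of $[B]$, hence a linear pencil $\{F_\mu\}_{\mu\in\mathbb{P}^1}$ inside $[B]$ with $F_\mu^2=q_X$. Minimality of $q_X$ forces the pencil to have no fixed part, for any nonzero common component $E$ would split $F_\mu=E+F_\mu'$ and the movable part would contain an effective divisor of strictly smaller positive self-intersection, contradicting the definition of $q_X$. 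Finally, $F_\mu\cdot D=D^2/(d/q_X+1)=d=\deg\zeta_\mu$, so $F_\mu\cap D$ equals $\zeta_\mu$ for $\mu\in S$; by rigidity of the pencil this identification extends to all $\mu\in\mathbb{P}^1$, yielding $\psi|_D=\varphi$ up to an automorphism of $\mathbb{P}^1$.

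The main obstacle I anticipate is the step ``$\lambda\mapsto B_\lambda$ is non-constant,'' or more precisely the full verification that the pencil we construct genuinely parametrizes $\varphi$. One must combine the set-theoretic statement that a single curve cannot contain infinitely many disjoint length-$d$ subschemes with the more delicate fact that the obstructing $B_\lambda$ really \emph{contains} $\zeta_\lambda$ as a subscheme---a point on which the rigidity of the equality case of Theorem~\ref{ReithmII} is essential, and which ensures that the intersection count $F_\mu\cdot D=d$ is accounted for by a genuine degree-$d$ fiber of $\varphi$ rather than by nearby scheme-theoretic support.
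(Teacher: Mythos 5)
Your skeleton---run the fiber-by-fiber argument of Theorem~\ref{extnthm} on the clusters $\mathfrak{a}_\lambda=\varphi^{-1}(\lambda)$ and extract the extra rigidity from the equality case of Theorem~\ref{ReithmII}---is the intended one, but the hypothesis that $\varphi$ does \emph{not} extend to a morphism is used in the wrong place, and this leaves the central dichotomy unhandled. Non-surjectivity of $H^0(\mathcal{O}_X(K_X+D))\to H^0(\mathcal{O}_X(K_X+D)|_{\mathfrak{a}_\lambda})$ holds for \emph{every} $\lambda$ in the dense set $\Lambda$ unconditionally (this is Lemma~6.4 of \cite{Eno}: the $\mathfrak{a}_\lambda$ move in a pencil on $D$, so they impose at most $d-1$ conditions); it has nothing to do with extendability, so your ``separation fails on an infinite set $S$, else $\varphi$ would extend'' is backwards. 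Where the hypothesis is actually needed is in the conclusion of Theorem~\ref{ReithmII}: for each $\lambda$ you get \emph{either} a negative semi-definite $B_\lambda$ with $A_\lambda B_\lambda\le d$ \emph{or} $B_\lambda^2=q_{X,\mathfrak{a}_\lambda}$ with $D\equiv(d/q_{X,\mathfrak{a}_\lambda}+1)B_\lambda$, and your phrase ``since strictness must fail'' silently discards the first alternative. (It can genuinely occur for individual $\lambda$: your claim that $q_X<d$ forces $\delta'_{\mathfrak{a}_\lambda}>\delta_{\mathfrak{a}_\lambda}$ in fact requires $q_{X,\mathfrak{a}_\lambda}<d$, and when $q_{X,\mathfrak{a}_\lambda}>q_X$ one is even in the strict-inequality version of Theorem~\ref{ReithmII}.) The correct argument is that if the negative semi-definite alternative occurred for infinitely many $\lambda$, then Lemmas~\ref{fiberlem} and \ref{fibrlem} would manufacture a fibration and hence a genuine morphism $\psi\colon X\to\mathbb{P}^1$ extending $\varphi$, contradicting the hypothesis; only then may one assume the proportional alternative for all but finitely many $\lambda$.

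Two further steps do not go through as written. First, ``two distinct effective representatives of $[B]$, hence a linear pencil'' conflates numerical with linear equivalence: distinct numerically equivalent effective divisors need not be linearly equivalent, so producing an actual pencil requires an extra argument (infinitely many distinct $B_\lambda$---which you do get, since each meets one of the pairwise disjoint $\mathfrak{a}_\lambda$ while $DB_\lambda$ is bounded---distributed among finitely many classes of $\Pic$ modulo $\Pic^{\circ}$, etc.), and your ``no fixed part'' argument also fails because removing a fixed component $E$ with $E^2<0$ need not decrease the self-intersection of the movable part. Second, the closing arithmetic is wrong: from $D\equiv(d/q_X+1)B_\lambda$ and $D^2=(d+q_X)^2/q_X$ one gets $F_\mu D=d+q_X$, not $d$, so $F_\mu\cap D$ is $\mathfrak{a}_\mu$ \emph{plus} a degree-$q_X$ contribution from the base locus of the pencil. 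This discrepancy is exactly why the theorem only asserts a rational map $\psi\colon X\dasharrow\mathbb{P}^1$ with $\psi|_D=\varphi$; the identification of the pencil with $\varphi$ on $D$ must be made after separating off the base scheme, not by the naive degree count you propose.
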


\begin{theorem}[Extension theorem on movable divisors] \label{mextnthm}
Let $D>0$ be an effective divisor on $X$ and assume that all prime components $D_i$ of $D$ have non-trivial numerical linear systems and positive self-intersection numbers.
Let $\varphi\colon D\to \mathbb{P}^1$ be a finite separable morphism of degree $d$  on $X$.
If $D^2>\mu(q_{X,\infty},d)$ $($resp.\ $D^2=\mu(q_{X,\infty},d)$, $q_{X,\infty}<d$$)$ and $\dim |D|\ge 3d+\dim H^1(\O_X)_n$,
then 
there exists a morphism $\psi\colon X\to \mathbb{P}^1$ $($resp.\ or a rational map $\psi\colon X\dasharrow \mathbb{P}^1$ induced by a linear pencil $\{F_{\lambda}\}_{\lambda}$ with $F_{\lambda}^2=q_{X,\infty}$ and no fixed parts$)$ such that $\psi|_D=\varphi$.
\end{theorem}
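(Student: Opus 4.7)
The plan is to adapt the proofs of Theorems~\ref{extnthm} and \ref{bextnthm} almost verbatim, exploiting the hypothesis that every prime component of $D$ has non-trivial numerical linear system in order to replace the global invariant $q_X$ by the stronger bound $q_{X,\infty}$ at the step where Theorem~\ref{ReithmII} is invoked. For a generic $t\in \mathbb{P}^1$ the scheme-theoretic fibre $\zeta_t:=\varphi^{*}(t)$ is a cluster of length $d$ on $X$ supported on $D$ and consisting of $d$ distinct smooth points of $X$. Let $\pi\colon X'\to X$ be the blow-up at these points together with the minimal resolution at any singularity of $X$ lying on $D$, and take $Z$ to be the sum of the reduced exceptional $(-1)$-curves (plus any auxiliary cycle for the resolved singularities). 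Then $(\pi,Z)$ satisfies the condition $(E)_{D,\zeta_t}$ of Definition~\ref{delta}, a direct computation gives $\delta_{\zeta_t}(\pi,Z)=4d$, and iterating Lemma~\ref{deltataulem}~(2) at the $d$ smooth points of $\zeta_t$ (each contributing $\tau_x=3$) converts the hypothesis $\dim|D|\ge 3d+\dim H^1(\O_X)_n$ into $\dim|D'|\ge \dim H^1(\O_{X'})_n$, where $D':=\pi^{*}D+\Delta-Z$.

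An adjoint-bundle computation parallel to that in Theorem~\ref{extnthm} reduces the existence of a morphism $\psi\colon X\to \mathbb{P}^1$ extending $\varphi$ to the surjectivity of
$$
H^0(\O_X(K_X+D))\to H^0(\O_X(K_X+D)|_{\zeta_t})
$$
for generic $t$. Supposing $\varphi$ does not extend to a morphism on $X$, this surjectivity fails for infinitely many $t$. The key numerical input before applying Theorem~\ref{ReithmII} is that, because $\zeta_t\subset D$ and any effective divisor meeting $\zeta_t$ with positive self-intersection must contain a prime component of $D$, one has $q_{X,\zeta_t}\ge q_{X,\infty}$, whence $\delta'_{\zeta_t}(\pi,Z)=\mu(q_{X,\zeta_t},d)\le \mu(q_{X,\infty},d)$. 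The assumption $D^2\ge \mu(q_{X,\infty},d)$ therefore implies $D^2\ge \delta'_{\zeta_t}(\pi,Z)$, and Theorem~\ref{ReithmII} yields an effective decomposition $D=A_t+B_t$ with $A_t,B_t>0$ both meeting $\zeta_t$, $A_t-B_t$ big and $A_tB_t\le d$, where either $B_t$ is negative semi-definite or $B_t^2=q_{X,\zeta_t}$ together with $D\equiv (d/q_{X,\zeta_t}+1)B_t$.

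Next, the movability hypothesis is used to exclude the first (negative semi-definite) alternative: every prime component of $B_t$ inherits a non-trivial numerical linear system from $D$, so the class of $B_t$ is pseudo-effective with $B_t^2\ge 0$, which together with negative semi-definiteness forces $B_t^2=0$; the Hodge-index computation from the proof of Theorem~\ref{extnthm}, with $q_X$ replaced by $q_{X,\infty}$ throughout, then yields $D^2\le \mu(q_{X,\infty},d)$. In the strict case $D^2>\mu(q_{X,\infty},d)$ this is a contradiction, so $\varphi$ extends to a morphism on $X$. In the equality case only the second alternative of Theorem~\ref{ReithmII} can occur, giving $B_t^2=q_{X,\infty}$ and $D\equiv (d/q_{X,\infty}+1)B_t$ for all relevant $t$; hence the $B_t$ are mutually numerically equivalent, and Proposition~6.7 of \cite{Eno} (invoked in the proof of Theorem~\ref{bextnthm}; see Lemma~\ref{fibrlem}) organises them into a linear pencil $\{F_\lambda\}$ without fixed parts with $F_\lambda^2=q_{X,\infty}$, whose induced rational map $\psi\colon X\dasharrow \mathbb{P}^1$ restricts to $\varphi$ on $D$ by using $B_t\cap \zeta_t\ne \emptyset$ and the bigness of $A_t-B_t$.

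The main obstacle will be the very last step: converting the family $\{B_t\}_{t\in \mathbb{P}^1}$ produced by Theorem~\ref{ReithmII} into a single linear pencil whose induced rational map genuinely restricts to $\varphi$ (rather than to some other map of the same numerical type). This requires the pencil-construction arguments of Section~6 of \cite{Eno}, now executed with the refined self-intersection bound $q_{X,\infty}$ and with the movability information on components of $D$ kept in play. A secondary but essential point is the verification that $q_{X,\zeta_t}\ge q_{X,\infty}$ for the clusters arising from $\varphi$, which rests on a careful case analysis of which effective divisors can meet $\zeta_t$ given that $\zeta_t\subset D$ and every component of $D$ has non-trivial numerical linear system.
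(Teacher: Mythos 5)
The paper does not actually write out a proof of Theorem~\ref{mextnthm}: it declares the argument to be the adaptation of Theorem~6.11 in \cite{Eno}, following the sketch given for Theorem~\ref{extnthm} with Theorem~\ref{ReithmII} in place of Theorem~5.4 of \cite{Eno}. Your skeleton (generic fibres of $\varphi$ as clusters, $\delta_{\mathfrak{a}_\lambda}(\pi,Z)=4d$, a Reider-type decomposition for each $\lambda$, Lemma~\ref{fibrlem} to assemble the pieces) is the right one, but two load-bearing steps are wrong. First, the inequality $q_{X,\zeta_t}\ge q_{X,\infty}$, which is how you verify the hypothesis $D^2>\delta'_{\zeta_t}(\pi,Z)$ of Theorem~\ref{ReithmII}, is unjustified: an effective divisor meeting the $0$-dimensional cluster $\zeta_t$ need not contain any prime component of $D$ (it only has to pass through one point of it), so your stated reason fails, and in general neither of $q_{X,\zeta}$, $q_{X,\infty}$ dominates the other. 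The subscript $\infty$ enters differently: one cannot invoke Theorem~\ref{ReithmII} as a black box, but must go through Theorem~\ref{ReithmI} (condition (iiip), which only needs $D'$ big, guaranteed by $D^2>\mu(q_{X,\infty},d)\ge 4d$) to get $D_\lambda=A_\lambda+B_\lambda$ with $A_\lambda B_\lambda\le d$ for each $\lambda$, and then, when $B_\lambda^2>0$, use that the infinitely many pairwise distinct $B_\lambda$ fall into finitely many numerical classes, so some class contains infinitely many effective divisors and hence $B_\lambda^2\ge q_{X,\infty}$; only then does the Hodge-index estimate give $D^2\le\mu(q_{X,\infty},d)$.

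Second, your logical architecture is inverted. The non-surjectivity of $H^0(\O_X(K_X+D))\to H^0(\O_X(K_X+D)|_{\mathfrak{a}_\lambda})$ is unconditional (Lemma~6.4 of \cite{Eno}, quoted in Section~6), not a consequence of assuming $\varphi$ does not extend, and the morphism $\psi$ in the strict case is \emph{constructed} from the negative semi-definite alternative ($B_\lambda^2=0$, $D\cap B_\lambda\subset\mathfrak{a}_\lambda$, then Lemma~\ref{fibrlem} produces a fibration through which $\varphi$ factors) — the very alternative you try to exclude. Your exclusion treats the components of $B_t$ as components of $D$; but the decomposition is applied to a member $D_\lambda\in|D|$, not to $D$ itself: indeed $(E)_{D,\zeta_t}$ fails for $D$ with your $(\pi,Z)$, since $\pi^{*}D+\Delta-Z=\widehat{D}-E$ is not effective over a point where $D$ is reduced, which is exactly why Lemma~\ref{deltalem} uses $\dim|D|\ge 3d$ to produce $D'_\lambda\in|\pi^{*}D+\Delta-Z|$. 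Moreover, the claimed contradiction ``$B_t^2=0$ plus Hodge index yields $D^2\le\mu(q_{X,\infty},d)$'' does not follow, as the index-theorem bound on $D^2$ requires $B_t^2>0$. With the negative semi-definite case discarded and no valid contradiction, your strict-case argument produces no morphism at all; you need to keep that case and run the fibration construction of Section~6 on it.
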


\subsection{The proof of Extension theorem}

The proofs of Theorems~\ref{extnthm}, \ref{bextnthm} and \ref{mextnthm} are almost identical to that of Theorems~6.1, 6.10 and 6.11 in \cite{Eno}.
We only sketch here the proof of Theorem~\ref{extnthm}
(the remaining cases are left to the reader).

Let $\Lambda$ be the set of closed points of $\mathbb{P}^1$ such that 
 $(\varphi|_{D_{\mathrm{red}}})^{-1}(\lambda)$ is reduced
and contained in the smooth loci of $X$ and $D_{\mathrm{red}}$ .
It is a dense subset of $\mathbb{P}^1$ since $X$ is normal and $\varphi|_{D}$ is separable.
For a closed point $\lambda\in \mathbb{P}^1$, we put $\mathfrak{a}_{\lambda}:=\varphi^{-1}(\lambda)$.

\begin{lemma}[\cite{Eno} Lemma~6.4]
For any $k$-rational point $\lambda\in \Lambda$, the restriction $H^0(\O_{X}(K_X+D))\to H^0(\O_X(K_X+D)|_{\mathfrak{a}_{\lambda}})$ is not surjective.
\end{lemma}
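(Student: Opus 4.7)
The plan is to factor the restriction map through cohomology on the curve $D$ and reduce the non-surjectivity to a Serre duality computation. Since $\mathfrak{a}_\lambda$ lies in the smooth loci of both $X$ and $D_{\mathrm{red}}$ by the definition of $\Lambda$, adjunction gives a canonical identification $\O_X(K_X+D)|_{\mathfrak{a}_\lambda}\cong \omega_D|_{\mathfrak{a}_\lambda}$ of line bundles on $\mathfrak{a}_\lambda$, and the standard exact sequence $0\to \O_X(K_X)\to \O_X(K_X+D)\to \omega_D \to 0$, valid in a neighbourhood of $\mathfrak{a}_\lambda$, exhibits the restriction in question as the composition
\[
H^0(X,\O_X(K_X+D)) \longrightarrow H^0(D,\omega_D) \longrightarrow H^0(\mathfrak{a}_\lambda,\omega_D|_{\mathfrak{a}_\lambda}).
\]
It will therefore suffice to prove that the second arrow is not surjective.

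For this, I would use the exact sequence $0\to \omega_D(-\mathfrak{a}_\lambda)\to \omega_D\to \omega_D|_{\mathfrak{a}_\lambda}\to 0$ on $D$. Because $\mathfrak{a}_\lambda$ is zero-dimensional we have $H^1(\omega_D|_{\mathfrak{a}_\lambda})=0$, so the connecting map $H^1(\omega_D(-\mathfrak{a}_\lambda))\to H^1(\omega_D)$ is surjective, and the cokernel of the second arrow equals the kernel of this surjection. Serre duality on the proper Cohen--Macaulay curve $D$ identifies the source and target with $H^0(\O_D(\mathfrak{a}_\lambda))^{\vee}$ and $H^0(\O_D)^{\vee}$ respectively, so the cokernel has dimension exactly $h^0(\O_D(\mathfrak{a}_\lambda))-h^0(\O_D)$. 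The task then reduces to showing this difference is strictly positive.

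This last step uses the morphism $\varphi$ itself. Since $\mathfrak{a}_\lambda = \varphi^{-1}(\lambda)$ realises $\O_D(\mathfrak{a}_\lambda)\cong \varphi^{*}\O_{\mathbb{P}^1}(1)$, the projection formula gives $H^0(\O_D(\mathfrak{a}_\lambda))=H^0(\mathbb{P}^1,\varphi_{*}\O_D\otimes \O_{\mathbb{P}^1}(1))$ while $H^0(\O_D)=H^0(\mathbb{P}^1,\varphi_{*}\O_D)$. The canonical ring homomorphism $\O_{\mathbb{P}^1}\hookrightarrow \varphi_{*}\O_D$ (injective since $\varphi$ is surjective and $\varphi_{*}\O_D$ is a locally free $\O_{\mathbb{P}^1}$-module) yields an exact sequence $0\to \O_{\mathbb{P}^1}\to \varphi_{*}\O_D\to Q\to 0$, and tensoring with $\O_{\mathbb{P}^1}(1)$ and taking cohomology, using $H^1(\O_{\mathbb{P}^1})=H^1(\O_{\mathbb{P}^1}(1))=0$, gives
\[
h^0(\varphi_{*}\O_D\otimes \O_{\mathbb{P}^1}(1))-h^0(\varphi_{*}\O_D)=1+\bigl(h^0(Q(1))-h^0(Q)\bigr)\geq 1,
\]
where the final inequality holds because twisting a coherent sheaf on $\mathbb{P}^1$ by the effective divisor $\O_{\mathbb{P}^1}(1)$ cannot decrease $h^0$ (splitting into torsion and locally free parts reduces this to the elementary fact that $\max(a+2,0)\geq \max(a+1,0)$ for each $a\in \Z$).

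The main technical point to verify carefully, rather than a serious obstacle, is the factorization of the restriction through $H^0(D,\omega_D)$: the adjunction quotient $\O_X(K_X+D)/\O_X(K_X)$ need not globally coincide with $\omega_D$ if $D$ is non-reduced or $X$ is singular at points of $D\setminus \mathfrak{a}_\lambda$, but the canonical comparison map is an isomorphism in a neighbourhood of $\mathfrak{a}_\lambda$, and this local agreement is enough to make the restriction map to the zero-dimensional scheme $\mathfrak{a}_\lambda$ factor through $H^0(\omega_D)$ as claimed, completing the argument.
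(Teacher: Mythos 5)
Your argument is correct and is essentially the standard Serrano-type proof that the paper imports from \cite{Eno}, Lemma~6.4: since $\mathfrak{a}_{\lambda}=\varphi^{*}\lambda$ moves in the pencil $\varphi^{*}|\O_{\PP^1}(1)|$ on $D$, one has $h^0(\O_D(\mathfrak{a}_{\lambda}))-h^0(\O_D)\ge 1$, so by duality on the proper Cohen--Macaulay curve $D$ the fibre fails to impose independent conditions on $|\omega_D|$, hence a fortiori on $|K_X+D|$. Your closing caveat about the factorization is in fact unnecessary: for an effective Weil divisor $D$ on a normal surface the adjunction sequence $0\to \omega_X\to \O_X(K_X+D)\to \omega_D\to 0$ holds globally, with $\omega_D=\mathcal{E}xt^{1}_{\O_X}(\O_D,\omega_X)$ the dualizing sheaf (apply $\mathcal{H}om(-,\omega_X)$ to $0\to \O_X(-D)\to \O_X\to \O_D\to 0$ and use reflexivity), so the restriction factors through $H^0(\omega_D)$ with no local comparison needed.
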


\begin{lemma}[cf.\ \cite{Eno} Lemma~6.5] \label{deltalem}
For any $k$-rational point $\lambda\in \Lambda$, there exist a member $D_{\lambda}\in |D|$ and a pair $(\pi,Z)$ satisfying the condition $(E)_{D_{\lambda},\mathfrak{a}_{\lambda}}$ in Definition~\ref{delta} such that $\delta_{\mathfrak{a}_{\lambda}}(\pi,Z)=4d$.
\end{lemma}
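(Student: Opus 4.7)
Since $\lambda \in \Lambda$, the cluster $\mathfrak{a}_\lambda$ is reduced and consists of $d$ distinct points $p_1,\dots,p_d$ lying in the smooth loci of both $X$ and $D_{\mathrm{red}}$. The plan is to take $\pi \colon X' \to X$ to be the blow-up of $X$ at these $d$ smooth points, with exceptional $(-1)$-curves $E_1,\dots,E_d$, and to set $Z := \sum_{i=1}^d E_i$. The member $D_\lambda$ will then be extracted from $|D|$ by imposing multiplicity conditions at the $p_i$.

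First I would verify the numerical data. Since each $p_i$ is smooth on $X$, the adjunction formula gives $K_{X'} = \pi^* K_X + \sum_i E_i$, so the anti-canonical cycle of $\pi$ is $\Delta = -\sum_i E_i$ and $\Delta - Z = -2\sum_i E_i$. The latter is not effective, hence by Definition~\ref{delta}
\[
\delta_{\mathfrak{a}_\lambda}(\pi, Z) = -(\Delta - Z)^2 = -4\sum_i E_i^2 = 4d,
\]
using the disjointness $E_iE_j = 0$ for $i \neq j$ and $E_i^2 = -1$. Moreover $\pi_*\mathcal{O}_{X'}(-E_i) = \mathfrak{m}_{p_i}$, so
\[
\pi_*\mathcal{I}_Z \;=\; \prod_{i=1}^d \mathfrak{m}_{p_i} \;=\; \mathcal{I}_{\mathfrak{a}_\lambda},
\]
and in particular the inclusion $\pi_*\mathcal{I}_Z \subset \mathcal{I}_{\mathfrak{a}_\lambda}$ in Definition~\ref{delta} holds.

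It remains to produce a member $D_\lambda \in |D|$ satisfying $\pi^* D_\lambda + \Delta - Z \geq 0$. Writing $\pi^* D_\lambda = \widehat{D_\lambda} + \sum_i \mathrm{mult}_{p_i}(D_\lambda)\,E_i$, this condition is equivalent to $\mathrm{mult}_{p_i}(D_\lambda) \geq 2$ for every $i$. At a smooth point of $X$, imposing multiplicity $\geq 2$ is at most $3$ linear conditions on $H^0(\mathcal{O}_X(D))$ (the value of a local defining section and its two first-order partial derivatives). Therefore imposing this at all $d$ points cuts out a sublinear system of $|D|$ of dimension at least
\[
\dim |D| - 3d \;\geq\; \dim H^1(\mathcal{O}_X)_n \;\geq\; 0,
\]
by the hypothesis $\dim|D| \geq 3d + \dim H^1(\mathcal{O}_X)_n$. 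Any such member $D_\lambda$, together with the pair $(\pi,Z)$ above, satisfies $(E)_{D_\lambda,\mathfrak{a}_\lambda}$ and achieves $\delta_{\mathfrak{a}_\lambda}(\pi,Z) = 4d$.

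The main substantive input is the dimension hypothesis, used exactly to guarantee existence of $D_\lambda$; the choice of $Z = \sum E_i$ is essentially forced by the target value $4d$, and the ideal-theoretic and intersection-theoretic verifications are routine once $\mathfrak{a}_\lambda$ is known to lie in the smooth locus of $X$, which is precisely the content of $\lambda \in \Lambda$. The only point requiring any care is the codimension bound for the multiplicity conditions, and this is standard at smooth points.
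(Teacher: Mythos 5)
Your construction is the same one the paper uses (blow up along $\mathfrak{a}_\lambda$, choose $Z$ with $\pi_*\mathcal{I}_Z=\mathcal{I}_{\mathfrak{a}_\lambda}$, and use $\dim|D|\ge 3d$ to produce a member of $|\pi^*D+\Delta-Z|$), and in the case you actually treat all of your verifications are correct. But your very first sentence contains a gap: $\mathfrak{a}_\lambda$ is defined as the scheme-theoretic fibre $\varphi^{-1}(\lambda)$ of $\varphi\colon D\to\PP^1$, where $D$ is only assumed to be an effective divisor whose prime components have positive self-intersection --- it may be non-reduced. The definition of $\Lambda$ only requires that $(\varphi|_{D_{\red}})^{-1}(\lambda)$ be reduced; if $D$ has a component of multiplicity $m>1$, then $\O_D$ has nilpotents and $\mathfrak{a}_\lambda$ is a cluster of length $d$ supported on strictly fewer than $d$ points (locally of the form $k[x]/(x^m)$ at a point where $D=mC$ with $C$ smooth). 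For such $\lambda$ your single blow-up at the support with $Z=\sum_i E_i$ satisfies neither $\pi_*\mathcal{I}_Z=\mathcal{I}_{\mathfrak{a}_\lambda}$ nor $\delta_{\mathfrak{a}_\lambda}(\pi,Z)=4d$: both quantities see only the number of support points, not the length $d$.

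The repair is routine but should be said. Since $\lambda\in\Lambda$ forces each support point to lie on exactly one component of $D_{\red}$, smooth there and with $\varphi^*t$ restricting to a local parameter, $\mathfrak{a}_\lambda$ is a disjoint union of \emph{curvilinear} clusters of total length $d$ inside the smooth locus of $X$. Resolving each by the corresponding chain of point blow-ups and taking $Z$ to be the sum of the \emph{total} transforms $Z_1,\dots,Z_d$ of the exceptional curves (as the paper does for length-$2$ clusters in the proof of Corollary~\ref{vacor}) gives $Z_j^2=-1$, $Z_iZ_j=0$ for $i\ne j$, $\Delta=-\sum_j Z_j$, hence again $\pi_*\mathcal{I}_Z=\mathcal{I}_{\mathfrak{a}_\lambda}$ and $\delta_{\mathfrak{a}_\lambda}(\pi,Z)=-(\Delta-Z)^2=4d$; and membership in $|\pi^*D+\Delta-Z|=|\pi^*D-2\sum_j Z_j|$ is still at most $\sum_j h^0(\O_{2Z_j})=3d$ linear conditions, so your dimension count goes through verbatim. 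With that amendment your argument is a complete and more explicit version of the paper's proof.
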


\begin{proof}
As in the proof of Lemma~6.5 in \cite{Eno}, we can take a blow-up $\pi\colon X'\to X$ along $\mathfrak{a}_{\lambda}$ and a $\pi$-exceptional divisor $Z>0$ such that $\pi_{*}\mathcal{I}_{Z}=\mathcal{I}_{\mathfrak{a}_{\lambda}}$ and $\delta_{\mathfrak{a}_{\lambda}}(\pi,Z)=4d$.
Since $\dim |D| \ge 3d$, we can take a member $D'_{\lambda}\in |\pi^{*}D+\Delta-Z|$.
Then $D_{\lambda}:=\pi_{*}D'_{\lambda}$ is a desired one.
\end{proof}

We fix a $k$-rational point $\lambda\in \Lambda$ arbitrarily.
Then $D_{\lambda}$, $\mathfrak{a}_{\lambda}$ and the pair $(\pi,Z)$ obtained by Lemma~\ref{deltalem} satisfy the condition $(E)_{D_{\lambda},\mathfrak{a}_{\lambda}}$.
Thus we can apply Theorem~\ref{ReithmII} to this situation since $D_{\lambda}^2=D^2>\mu(q_X,d)\ge \delta'_{\mathfrak{a}_{\lambda}}(\pi,Z)$ and $\dim |D'_{\lambda}|\ge \dim H^1(\O_{X'})_n$.
Hence there exists an effective decomposition $D_{\lambda}=A_{\lambda}+B_{\lambda}$ with $A_{\lambda}$ and $B_{\lambda}$ intersecting $\mathfrak{a}_{\lambda}$ such that $A_{\lambda}-B_{\lambda}$ is big, $B_{\lambda}$ is negative semi-definite and $A_{\lambda}B_{\lambda}\le d$.
Moreover the following lemma can be shown similarly to Lemma~6.6 in \cite{Eno}.

\begin{lemma}[\cite{Eno} Lemma~6.6] \label{fiberlem}
In the above situation, we have $B_{\lambda}^2=0$ and $D\cap B_{\lambda}\subset \mathfrak{a}_{\lambda}$ scheme-theoretically.
\end{lemma}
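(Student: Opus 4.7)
The plan is to combine Reider's numerical bound on $A_\lambda B_\lambda$ with a scheme-theoretic analysis at the points of $\mathfrak{a}_\lambda$ to pin down both $B_\lambda^2$ and the intersection $D\cap B_\lambda$. First I would observe that $D$ and $B_\lambda$ share no prime component: every prime $D_i$ appearing in $D$ satisfies $D_i^2>0$ by hypothesis, whereas every prime $C$ in $B_\lambda$ satisfies $C^2\le 0$ by negative semi-definiteness. Hence $D\cap B_\lambda$ is a zero-dimensional subscheme of $X$ whose length equals $D\cdot B_\lambda$. Using $D\equiv D_\lambda=A_\lambda+B_\lambda$, the Reider bound $A_\lambda B_\lambda\le \delta_{\mathfrak{a}_\lambda}(\pi,Z)/4=d$ from Theorem~\ref{ReithmII} (with $\delta_{\mathfrak{a}_\lambda}(\pi,Z)=4d$ by Lemma~\ref{deltalem}), and $B_\lambda^2\le 0$, this yields the upper bound
\[
D\cdot B_\lambda \;=\; A_\lambda B_\lambda+B_\lambda^2 \;\le\; d.
\]

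The main obstacle is the matching lower bound, which I would extract from the scheme-theoretic inclusion $\mathfrak{a}_\lambda\subset B_\lambda$ as subschemes of $X$. By Lemma~\ref{deltalem}, $D_\lambda$ has multiplicity $m_p\ge 2$ at every $p\in \mathfrak{a}_\lambda$, so on $X'$ the proper transform $\widehat{D_\lambda}$ meets each exceptional curve $E_p$ over $p$ in a zero-cycle of length $m_p$. Tracing through the proof of Theorem~\ref{ReithmII}, the decomposition $D'_\lambda=A'_\lambda+B'_\lambda$ comes from the chain-connected component decomposition of $D'_\lambda$, with $A'_\lambda$ chain-connected and $-A'_\lambda$ nef over $B'_\lambda$. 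The key local claim is that $B'_\lambda\cdot E_p\ge 1$ at every $p$: if $B'_\lambda$ missed $E_p$ entirely, then $A'_\lambda$ would absorb all $m_p$ local branches of $\widehat{D_\lambda}$ at $p$, and iterating the chain-connectedness of $A'_\lambda$ against the nef-ness of $-A'_\lambda$ over $B'_\lambda$ would eventually force $B'_\lambda=0$, contradicting the non-triviality supplied by the Reider argument. This local case analysis is the technical heart of the proof. Pushing down yields $p\in \operatorname{supp}(B_\lambda)$ for every $p\in\mathfrak{a}_\lambda$, and since $\mathfrak{a}_\lambda$ is reduced and lies in the smooth loci of $D_{\mathrm{red}}$ and $X$ (by $\lambda\in \Lambda$), a local transversality check upgrades this to the scheme-theoretic inclusion $\mathfrak{a}_\lambda\subset B_\lambda$; in particular $\mathfrak{a}_\lambda\subset D\cap B_\lambda$.

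Combining both directions,
\[
d \;=\; \length(\mathfrak{a}_\lambda) \;\le\; \length(D\cap B_\lambda) \;=\; D\cdot B_\lambda \;\le\; d,
\]
so equality holds throughout. This forces $B_\lambda^2=0$ and $A_\lambda B_\lambda=d$, and the length equality together with $\mathfrak{a}_\lambda\subset D\cap B_\lambda$ yields the scheme-theoretic equality $D\cap B_\lambda=\mathfrak{a}_\lambda$, in particular the desired inclusion $D\cap B_\lambda\subset \mathfrak{a}_\lambda$.
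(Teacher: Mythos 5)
Your overall squeeze strategy (no common components, then trapping $D\cdot B_\lambda$ between a Reider-type upper bound and a lower bound supplied by the points of $\mathfrak{a}_\lambda$ on $B_\lambda$) is the right skeleton, but the lower bound as you set it up rests on a claim that is neither proved nor available: that $\mathfrak{a}_\lambda\subset B_\lambda$, i.e.\ that $B'_\lambda E_p\ge 1$ for \emph{every} $p\in\mathfrak{a}_\lambda$. Theorem~\ref{ReithmII} only guarantees that $B_\lambda$ \emph{meets} $\mathfrak{a}_\lambda$, and the mechanism inside the proof of Theorem~\ref{ReithmI} that produces this (if $B\cap\zeta=\emptyset$ then $\widehat{E}=\pi^{*}E$ for every component $E\le B$, contradicting chain-connectedness of $D$) yields a contradiction only when $B_\lambda$ misses the \emph{whole} cluster. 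If $B'_\lambda$ avoids a single $E_p$ while meeting some other $E_q$, nothing breaks: $A'_\lambda$ simply carries the full multiplicity of $D'_\lambda$ over $p$, and the nefness of $-A'_\lambda$ over $B'_\lambda$ imposes no condition along $E_p$. So your ``iterating chain-connectedness forces $B'_\lambda=0$'' step does not go through, and the inequality $d=\length(\mathfrak{a}_\lambda)\le\length(D\cap B_\lambda)$ is unfounded; indeed the lemma is deliberately stated as an inclusion $D\cap B_\lambda\subset\mathfrak{a}_\lambda$ rather than an equality.

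The repair is to let the two bounds degrade in tandem. Let $S\subset\mathfrak{a}_\lambda$ be the set of points not on $B_\lambda$ and revisit the completing-the-square step in the proof of Theorem~\ref{ReithmI}: with $\Delta-Z=-2\sum_{p}E_p$ and $B_\pi=\sum_{p}b_pE_p$ one gets $A_\lambda B_\lambda\le d-\sum_{p}(b_p+1)^2$, and for $p\in S$ one has $b_p=-B'_\lambda E_p\ge 0$ (near $E_p$ the divisor $B'_\lambda$ is a nonnegative multiple of $E_p$), hence $(b_p+1)^2\ge 1$ and $A_\lambda B_\lambda\le d-|S|$. On the other side, the points of $\mathfrak{a}_\lambda\setminus S$ are smooth points of $X$ and of $D_{\red}$ at which $D$ and $B_\lambda$ meet without common components, so $D\cdot B_\lambda\ge d-|S|$. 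Then $d-|S|\le D\cdot B_\lambda=A_\lambda B_\lambda+B_\lambda^{2}\le (d-|S|)+B_\lambda^{2}\le d-|S|$ forces $B_\lambda^{2}=0$ and shows that every local intersection of $D$ with $B_\lambda$ occurs at a point of $\mathfrak{a}_\lambda$ with multiplicity one, which gives the scheme-theoretic inclusion. One last caution: your identity $\length(D\cap B_\lambda)=D\cdot B_\lambda$ is not automatic on a singular normal surface, since Mumford's local intersection numbers at singular points of $X$ need not compute lengths; it is only the positivity of those local contributions (used to exclude intersection points outside $\mathfrak{a}_\lambda$ in the equality analysis) that one should invoke.
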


Let $\mathcal{B}$ be the set of all prime divisors $C$ such that $C\le B_{\lambda}$ for some $k$-rational point $\lambda\in \Lambda$ and $DC>0$.
This is an infinite set because $D\cap B_{\lambda}\subset\mathfrak{a}_{\lambda}$ by Lemma~\ref{fiberlem} and $\mathfrak{a}_{\lambda}\cap \mathfrak{a}_{\lambda'}=\emptyset$ for $\lambda\neq \lambda'$. 
On the other hand, the set $\mathcal{B}$ consists of finitely many numerical equivalence classes, say $B_{(1)},\ldots, B_{(m)}$, since $0<DC\le DB_{\lambda}\le d$ for any $C\in \mathcal{B}$.
We put $\mathcal{B}_{(i)}:=\{C\in \mathcal{B}\ |\ C\equiv B_{(i)}\}$.
Then there is at least one $\mathcal{B}_{(i)}$ which has infinite elements.
We choose such a $\mathcal{B}_{(i)}$ and put $\mathcal{B}:=\mathcal{B}_{(i)}$ again.

\begin{lemma}[\cite{Eno} Proposition~6.7] \label{fibrlem}
Let $X$ be a normal proper surface over an infinite perfect field $k$.
Let $\mathcal{B}$ be an infinite family of prime divisors on $X$ any member of which has the same numerical equivalence class $B$ with $B^2=0$.
Then there exists a fibration $f\colon X\to Y$ onto a smooth curve $Y$ such that any member of $\mathcal{B}$ is a fiber of $f$.
\end{lemma}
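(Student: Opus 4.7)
The strategy is a Hilbert-scheme argument: infinitely many pairwise numerically disjoint prime curves in a fixed numerical class with self-intersection zero must be swept out by a one-parameter algebraic family on $X$, which then induces the desired fibration onto a smooth curve.

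First I would observe that for distinct $B_1,B_2\in\mathcal{B}$, numerical equivalence gives $B_1\cdot B_2=B_1^2=0$. Since $B_1,B_2$ are distinct prime Weil divisors they share no component, so Mumford's intersection form forces them to meet only at singular points of $X$ (any intersection at a smooth point would contribute a positive integer to $B_1\cdot B_2$); in particular they are disjoint on the smooth locus.

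Next I would fix an ample Cartier divisor $H$ on $X$. The numerical class of $B$ determines both $HB$ and, via Mumford adjunction, the arithmetic genus of each $B\in\mathcal{B}$, so all members of $\mathcal{B}$ share a common Hilbert polynomial and give $k$-rational points of a single projective Hilbert scheme of finite type over $k$. Since $k$ is infinite and $\mathcal{B}$ is infinite, at least one irreducible component $T$ of this Hilbert scheme has positive dimension and contains infinitely many $B_i$. Pulling back the universal family to the normalization $\widetilde{T}$ of $T$ and shrinking to a suitable open subset, I obtain a flat family $\mathcal{D}\subset X\times\widetilde{T}$ whose general fibre over $\widetilde{T}$ is a prime divisor numerically equivalent to $B$. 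Let $p\colon\mathcal{D}\to X$ and $q\colon\mathcal{D}\to\widetilde{T}$ be the two projections. For general distinct $s,t\in\widetilde{T}$ the fibres $B_s,B_t$ are disjoint on the smooth locus of $X$ by Step~1, so a general smooth point $x\in X$ lies on at most one $B_t$. This forces $\dim\widetilde{T}=1$ (otherwise a positive-dimensional subfamily would pass through a general $x$) and makes $p$ birational, producing a dominant rational map $\phi\colon X\dasharrow\widetilde{T}$.

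Resolving the indeterminacies of $\phi$ on a smooth model $\pi\colon X'\to X$ and taking the Stein factorization, I obtain a fibration $f'\colon X'\to Y$ onto a smooth projective curve $Y$. Since the general $B\in\mathcal{B}$ avoids the finite indeterminacy locus and $X$ is normal, this descends to a morphism $f\colon X\to Y$. Finally, for any $B\in\mathcal{B}$, pick a general fibre $F$ of $f$ (which lies in $\mathcal{B}$ up to numerical equivalence, by construction of $f$); then $B\cdot F=B\cdot B=0$ shows $f|_B$ has degree zero, so $f(B)$ is a single point of $Y$, whence $B$ is supported on a fibre. Since $B$ is prime and numerically equivalent to the full fibre class, the standard argument (any effective divisor numerically equivalent to $B$ and supported on a fibre must equal that fibre, using the ampleness of $H$) shows $B$ is itself a fibre of $f$.

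\textbf{Main obstacle.} The chief technical hurdle is the boundedness step: on a normal surface $X$ whose intersection form is only $\Q$-valued (Mumford), one must still verify that the numerical class $[B]$ cuts out a finite-type family of effective divisors with a common Hilbert polynomial, and that the infinitude of $k$-rational points in this family forces some component to be positive-dimensional. A secondary difficulty is that $k$ is only assumed infinite and perfect: one needs to check that the Stein factorization really produces a smooth \emph{geometrically integral} base curve $Y$ defined over $k$, and that a positive-dimensional Hilbert component contains infinitely many $k$-rational points (which uses the infiniteness of $k$ via a Bertini-type density argument).
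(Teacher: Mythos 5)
The paper does not actually prove this lemma; it quotes it from \cite{Eno}, Proposition~6.7, where (following Serrano's Lemma in \cite{Ser}) the fibration is produced from a \emph{linear system}: one shows that suitable sums of the $B_i$ (or differences $B_i-B_j$ lying in $\Pic^{\tau}$) move in a pencil, and $f$ is the Stein factorization of the associated map, so that each $B_i$ is literally a member of the defining pencil and hence a fibre, multiplicity included. Your Hilbert-scheme route is genuinely different and is in principle viable, but as written it has three concrete gaps.

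First, the boundedness step is justified by a false claim: on a singular normal surface the numerical class of an integral curve does \emph{not} determine its arithmetic genus (Mumford's adjunction carries local correction terms at the points of $\mathrm{Sing}(X)$ the curve passes through, and these vary within a numerical class), so the $B_i$ need not share a Hilbert polynomial. What is true, and what you need, is that $\chi(\O_{B_i})$ takes only finitely many values for integral curves of fixed $H$-degree, so that by pigeonhole infinitely many $B_i$ land in a single component of a finite-type Hilbert scheme. Second, the descent of $\phi\colon X\dasharrow \widetilde{T}$ to a morphism is justified by ``the general $B$ avoids the finite indeterminacy locus,'' which is not a reason: an indeterminacy point of such a family is a base point through which \emph{every} member passes, and after blowing it up the exceptional curve dominates $Y$, so the map does not descend merely because general members miss that point. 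The correct argument is that a base point is impossible, because it would lie on two distinct $B_i$, contradicting their disjointness --- and note that $B_iB_j=0$ forces disjointness on \emph{all} of $X$, not only on the smooth locus, since a common point in $\mathrm{Sing}(X)$ already contributes positively to Mumford's pairing via the exceptional part of $\pi^{*}B_i$. Third, your final step asserts that the general fibre of $f$ is numerically equivalent to $B$; the construction only yields that each $B_i$ is the \emph{support} of a fibre, i.e. $f^{-1}(y_i)=\lambda B_i$ for some fixed $\lambda\ge 1$, and in positive characteristic one must separately rule out $\lambda\ge 2$ (fibrations whose generic fibre is not geometrically reduced, hence with infinitely many non-reduced closed fibres, are a genuine phenomenon to exclude here). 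This multiplicity issue is precisely what the linear-system proof gets for free and what your argument still owes. A smaller point in the same direction: the pairwise disjointness you established for the $B_i$ must be propagated to \emph{general} members $\mathcal{D}_t,\mathcal{D}_{t'}$ of the universal family (via density of the points $[B_i]$ in $T$ and constructibility of the ``common component'' locus) before you may conclude that $p$ has degree one.
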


By using Lemma~\ref{fibrlem} in this situation, there exists a fibration $f\colon X\to Y$ onto a smooth curve $Y$ such that any member of $\mathcal{B}$ is a fiber of $f$.
Let $\overline{D}$ denote the scheme-theoretic image of $(f|_D,\varphi)\colon D\to Y\times \mathbb{P}^1$ and $h\colon \overline{D}\to Y$ the restriction of the first projection $Y\times \mathbb{P}^1\to Y$ to $\overline{D}$.

\begin{lemma}[\cite{Eno} Lemma~6.8]
$h\colon \overline{D}\to Y$ is an isomorphism.
\end{lemma}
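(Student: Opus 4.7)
My plan is to show that $h$ is a finite birational morphism onto the smooth (hence normal) curve $Y$, and conclude by Zariski's main theorem.

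First I would establish finiteness. The hypothesis $D_i^2>0$ on each prime component forbids $D_i$ from being contained in a fiber of $f\colon X\to Y$: a fiber has self-intersection zero, so containment would force $D_i^2\le 0$ by the Hodge index theorem. Hence $f|_{D_i}$ is surjective onto $Y$, every irreducible component of $\overline{D}\subset Y\times \mathbb{P}^1$ dominates $Y$ via the first projection, and $\overline{D}$ contains no vertical fiber $\{y\}\times\mathbb{P}^1$; thus $h$ is quasi-finite, and properness upgrades this to finiteness.

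Next I would show $h$ is birational with $\overline{D}$ irreducible. By Lemma~\ref{fiberlem}, for each $B_\lambda\in \mathcal{B}$ we have the scheme-theoretic containment $D\cap B_\lambda\subseteq \mathfrak{a}_\lambda=\varphi^{-1}(\lambda)$, so the morphism $(f|_D,\varphi)$ sends $D\cap f^{-1}(y_\lambda)$ (with $y_\lambda:=f(B_\lambda)$) to the single point $(y_\lambda,\lambda)\in Y\times \mathbb{P}^1$. Since $\mathcal{B}$ is infinite, the set $\{y_\lambda\}$ is infinite --- and hence dense --- in the curve $Y$, so for dense $y\in Y$ the fiber $h^{-1}(y)$ consists set-theoretically of a single point. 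Were $\overline{D}$ to possess two distinct irreducible components, both would dominate $Y$ by the previous paragraph, and a general $h$-fiber would contain at least two distinct points, contradicting what was just established. Therefore $\overline{D}$ is irreducible, and $h$ has generic degree one.

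Finally, since $\overline{D}$ is a one-dimensional closed subscheme of the smooth surface $Y\times \mathbb{P}^1$, it is a Cartier divisor and hence Cohen--Macaulay without embedded components; combined with the generic reducedness coming from the singleton fibers over the dense set $\{y_\lambda\}$, $\overline{D}$ is reduced. A finite birational morphism from a reduced scheme onto a normal variety is an isomorphism by Zariski's main theorem, so $h\colon \overline{D}\to Y$ is an isomorphism. The main technical obstacle will be reducedness of $\overline{D}$: a priori the scheme-theoretic image could acquire nilpotents along its generic stratum from non-reducedness of $D$, and one must use the Cohen--Macaulay structure inherited from the ambient smooth surface together with the density of the singleton fibers from Lemma~\ref{fiberlem} to rule this out.
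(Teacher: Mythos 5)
The skeleton of your argument --- finiteness of $h$ because no component of $\overline{D}$ is vertical (which you correctly deduce from $D_i^2>0$ via Zariski's lemma on fiber components), and generic injectivity from Lemma~\ref{fiberlem} together with the infinitude of $\mathcal{B}$ --- is the right one, and the paper itself only cites \cite{Eno} for this lemma. But there is a genuine gap at the step ``therefore $h$ has generic degree one.'' Section~6 of this paper is set over a field of \emph{positive characteristic}, and there a finite morphism of curves whose set-theoretic fibers are singletons over a dense (even over every) point need not be birational: the relative Frobenius $\PP^1\to\PP^1$ is finite, set-theoretically bijective, and of degree $p$. Your argument as written would ``prove'' that Frobenius is an isomorphism. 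What is missing is a proof that $h$ restricted to $\overline{D}_{\red}$ is \emph{separable} over $Y$; only then does singleton-fibers-on-a-dense-set yield degree one. This is exactly where the separability hypothesis on $\varphi$ must enter: by the definition of $\Lambda$, the fiber $(\varphi|_{D_{\red}})^{-1}(\lambda)$ is reduced, hence so is its closed subscheme $D_{\red}\cap C=(f|_{D_{\red}})^{-1}(y_C)$ (a quotient of a product of fields is reduced), and since $k$ is perfect this fiber is \'etale over $k(y_C)$. Thus $f|_{D_{\red}}\colon D_{\red}\to Y$ is generically unramified, so each extension $K(D_{i,\red})/K(Y)$ is separable, and the intermediate extension $K(\overline{D}_{\red})/K(Y)$ is separable as well. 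With that in hand your fiber count does give birationality.

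A secondary weakness is the reducedness of $\overline{D}$. You correctly flag it as the main obstacle, but the proposed fix --- ``generic reducedness coming from the singleton fibers'' --- is again a non sequitur: if $\overline{D}$ were the divisor $2\Gamma$ for an integral curve $\Gamma$, its set-theoretic $h$-fibers would still be singletons. The clean route is to note that $\overline{D}$, being an effective divisor on the smooth surface $Y\times\PP^1$ with no vertical components, is Cohen--Macaulay and $h$ is finite \emph{flat}, so $\deg h$ equals the length of the scheme-theoretic fiber $h^{-1}(y_C)$; one must then show this length is $1$ (reduced point with residue field $k(y_C)$), which simultaneously settles irreducibility, reducedness, and degree one, and makes $h$ a finite flat morphism of degree $1$, i.e.\ an isomorphism, without appealing to Zariski's main theorem. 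Establishing that length bound requires feeding the scheme-theoretic statement $D\cap B_\lambda\subset\mathfrak{a}_\lambda$ of Lemma~\ref{fiberlem} and the reducedness built into $\Lambda$ through the scheme-theoretic image, not just the underlying sets. So the proposal needs repair at both the separability and the reducedness steps before the final appeal to normality of $Y$ is legitimate.
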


Let $\gamma\colon Y \to\mathbb{P}^1$ be the composition of $h^{-1}$ and the second projection $\overline{D}\to \mathbb{P}^1$.
Then $\varphi\colon D\to \mathbb{P}^1$ decomposes into $\varphi=\gamma\circ f|_D\colon X\to Y\to \mathbb{P}^1$.
Hence $\psi:=\gamma\circ f$ is the desired one.

\section{Applications to plane curves}
\label{sec:Applications to plane curves}

In this section, we are going to apply our extension theorems obtained in Section~6 to the geometry of plane curves.

\begin{definition}[Strange points for plane curves]
Let $D\subset \PP^2$ be a plane curve over a field $k$.
For a $k$-rational point $x\in \PP^2$, we define the open subset $U_{D,x}$ of $D$
to be the set of points $y$ of $D$ such that the reduced plane curve $(D_{k(y)})_{\mathrm{red}}\subset \PP^{2}_{k(y)}$ is non-singular at $y$ and its tangent line $L_{y}\subset \PP^2_{k(y)}$ at $y$ does not pass through $x$.
Then we say that $x$ is {\em strange} with respect to $D$ if $U_{D,x}$ is not dense in $D$.
If $\Char k=0$, all the strange points are $k$-rational points on lines contained in $D$.
If $D$ is smooth and has strange points, then $D$ is a line or a conic with $\Char k=2$ (cf.\ \cite{Har}, Chapter~IV, Theorem~3.9).
For a non-singular $k$-rational point $x$ of $D$, we can see that $x$ is not strange if and only if the inner projection $D\to \PP^1$ from $x$ is finite and separable.
\end{definition}

\begin{theorem} \label{planethm}
Let $D\subset \PP^2$  be a plane curve of degree $m\ge 3$ over an arbitrary base field $k$.
Then there is a one-to-one correspondence between 

\smallskip

\noindent
$(\mathrm{i})$
the set of non-singular $k$-rational points of $D$ which is not strange, and 

\smallskip

\noindent
$(\mathrm{ii})$
the set of finite separable morphisms $D\to \PP^1$ of degree $m-1$ up to automorphisms of $\PP^1$.

\smallskip

\noindent
Moreover, any finite separable morphism $D\to \PP^1$ has degree greater than or equal to $m-1$.
\end{theorem}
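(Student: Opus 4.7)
My plan is to apply the extension theorems of Section~\ref{sec:Extension theorems in positive characteristic} to $X = \PP^2$ and read off the data of the morphism from the resulting linear pencil. The key numerical input is $q_{\PP^2} = 1$, realized by any line, so $\mu(q_X, m-1) = 1 \cdot m^2 = m^2 = D^2$, placing us exactly at the equality case of Theorem~\ref{bextnthm} with $d = m-1$, while $q_X = 1 < m - 1$ since $m \ge 3$. One also checks $\dim|D| = \binom{m+2}{2} - 1 \ge 3(m-1)$ for $m \ge 3$, and $H^1(\O_{\PP^2}) = 0$, so the cohomological hypotheses are automatic.

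The forward direction (i) $\Rightarrow$ (ii) is essentially the definition of non-strange: for a non-singular $k$-rational point $x \in D$ the inner projection $\pi_x\colon D \to \PP^1$ has degree $m - \mult_x D = m - 1$, and the phrase ``$x$ is not strange'' means precisely that $\pi_x$ is finite and separable.

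For the converse (ii) $\Rightarrow$ (i), given $\varphi\colon D \to \PP^1$ finite and separable of degree $m - 1$, Theorem~\ref{bextnthm} will produce a linear pencil $\{F_\lambda\}$ on $\PP^2$ with $F_\lambda^2 = 1$ and no fixed component, inducing a rational map $\psi\colon \PP^2 \dashrightarrow \PP^1$ with $\psi|_D = \varphi$. Since $F_\lambda^2 = 1$, every $F_\lambda$ is a line, so $\{F_\lambda\}$ is a pencil of lines with a unique base point $x = F_0 \cap F_1$, which is $k$-rational as the transverse intersection of two $k$-lines. Comparing degrees, $\deg(\psi|_D) = m - \mult_x D = m - 1$ forces $x \in D$ and $\mult_x D = 1$, and the separability of $\varphi$ forces $x$ to be non-strange. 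Bijectivity is then immediate: the inner projection from $x$ recovers the pencil of lines through $x$, and conversely the pencil determines its base point uniquely. The ``moreover'' statement is proved in the same spirit: if $\varphi$ had degree $d < m - 1$, then $D^2 = m^2 > (d+1)^2 = \mu(q_X, d)$, and Theorem~\ref{extnthm} would yield a genuine morphism $\PP^2 \to \PP^1$ extending $\varphi$; but there is no nonconstant morphism $\PP^2 \to \PP^1$, since two distinct scheme-theoretic fibers would be disjoint effective divisors on $\PP^2$, contradicting the fact that any two effective divisors on $\PP^2$ intersect.

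The main technical obstacle I anticipate is the base field: Theorems~\ref{extnthm} and \ref{bextnthm} are stated over an infinite perfect field of positive characteristic, with analogous characteristic-zero statements in \cite{Eno}, whereas the theorem asserts the correspondence over an arbitrary base field. To handle a finite or imperfect $k$, my plan is to pass to a suitable separable extension $k'/k$, apply the extension theorem over $k'$ to obtain the pencil $\{F_\lambda\}$, and then descend by uniqueness: the pencil is determined by its restriction to $D$, which equals $\varphi$ and is defined over $k$, so $\{F_\lambda\}$ is $\mathrm{Gal}(k'/k)$-stable and hence defined over $k$. Its base point is then automatically $k$-rational. A minor ancillary point is the degree identity $\deg(\pi_x|_D) = m - \mult_x D$ for inner projections, which is classical but requires that no line through $x$ be a component of $D$---automatic once the non-strange and degree conditions are imposed.
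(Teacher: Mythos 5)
Your argument coincides with the paper's in all the essential geometric steps: the forward direction via inner projection, the numerical bookkeeping $q_{\PP^2}=1$ and $\mu(1,m-1)=m^2=D^2$ placing you in the equality case of Theorem~\ref{bextnthm} with $q_X=1<d=m-1$, the identification of the resulting pencil (all members of self-intersection $1$, hence lines) with the pencil of lines through its base point, and the ``moreover'' part via Theorem~\ref{extnthm} combined with the non-existence of non-constant morphisms $\PP^2\to\PP^1$. The checks $\dim|D|=\binom{m+2}{2}-1\ge 3(m-1)$ and $H^1(\O_{\PP^2})=0$ are also as in the paper.

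The genuine gap is in your descent step for a general base field. You propose to ``pass to a suitable separable extension $k'/k$'' and apply the extension theorem over $k'$; but Theorems~\ref{extnthm} and \ref{bextnthm} require the base field to be infinite and \emph{perfect}, and an imperfect field $k$ (allowed by the hypothesis ``arbitrary base field'') admits no separable extension that is perfect: if $a\in k$ has no $p$-th root in $k$ and $k'/k$ is separable, then $k'$ is linearly disjoint from $k^{1/p}$, so $a^{1/p}\notin k'$. You are therefore forced to base change to $\overline{k}$, an inseparable extension of such a $k$, and then the conclusion ``$\{F_\lambda\}$ is $\mathrm{Gal}$-stable, hence defined over $k$'' also fails: invariance under $\mathrm{Aut}(\overline{k}/k)$ only descends a point of $\PP^2_{\overline{k}}$ to the perfect closure $k^{1/p^{\infty}}$, not to $k$. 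The paper instead argues geometrically after base change to $\overline{k}$: for a $k$-rational $\lambda\in\PP^1$ the fiber $\varphi^{-1}(\lambda)$ has degree $m-1\ge 2$ and lies on a unique line (the tangent line of contact order $m-1$ in the totally ramified case), this line is shown to be Galois-invariant, and two such lines for distinct $\lambda,\lambda'$ meet in the desired point; phrasing this as ``the linear span of the $k$-subscheme $\varphi^{-1}(\lambda)$ is a line defined over $k$'' is what repairs your argument for imperfect $k$. Your uniqueness-of-the-pencil observation is correct and does settle the finite-field case, where $\overline{k}/k$ is separable and $\overline{k}$ is perfect, but as written the proposal does not cover an arbitrary field.
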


\begin{proof}
Let $x$ be a non-singular, non-strange $k$-rational point of $D$.
Then the inner projection from $x$ defines a finite separable morphism $\mathrm{pr}_{x}\colon D\to \PP^1$ of degree $m-1$.
This correspondence $x\mapsto \mathrm{pr}_x$ defines a map from the set of (i) to that of (ii), which is injective since $m\ge 3$.
Thus, in order to prove the first claim, it suffices to show that any finite separable morphism $D\to \PP^1$ of degree $m-1$ is obtained by the inner projection from some $k$-rational point of $D$.
Let $\varphi\colon D\to \PP^1$ be such a morphism.
If the base field $k$ is algebraically closed (or infinite and perfect), then by Theorem~\ref{bextnthm} (or Theorem~6.10 in \cite{Eno} when $\Char k=0$), there exists a rational map $\psi\colon \PP^2\dasharrow \PP^1$ induced by a linear pencil of lines such that $\psi|_{D}=\varphi$.
Since $\varphi$ is a morphism, $\psi$ is nothing but an inner projection from a $k$-rational point. 
Suppose that $k$ is not algebraically closed.
Taking the base change to an algebraic closure $\overline{k}$ of $k$,
we obtain a finite separable morphism $\varphi_{\overline{k}}\colon D_{\overline{k}}\to \PP^1_{\overline{k}}$ of degree $m-1$.
From the above argument, this comes from an inner projection from a $\overline{k}$-rational point $x$ of $D_{\overline{k}}$.
Now we take a $k$-rational point $\lambda\in \PP^1$ 
and write $\varphi_{\overline{k}}^{-1}(\lambda)=\{x_1,\ldots,x_{l}\}$ as sets.
Then these points $x,x_1,\ldots,x_l$ lie on the same line $L\subset \PP^2_{\overline{k}}$.
Moreover, the set $\{x_1,\ldots,x_{l}\}$ is $G$-invariant under the Galois action $G:=\mathrm{Gal}(\overline{k}/k)$ on $\PP^2_{\overline{k}}$.
On the other hand,  each $\sigma\in G$ sends the line $L$ to another line $\sigma(L)$, which also contains $\{x_1,\ldots,x_{l}\}$.
Now we show $\sigma(L)=L$, that is, $L$ is $G$-invariant.
If $l\ge 2$, then this is clear since the line passing through fixed two points is unique.
Thus we may assume $l=1$.
Then both $L$ and $\sigma(L)$ are tangent lines at $x_1$ with multiplicity $m-1$, which implies $L=\sigma(L)$.
By taking another $k$-rational point $\lambda'$ of $\PP^1$ and using the same argument as above, we can take another $G$-invariant line $L'$ in $\PP^2_{\overline{k}}$ such that $L\cap L'=\{x\}$.
Thus $x$ is $G$-invariant and descends to a $k$-rational point $x^{G}$ of $D$.
Since $x$ is a smooth point of $D_{\overline{k}}$, so is $x^{G}$.
Since the lines $L$ and $L'$ descend to lines $L^{G}$ and $L'^{G}$ which intersects at $x^{G}$, we conclude that $\varphi\colon D\to \PP^1$ is the inner projection from $x^{G}$.
The last claim is due to Theorem~\ref{extnthm} (or Theorem~6.1 in \cite{Eno} when $\Char k=0$) since $\PP^2$ does not admit any non-constant morphism to $\PP^1$.
\end{proof}

\appendix

\section{Mumford's intersection form on a normal projective variety}
\label{sec:Mumford's intersection form on a normal projective variety}

In this appendix, we extend Mumford's intersection form on a normal surface (\cite{Mum}) to a higher dimensional variety over a field $k$.

\begin{theorem} \label{Mumint}
Let $X$ be a normal projective variety of dimension $n\ge 2$ over a field $k$.
Then there exists a multilinear form
$$
Q\colon \underbrace{\Pic(X)\times \cdots \times \Pic(X)}_{n-2}\times \Cl(X)\times \Cl(X)\to \Q,
$$
which we call Mumford's intersection form,
such that the following conditions hold:

\smallskip

\noindent
$(\mathrm{i})$ $Q$ is an extension of the usual intersection form $\Pic(X)\times \cdots \times \Pic(X)\times \Cl(X)\to \Z$,

\smallskip

\noindent
$(\mathrm{ii})$ $Q$ is symmetric with respect to the first $n-2$ terms  and the last two terms,

\smallskip

\noindent
$(\mathrm{iii})$ $Q$ is compatible with the base change to any separable field extension $k'$ of $k$, and

\smallskip

\noindent
$(\mathrm{iv})$  If $k$ is an infinite field and $S:=H_1\cap \cdots \cap H_{n-2}$ is a normal surface obtained by the intersection of $n-2$ general hyperplanes, then 
$Q(H_1,\ldots,H_{n-2},D_1,D_2)$ coincides with the Mumford's intersection number of
$D_{1}|_S$ and $D_{2}|_S$ on $S$.
\end{theorem}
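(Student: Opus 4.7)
The strategy is to reduce to Mumford's form on normal surfaces \cite{Mum} via hyperplane cutting, then extend multilinearly in the Picard entries and descend for finite base fields.

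First I would assume $k$ is infinite. For very ample Cartier classes $H_1,\ldots,H_{n-2}\in\Pic(X)$ and Weil divisors $D_1,D_2$ on $X$, I apply the Bertini-type result of Section~2 inductively: choose general members $Y_i\in|H_i|$ so that $S:=Y_1\cap\cdots\cap Y_{n-2}$ is a normal projective surface with $\O_S(D_j|_S)\cong\O_X(D_j)|_S$. Define $Q(H_1,\ldots,H_{n-2},D_1,D_2)$ to be Mumford's intersection number of $D_1|_S$ and $D_2|_S$ on $S$. Then (iv) is tautological by construction, while (i) follows since Mumford's form on $S$ agrees with the ordinary intersection form on Cartier classes, which in turn equals $H_1\cdots H_{n-2}D_1D_2$ on $X$ by the projection formula. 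The symmetries (ii) are inherited from the symmetry of Mumford's form on $S$ and from the symmetry of the cycle $H_1\cdots H_{n-2}$ in its arguments.

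The main step is independence of the choice of $(Y_i)$. I would argue that the open subset $U\subset\prod|H_i|$ parametrizing tuples yielding a normal $S$ is dense and connected, and that as $(Y_i)$ varies over $U$ the restricted classes $D_j|_S\in\Cl(S)$ vary flatly, so that Mumford's number---being a numerical invariant---is locally constant on $U$ and hence constant. A more explicit route, which avoids relative intersection theory, is to fix a common log-resolution $\pi\colon\widetilde X\to X$ dominating all the $\pi^*Y_i$ and compare the two Mumford intersections via the projection formula upstairs; both reduce to a single intersection number on $\widetilde X$. Multilinearity in the Picard entries is then extended from very ample to ample by scaling (replacing $H_i$ by $mH_i$ multiplies $[S]$, and the intersection, by $m$), and from ample to arbitrary line bundles by writing any $L\in\Pic(X)$ as a difference of very ample ones.

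Separable base change (iii) is routine: normality of $X$, the Bertini statement, and Mumford's form on a normal surface are all preserved under $-\otimes_k k'$, so $Q$ commutes with separable base change. For finite base field $k$, I would define $Q$ by base-changing to any infinite separable extension $k'/k$ (say $k(t)$ or the algebraic closure); the resulting value lies in $\Q$ because Mumford's form on a normal surface does, and it is independent of $k'$ by the separable base change compatibility already established, together with a standard Galois invariance argument. The hardest point, I expect, will be the well-definedness in the second paragraph---showing that any two general surface sections of $X$ yield the same Mumford number for $D_1,D_2$---and the cleanest argument goes through the simultaneous resolution and projection formula rather than through naive deformation-invariance alone.
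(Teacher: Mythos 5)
Your overall architecture is inverted relative to the paper's, and the inversion is where the trouble lies. The paper does not define $Q$ by cutting with hyperplanes: it first defines a Mumford pull-back $\pi^{\star}D$ of a Weil divisor under a resolution (or alteration) $\pi\colon X'\to X$, prescribing the coefficients of the exceptional divisors with codimension-two centres by the conditions $\pi^{\star}D\cdot F_j=0$ against fibre classes $F_j$, and then sets $Q(L_1,\ldots,L_{n-2},D_1,D_2):=\pi^{*}L_1\cdots\pi^{*}L_{n-2}\,\pi^{\star}D_1\,\pi^{\star}D_2$ (with a Stein factorization and a degree factor for alterations). With that definition multilinearity in every entry is automatic, since $Q$ is an honest intersection number on a regular variety, and the only things left to prove are independence of the chosen alteration and property (iv), the latter by comparing the normalization of $\pi^{-1}(S)$ with a resolution of $S$. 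You instead take (iv) as the definition and must then manufacture multilinearity, and that is the step that does not go through as written.

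Concretely: your extension of $Q$ from very ample classes to all of $\Pic(X)$ by writing $L=A-B$ requires additivity $Q(H_1+H_1',H_2,\ldots)=Q(H_1,\ldots)+Q(H_1',\ldots)$ for very ample $H_1,H_1'$, and your only justification is the scaling remark. But a general member of $|mH_1|$ (or of $|H_1+H_1'|$) is an irreducible hypersurface, not a union of $m$ members of $|H_1|$; the degenerate section $Y_1\cup Y_1'$ is not normal along $Y_1\cap Y_1'$, so Mumford's form is not even defined on it and no naive specialization argument applies. The true statement is that both sides equal $\pi^{*}(H_1+H_1')\,\pi^{*}H_2\cdots\pi^{\star}D_1\,\pi^{\star}D_2$ on a resolution, i.e.\ additivity only becomes visible after the pull-back machinery has been built --- at which point one should simply adopt it as the definition, as the paper does. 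The same applies to your well-definedness step: the ``locally constant on a connected parameter space'' argument is unsubstantiated (the groups $\Cl(S)$ of the varying sections do not form a local system in any obvious sense), and the workable route you mention in passing --- a common resolution plus the projection formula --- is exactly the paper's construction, resting on the identity $E_i'E_j'=(H_1\cdots H_{n-2}C_j)\,E_iF_j$ which identifies the restriction of $\pi^{\star}D$ to the normalized preimage of $S$ with the Mumford pull-back of $D|_S$. Finally, for finite $k$ you should also treat the case $H^0(\O_X)\neq k$, where $X_{\overline{k}}$ is disconnected and the paper inserts the factor $[H^0(\O_X):k]$ to keep compatibility with the usual intersection form in (i).
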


\begin{definition}[Mumford pull-back]
Let $X$ be a normal projective variety over an infinite field $k$.
Let $\pi\colon X'\to X$ be a resolution of $X$.
Let $\{E_i\}_i$ denote the set of $\pi$-exceptional prime divisors on $X'$ such that the center $C_i:=\pi(E_i)$ is of codimension $2$.
For each $i$, let $F_i$ denote the numerical equivalence class of $[k(x):k]^{-1}(\pi|_{E_i})^{-1}(x)$, 
where it is independent of the choice of a closed point $x\in C_i$.
For a Weil divisor $D$ on $X$, we define the {\em Mumford pull-back} of $D$ by $\pi$, which is denoted by $\pi^{\star}D$, 
as $\widehat{D}+\sum_{i}d_iE_i$, where $\widehat{D}$ is the proper transform of $D$ on $X'$ and  the coefficients $d_i\in \Q$ are determined by the equation
$(\widehat{D}+\sum_{i}d_iE_i)F_j=0$ for each $j$.
\end{definition}

\begin{remark}
(1) The definition of the Mumford pull-back makes sense if the intersection matrix $(E_iF_j)_{i,j}$ is invertible.
The invertibility can be checked as follows:
Let $H_1, \ldots, H_{n-2}$ be general hyperplanes on $X$ such that $S:=H_1\cap \cdots \cap H_{n-2}$ is a normal surface.
Let $\rho\colon S'\to \pi^{-1}(S)$ denote the normalization 
and $E'_{i}$ the pull-back of $E_i$ under $\rho$.
Then $E'_{i}$ is a non-zero effective $(\pi \circ \rho)$-exceptional divisor on $S'$ and thus $(E'_{i}E'_{j})_{i,j}$ is negative definite.
Since $E'_{i}E'_{j}=\pi^{*}H_1\cdots \pi^{*}H_{n-2}E_iE_j=(H_1\cdots H_{n-2}C_j)E_iF_j$, the matrix $(E_iF_j)_{i,j}$ is invertible.

\smallskip

\noindent
(2) The definition of the Mumford pull-back seems to be unnatural because all the coefficients of $\pi$-exceptional divisors contracting to codimension $\ge 3$ centers are zero.
It seems to be natural to consider that the Mumford pull-back is determined modulo $\pi$-exceptional divisors contracting to codimension $\ge 3$ centers.
Indeed, the terms of such $\pi$-exceptional divisors do not affect the intersection numbers of $n-2$ Cartier divisors and two Weil divisors defined later.
For more general treatment of Mumford pull-backs, see \cite{BdFF}.
\end{remark}

\begin{definition} \label{altintnumber}
Let $X$ be a normal projective variety of dimension $n\ge 2$ over an infinite field $k$.
Let $L_1,\ldots,L_{n-2}$ be Cartier divisors on $X$.
Let $D_1$ and $D_2$ be Weil divisors on $X$.
\smallskip

\noindent
(1) For a resolution $\pi\colon X'\to X$ of $X$,
we define $(L_1\cdots L_{n-2}D_{1}D_{2})_{\pi}$ to be the rational number $\pi^{*}L_1\cdots \pi^{*}L_{n-2}\pi^{\star}D_{1}\pi^{\star}D_{2}$.

\smallskip

\noindent
(2) Let $\pi\colon Y'\to X$ be an alteration from a regular projective variety $Y'$.
Let $Y'\xrightarrow{\psi} Y \xrightarrow{\varphi} X$ denote the Stein factorization of $\pi$,
where $\psi$ is a resolution of a normal projective variety $Y$ and $\varphi$ is finite.
Then we define 
$$
(L_1\cdots L_{n-2}D_{1}D_{2})_{\pi}:=\frac{1}{\deg \varphi}(\varphi^{*}L_1\cdots \varphi^{*}L_{n-2}\varphi^{*}D_{1}\varphi^{*}D_{2})_{\psi}.
$$
\end{definition}

\begin{lemma}
Let $X$, $L_1\ldots,L_{n-2}$, $D_{1}$, $D_{2}$ be as in Definition~\ref{altintnumber}.
Then the numbers $(L_1\cdots L_{n-2}D_{1}D_{2})_{\pi}$ are independent of alterations $\pi$.
\end{lemma}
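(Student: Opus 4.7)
The plan is as follows.

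\textbf{Reduction to a common dominating alteration.} Given two alterations $\pi_i \colon Y_i' \to X$ for $i = 1,2$, I would take a regular projective alteration $\pi_3 \colon Y_3' \to X$ of an irreducible component of the fibre product $Y_1' \times_X Y_2'$. This yields generically finite morphisms $\sigma_i \colon Y_3' \to Y_i'$ of regular projective varieties with $\pi_i \circ \sigma_i = \pi_3$. Thus it is enough to prove $(L_1 \cdots L_{n-2} D_1 D_2)_{\pi \circ \sigma} = (L_1 \cdots L_{n-2} D_1 D_2)_\pi$ for any alteration $\pi \colon Y' \to X$ and any generically finite morphism $\sigma \colon Z' \to Y'$ of regular projective varieties.

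\textbf{Using Stein factorisations.} Write $\pi \colon Y' \xrightarrow{\psi} Y \xrightarrow{\varphi} X$ and $\pi\circ\sigma \colon Z' \xrightarrow{\tau} Z \xrightarrow{\chi} X$. The universal property of the Stein factorisation furnishes a finite surjective morphism $\mu \colon Z \to Y$ satisfying $\varphi \circ \mu = \chi$, $\mu \circ \tau = \psi \circ \sigma$ and $\deg \mu = \deg \sigma$. Using the functoriality of Cartier pull-back and of the standard ramification-based Weil-divisor pull-back under a finite surjective morphism of normal varieties, one obtains $\chi^* L_i = \mu^* \varphi^* L_i$ and $\chi^* D_j = \mu^* \varphi^* D_j$. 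Setting $N_i := \varphi^* L_i$ and $M_j := \varphi^* D_j$, and dividing by $\deg \chi = \deg \mu \cdot \deg \varphi$, the desired identity reduces to
\[
(N_1 \cdots N_{n-2}\, M_1\, M_2)_\psi \;=\; \frac{1}{\deg \mu}\bigl(\mu^* N_1 \cdots \mu^* N_{n-2}\, \mu^* M_1\, \mu^* M_2\bigr)_\tau,
\]
i.e.\ the alteration intersection number on $Y$ computed via the resolution $\psi$ coincides with the one computed via the alteration $\mu \circ \tau$ (Stein factorisation: finite $\mu$, then resolution $\tau$).

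\textbf{Two subcases and the main obstacle.} The identity above splits into two cases. (a) When $\sigma$ is birational and $\mu = \mathrm{id}$, it asserts the invariance of the intersection number under replacing the resolution $\psi$ by a further resolution $\tau = \psi \circ \sigma$ of $Y$; one dominates any two resolutions of $Y$ by a third and applies the projection formula. (b) When $\tau$ is the identity and $\mu \colon Z \to Y$ is finite between normal varieties with $Z$ already regular, the identity is simply the projection formula for the finite flat morphism $\mu$, applied after a common further resolution of $Z$ compatible with $\psi$. The general case follows by combining (a) and (b) through a resolution of $Z$ that factors through both $\tau$ and a chosen resolution of $Z$. The main obstacle lies in case (a): the Mumford pull-backs $\sigma^* \psi^\star M$ and $(\psi \circ \sigma)^\star M$ differ by $\sigma$-exceptional divisors contracting to subvarieties of $Y$ of codimension at least two. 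One must verify that these correction terms never contribute to the four-fold intersection with $n-2$ pull-backs of Cartier divisors from $Y$: for components whose image in $Y$ has codimension $\ge 3$ this is automatic by dimension, whereas for components whose image has codimension exactly two the vanishing is precisely the defining orthogonality condition of the Mumford pull-back. Once this dimension-counting and orthogonality check is in place, the projection formula finishes the proof.
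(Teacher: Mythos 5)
Your proposal follows the same route as the paper's proof: dominate the two alterations by a third via de Jong, pass to the Stein factorizations to produce the finite morphism between the normal intermediate varieties, and reduce everything to comparing the two Mumford pull-backs of a Weil divisor, whose difference is exceptional and is killed in the intersection number by a dimension count over codimension $\ge 3$ centers and by the defining orthogonality against the fiber classes over codimension $2$ centers. The paper compresses this last comparison into the single claim that $\rho^{*}\psi_{1}^{\star}D$ and $\psi_{2}^{\star}\tau^{*}D$ agree modulo exceptional divisors over codimension $\ge 3$ centers, ``checked by direct computation,'' which is exactly the check you describe; your extra splitting into a birational case and a finite case is an organizational variant rather than a different argument.
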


\begin{proof}
For two alterations $\pi_{i}\colon Y'_{i}\to X$ with $Y'_{i}$ regular, $i=1,2$, we can take an alteration $Y'_{3}\to Y'_{1}\times_{X} Y'_{2}$ with $Y'_{3}$ regular (\cite{dJo}).
Thus we may assume that there exists a generically finite morphism $\rho\colon Y'_{2}\to Y'_{1}$ such that $\pi_{1}\circ \rho =\pi_{2}$.
Let $Y'_{i}\xrightarrow{\psi_{i}} Y_{i} \xrightarrow{\varphi_{i}} X_{i}$ denote the Stein factorization of $\pi_{i}$.
Then there exists a finite morphism $\tau\colon Y_2\to Y_1$ such that $\varphi_2=\varphi_1\circ \tau$.
Then it suffices to show that for any Weil divisor $D$ on $Y_1$,
$\rho^{*}\psi_{1}^{\star}D$ equals $\psi_{2}^{\star}\tau^{*}D$ modulo $\psi_{2}$-exceptional divisors contracting to codimension $\ge 3$ centers.
One can check this by direct computations.
\end{proof}

\begin{definition}[Intersection numbers]
Let $X$ be a normal projective variety of dimension $n\ge 2$ over a field $k$.
Let $L_1,\ldots,L_{n-2}$ be Cartier divisors on $X$.
Let $D_1$ and $D_2$ be Weil divisors on $X$.
Then we define the {\em intersection number} of $L_1,\ldots,L_{n-2},D_1$ and $D_2$, which is denoted by $L_1\cdots L_{n-2}D_{1}D_{2}$, as follows:

\smallskip

\noindent
(1) If the base field $k$ is infinite, then  we define
$$
L_1\cdots L_{n-2}D_{1}D_{2}:=(L_1\cdots L_{n-2}D_{1}D_{2})_{\pi},
$$
where $\pi\colon Y'\to X$ is an alteration with $Y'$ regular (for the existence of alterations, see \cite{dJo}).

\smallskip

\noindent
(2) If $k$ is finite and $H^0(\O_X)=k$, then we take an algebraic closure $\overline{k}$ of $k$ and define
$$
L_1\cdots L_{n-2}D_{1}D_{2}:=L_{1,\overline{k}}\cdots L_{n-2,\overline{k}}D_{1,\overline{k}}D_{2,\overline{k}},
$$
where we put $X_{\overline{k}}:=X\times_{k}\overline{k}$ and the divisors $L_{i,\overline{k}}$ and $D_{i,\overline{k}}$ are respectively the pull-backs of $L_{i}$ and $D_{i}$ via the projection $X_{\overline{k}}\to X$.
Note that $X_{\overline{k}}$ is normal since $k$ is perfect.

\smallskip

\noindent
(3) If $k$ is finite and $k_{X}:=H^0(\O_X)\neq k$, then $X$ is geometrically integral and geometrically normal over $k_{X}$.
Then we define
$$
L_1\cdots L_{n-2}D_{1}D_{2}:=[k_{X}:k](L_1\cdots L_{n-2}D_{1}D_{2})_{X},
$$
where $(L_1\cdots L_{n-2}D_{1}D_{2})_{X}$ is the intersection number on $X$ over $k_{X}$ defined in (2).
\end{definition}

\begin{proof}[Proof of Theorem~\ref{Mumint}]
We define the multilinear form $Q$ as
$$
Q(L_1,\ldots,L_{n-2},D_1,D_2):=L_1\cdots L_{n-2}D_1D_2.
$$
One can see easily that this is well-defined and satisfies the conditions (i), (ii), (iii) and (iv).
\end{proof}
\bigskip

\end{document}